\documentclass[11pt]{article}

\usepackage[T1]{fontenc}
\usepackage{graphicx}
\usepackage{pgfplots}
\usepackage{natbib}
\usepackage{amsfonts}
\usepackage{amsmath}
\usepackage{amssymb}
\usepackage{amsthm}
\usepackage{tgtermes}
\usepackage{newtxtext}
\usepackage{newtxmath}
\usepackage{bm}
\usepackage{dsfont}
\usepackage{url}
\usepackage{fancyhdr}
\usepackage{indentfirst}
\usepackage{enumerate}
\usepackage{titlesec}
\usepackage[misc]{ifsym}
\usepackage{cases}
\usepackage{algorithm}
\usepackage{algpseudocode}
\usepackage{tikz-qtree}
\usepackage{tikz}
\usepackage{tabularx}
\usepackage{caption}
\usepackage{xcolor}
\usepackage[onehalfspacing]{setspace}
\usepackage[colorlinks=true,citecolor=blue,linkcolor=blue,urlcolor=blue]{hyperref}

\bibpunct[, ]{(}{)}{,}{a}{}{,}

\def\bibsep{\smallskipamount}

\theoremstyle{definition}
\newtheorem{example}{Example}

\newtheorem{definition}{Definition}

\newtheorem{assumption}{Assumption}

\theoremstyle{plain}
\newtheorem{theorem}{Theorem}
\newtheorem{lemma}{Lemma}
\newtheorem{proposition}{Proposition}
\newtheorem{corollary}{Corollary}

\theoremstyle{remark}
\newtheorem{remark}{Remark}

\def\N{\mathbb{N}}
\def\P{\mathbb{P}}
\def\p{\mathbb{P}}
\def\E{\mathbb{E}}

\def\R{\mathbb{R}}

\def\d{\mathrm{d}}

\def\bxi{\bm\xi}
\def\bepsilon{\bm\epsilon}
\def\bbeta{\bm\beta}
\def\blambda{\bm\lambda}
\def\balpha{\bm\alpha}
\def\btheta{\bm\theta}

\def\bomega{\bm\omega}
\def\bgamma{\bm\gamma}

\def\bX{\mathbf X}
\def\bY{\mathbf Y}

\def\ba{\mathbf a}
\def\bc{\mathbf c}
\def\bx{\mathbf x}
\def\by{\mathbf y}
\def\bz{\mathbf z}
\def\bt{\mathbf t}
\def\br{\mathbf r}
\def\bs{\mathbf s}
\def\bp{\mathbf p}
\def\bq{\mathbf q}
\def\bv{\mathbf v}
\def\bw{\mathbf w}
\def\bg{\mathbf g}
\def\bu{\mathbf u}

\newcommand{\id}{\mathbf{1}}

\DeclareMathOperator*{\argmin}{arg\,min}
\DeclareMathOperator*{\argmax}{arg\,max}

\renewcommand{\epsilon}{\varepsilon}
\renewcommand{\ge}{\geqslant}
\renewcommand{\le}{\leqslant}
\renewcommand{\geq}{\geqslant}
\renewcommand{\leq}{\leqslant}
\renewcommand{\cdots}{\dots}

\pgfplotsset{compat=1.18}

\title{Harnessing Heterogeneous Data for Conditional Optimization via Optimal Transport}
\author{
Jonathan Yu-Meng Li\thanks{Telfer School of Management, University of Ottawa, Ottawa, Ontario K1N 6N5, Canada. \Letter\ {\scriptsize\url{jonathan.li@telfer.uottawa.ca}}}
\and
Qinyu Wu\thanks{Center for Algorithms, Data, and Market Design at Yale (CADMY), Yale University, New Haven, CT 06511, USA. \Letter\ {\scriptsize\url{qinyu.wu@yale.edu}}}
}
\date{\today}

\begin{document}
\maketitle

\begin{abstract}
Conditional optimization tailors decisions to contextual or event information, but its practical use is often limited by the difficulty of learning the relevant conditional distribution from finite samples of a target joint distribution. This challenge is especially acute when target joint data are scarce or unavailable, or when few observations fall in the conditioning region of interest. Related joint data may be available from multiple sources, such as different stores, markets, populations, or operating environments, but these sources may be biased relative to the target distribution and cannot be pooled naively. We develop a distributionally robust framework based on optimal transport (OT) for harnessing such heterogeneous data in conditional optimization. The framework constructs ambiguity sets over joint distributions using OT distances to empirical source distributions and optimizes worst-case conditional performance over plausible target laws. We propose three OT ambiguity sets that capture different ways of using heterogeneous sources: enforcing simultaneous source consistency, aggregating source discrepancies through weights, and centering the ambiguity set at an OT barycenter. We derive tractable reformulations, establish feasibility conditions, discuss parameter choices, and characterize the relationships among the formulations, revealing trade-offs between robustness, information aggregation, and computational complexity. We demonstrate the value of the framework through a conditional assortment problem using demand and product-feature data from multiple stores.
\end{abstract}

\noindent\textbf{Keywords:} Distributionally robust conditional optimization, heterogeneous data, optimal transport, multi-source learning, data-driven optimization.

%%%%%%%%%%%%%%%%%%%%%%%%%%%%%%%%%%%%%%%%%%%%%%%%%%%%%%%%%%%%%%%%%%%%%%

% Text of your paper here
\section{Introduction}\label{sec:Intro}

Tailoring decisions to specific conditions--such as minimizing financial risk under tail events, selecting treatments based on patient profiles, or optimizing prices for targeted customer segments--is increasingly central in modern analytics. These problems require customizing decisions based on contextual or event-based information and are naturally formulated as conditional stochastic optimization (CSO) problems of the form:
\begin{align}\label{intro_f}
\min_{\bbeta\in\mathcal D} \;\; \E_Q[\ell(\bbeta,\bxi)\mid\bxi\in \mathcal N],
\end{align}
where $\bbeta$ denotes a decision variable, and the objective evaluates the expected loss conditional on the event $\bxi \in \mathcal N$. 
% {
% \color{black} A typical scenario arises when $\bxi$ can be decomposed as $\bxi = (\bX,\bY)$, where $\bX$ represents the available side information and $\bY$ corresponds to the associated risk. Under this decomposition, the conditional event $\mathcal N$ naturally translates into a subset within the space of side information. Consequently, the corresponding CSO problem can be reformulated as
% \begin{align*}
% \min_{\bbeta\in\mathcal D}  \E_Q[\ell(\bbeta,\bY) \mid \bX \in \mathcal N].
% \end{align*}
% This formulation leverages side information explicitly, allowing for decisions to be tailored more precisely according to observed contextual conditions.
% }
While this formulation is expressive and widely applicable, its practical value depends on learning the relevant conditional law from data generated by the target environment and on optimizing the resulting conditional expectation in a tractable manner. Most data-driven CSO formulations therefore implicitly rely on a single representative dataset from the target joint distribution. This requirement can be restrictive. In many applications, target joint data are scarce or unavailable; even when a target sample exists, few observations may fall in the conditioning region of interest. A firm launching a new product may have no historical demand records, a financial institution evaluating a novel asset class may have only a short return history, and a retailer entering a new market may observe customer behavior that differs materially from its existing markets.

Scarce target data do not mean that no useful information exists. Related joint datasets are often available from other products, markets, stores, populations, or operating environments. The difficulty is that such data are heterogeneous: each source may carry useful signal but may also be biased relative to the target law. Pooling the sources as if they were drawn from the target can produce misleading conditional decisions, whereas ignoring them can leave the decision maker with too little information. This trade-off motivates the question we study: when target joint data are limited, can heterogeneous sources, collected under different conditions or for different purposes, be harnessed to support reliable conditional decisions while preserving tractability and robustness?

Distributionally robust CSO provides a natural starting point for this question. Recent work by \citet{NZBDY20, E22, N24, X25} guards against uncertainty about the conditional distribution by optimizing performance under the worst-case conditional expectation, evaluated over an ambiguity set of plausible joint distributions based on empirical samples $\{\widehat{\bxi}_i\}_{i=1}^N$. This leads to the formulation: 
\begin{align}\label{prob:maincap}
\min_{\bbeta\in\mathcal D}\sup_{Q\in\mathcal B(\{\widehat{\bxi}_i\}_{i=1}^N)} \E_Q[\ell(\bbeta,\bxi)\mid\bxi\in \mathcal N],
\end{align}
where $\mathcal  B(\{\widehat{\bxi}_i\}_{i=1}^N)$ denotes an ambiguity set over joint distributions constructed from the empirical data. These models protect against sampling error and misspecification in the target law, but they are designed around a single empirical reference and therefore do not directly address how several biased but informative source datasets should be combined.

We tackle this challenge by developing an optimal transport (OT)-based framework for incorporating heterogeneous joint data into distributionally robust CSO. Specifically, we consider $K$ datasets $\{ \widehat{\boldsymbol\xi}_{k,i} \}_{i=1}^{n_k}$, each drawn from a source distribution $\mathbb{P}_k$, for $k = 1, \dots, K$. These source distributions may differ from the true target distribution due to bias or distributional shifts, but are assumed to retain useful signal. OT provides a natural way to construct ambiguity sets that couple each empirical source law with the unknown target law: by measuring distributional discrepancy through transportation cost, it quantifies how much each source can inform the target while controlling source-target mismatch.

Our framework can be seen as the conditional counterpart to recent multi-source distributionally robust optimization (DRO) models developed by \cite{REMK24} and \cite{L22}, which aggregate heterogeneous data via OT-based ambiguity sets in unconditional settings. \cite{REMK24} construct ambiguity sets as intersections of OT balls centered at each source's empirical distribution, while \cite{L22} propose two formulations based on Wasserstein barycenters: one using a weighted Wasserstein distance, and the other centering a single Wasserstein ball at the barycenter itself. We propose three conditional formulations that naturally parallel these constructions in the structure of their ambiguity sets, but arise in a fundamentally different setting--one that involves conditioning on events or covariates and requires reasoning over sets of conditional distributions. While the models in \cite{REMK24} and \cite{L22} remain close in form to classical Wasserstein DRO and admit similar duality-based analysis, our setting does not. Conditioning introduces structural complexity and breaks key simplifications available in the unconditional case. As a result, our analysis departs from classical tools and necessitates a new line of development.

Among the three conditional formulations, we adopt the one based on the intersection of OT balls as our baseline, as it most directly uses heterogeneous data by requiring simultaneous consistency with every source. We characterize the general $K$-source conditional ambiguity set by decomposing the joint transport plan into inside and outside components and tracking the exact source-wise transportation cost generated by the outside-$\mathcal N$ mass. This characterization yields a general $K$-source robust counterpart. The two-source norm case remains important, not because it is the only tractable case, but because the outside-cost scalarization admits an explicit line-segment representation, leading to a particularly transparent finite reformulation. We further establish inclusion relationships among the three ambiguity sets, clarifying how the weighted-distance and barycenter formulations trade source-level fidelity for simpler aggregate or single-center structures.

Among the remaining two formulations, we find the weighted OT distance model particularly compelling. Its single aggregate transport budget leads to a compact convex reformulation for general $K$, convex conditioning sets $\mathcal N$, and a broad class of transportation cost functions. Most importantly, when compared to the conditional formulation that centers a single Wasserstein ball at the OT barycenter, the weighted-distance formulation avoids a key pitfall: while the former can collapse in degenerate cases--failing to incorporate useful information from the other sources--the weighted formulation ensures that the influence of all data sources is retained. Taken together, these properties make the weighted OT distance formulation a strong and computationally convenient alternative, and in many cases, our recommended default.

Together, these three formulations form a unified and flexible framework for distributionally robust CSO under data heterogeneity. They offer distinct trade-offs between robustness, tractability, and data utilization, allowing decision-makers to select the approach that best aligns with the structure of their problem and computational budget. Our analysis establishes structural relationships between the formulations, characterizes their feasibility and tractability properties, and clarifies the implications of incorporating multiple data sources under distributional uncertainty. This work lays a comprehensive foundation for robust, data-driven decision-making in settings where conditional distributions must be inferred from diverse and imperfect information.

\subsection{Related Literature}

\textbf{Conditional stochastic optimization.} 
The CSO problem in \eqref{intro_f} addresses problems that explicitly incorporate available side information, often in the form of observed covariates, to enhance decision-making under uncertainty. In this setting, $\bxi$ can be decomposed as $\bxi = (\bX,\bY)$, where $\bX$ represents the available side information and $\bY$ corresponds to the associated risk. The conditioning event $\mathcal N$ naturally translates into a subset within the space of side information. Consequently, the corresponding CSO problem can be reformulated as
\begin{align}\label{prob:covariant}
\min_{\bbeta\in\mathcal D}  \E_Q[\ell(\bbeta,\bY) \mid \bX \in \mathcal N].
\end{align}
% This formulation leverages side information explicitly, allowing for decisions to be tailored more precisely according to observed contextual conditions.
The CSO literature has developed several ways to use covariates when estimating conditional objectives. \cite{H10} used kernel-based methods for stochastic optimization with observable state variables. \cite{BK20} proposed predictive prescriptions, where weights derived from machine learning methods, such as $k$-nearest neighbors, kernel regression, and trees, are used to approximate the conditional expectation in \eqref{prob:covariant}, with asymptotic guarantees for the resulting decisions. \cite{BR19} studied conditional formulations of the newsvendor problem using linear decision rules and kernel density estimators, with attention to regularization and generalization.
Subsequent work has developed richer data-driven decision rules: \cite{K23} proposed forest-based decision rules designed to minimize downstream optimization costs, \cite{B19} introduced residual tree methods for multistage stochastic programming with covariates, and \cite{K25} studied sample-average approximation methods for two-stage stochastic programs with concurrent random variables and covariates.
% One early significant contribution in this area was by \cite{H10}, who introduced kernel-based estimation techniques to systematically model and solve CSO problems. In a seminal work, \cite{BK20} developed the framework of predictive prescriptions, in which weights derived from machine learning methods, such as $k$-nearest neighbors, kernel regression, and trees, are used to approximate the conditional expectation in \eqref{prob:covariant}, and established the asymptotic optimality of the resulting decisions. \cite{BR19} later studied conditional formulations of the newsvendor problem, analyzing the efficacy of linear decision rules coupled with kernel density estimators. They also addressed the regularization of generalization error.
% In further developments, \cite{K23} proposed the use of forest-based decision rules specifically designed to minimize optimization costs within CSO problems. \cite{B19} expanded on these ideas by introducing residual tree methods, effectively tackling multi-stage stochastic programming challenges where covariate data informs decision stages. Additionally, data-driven methods for solving two-stage stochastic problems using Sample Average Approximation (SAA) frameworks with concurrent random variables and covariates were explored by \cite{K25}.
Recent studies have also shifted focus toward risk-sensitive formulations. For instance, \cite{LCLS22} considered a CSO problem that involves  conditional Value-at-Risk constraints within the newsvendor model, providing robust solutions that explicitly account for downside risk conditional on covariates. We refer to \cite{SCD25} for a comprehensive survey of contextual optimization methods for decision-making under uncertainty. 
% While many of these works typically follow an “estimate-then-optimize” paradigm (Srivastava et al., 2021; Hu et al., 2022), there has also been a growing interest in developing integrated end-to-end frameworks for CSO, as discussed by Elmachtoub and Grigas (2022) and Donti et al. (2017).

\noindent\textbf{Distributionally robust CSO.} 
To handle uncertainty and model misspecification in conditional problems, DRO methods based on OT ambiguity sets have recently emerged as prominent tools. 
%Under the setting that $\bxi=(\bX,\bY)$, where $\bX$ and $\bY$ are the vector of side information and the associate risk vector, respectively,
For example, \cite{BMS23} examined a Wasserstein distributionally robust formulation of \eqref{prob:covariant} in which the conditioning event is restricted to a single point, $\bX=\bx_0$. They first estimate the conditional distribution of $\bY\mid \bX=\bx_0$ and then construct a type-$1$ Wasserstein ball around this estimated conditional law, an approach referred to by \cite{X25} as a ``predict-then-robustify'' scheme.
%utilized a type-$1$ Wasserstein ball centered at the estimated conditional distribution of  $\bY| \bX=\bx_0$ as the ambiguity set, and the conditional events in the objective functional are restricted to a single point $\bX=\bx_0$. 
In contrast, \cite{NZBDY20,N24} used the joint empirical distribution of $\bxi$ as the reference distribution and constructed OT ambiguity sets around this joint law. In these models, the conditioning event is specialized to a closed ball in the space of side information, and the transportation cost is decomposed between the covariate component $\bX$ and the outcome component $\bY$. Rather than directly solving the distributionally robust CSO problem \eqref{prob:maincap}, \cite{E22} introduced an unconditional trimming-based relaxation, which simplifies both analysis and computation. More recently, \cite{X25} studied conditional risk minimization by characterizing the conditional ambiguity sets induced by OT ambiguity sets as unions of multiple OT balls, encompassing the joint-distribution formulations of \cite{NZBDY20,N24}, the trimming construction of \cite{E22}, and predict-then-robustify schemes as special instances.
%More recently, \cite{X25} studied conditional risk minimization by characterizing the conditional ambiguity sets induced by OT ambiguity sets as unions of multiple OT balls. This induced-set perspective unifies the joint-distribution formulations of \cite{NZBDY20,N24}, the trimming construction of \cite{E22}, and predict-then-robustify schemes based on estimated conditional distributions. It also yields explicit tractable reformulations through strong duality results \citep{EK18,BM19} or regularized equivalence formulations \citep{W22} within OT-based DRO. 
Beyond OT-based constructions, \cite{KBL24} integrated machine learning prediction models into DRO by building Wasserstein and $\phi$-divergence ambiguity sets from regression residuals, with asymptotic and finite-sample guarantees.

% In contrast, \cite{NZBDY20} employed a joint empirical distribution of $\bxi$ as the reference distribution and constructed an ambiguity set via the type-$\infty$ Wasserstein ball, where the set of conditioning events was specialized  as a closed ball in the space of side information.
% Their transportation cost function was explicitly decomposed between the components $\bX$ and $\bY$.
% Extending the framework of \cite{NZBDY20}, \cite{N24} provided a comprehensive and unified approach by combining predictive modeling and decision-making into a single integrated problem formulation. They generalized transportation cost functions to construct a broader and more flexible class of distributional ambiguity sets. Subsequently, \cite{X25} characterized the conditional ambiguity sets proposed in \cite{N24} as unions of multiple OT-based balls. This characterization enables obtaining explicit, tractable reformulations through strong duality results \citep{EK18,BM19} or regularized equivalence formulations \citep{W22} within OT-based DRO.
% Rather than directly solving the distributionally robust CSO problem \eqref{prob:maincap}, \cite{E22} introduced an unconditional relaxation of the original problem by employing a trimming approach, which significantly simplifies the analysis and solution process. Beyond OT-based constructions, \cite{KBL24} integrated machine learning prediction models into DRO by building Wasserstein and $\phi$-divergence ambiguity sets from regression residuals, with asymptotic and finite-sample guarantees.

\noindent\textbf{Multi-source data.}
Multi-source learning problems have attracted considerable attention across statistics and machine learning because they integrate heterogeneous data sources. A central challenge is how to aggregate decision rules or objectives across environments with significant heterogeneity.  \cite{DN18} proposed minimax statistical learning over group-wise distributions. \cite{SKHL19} and \cite{Z20} explored sample reweighting and invariant learning to align source-specific risks. In the statistics literature, transfer learning provides guarantees for exploiting biased yet related source data \citep{B21,LCL22}. Within operations research, \cite{GK22} showed that pooling data across many stochastic optimization problems can strictly outperform solving each problem in isolation, while \cite{BMM25} analyzed the performance of data-driven decisions when historical samples are drawn from heterogeneous environments that deviate from the target distribution. These works, however, do not address conditional decision-making. 

Our work lies at the intersection of distributionally robust CSO and multi-source data. The closest related work is \cite{WCW24}, who also use an intersection of Wasserstein balls in a contextual setting. However, their formulation follows a predict-then-robustify scheme: the balls are centered at parametric and nonparametric estimates of the conditional law $\bY\mid \bX=\bx_0$ constructed from a single dataset, and the resulting DRO problem is then solved over outcome distributions. In contrast, our formulation is directly built from heterogeneous joint source datasets: the empirical joint source distributions themselves serve as references, which retains source-level joint information and avoids committing to a single estimated conditional law before optimization. Conditioning is imposed within the robust optimization problem itself, leading to induced conditional ambiguity sets and requiring new tractability analyses for intersection, weighted-distance, and barycenter-based formulations.

\subsection{Notations}
For a Borel set $\Xi\subseteq \R^d$, denote by $\mathcal M_+(\Xi)$ and $\mathcal P(\Xi)$ the set of all finite Borel measures on $\Xi$ and the set of all probability measures in $\mathcal M_+(\Xi)$, respectively. {\color{black}We use the terms probability measure, distribution, and law interchangeably.}
In our setting, $\Xi=\mathcal X\times \mathcal Y$, where $\mathcal Y\subseteq \R^{d_{\mathcal Y}}$ and $\mathcal X\subseteq \R^{d_{\mathcal X}}$ are the spaces of the associated risk vector and the side-information vector, respectively. 
% For $\mathcal N\subseteq\Xi$ and $a,b\in[0,1]$, we define $Q_{\mathcal N}^{[a,b]}=\{Q\in\mathcal P(\Xi): Q(\mathcal N)\in[a,b]\}$, and simplify let $Q_{\mathcal N}=\{Q\in\mathcal P(\Xi): Q(\mathcal N)=1\}$.
We use $\delta_{\bm\omega}$ to represent the Dirac measure corresponding to $\bm\omega\in\Xi$, i.e., $\delta_{\bm\omega}(E)=1$ if $\bm\omega\in E$ and $\delta_{\bm\omega}(E)=0$ otherwise. 
For a measure $\nu\in\mathcal M_+(\Xi)$ and $\mathcal N\subseteq \Xi$, define
%denote by $\nu^{\mathcal X}$ and $\nu^{\mathcal Z}$ the marginal measures of $\nu$ on $\mathcal X$ and $\mathcal Z$, respectively, and for $\mathcal N\subseteq \mathcal X$, let
\begin{align}\label{eq-cddf}
\nu^{\mathcal N}(A)=\frac{\nu(\mathcal N\cap A)}{\nu(\mathcal N)},~~A\subseteq \Xi
\end{align}
as the conditional measure of $\nu$ on the set $\mathcal N$. 
%It is clear that $\nu^{\mathcal N}(\mathcal N)=\nu^{\mathcal N}(\Xi)$.
%Suppose that $(\bX,\bZ)\sim \nu$. Then, $\nu^{\mathcal X} (\cdot|\mathcal N)$ and $\nu^{\mathcal Z|\mathcal N}(\cdot)$ represent the measures of $\bX$ and $\bZ$ conditional on $\bX\in\mathcal N$, respectively. 
%It is clear that $\nu^{\mathcal X} (\cdot|\mathcal N)\in\mathcal P(\mathcal X)$ and $\nu^{\mathcal Z|\mathcal N} (\cdot)\in\mathcal P(\mathcal Z)$.
For $\nu\in\mathcal M_+(\Xi)$, $\E_{\nu}[\cdot]$ denotes the integral under $\nu$, and if $\nu$ is a probability measure, then $\E_{\nu}[\cdot]$ is an expectation. 
%For a law-invariant mapping $\rho$,\footnote{A mapping $\rho$ is law-invariant if $\rho(X)=\rho(Y)$ whenever random variables $X$ and $Y$ have the same distribution.} we use $\rho_{P}(\cdot)$ to represent the value  of $\rho$ under $P\in\mathcal P(\Xi)$.
For  $\mathcal N\subseteq \Xi$, we denote $\mathcal N^c$, ${\rm cl}(\mathcal N)$ and ${\rm conv}(\mathcal N)$ as the complement, closure and convex hull of $\mathcal N$, respectively.
% We write $\id_{E}$ as the indicator function of the set $E$, i.e., $\id_{E}(\bm\omega)=1$ if $\bm\omega\in E$ and $\id_{E}(\bm\omega)=0$ otherwise. 
Denote by $[N]=\{1,\dots,N\}$ for $N\in\N$.

% Let $\{\p_k\}_{k\in[K]}$ be the set of source distributions. 
% For each $\p_k$, the available data consist of $N_k$ samples, and each sample is denoted by $\widehat{\bm\xi}_{k,j}=(\widehat{\mathbf x}_{k,j},\widehat{\mathbf y}_{k,j})\in \mathcal X\times \mathcal Y$ with a probability $w_{k,j}$, and thus, the empirical distributions are given by
% \begin{align*}
% \widehat{\p}_k=\sum_{j=1}^{N_k}w_{k,j}\delta_{\widehat{\bxi}_{k,j}}~~\forall k\in[K].
% \end{align*}
% Define $\mathcal A=\times_{k=1}^K [N_k]$ as the set of $K$-dimensional multi-indices. Each $\balpha\in\mathcal A$ uniquely identifies a combination of atoms from $\{\widehat{\p}_k\}_{k\in[K]}$. 
% We write $\delta_{\balpha}$ as a shorthand for the Dirac distribution on $\Xi^K$ that concentrates unit mass at $(\widehat{\bxi}_{1, \alpha_1}, \ldots, \widehat{\bxi}_{K, \alpha_K})$.

%\newpage

%%%%%%%%%%%%%%%%%%%%%%%%%%%%%%%%%%%%%%%%%%%%%%%%%%%%%%%%%%%
\section{Conditional Optimization with Heterogeneous Data} \label{sec:framework}
%For each $k \in [K]$, we denote by $\widehat{\mathbb{P}}_k = \frac{1}{N_k} \sum_{i=1}^{N_k} \delta_{\widehat{\boldsymbol{\xi}}_{k,i}}$ the empirical distribution constructed from $N_k$ samples drawn from the source distribution $\mathbb{P}_k$. 

We now specialize the conditional decision problem in \eqref{intro_f} to the multi-source data setting studied in this paper. The target joint distribution is observed only through limited data, or not observed directly, while related joint data are available from multiple sources.

Let $\{\p_k\}_{k\in[K]}$ be the set of source distributions. 
For each $\p_k$, the available data consist of $N_k$ samples, where sample $j$ is denoted by $\widehat{\bm\xi}_{k,j}=(\widehat{\mathbf x}_{k,j},\widehat{\mathbf y}_{k,j})\in \mathcal X\times \mathcal Y$ and has weight $w_{k,j}$. The empirical distributions are
\begin{align*}
\widehat{\p}_k=\sum_{j=1}^{N_k}w_{k,j}\delta_{\widehat{\bxi}_{k,j}},\quad  k\in[K].
\end{align*}
These empirical source laws serve as references for plausible target joint laws. We next formulate the corresponding multi-source conditional DRO problem and introduce three optimal-transport ambiguity sets, each encoding a different way to use the heterogeneous sources.

\subsection{A Multi-Source Distributionally Robust Formulation}

% To formalize distributionally robust conditional models that incorporate data from multiple sources, we begin by recalling the definition of optimal transport (OT) and the standard ambiguity set constructed around a single distribution.

% \begin{definition}[Optimal transport cost]
% Let $c : \Xi \times \Xi \to \mathbb{R}_+$ be a \emph{cost function}, assumed to be continuous and such that $c(\boldsymbol{\xi}_1, \boldsymbol{\xi}_2) = 0$ if and only if $\boldsymbol{\xi}_1 = \boldsymbol{\xi}_2$. The optimal transport (OT) cost between two distributions $P$ and $Q$ supported on $\Xi$ is defined as
% $$
% W(P, Q) := \inf_{\pi \in \Pi(P, Q)} \mathbb{E}_{\pi}\left[c(\boldsymbol{\xi}_1, \boldsymbol{\xi}_2)\right],
% $$
% where $\Pi(P, Q)$ denotes the set of all couplings on $\Xi \times \Xi$ with marginals $P$ and $Q$, respectively.
% \end{definition}

% For any $P\in\mathcal P(\Xi)$, we define the ambiguity set as the OT ball centered at $P$ with radius $\epsilon$:\begin{align*}
% \mathcal B\left(P,\epsilon\right)=\left\{Q\in \mathcal P(\Xi):~W(Q,P)\le \epsilon\right\}.
% \end{align*}
% %where $W(Q,P)$ denotes the optimal transport distance between $Q$ and $P$. 
% The corresponding open OT ball is denoted as $\mathcal B^{\circ}(P,\epsilon)$ with the form:
% \begin{align*}%\label{eq-openball}
% \mathcal B^{\circ}(P,\epsilon)=\left\{Q\in \mathcal P(\Xi):~W(Q,P)<\epsilon\right\}.
% \end{align*}

Replacing the single empirical reference in \eqref{prob:maincap} with the collection of empirical source laws gives the multi-source distributionally robust conditional optimization problem
\begin{align}\label{prob-mainprob}
\min_{\bbeta \in \mathcal{D}} \;
\sup_{\substack{Q \in \mathcal{B}\left(\{ \widehat{\p}_k \}_{k \in [K]}, \boldsymbol{\epsilon} \right) \\ Q(\mathcal{N}) \in [a, b]}} 
\E_Q[\ell(\bbeta, \boldsymbol{\xi}) \mid \boldsymbol{\xi} \in \mathcal{N}].
\end{align}
Here $\mathcal B(\{\widehat{\p}_k\}_{k\in[K]},\bepsilon)$ is an ambiguity set constructed from the empirical source laws, and the budget parameter $\bepsilon$ controls its size. {\color{black}Throughout, we assume that the loss function $\bxi\mapsto \ell(\bbeta,\bxi)$ is real-valued, measurable and upper semicontinuous for all $\bbeta\in\mathcal D$.}
The condition $Q(\mathcal N)\in[a,b]$, where $0<a\le b\le 1$, restricts attention to joint laws assigning a plausible amount of mass to the conditioning event; the positive lower bound ensures that the conditional expectation is well defined.

% Note that for any $Q\in\mathcal P(\Xi)$,
% the value of $\E_Q[\ell(\bbeta,\bxi)|\bxi\in \mathcal N]$ is determined by the conditional distribution $Q^{\mathcal N}\in\mathcal P(\Xi)$ that is defined in \eqref{eq-cddf} with the specific form:
% $$
% Q^{\mathcal N}(A)=\frac{Q(A\cap \mathcal N)}{Q(\mathcal N)},~~A\subseteq \Xi.
% $$ 
% This observation motives us to consider a subset of $\mathcal P(\Xi)$:
% \begin{align}\label{eq-conset}
% \mathcal B_{\mathcal N}\left(\{\widehat{\p}_k\}_{k\in[K]},\bm\epsilon\right)
% =\left\{Q^{\mathcal N}\in\mathcal P(\Xi):~Q\in \mathcal B\left(\{\widehat{\p}_k\}_{k\in[K]},\bm\epsilon\right),~Q(\mathcal N)\in[a,b]\right\}.
% \end{align}
% The problem in \eqref{prob-mainprob} can be equivalently reformulated as the following unconditional problem, where the ambiguity set is specified in \eqref{eq-conset}:
% \begin{align}\label{prob:unconditional}
% \min_{\bbeta\in\mathcal D}
% \sup_{Q\in\mathcal B_{\mathcal N}\left(\{\widehat{\p}_k\}_{k\in[K]},\bm\epsilon\right)} \E_Q[\ell(\bbeta,\bxi)].
% \end{align}

We use optimal transport to measure discrepancies between candidate target joint laws and empirical source laws. We recall its definition and the corresponding single-source OT ball.

\begin{definition}[Optimal transport cost]
Let $c : \Xi \times \Xi \to \mathbb{R}_+$ be a \emph{cost function}, assumed to be {\color{black}measurable}, lower semicontinuous and such that $c(\boldsymbol{\xi}_1, \boldsymbol{\xi}_2) = 0$ if and only if $\boldsymbol{\xi}_1 = \boldsymbol{\xi}_2$. The optimal transport (OT) cost between two distributions $P$ and $Q$ supported on $\Xi$ is defined as
$$
W(P, Q) := \inf_{\pi \in \Pi(P, Q)} \mathbb{E}_{\pi}\left[c(\boldsymbol{\xi}_1, \boldsymbol{\xi}_2)\right],
$$
where $\Pi(P, Q)$ denotes the set of all couplings on $\Xi \times \Xi$ with marginals $P$ and $Q$, respectively.
\end{definition}

For any $P\in\mathcal P(\Xi)$, the OT ball centered at $P$ with radius $\epsilon$ is defined as
\begin{align*}
\mathcal B\left(P,\epsilon\right)=\left\{Q\in \mathcal P(\Xi):~W(Q,P)\le \epsilon\right\}.
\end{align*}

%and the corresponding open OT ball is
% $\mathcal B^{\circ}(P,\epsilon)=\left\{Q\in \mathcal P(\Xi):~W(Q,P)<\epsilon\right\}$.
%where $W(Q,P)$ denotes the optimal transport distance between $Q$ and $P$. 
% The corresponding open OT ball is denoted as $\mathcal B^{\circ}(P,\epsilon)$ with the form:
% \begin{align*}%\label{eq-openball}
% \mathcal B^{\circ}(P,\epsilon)=\left\{Q\in \mathcal P(\Xi):~W(Q,P)<\epsilon\right\}.
% \end{align*}

%%%%%%%%%%%%%%%%%%%%%%%%%%%%%%%%%%%%%%%%%%%%%%%%%%%%%%%%%%%
\subsection{Optimal-Transport Ambiguity Sets}
The modeling choice in \eqref{prob-mainprob} is the ambiguity set $\mathcal{B}(\{\widehat{\p}_k\}_{k\in[K]},\bepsilon)$. We consider three optimal-transport constructions, corresponding to three ways of using heterogeneous sources: preserving source-by-source consistency, aggregating source discrepancies, and summarizing the sources through a representative law.

\vspace{1ex}
\noindent\textbf{Intersection of OT Balls.}  
This ambiguity set is defined as the intersection of $K$ OT balls, each centered at an empirical distribution:
\begin{align}\label{eq-defI}
\mathcal{B}^{\mathrm{I}}\left(\{\widehat{\p}_k\}_{k \in [K]}, \boldsymbol{\epsilon} \right) := \bigcap_{k \in [K]} \mathcal{B}(\widehat{\p}_k, \epsilon_k),
\end{align}
where $\boldsymbol{\epsilon} = (\epsilon_1, \ldots, \epsilon_K)$ is a vector of transport radii.
This formulation imposes source-by-source consistency: a plausible target law must be close to every empirical source distribution.

\vspace{1ex}
\noindent\textbf{Weighted OT Distance.}  
This ambiguity set aggregates distances to each source using a weighted sum:
\begin{align}\label{eq-defII}
\mathcal{B}_{\btheta}^{\mathrm{II}}\left(\{\widehat{\p}_k\}_{k \in [K]}, \epsilon \right)
:= \left\{ Q \in \mathcal{P}(\Xi) \,:\, \sum_{k = 1}^K \theta_k W(Q, \widehat{\p}_k) \leq \epsilon \right\},
\end{align}
where $\boldsymbol{\theta} \in \mathbb{R}_+^K$ is a vector of non-negative weights. For simplicity, we omit the subscript $\btheta$ and write $\mathcal B^{\rm II}$ when referring to the weighted-distance balls and their induced conditional sets throughout the paper.
This formulation imposes aggregate consistency: discrepancies from different sources can trade off through the weights.

\vspace{1ex}
\noindent\textbf{OT Barycenter Ball.}  
This ambiguity set is centered at an OT barycenter of the source distributions. Let $\widehat{\mathbb{P}}_{\mathrm{bar}}$ be any minimizer of
\begin{align*}%\label{def:defIII}
\widehat{\mathbb{P}}_{\mathrm{bar}} \in \argmin_{P \in \mathcal{P}(\Xi)} \sum_{k=1}^K \theta_k W(P, \widehat{\p}_k),
\end{align*}
with the same weighting vector $\boldsymbol{\theta}$. The ambiguity set is then
$$
\mathcal{B}^{\mathrm{III}}\left(\{\widehat{\p}_k\}_{k \in [K]}, \epsilon \right)
:= \left\{ Q \in \mathcal{P}(\Xi) \,:\, W(Q, \widehat{\mathbb{P}}_{\mathrm{bar}}) \leq \epsilon \right\}.
$$
This formulation imposes consistency with a source summary: the sources are compressed into a representative joint law, around which a standard single-center OT ball is constructed.

%%%%%%%%%%%%%%%%%%%%%%%%%%%%%%%%%%%%%%%%%%%%%%%%%%%%%%%%%%
\subsection{Modeling Trade-offs Among the Three Formulations}
The three ambiguity sets differ in how much source-level information they preserve. The intersection formulation is the most direct multi-source model because it requires simultaneous consistency with all sources. The weighted-distance formulation relaxes this requirement by allowing discrepancies across sources to trade off. The barycenter formulation goes further by replacing the sources with a single representative distribution, which is parsimonious but may lose heterogeneity.

\vspace{1ex}
\noindent\textbf{Intersection versus weighted distance.}
To relate the parameters in \eqref{eq-defII} to those in \eqref{eq-defI}, we propose a principled scheme for selecting the weights $\bm\theta = \{\theta_1, \dots, \theta_K\}$ and common radius $\epsilon$ based on the vector $\bm\epsilon = \{\epsilon_1, \dots, \epsilon_K\}$. Let
\begin{align}\label{eq-epsilonmin}
\epsilon_{\min} = \min\left\{ \epsilon_k : k \in [K] \right\},
\end{align}
and define
\begin{align}\label{eq-theta}
\theta_k = \frac{\epsilon_{\min}}{\epsilon_k}, \quad  k \in [K].
\end{align}
This choice assigns larger weights to sources with smaller prescribed radii, aligning the weighted formulation with the trust encoded by the intersection benchmark.

\begin{proposition}\label{prop:weightset}
Let $\epsilon_k>0$ for $k\in[K]$, and let $\epsilon_{\min}$ and $\theta_{k}$ be defined in \eqref{eq-epsilonmin} and \eqref{eq-theta}, respectively. The following relations hold:
\begin{align*}
\mathcal B^{\rm{II}}\left(\{\widehat{\p}_k\}_{k\in[K]},\epsilon_{\min}\right)\subseteq\mathcal B^{\rm I}\left(\{\widehat{\p}_k\}_{k\in[K]},\bm\epsilon\right)\subseteq \mathcal B^{\rm{II}}\left(\{\widehat{\p}_k\}_{k\in[K]},K\epsilon_{\min}\right).
\end{align*}
% As a result, we have
% \begin{align*}
% \mathcal B_{\mathcal N}^{\rm{II}}\left(\{\widehat{\p}_k\}_{k\in[K]},\epsilon_{\min}\right)\subseteq\mathcal B_{\mathcal N}^{\rm I}\left(\{\widehat{\p}_k\}_{k\in[K]},\bm\epsilon\right)\subseteq \mathcal B_{\mathcal N}^{\rm{II}}\left(\{\widehat{\p}_k\}_{k\in[K]},K\epsilon_{\min}\right).
% \end{align*}
\end{proposition}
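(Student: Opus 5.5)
The argument is a direct comparison of the defining inequalities for each $Q\in\mathcal P(\Xi)$, using only the nonnegativity of the OT costs $W(Q,\widehat{\p}_k)$ and the specific choice $\theta_k=\epsilon_{\min}/\epsilon_k$ in \eqref{eq-theta}. The weighted-distance constraint can be written in normalized form: dividing $\sum_{k}\theta_k W(Q,\widehat{\p}_k)\le \epsilon$ by $\epsilon_{\min}>0$ gives
\begin{align*}
\sum_{k=1}^K \frac{W(Q,\widehat{\p}_k)}{\epsilon_k}\le \frac{\epsilon}{\epsilon_{\min}}.
\end{align*}
With $r_k:=W(Q,\widehat{\p}_k)/\epsilon_k\ge 0$, membership in $\mathcal B^{\rm I}$ means $\max_k r_k\le 1$. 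Membership in $\mathcal B^{\rm II}(\cdot,\epsilon_{\min})$ means $\sum_k r_k\le 1$, and membership in $\mathcal B^{\rm II}(\cdot,K\epsilon_{\min})$ means $\sum_k r_k\le K$. The proposition then reduces to the elementary inequalities $\max_k r_k\le \sum_k r_k\le K\max_k r_k$ for nonnegative reals.

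For the first inclusion, let $Q\in\mathcal B^{\rm II}(\{\widehat{\p}_k\},\epsilon_{\min})$. Since every term $r_j$ is nonnegative, each single term satisfies $r_k\le\sum_j r_j\le 1$. Hence $W(Q,\widehat{\p}_k)\le\epsilon_k$ for every $k$, which places $Q$ in $\bigcap_k\mathcal B(\widehat{\p}_k,\epsilon_k)$.

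For the second inclusion, let $Q\in\mathcal B^{\rm I}$. Then $r_k\le 1$ for every $k$, so $\sum_k\theta_k W(Q,\widehat{\p}_k)=\epsilon_{\min}\sum_k r_k\le K\epsilon_{\min}$.

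There is no genuine obstacle here. The only points that need attention are that $\epsilon_k>0$, which makes the division legitimate, and that the OT cost takes values in $[0,\infty]$ because $c\ge 0$. The infinite case needs no separate treatment: if some $W(Q,\widehat{\p}_k)=\infty$, then $Q$ lies in none of the three sets, and the inclusions hold trivially.
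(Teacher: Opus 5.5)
Your proposal is correct and is essentially the paper's proof. Both inclusions follow from comparing each term $\theta_k W(Q,\widehat{\p}_k)=\epsilon_{\min}W(Q,\widehat{\p}_k)/\epsilon_k$ with the full nonnegative sum, which is exactly your $\max_k r_k\le\sum_k r_k\le K\max_k r_k$. Your handling of the case $W(Q,\widehat{\p}_k)=\infty$ is also valid, since every $\theta_k>0$.
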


Thus, the weighted-distance formulation can be calibrated around the intersection benchmark, but it enforces aggregate rather than source-by-source consistency.

The next example shows why this distinction matters: when multiple sources are informative about the target law, simultaneous consistency can yield a sharper ambiguity set.

\begin{example}[Intersection vs. weighted distance]\label{ex:compare12}
Suppose in this example that the cost function $c$ is a metric on $\Xi$ (e.g., a norm), so that the OT cost $W$ is jointly convex and satisfies the triangle inequality. Let $P_1$ and $P_2$ denote two source distributions with $W(P_1,P_2)>0$, and suppose that the true distribution is given by $P^* = (P_1 + P_2)/2$. Since $P_1$ and $P_2$ play symmetric roles in the construction of $P^*$, it is natural to set $\epsilon_1 = \epsilon_2$ for $\mathcal B^{\rm I}(\{P_1, P_2\}, \bepsilon)$. By our construction of the weights $\btheta = \{\theta_1, \theta_2\}$ in $\mathcal B^{\rm II}(\{P_1, P_2\}, \epsilon)$, it holds that $\theta_1 = \theta_2 = 1$.
% A central motivation for using multi-source data is to reduce conservativeness by leveraging additional information when the true distribution $P^*$ is guaranteed to lie within the ambiguity set. 
%Thus, it is natural that when , a smaller radius implies a less conservative and more informative model.
The smallest common radius for which $\mathcal B^{\rm I}(\{P_1, P_2\}, \bepsilon)$ contains $P^*$ is $W(P_1,P_2)/2$, whereas the smallest radius for which $\mathcal B^{\rm II}(\{P_1, P_2\}, \epsilon)$ contains $P^*$ is $W(P_1,P_2)$. When these minimal radii are adopted, Proposition~\ref{prop:weightset} shows that $\mathcal B^{\rm II}(\{P_1, P_2\}, \epsilon)$ is larger than $\mathcal B^{\rm I}(\{P_1, P_2\}, \bepsilon)$.
In particular, $P_1$ and $P_2$ lie outside $\mathcal B^{\rm I}(\{P_1, P_2\}, \bepsilon)$ but inside $\mathcal B^{\rm II}(\{P_1, P_2\}, \epsilon)$. Thus, compared with the intersection formulation, the weighted-distance formulation may be more conservative by admitting distributions that remain close in aggregate but are not simultaneously consistent with all sources.
\end{example}

\vspace{1ex}
\noindent\textbf{Weighted distance versus barycenter.}
The weighted-distance and barycenter formulations are both connected to an OT barycenter in the small-radius limit, but they use this information differently. The weighted-distance formulation retains distances to each source, whereas the barycenter formulation replaces the sources by a single representative law. This compression can simplify the model, but it may also discard source-specific information. The following example illustrates the trade-off and shows that, when the true distribution lies within both ambiguity sets, the weighted-distance set can be less conservative than the barycenter-based set.

\begin{example}[Weighted distance vs. barycenter]
Consider the case where $K=2$ and the cost function is $c(\bxi_1,\bxi_2)=\|\bxi_1-\bxi_2\|$ for some norm $\|\cdot\|$ on $\Xi$. By the triangle inequality, if one barycenter weight is strictly larger than the other, the barycenter objective is minimized at the more heavily weighted source distribution. Thus, when $\theta_1>\theta_2$, the barycenter is $P_1$; when $\theta_2>\theta_1$, it is $P_2$. This illustrates how the barycenter formulation can discard source-level heterogeneity. 
Assume now that $\theta_1>\theta_2$ and the true distribution is $P^*=P_2$. In this case, the barycenter becomes $P_1$, and thus $\mathcal B^{\rm III}(\{P_1,P_2\},\epsilon)=\{Q:W(Q,P_1)\le \epsilon\}$. The minimal radius making $\mathcal B^{\rm II}(\{P_1, P_2\}, \epsilon)$ contain $P^*$ is $\theta_1W(P_1, P_2)$, whereas the corresponding radius for $\mathcal B^{\rm III}(\{P_1, P_2\}, \epsilon)$ is $W(P_1, P_2)$. With these respective minimal radii, the ambiguity sets become
\begin{align*}
\mathcal B^{\rm II}\left(\{P_1, P_2\}, \theta_1 W(P_1,P_2)\right)=\{Q: \theta_1 W(Q,P_1)+\theta_2 W(Q,P_2)\le \theta_1 W(P_1,P_2)\}
\end{align*}
and 
\begin{align*}
\mathcal B^{\rm III}(\{P_1,P_2\},W(P_1,P_2))&=\{Q:W(Q,P_1)\le W(P_1,P_2)\}\\
&=\{Q:\theta_1 W(Q,P_1)\le \theta_1 W(P_1,P_2)\}.
\end{align*}
Hence, $\mathcal B^{\rm II}\left(\{P_1, P_2\}, \theta_1 W(P_1,P_2)\right)\subseteq \mathcal B^{\rm III}(\{P_1,P_2\},W(P_1,P_2))$, showing that the barycenter-based formulation may be more conservative than the weighted-distance formulation.
\end{example}

These comparisons motivate the intersection formulation as the benchmark multi-source model. It preserves simultaneous consistency with all sources, but this fidelity also makes the model technically more challenging. The next section studies its tractability.

%\newpage
%%%%%%%%%%%%%%%%%%%%%%%%%%%%%%%%%%%%%%%%%%%%%%%%%%%%%%%%%%
\section{Tractable Reformulation of the Intersection Model}\label{sec:tractbilityB1}\label{sec:tractbilityB1_general}

This section establishes a tractable counterpart for the distributionally robust conditional optimization problem \eqref{prob-mainprob} under the intersection ambiguity set \eqref{eq-defI}. The key difficulty is that the OT constraints are imposed on a joint target law, whereas performance is evaluated only through the conditional law induced on $\mathcal N$. Thus, the reformulation must keep track of how probability mass on $\mathcal N$ and on $\mathcal N^c$ share the same source-wise transport budgets.

We first express the conditional problem directly through the conditional laws induced by feasible joint laws. Since the objective depends on a joint law only through its conditional law on $\mathcal N$, the intersection model is equivalent to
\begin{align}
\min_{\bbeta\in\mathcal D}
\sup_{Q\in\mathcal B_{\mathcal N}^{\rm I}\left(\{\widehat{\p}_k\}_{k\in[K]},\bm\epsilon\right)}
\E_Q[\ell(\bbeta,\bxi)],
\label{prob-intersection-induced}
\end{align}
where the induced conditional ambiguity set is
\begin{align}
\mathcal B_{\mathcal N}^{\rm I}\left(\{\widehat{\p}_k\}_{k\in[K]},\bm\epsilon\right)
:=\left\{Q^{\mathcal N}: Q\in\mathcal B^{\rm I}\left(\{\widehat{\p}_k\}_{k\in[K]},\bm\epsilon\right),\ Q(\mathcal N)\in[a,b]\right\},
\label{eq-BI-N}
\end{align}
and $Q^{\mathcal N}$ is the conditional measure defined in \eqref{eq-cddf}. The induced set $\mathcal B_{\mathcal N}^{\rm I}$ is therefore the object to characterize: it contains exactly the conditional laws that can arise from a joint law satisfying all source-wise OT constraints and the mass bound $Q(\mathcal N)\in[a,b]$. This characterization entails a decomposition principle analogous to the one used for multi-source DRO in the unconditional setting \citep{REMK24}. Let $\mathcal A$ be the set of all multi-indices $\balpha=(\alpha_1,\ldots,\alpha_K)$ with $\alpha_k\in[N_k]$ for each $k\in[K]$. Each $\balpha\in\mathcal A$ selects one observation from each source and forms the tuple $(\widehat\bxi_{1,\alpha_1},\ldots,\widehat\bxi_{K,\alpha_K})$. The theorem below uses this indexing to separate the mass kept on $\mathcal N$ from the mass placed on $\mathcal N^c$, while preserving the source-wise transport budgets.

\begin{theorem}[Exact characterization of the intersection conditional set]\label{th-analysisB1}
The intersection-based conditional set defined in \eqref{eq-BI-N} can be represented as follows:
\begin{align}\label{eq-mainthB1-exact}
&\mathcal B_{\mathcal N}^{\rm I}\left(\{\widehat{\p}_k\}_{k\in[K]},\bm\epsilon\right)\notag\\
&=
\left\{
\begin{array}{ll}
Q: Q\in\mathcal P(\Xi),~Q(\mathcal N)=1,~{\rm and~there~exist}~m,\\
\{a_{k,j}\}_{j\in[N_k],k\in[K]},~\{r_{\balpha}\}_{\balpha\in\mathcal A},~{\rm and}~\{\bs_{\balpha}\}_{\balpha\in\mathcal A}~{\rm such~that}\\
m\in[1/b,1/a],\\
a_{k,j}\ge 0 & \forall j\in[N_k],~k\in[K],\\
r_{\balpha}\ge 0 & \forall \balpha\in\mathcal A,\\
\sum_{j=1}^{N_k}a_{k,j}=1 & \forall k\in[K],\\
a_{k,j}+\sum_{\balpha\in\mathcal A:\alpha_k=j}r_{\balpha}=m w_{k,j} & \forall j\in[N_k],~k\in[K],\\
\bs_{\balpha}:=(s_{\balpha,1},\ldots,s_{\balpha,K})\in r_{\balpha}\mathcal V_{\balpha} & \forall \balpha\in\mathcal A,\\
Q\in\displaystyle\bigcap_{k=1}^K \mathcal B\left(\sum_{j=1}^{N_k}a_{k,j}\delta_{\widehat\bxi_{k,j}},\;m\epsilon_k-\sum_{\balpha\in\mathcal A}s_{\balpha,k}\right)
\end{array}
\right\}.
\end{align}
Here, for each $\balpha\in\mathcal A$,
\begin{align}
\mathcal V_{\balpha}
=
\operatorname{conv}\left(
\left\{
\left(
c(\bxi,\widehat\bxi_{1,\alpha_1}),\ldots,c(\bxi,\widehat\bxi_{K,\alpha_K})
\right):\bxi\in\mathcal N^c
\right\}
\right).
\label{eq-outside-cost-vector}
\end{align}
We use the convention $0\mathcal V_{\balpha}:=\{0\}$.
\end{theorem}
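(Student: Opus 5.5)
The plan is to prove the two inclusions separately. The bookkeeping device throughout is to rescale by $m=1/Q(\mathcal N)$: every joint law $Q$ with $Q(\mathcal N)=p\in[a,b]$ decomposes as
\[
Q=p\,Q^{\mathcal N}+(1-p)\,Q_{\rm out},\qquad Q_{\rm out}(\mathcal N^c)=1 .
\]
We then track how each source's transport plan splits its atoms between $\mathcal N$ and $\mathcal N^c$.

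\emph{Inclusion ``$\subseteq$''.} Let $Q\in\mathcal B^{\rm I}$ with $Q(\mathcal N)=p\in[a,b]$, and set $m=1/p\in[1/b,1/a]$.

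1. Since $c$ is lower semicontinuous and nonnegative, an optimal coupling $\pi_k\in\Pi(\widehat{\p}_k,Q)$ exists with $\E_{\pi_k}[c]\le\epsilon_k$.

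2. Restrict $\pi_k$ to $\Xi\times\mathcal N$ and to $\Xi\times\mathcal N^c$. The first piece moves mass $p\,a_{k,j}$ from atom $\widehat\bxi_{k,j}$ into $\mathcal N$, which defines $a_{k,j}\ge0$. Its second marginal is $pQ^{\mathcal N}$, so $\sum_j a_{k,j}=1$.

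3. The second piece moves mass $w_{k,j}-p\,a_{k,j}$ into $\mathcal N^c$. Disintegrating it over the atoms gives, for each $k$, a kernel $\xi\mapsto(\text{source index }j)$ on the measure $(1-p)Q_{\rm out}$.

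4. Glue these $K$ couplings along their common marginal $(1-p)Q_{\rm out}$, taking the source indices conditionally independent given $\bxi$ (gluing lemma). This yields a measure $\gamma$ on $\mathcal A\times\mathcal N^c$.

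5. Define $r_{\balpha}:=m\,\gamma(\{\balpha\}\times\mathcal N^c)$ and
\[
s_{\balpha,k}:=m\int c(\bxi,\widehat\bxi_{k,\alpha_k})\,\gamma(\balpha,d\bxi).
\]
Summing $r_{\balpha}$ over $\alpha_k=j$ recovers $m(w_{k,j}-p\,a_{k,j})$, which is exactly the balance constraint.

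6. When $r_{\balpha}>0$, $\bs_{\balpha}/r_{\balpha}$ is the mean of the cost vector $(c(\bxi,\widehat\bxi_{k,\alpha_k}))_k$ under a probability measure on $\mathcal N^c$. The mean of a finite-dimensional random vector supported in a set $S$ lies in ${\rm conv}(S)$ (a standard fact, provable by induction on dimension via supporting hyperplanes). Hence $\bs_{\balpha}\in r_{\balpha}\mathcal V_{\balpha}$.

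7. Finally, multiply the inside piece of $\pi_k$ by $m$. This gives a coupling of $\sum_j a_{k,j}\delta_{\widehat\bxi_{k,j}}$ with $Q^{\mathcal N}$ of cost $m\E_{\pi_k}[c]-\sum_{\balpha}s_{\balpha,k}\le m\epsilon_k-\sum_{\balpha}s_{\balpha,k}$. So $Q^{\mathcal N}$ lies in the stated intersection of balls, and trivially $Q^{\mathcal N}(\mathcal N)=1$.

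\emph{Inclusion ``$\supseteq$''.} Given $Q$ with $Q(\mathcal N)=1$ and parameters satisfying the constraints, build a joint law as follows.

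1. For each $\balpha$ with $r_{\balpha}>0$, apply Carathéodory's theorem to write $\bs_{\balpha}/r_{\balpha}=\sum_i\lambda_{\balpha,i}\,(c(\bxi_{\balpha,i},\widehat\bxi_{k,\alpha_k}))_k$ with finitely many points $\bxi_{\balpha,i}\in\mathcal N^c$.

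2. Set
\[
\tilde Q:=\frac1m\Big(Q+\sum_{\balpha}r_{\balpha}\sum_i\lambda_{\balpha,i}\delta_{\bxi_{\balpha,i}}\Big).
\]
The constraints $\sum_j a_{k,j}=1$ and $a_{k,j}+\sum_{\alpha_k=j}r_{\balpha}=mw_{k,j}$ give $\sum_{\balpha}r_{\balpha}=m-1$. Therefore $\tilde Q$ is a probability measure, $\tilde Q(\mathcal N)=1/m\in[a,b]$, and $\tilde Q^{\mathcal N}=Q$.

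3. For each $k$, couple $\widehat{\p}_k$ with $\tilde Q$ by two pieces. The first is $1/m$ times an optimal coupling of $\sum_j a_{k,j}\delta_{\widehat\bxi_{k,j}}$ with $Q$. The second sends mass $r_{\balpha}\lambda_{\balpha,i}/m$ from $\widehat\bxi_{k,\alpha_k}$ to $\bxi_{\balpha,i}$. The balance constraint makes the source marginal exactly $w_{k,j}$.

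4. The total cost is at most $\frac1m\big(m\epsilon_k-\sum_{\balpha}s_{\balpha,k}+\sum_{\balpha}s_{\balpha,k}\big)=\epsilon_k$. Hence $\tilde Q\in\mathcal B^{\rm I}$.

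The degenerate case $r_{\balpha}=0$ is handled by the convention $0\mathcal V_{\balpha}=\{0\}$.

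I expect the main obstacle to be the ``$\subseteq$'' direction. The $K$ per-source outside couplings must be consolidated into one multi-index decomposition in which the same outside point $\bxi$ simultaneously accounts for the cost against every source, and this requires the gluing construction above. A further point needing care there is justifying that the averaged cost vector lies in the convex hull itself, not merely in its closure. I would verify that step with the hyperplane induction argument before relying on it.
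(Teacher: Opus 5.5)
Your proof is correct and follows the paper's argument. Both rescale by $m=1/Q(\mathcal N)$, split each source's transport plan into the mass sent into $\mathcal N$ and the mass sent into $\mathcal N^c$, consolidate the outside mass into multi-indices by gluing, and for the converse append outside mass at finitely many points of $\mathcal N^c$ realizing $\bs_{\balpha}/r_{\balpha}$. The differences are minor economies: you glue only the outside pieces, whereas the paper glues the full plans and then applies Lemma~\ref{lm:DMM}, and you build the converse couplings source by source. Your explicit barycenter-in-convex-hull argument also justifies a step the paper asserts ``by the definition of $\mathcal V_{\balpha}$,'' even though $\nu_{\balpha}^{\mathcal N^c}$ need not be finitely supported.
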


The first implication of Theorem~\ref{th-analysisB1} is conceptual: the induced conditional set is a union of ordinary intersection balls. To make this interpretation explicit, let $\mathfrak F^{\rm I}$ be the set of all $(m,\ba,\br,\bs)$ satisfying
\begin{align*}
\mathfrak F^{\rm I}=
\left\{
\begin{array}{lll}
(m,\ba,\br,\bs):&
m\in[1/b,1/a],\quad a_{k,j}\ge0,\quad r_{\balpha}\ge0,\\
&\sum_{j=1}^{N_k}a_{k,j}=1
&\quad \forall k\in[K],\\
&a_{k,j}+\sum_{\balpha\in\mathcal A:\alpha_k=j}r_{\balpha}=m w_{k,j}
&\quad \forall j\in[N_k],~k\in[K],\\
&\bs_{\balpha}\in r_{\balpha}\mathcal V_{\balpha}
&\quad \forall \balpha\in\mathcal A.
\end{array}
\right\}.
%\label{eq-union-index-I}
\end{align*}
Then \eqref{eq-mainthB1-exact} is equivalently
\begin{align}
\mathcal B_{\mathcal N}^{\rm I}\left(\{\widehat{\p}_k\}_{k\in[K]},\bm\epsilon\right)
=
\bigcup_{(m,\ba,\br,\bs)\in\mathfrak F^{\rm I}}
\left[
\mathcal B^{\rm I}
\left(
\left\{\sum_{j=1}^{N_k}a_{k,j}\delta_{\widehat\bxi_{k,j}}\right\}_{k\in[K]},
m\bepsilon-\sum_{\balpha\in\mathcal A}\bs_{\balpha}
\right)
\cap
\{Q\in\mathcal P(\Xi):Q(\mathcal N)=1\}
\right].
\label{eq-unionI-exact}
\end{align}
For each feasible $(m,\ba,\br,\bs)$, the expression inside the union is an intersection of $K$ OT balls restricted to probability laws supported on $\mathcal N$. The scalar $m$ is the inverse of the probability assigned to $\mathcal N$ before conditioning. For each source $k$, the weights $\{a_{k,j}\}_{j\in[N_k]}$ define the source-$k$ reference distribution after conditioning, $\sum_{j=1}^{N_k} a_{k,j}\delta_{\widehat\bxi_{k,j}}$.

The remaining variables describe how the source mass not used on $\mathcal N$ consumes transport budget. The balance equation splits the scaled empirical mass $mw_{k,j}$ into the part $a_{k,j}$ used on $\mathcal N$ and the parts $r_{\balpha}$ assigned to $\mathcal N^c$ through tuples with $\alpha_k=j$. The vector $\bs_{\balpha}$ records the source-wise transport budget consumed by the outside-$\mathcal N$ mass assigned to tuple $\balpha$, so the radii available on $\mathcal N$ are $m\bepsilon-\sum_{\balpha\in\mathcal A}\bs_{\balpha}$.

The key constraint is $\bs_{\balpha}\in r_{\balpha}\mathcal V_{\balpha}$: it links the amount of mass assigned to $\mathcal N^c$ with the transport budget that this mass can consume. The set $\mathcal V_{\balpha}$ gives the possible source-wise transport cost vectors for tuple $\balpha$. Indeed,
\begin{align*}
\mathcal V_{\balpha}
&=
\left\{
\left(
W(P,\delta_{\widehat\bxi_{1,\alpha_1}}),\ldots,
W(P,\delta_{\widehat\bxi_{K,\alpha_K}})
\right):P\in\mathcal P(\mathcal N^c)
\right\}\\
&=
\left\{
\left(
\E_P[c(\bxi,\widehat\bxi_{1,\alpha_1})],\ldots,
\E_P[c(\bxi,\widehat\bxi_{K,\alpha_K})]
\right):P\in\mathcal P(\mathcal N^c)
\right\}.
\end{align*}
The equality with expectations follows because transporting a distribution to a Dirac measure has a unique coupling; the convex hull in \eqref{eq-outside-cost-vector} arises because the outside-$\mathcal N$ mass may be randomized over points in $\mathcal N^c$. Since $\mathcal V_{\balpha}$ is convex by construction, the constraint $\bs_{\balpha}\in r_{\balpha}\mathcal V_{\balpha}$ is a perspective constraint and is convex in $(r_{\balpha},\bs_{\balpha})$.

The second implication is computational. With the characterization in Theorem~\ref{th-analysisB1}, fixing $(m,\ba,\br,\bs)$ leaves an intersection DRO problem with fixed centers and radii, supported on $\mathcal N$. 
A tractable formulation can be obtained by 
applying fixed-center intersection duality to this inner problem and subsequently dualizing the finite-dimensional allocation problem.
To establish this tractable reformulation, we impose the following three assumptions.
{\color{black}
The first two assumptions ensure strong duality for the inner fixed-center intersection DRO problems for all admissible radii, including those on the boundary of their feasible regions.} The second and third assumptions allow the outer allocation set $\mathcal V_{\balpha}$ to be equivalently replaced by a compact set, thereby enabling the application of the minimax theorem.

{
\color{black}

\begin{assumption}[Cost-Lipschitz continuity]\label{assump:growthI}
% For every $\bbeta\in\mathcal D$, the function
% $\bxi\mapsto \ell(\bbeta,\bxi)$ is upper semicontinuous on $\Xi$.
% Moreover, 
For every $\bbeta\in\mathcal D$, there exists $L_{\bbeta}>0$ such that for any $\bxi\in\mathcal N$ and any source observation $\widehat{\bxi}$,
$$
\ell(\bbeta,\bxi)-\ell(\bbeta,\widehat{\bxi})
\le
L_{\bbeta}c(\bxi,\widehat{\bxi}).
$$
\end{assumption}
}

\begin{assumption}[Coercivity]\label{assump:costI}
For every source observation $\widehat\bxi$, the function $\bxi\mapsto c(\bxi,\widehat\bxi)$ is finite and coercive; that is, $c(\bxi,\widehat\bxi)\to \infty$ whenever $\|\bxi\|\to \infty$.
\end{assumption}

\begin{assumption}[Slater condition for intersection model]\label{assump:SlaterI}
There exists $P^{\circ}\in\mathcal P(\Xi)$
such that
\begin{align*}
P^{\circ}(\mathcal{N}) \in[a,b];\quad W\left(P^{\circ}, \widehat{\P}_k\right) < \varepsilon_k,\quad  k \in[K];\quad
\E^{P^{\circ}}[\ell(\bbeta,\bxi)\mid\bxi\in\mathcal N]<\infty,\quad \bbeta\in\mathcal D.
\end{align*}
\end{assumption}

{\color{black}
Assumption~\ref{assump:growthI} imposes a one-sided Lipschitz continuity condition on the loss function with respect to the transportation cost. Together with Assumption~\ref{assump:costI}, it provides sufficient conditions for the strong duality of unconditional DRO problems over intersection-based OT balls in \cite{REMK24} (see also \cite{WCW24}) to admissible radii that may lie on the boundary. This extension is formalized in Theorem~\ref{lm:boundaryI} of Section~\ref{sec:proofTHintersectionDual}. It is essential for our study because the inner intersection-based OT balls appearing in the union representation \eqref{eq-unionI-exact} may have boundary radii.

Assumption~\ref{assump:costI} is a mild coercivity condition on the transportation cost. It is satisfied, for example, by norm-type costs on $\mathbb R^d$.
The role of this assumption is twofold. First, when combined with Assumption~\ref{assump:growthI}, it guarantees tightness of sequences of feasible distributions whose OT radii are uniformly bounded. Hence Prokhorov's theorem can be applied to extract weakly convergent subsequences. This is the key compactness ingredient in the proof of the boundary-radii strong duality argument. Second, when combined with Assumption~\ref{assump:SlaterI}, it allows the generally unbounded sets $\mathcal V_{\balpha}$ to be replaced, without changing the robust value, by compact substitutes. This compactification is needed for the minimax argument in the derivation of the final reformulation. 

Assumption~\ref{assump:SlaterI} is a Slater-type condition for the intersection-based set. It ensures the existence of a joint law $P^\circ$ satisfying the mass constraint $P^\circ(\mathcal N)\in[a,b]$ and lying strictly inside all source-wise Wasserstein balls. This strict feasibility provides the slack needed to approximate closure-induced allocations by feasible joint laws. The integrability condition
$
\E^{P^\circ}[\ell(\bbeta,\bxi)\mid\bxi\in\mathcal N]<\infty
$
for all $\bbeta\in\mathcal D$ ensures that this approximation does not introduce an infinite conditional loss.
}

\begin{theorem}[General intersection robust counterpart]\label{th:intersectionDual}
{Suppose that Assumptions \ref{assump:growthI}, \ref{assump:costI} and \ref{assump:SlaterI} hold.}
Then, the intersection-based conditional DRO problem
\begin{align*}
\min_{\bbeta\in\mathcal D}
\sup_{\substack{Q\in\mathcal B^{\rm I}(\{\widehat{\p}_k\}_{k\in[K]},\bepsilon)\\ Q(\mathcal N)\in[a,b]}}
\E_Q[\ell(\bbeta,\bxi)\mid \bxi\in\mathcal N]
\end{align*}
is equivalent to the following finite-dimensional robust counterpart:
\begin{align}\label{prob-intersection-general}
\tag{$\mathrm P^{\rm I}_K$}
\begin{array}{lll}
\inf
&\quad
\kappa+\eta^+/a-\eta^-/b \nonumber\\
{\rm s.t.}
&\quad 
\bbeta\in\mathcal D, \kappa\in\R,\eta^+\ge0,\eta^-\ge0 \\
&\quad 
\lambda_k\ge0, \bm\varphi_k\in \R^{N_k}  &\forall k\in[K]\\
&\quad
 \sum_{k=1}^K\lambda_k\epsilon_k-\sum_{k=1}^K\sum_{j=1}^{N_k}\varphi_{k,j}w_{k,j}=\eta^+-\eta^-\nonumber\\
&\quad \sup_{\bxi\in\mathcal N}\left\{\ell(\bbeta,\bxi)-\sum_{k=1}^K\lambda_k c(\bxi,\widehat\bxi_{k,\alpha_k})\right\}
 +\sum_{k=1}^K\varphi_{k,\alpha_k}\le \kappa~~~~~~ &\forall \balpha\in\mathcal A\nonumber\\
&\quad \sum_{k=1}^K\varphi_{k,\alpha_k}\le d_{\balpha}^{\blambda}\quad &\forall \balpha\in\mathcal A,
\end{array}
\end{align}
where
\begin{align}
d_{\balpha}^{\blambda}:=
\inf_{\bgamma\in\mathcal V_{\balpha}}\blambda^\top\bgamma
=\inf_{\bxi\in\mathcal N^c}\sum_{k=1}^K\lambda_k c(\bxi,\widehat\bxi_{k,\alpha_k}),
\quad \balpha\in\mathcal A,\ \blambda\in\mathbb R_+^K.
\label{eq-dB1cap}
\end{align}
When $\mathcal D$ is convex and the displayed supremum constraints are convex in $\bbeta$, this counterpart is a convex program.
\end{theorem}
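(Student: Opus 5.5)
The plan is to start from the union representation \eqref{eq-unionI-exact} of Theorem~\ref{th-analysisB1}. Fix $\bbeta$. The inner worst-case value is
\[
\sup_{(m,\ba,\br,\bs)\in\mathfrak F^{\rm I}} \; \sup\left\{\E_Q[\ell(\bbeta,\bxi)] : Q\in\mathcal B^{\rm I}\left(\{\textstyle\sum_j a_{k,j}\delta_{\widehat\bxi_{k,j}}\}_k,\ m\bepsilon-\textstyle\sum_{\balpha}\bs_{\balpha}\right),\ Q(\mathcal N)=1\right\}.
\]

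\textbf{Step 1: dualize the inner problem.} For fixed allocation $(m,\ba,\br,\bs)$, the inner supremum is a fixed-center intersection DRO over laws supported on $\mathcal N$. I apply the boundary-radii strong duality (Theorem~\ref{lm:boundaryI}, valid under Assumptions~\ref{assump:growthI}--\ref{assump:costI}) in its multi-marginal form from \cite{REMK24}. This gives
\[
\inf_{\blambda\ge0,\ \bm\varphi}\ \sum_k\lambda_k\left(m\epsilon_k-\textstyle\sum_{\balpha}s_{\balpha,k}\right)+\sum_k\sum_j a_{k,j}\varphi_{k,j}
\]
subject to
\[
\sup_{\bxi\in\mathcal N}\Big\{\ell(\bbeta,\bxi)-\sum_k\lambda_k c(\bxi,\widehat\bxi_{k,\alpha_k})\Big\}-\sum_k\varphi_{k,\alpha_k}\le 0\quad\forall\balpha\in\mathcal A.
\]
Here the multi-index constraint comes from the multi-marginal coupling of the $K$ discrete centers with a single $\bxi$. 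Sign conventions on $\bm\varphi$ will be flipped later to match \eqref{prob-intersection-general}.

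\textbf{Step 2: exchange sup and inf.} This is the main obstacle. The Lagrangian is linear in $(m,\ba,\br,\bs)$ for fixed $(\blambda,\bm\varphi)$, and convex in $(\blambda,\bm\varphi)$. So the minimax theorem (Sion) applies once the allocation set is compact and convex. The set $\mathfrak F^{\rm I}$ is convex, since $\bs_{\balpha}\in r_{\balpha}\mathcal V_{\balpha}$ is a perspective constraint. However, $\mathcal V_{\balpha}$ is unbounded, because $\mathcal N^c$ may be unbounded.

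I would replace $\mathcal V_{\balpha}$ by $\mathcal V_{\balpha}\cap\{\bgamma:\bgamma\le M\mathbf 1\}$ plus the upward closure, or truncate via a large ball in $\bxi$ using coercivity (Assumption~\ref{assump:costI}). The justification is that cost vectors with large entries cannot be feasible: they would exhaust $m\epsilon_k$. So only bounded points of $\mathcal N^c$ matter, and the closure issues are handled by the Slater point $P^\circ$ (Assumption~\ref{assump:SlaterI}). That point lets me perturb any closure-limit allocation into a feasible joint law with conditional loss arbitrarily close, using finite integrability. This yields compactness without changing the value.

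\textbf{Step 3: maximize over the allocation.} After the exchange, for fixed $(\blambda,\bm\varphi)$, I maximize the linear function over $\mathfrak F^{\rm I}$. Eliminate $a_{k,j}=mw_{k,j}-\sum_{\balpha:\alpha_k=j}r_{\balpha}$. Then:
\begin{itemize}
\item The $\bs$ variables contribute $-\inf_{\bs_{\balpha}\in r_{\balpha}\mathcal V_{\balpha}}\blambda^\top\bs_{\balpha}=-r_{\balpha}d^{\blambda}_{\balpha}$.
\item Each $r_{\balpha}$ has coefficient $-\sum_k\varphi_{k,\alpha_k}-d^{\blambda}_{\balpha}$ (up to sign convention).
\end{itemize}
Boundedness in $r_{\balpha}\ge0$ requires this coefficient to be nonpositive. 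After renaming $\bm\varphi\to-\bm\varphi$, this gives $\sum_k\varphi_{k,\alpha_k}\le d_{\balpha}^{\blambda}$, with $r_{\balpha}=0$ optimal. Otherwise the inner sup is $+\infty$, provided $a_{k,j}\ge0$ leaves room; any remaining slack is absorbed through the constraint $\sum_j a_{k,j}=1$.

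\textbf{Step 4: dualize the remaining constraints.} The constraints $\sum_j a_{k,j}=1$ and $m\in[1/b,1/a]$ are dualized with a multiplier $\kappa$ (for the normalization, which also turns the semi-infinite constraint into the $\le\kappa$ form with $+\sum_k\varphi_{k,\alpha_k}$) and $\eta^\pm\ge0$ (for the interval on $m$). The coefficient of $m$ is $\sum_k\lambda_k\epsilon_k-\sum_{k,j}\varphi_{k,j}w_{k,j}$. Maximizing it over $[1/b,1/a]$ equals $\min\{\eta^+/a-\eta^-/b\}$ subject to $\eta^+-\eta^-$ equal to that coefficient, which yields the stated objective.

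Collecting terms and minimizing jointly over $\bbeta$ gives \eqref{prob-intersection-general}. The second expression for $d_{\balpha}^{\blambda}$ holds because a linear function's infimum over a convex hull equals its infimum over the generating points. Convexity under the stated conditions is immediate, since $d^{\blambda}_{\balpha}$ is concave in $\blambda$ as an infimum of linear functions.
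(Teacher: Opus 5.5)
Your overall architecture matches the paper's: the union representation from Theorem~\ref{th-analysisB1}, fixed-center duality via Theorem~\ref{lm:boundaryI}, compactification plus Sion, elimination of $\bs$, then finite LP duality. However, two steps are argued in a way that does not work.

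\textbf{Step~2 (compactification).} The budget only controls $\bs_{\balpha}=r_{\balpha}\bv_{\balpha}$, through $s_{\balpha,k}\le m\epsilon_k$. It does not control $\bv_{\balpha}\in\mathcal V_{\balpha}$: a cost vector with huge entries is admissible when paired with a tiny $r_{\balpha}$. For the same reason, bounding $\bs$ does not make the allocation set compact. If $\bv_n\in\mathcal V_{\balpha}$ with $\|\bv_n\|\to\infty$, the admissible pairs $(r_n,\bs_n)=(1/\|\bv_n\|,\bv_n/\|\bv_n\|)$ have a limit point $(0,\bs)$ with $\|\bs\|=1$. That point is excluded by $0\mathcal V_{\balpha}=\{0\}$.

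You must replace $\mathcal V_{\balpha}$ itself by a compact set, and the justification is domination, not infeasibility. Fix $\bxi^0_{\balpha}\in\mathcal N^c$. By coercivity, the set $\mathcal K_{\balpha}$ of points $\bxi\in\mathcal N^c$ with $c(\bxi,\widehat\bxi_{k,\alpha_k})\le c(\bxi^0_{\balpha},\widehat\bxi_{k,\alpha_k})$ for some $k$ is bounded. Every point outside $\mathcal K_{\balpha}$ has cost vector componentwise above that of $\bxi^0_{\balpha}$. Hence each element of $\overline{\mathcal V}_{\balpha}$ dominates an element of the compact set $\mathcal U_{\balpha}=\operatorname{cl}\operatorname{conv}$ of the cost vectors of $\mathcal K_{\balpha}$. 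Since the inner radii $m\epsilon_k-\sum_{\balpha}s_{\balpha,k}$ decrease in $\bs$, this replacement cannot lower the value. It comes after the Slater-based passage to the closure, which you identify correctly. After Sion, you also need the same domination, plus monotonicity and continuity of the dual objective in $\bs$, to return from $\mathcal U_{\balpha}$ to $\mathcal V_{\balpha}$. Otherwise $d^{\blambda}_{\balpha}$ is not the infimum over the original $\mathcal V_{\balpha}$.

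\textbf{Step~3 (maximizing over the allocation).} Here the deduction fails outright. After eliminating $\ba$, the simplex constraints become $\sum_{\balpha}r_{\balpha}=m-1\le 1/a-1$, and $\ba\ge0$ becomes $\sum_{\balpha:\alpha_k=j}r_{\balpha}\le mw_{k,j}$. So $\br$ ranges over a bounded set. The supremum is never $+\infty$, nothing forces the coefficient of $r_{\balpha}$ to be nonpositive, and $\br=0$ is infeasible unless $m=1$.

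The constraint $\sum_k\varphi_{k,\alpha_k}\le d^{\blambda}_{\balpha}$ is instead the dual-feasibility condition for $r_{\balpha}\ge0$ in the LP dual of the whole outer program. That dual uses multipliers $\varrho_k$ for $\sum_j a_{k,j}=1$, $\varphi_{k,j}$ for the balance equations $a_{k,j}+\sum_{\balpha:\alpha_k=j}r_{\balpha}=mw_{k,j}$, and $\eta^\pm$ for $m\in[1/b,1/a]$. So the theorem's $\varphi_{k,j}$ is a balance-equation multiplier, not minus your Step~1 inner dual variable (call that $\gamma_{k,j}$). The two are related by $\gamma_{k,j}=\varrho_k-\varphi_{k,j}-\psi_{k,j}$, where $\psi_{k,j}\ge0$ comes from $a_{k,j}\ge0$.

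This shift by $\kappa=\sum_k\varrho_k$ is what makes the semi-infinite constraint read ``$\le\kappa$'' while the outside constraint reads ``$\le d^{\blambda}_{\balpha}$'' with the same $\bm\varphi$. One then eliminates $\bgamma,\bm\varrho,\bm\psi$; for the converse direction, take $\psi_{k,j}=0$ and $\gamma_{k,j}=\varrho_k-\varphi_{k,j}$. Your Step~4 introduces $\kappa$, but with the identification $\bm\varphi\leftrightarrow-\bgamma$ carried over from Step~3, the two constraints are not derived consistently. The constraints $a_{k,j}\ge0$ are also left unhandled. They can be dropped only because their multipliers can be absorbed into $\bgamma$, and that needs to be argued.
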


The counterpart has two recognizable parts. The supremum constraints over $\mathcal N$ are the usual Wasserstein-DRO worst-case loss constraints and can often be reduced further when the loss and cost have tractable forms. The genuinely new part is the scalarization of the outside-$\mathcal N$ transport costs. For fixed nonnegative weights $\blambda$, $d_{\balpha}^{\blambda}$ is the smallest weighted sum of transport costs from a point in $\mathcal N^c$ to the observations in tuple $\balpha$. Thus, the remaining source of complexity is the family of constraints
\begin{align}
\sum_{k=1}^K\varphi_{k,\alpha_k}\le d_{\balpha}^{\blambda},
\quad \balpha\in\mathcal A.
\label{eq-intersection-outside-cut}
\end{align}
These constraints are convex in $(\bm\varphi,\blambda)$ because $d_{\balpha}^{\blambda}$ is concave in $\blambda$. The computational question is how to evaluate or represent $d_{\balpha}^{\blambda}$ as $\blambda$ varies. When $\operatorname{cl}(\mathcal N^c)=\Xi$ and $\Xi$ is convex, as in singleton conditioning on a continuous support, this scalarization can be written directly through finite convex constraints.

\begin{proposition}[Convex representation of the scalarization constraints]\label{prop:scalarized-outside}
% { Suppose that Assumptions \ref{assump:growthI}, \ref{assump:costI} and \ref{assump:SlaterI} hold.}

Suppose that $\Xi$ is nonempty, closed and convex, and 
${\rm cl}(\mathcal N^c)=\Xi$. 
Then, for every $\balpha\in\mathcal A$ and $\blambda\in\mathbb R_+^K$,
\begin{align}
d_{\balpha}^{\blambda}
=\inf_{\bxi\in\Xi}\sum_{k=1}^K\lambda_k c(\bxi,\widehat\bxi_{k,\alpha_k}).
\label{eq-convex-dalpha}
\end{align}
Moreover, the constraint \eqref{eq-intersection-outside-cut} is equivalently represented by the existence of $z_{\balpha,k}$ and $v_{\balpha}$ satisfying
\begin{align}
&\sum_{k=1}^K\varphi_{k,\alpha_k}
+\sum_{k=1}^K\left(\lambda_k c(\cdot,\widehat\bxi_{k,\alpha_k})\right)^*(z_{\balpha,k})
+\sigma_{\Xi}(v_{\balpha})\le 0,\label{eq-fenchel-outside}\\
&\sum_{k=1}^K z_{\balpha,k}+v_{\balpha}=0,\label{eq-fenchel-balance}
\end{align}
where $f^*$ denotes the convex conjugate and $\sigma_{\Xi}$ is the support function of $\Xi$. For norm costs $c(\bxi,\widehat\bxi)=\|\bxi-\widehat\bxi\|$, these constraints reduce to
\begin{align}
&\sum_{k=1}^K\varphi_{k,\alpha_k}
+\sum_{k=1}^K z_{\balpha,k}^\top\widehat\bxi_{k,\alpha_k}
+\sigma_{\Xi}(v_{\balpha})\le 0,\label{eq-norm-outside}\\
&\sum_{k=1}^K z_{\balpha,k}+v_{\balpha}=0,~
\|z_{\balpha,k}\|_*\le \lambda_k\quad \forall k\in[K].\label{eq-norm-outside-balance}
\end{align}
\end{proposition}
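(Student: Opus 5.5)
The proof has three parts: replace $\mathcal N^c$ by its closure, dualize the resulting convex minimization via Fenchel--Rockafellar, and specialize the conjugates to norm costs.

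\textbf{Step 1: the identity \eqref{eq-convex-dalpha}.} Fix $\balpha\in\mathcal A$ and $\blambda\in\R_+^K$, and write
\[
f(\bxi)=\sum_{k=1}^K\lambda_k c(\bxi,\widehat\bxi_{k,\alpha_k}).
\]
Since $\mathcal N^c\subseteq\Xi$, we immediately have $\inf_{\mathcal N^c}f\ge\inf_{\Xi}f$. For the reverse inequality I would use density. Take any $\bxi\in\Xi$. Because ${\rm cl}(\mathcal N^c)=\Xi$, there is a sequence $\bxi_n\in\mathcal N^c$ with $\bxi_n\to\bxi$, so it suffices that $f(\bxi_n)\to f(\bxi)$.

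Lower semicontinuity of $c$ gives the wrong direction, so here I need $c(\cdot,\widehat\bxi)$ to be continuous, or at least upper semicontinuous along such sequences. For the convex costs the proposition targets, continuity holds automatically on the relative interior of $\Xi$. At a boundary point $\bxi$ I would instead approach along a segment from an interior point and use convexity of $f$ along that segment. This technical point, together with the requirement that $c(\cdot,\widehat\bxi)$ be convex for the Fenchel part, is what I expect to be the main obstacle. I would state convexity and finiteness of $c(\cdot,\widehat\bxi)$ as the standing setting; it is implicit in the use of conjugates and holds for norm costs.

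\textbf{Step 2: Fenchel duality for the constraint \eqref{eq-intersection-outside-cut}.} Write $f_k=\lambda_k c(\cdot,\widehat\bxi_{k,\alpha_k})$ and $\iota_\Xi$ for the indicator of $\Xi$. Then
\[
d_{\balpha}^{\blambda}=\inf_{\bxi}\Bigl\{\sum_k f_k(\bxi)+\iota_\Xi(\bxi)\Bigr\}.
\]
Each $f_k$ is finite and convex on all of $\R^d$, so it is continuous there, and $\Xi$ is nonempty, closed and convex. The Rockafellar qualification condition therefore holds, and the infimal-convolution duality formula is exact with the maximum attained:
\[
d_{\balpha}^{\blambda}=\max\Bigl\{-\sum_k f_k^*(z_k)-\sigma_\Xi(v):\ \sum_k z_k+v=0\Bigr\},
\]
using $\iota_\Xi^*=\sigma_\Xi$. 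Since the maximum is attained, $\sum_k\varphi_{k,\alpha_k}\le d_{\balpha}^{\blambda}$ holds if and only if some feasible $(z_{\balpha,k},v_{\balpha})$ satisfies \eqref{eq-fenchel-outside}--\eqref{eq-fenchel-balance}. The "if" direction is just weak duality; the "only if" direction uses attainment.

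\textbf{Step 3: the norm case.} For $c(\bxi,\widehat\bxi)=\|\bxi-\widehat\bxi\|$ and $\lambda\ge0$, a direct computation gives
\[
(\lambda\|\cdot-\widehat\bxi\|)^*(z)=z^\top\widehat\bxi+\iota\{\|z\|_*\le\lambda\}.
\]
This also covers $\lambda=0$, where the conjugate forces $z=0$. Substituting this into \eqref{eq-fenchel-outside} yields \eqref{eq-norm-outside}--\eqref{eq-norm-outside-balance}. Finally, all resulting constraints are jointly convex in $(\bm\varphi,\blambda,z,v)$, because the map $(z,\lambda)\mapsto(\lambda f)^*(z)$ is a perspective-type function and hence jointly convex, which is consistent with the convexity claim in Theorem~\ref{th:intersectionDual}.
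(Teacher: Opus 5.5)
Your proposal is correct and follows the paper's proof essentially step for step. The paper obtains \eqref{eq-convex-dalpha} from density of $\mathcal N^c$ together with continuity of $c(\cdot,\widehat\bxi)$, then applies Fenchel--Rockafellar duality to $\sum_k f_k+\delta_\Xi$ with $\delta_\Xi^*=\sigma_\Xi$, and finally uses the explicit conjugate of $\lambda_k\|\cdot-\widehat\bxi_{k,\alpha_k}\|$. You are slightly more explicit than the paper on two points: that continuity and convexity of the cost are used beyond its standing lower semicontinuity, and that dual attainment is what yields the ``only if'' direction. Note also that once you take $c(\cdot,\widehat\bxi)$ finite and convex on the whole space, as you do in Step~2, continuity holds everywhere, so the boundary-point segment argument in Step~1 is unnecessary.
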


The preceding proposition applies to any number of sources. We close this section with the two-source norm case, not as a limitation of the general theory, but because it gives a transparent benchmark used later in the paper. In this case, the scalarization becomes explicit: the relevant outside-cost geometry reduces to the line segment joining the cost vectors $(d_{\balpha},0)$ and $(0,d_{\balpha})$, and the final robust counterpart becomes a compact finite formulation.

\begin{corollary}[Two-source norm case]\label{cor:two-source-intersection}\label{th:SDK=2}
{Suppose that Assumptions \ref{assump:growthI} and \ref{assump:SlaterI} hold,} and in addition that $\Xi$ is convex, 
%all source observations belong to $\Xi$, 
$\operatorname{cl}(\mathcal N^c)=\Xi$, $K=2$, and $c(\bxi,\widehat\bxi)=\|\bxi-\widehat\bxi\|$ for some norm on the ambient space. For $\balpha=(\alpha_1,\alpha_2)$, define
\begin{align}
d_{\balpha}:=\left\|\widehat\bxi_{1,\alpha_1}-\widehat\bxi_{2,\alpha_2}\right\|.
\label{eq-dthmainB1K=2}
\end{align}
The quantity required in Theorem~\ref{th:intersectionDual} has the closed form
\begin{align}
d_{\balpha}^{\blambda}=\min\{\lambda_1,\lambda_2\}d_{\balpha},
\quad \blambda\in\mathbb R_+^2.
\label{eq-two-source-dlambda}
\end{align}
Consequently, 
%for any loss $\ell:\Xi\to\R$, 
the intersection-based conditional DRO problem
\begin{align}\label{prob:B1innerK=2}
{ \min_{\bbeta\in\mathcal D}}\sup_{Q\in\mathcal B^{\rm I}\left(\{\widehat{\p}_1,\widehat{\p}_2\},\bm\epsilon\right),Q(\mathcal N)\in[a,b]} \E_Q[\ell(\bbeta,\bxi)\mid\bxi\in \mathcal N]
\end{align}
is equivalent to
\begin{align}\tag{${\rm P}^{\rm I}_2$}\label{prob-capK=2}
\begin{array}{lll}\inf &
\varrho_1+\varrho_2-\frac{1}{b}\tau_1+\frac{1}{a}\tau_2\\
 {\rm s.t. } 
 &\bbeta\in\mathcal D, \tau_1,\tau_2\in\R_+\\
 & \varrho_k\in\R, \bm\varphi_{k}\in\R^{N_k}, \bm\psi_k\in\R_+^{N_k}, \zeta_k\in\R_+, \lambda_k \in \mathbb{R}_{+}, \bm\gamma_k \in \mathbb{R}^{N_k} ~~~~~~&\forall k=1,2\\
&\eta_{\balpha}\in\R_+ &\forall \balpha\in\mathcal A\\
& \sum_{k=1}^2 \lambda_k\epsilon_k-\sum_{k=1}^2\sum_{j=1}^{N_k}{\varphi_{k,j}}{w_{k,j}}+\tau_1-\tau_2=0\\
&-\lambda_1+\lambda_2+\zeta_1-\zeta_2=0\\
&\gamma_{k,j}-\varrho_k+\varphi_{k,j}+\psi_{k,j}=0 &\forall j\in[N_k],~k=1,2\\
&(-\lambda_1+\zeta_1)d_{\balpha}+\sum_{k=1}^2\varphi_{k,\alpha_k}+\eta_{\balpha}=0 &\forall \balpha\in\mathcal A\\
& \sup _{\bxi \in \mathcal N} \left\{\ell(\bbeta,\bxi)-\sum_{k=1}^2 \lambda_k \|\bxi-\widehat{\bxi}_{k, \alpha_k}\|\right\} \leq \sum_{k=1}^2 \gamma_{k, \alpha_k} & \forall \balpha \in \mathcal{A}.
\end{array}
\end{align}
\end{corollary}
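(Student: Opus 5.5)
The plan has two parts: first the closed form \eqref{eq-two-source-dlambda}, then substituting it into \eqref{prob-intersection-general} and rewriting the result as the linear system \eqref{prob-capK=2}.

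\textbf{Step 1: the closed form.} By Proposition~\ref{prop:scalarized-outside}, which applies since $\Xi$ is closed and convex with ${\rm cl}(\mathcal N^c)=\Xi$, we have
\[
d_{\balpha}^{\blambda}=\inf_{\bxi\in\Xi}\left\{\lambda_1\|\bxi-\widehat\bxi_{1,\alpha_1}\|+\lambda_2\|\bxi-\widehat\bxi_{2,\alpha_2}\|\right\}.
\]
Closedness of $\Xi$ is implicit; if needed I would argue the infimum over $\mathcal N^c$ directly by density and continuity of the norm.

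For the lower bound, assume without loss of generality $\lambda_1\le\lambda_2$. The triangle inequality gives
\[
\lambda_1\|\bxi-\widehat\bxi_{1,\alpha_1}\|+\lambda_2\|\bxi-\widehat\bxi_{2,\alpha_2}\|\ \ge\ \lambda_1\bigl(\|\bxi-\widehat\bxi_{1,\alpha_1}\|+\|\bxi-\widehat\bxi_{2,\alpha_2}\|\bigr)\ \ge\ \lambda_1 d_{\balpha}.
\]
For the upper bound, take $\bxi=\widehat\bxi_{2,\alpha_2}$, which attains $\lambda_1 d_{\balpha}$. This requires the observations to lie in $\Xi$, which holds because source data are supported on $\Xi$. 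This also explains the line-segment picture: $\mathcal V_{\balpha}$ has lower-left frontier given by the segment joining $(d_{\balpha},0)$ and $(0,d_{\balpha})$.

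\textbf{Step 2: linearize the min.} Write $\min\{\lambda_1,\lambda_2\}=\lambda_1-\zeta_1$ with $\zeta_1\ge0$ and $\lambda_1-\zeta_1\le\lambda_2$. Introduce slacks $\zeta_2\ge0$ with $-\lambda_1+\lambda_2+\zeta_1-\zeta_2=0$, so that $\lambda_1-\zeta_1=\lambda_2-\zeta_2$. Then the constraint $\varphi_{1,\alpha_1}+\varphi_{2,\alpha_2}\le\min\{\lambda_1,\lambda_2\}d_{\balpha}$ is equivalent to the existence of such $\zeta_1,\zeta_2\ge0$ and $\eta_{\balpha}\ge0$ satisfying
\[
(-\lambda_1+\zeta_1)d_{\balpha}+\varphi_{1,\alpha_1}+\varphi_{2,\alpha_2}+\eta_{\balpha}=0 .
\]
Since $d_{\balpha}\ge0$, choosing $\zeta_1>0$ only lowers the right-hand side, so the optimal choice recovers the min exactly. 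Check both directions.

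\textbf{Step 3: separate the $\kappa$-constraint.} The constraint $\sup_{\mathcal N}\{\cdot\}+\varphi_{1,\alpha_1}+\varphi_{2,\alpha_2}\le\kappa$ for all $\balpha$ couples the two indices. I would set $\gamma_{k,j}:=\varrho_k-\varphi_{k,j}-\psi_{k,j}$ with $\psi\ge0$ and $\kappa=\varrho_1+\varrho_2$. Then $\gamma_{1,\alpha_1}+\gamma_{2,\alpha_2}\le \kappa-\varphi_{1,\alpha_1}-\varphi_{2,\alpha_2}$, and equality with $\psi=0$ shows the reformulations coincide. The $\psi$ variables appear to be dual-slack artifacts of the authors' LP derivation, and they are harmless. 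Finally, rename $\eta^-\to\tau_1$ and $\eta^+\to\tau_2$ to match the objective $\varrho_1+\varrho_2-\tau_1/b+\tau_2/a$ and the budget equation.

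\textbf{Main obstacle.} Theorem~\ref{th:intersectionDual} assumes Assumption~\ref{assump:costI}, which the corollary does not list explicitly. Norm costs satisfy it, so it holds automatically, and I would note this. The other delicate point is confirming attainment and equality in the closed form when $\Xi$ is only assumed convex. Beyond that, the corollary is substitution plus bookkeeping of slacks.
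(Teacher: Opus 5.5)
Your proposal is correct and follows essentially the same route as the paper: the triangle-inequality lower bound plus approximation, by points of $\mathcal N^c$, of the observation carrying the larger weight (which you package through Proposition~\ref{prop:scalarized-outside}) for the closed form; the $\zeta$-slack linearization of $\min\{\lambda_1,\lambda_2\}$ via $\lambda_1-\zeta_1=\lambda_2-\zeta_2=\min\{\lambda_1,\lambda_2\}$; and the splitting $\gamma_{k,j}=\varrho_k-\varphi_{k,j}-\psi_{k,j}$ with $\kappa=\varrho_1+\varrho_2$, $\tau_1=\eta^-$, $\tau_2=\eta^+$. You also correctly note that Theorem~\ref{th:intersectionDual} additionally requires Assumption~\ref{assump:costI}, which norm costs satisfy automatically; the paper leaves this point implicit.
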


\section{{Tractable Reformulation of the Weighted-Distance Model}}\label{sec:tractabilityB2}
We now turn to the weighted-distance formulation. Because it aggregates source discrepancies through a single transport budget, this model has a simpler computational structure than the intersection model, especially when many sources are present. As in Section \ref{sec:tractbilityB1_general}, we analyze the induced conditional set
\begin{align}\label{eq-BII-N}
\mathcal B^{\rm II}_{\mathcal N}\left(\{\widehat{\p}_k\}_{k\in[K]},\epsilon\right)
:=\left\{Q^{\mathcal N}\in\mathcal P(\Xi):~Q\in \mathcal B^{\rm II}\left(\{\widehat{\p}_k\}_{k\in[K]},\epsilon\right),~Q(\mathcal N)\in[a,b]\right\}.
\end{align}
The next theorem characterizes this induced set. 
%The only possible gap between the inner and outer descriptions comes from whether the outside-cost infima $d_{\balpha}^{\btheta}$ are attained.

\begin{theorem}[Exact characterization of weighted-distance conditional set]\label{th-analysisB2}
Let $\mathcal V_{\balpha}$ be defined in \eqref{eq-outside-cost-vector}.
The weighted-distance conditional set defined in \eqref{eq-BII-N} can be represented as follows:
\begin{align}\label{eq-mainthB2}
 &\mathcal B^{\rm{II}}_{\mathcal N}\left(\{\widehat{\p}_k\}_{k\in[K]},\epsilon\right)\notag\\
 &=
\left\{
\begin{array}{ll}
Q: Q\in\mathcal P(\Xi),~Q(\mathcal N)=1,~{\rm and~there~exist}~m,\\
\{a_{k,j}\}_{j\in[N_k],k\in[K]},~\{r_{\balpha}\}_{\balpha\in\mathcal A},~{\rm and}~\{\bs_{\balpha}\}_{\balpha\in\mathcal A}~{\rm such~that}\\
m\in[1/b,1/a],\\
a_{k,j}\ge 0 & \forall j\in[N_k],~k\in[K],\\
r_{\balpha}\ge 0 & \forall \balpha\in\mathcal A,\\
\sum_{j=1}^{N_k}a_{k,j}=1 & \forall k\in[K],\\
a_{k,j}+\sum_{\balpha\in\mathcal A:\alpha_k=j}r_{\balpha}=m w_{k,j} & \forall j\in[N_k],~k\in[K],\\
\bs_{\balpha}:=(s_{\balpha,1},\ldots,s_{\balpha,K})\in r_{\balpha}\mathcal V_{\balpha} & \forall \balpha\in\mathcal A,\\
\sum_{k=1}^K \theta_k W\left(Q,\sum_{j=1}^{N_k}a_{k,j}\delta_{\widehat{\bxi}_{k,j}}\right)\le m\epsilon -\sum_{\balpha\in\mathcal A}\btheta^{\top} \bs_{\balpha}
\end{array}
\right\}.
\end{align}
\end{theorem}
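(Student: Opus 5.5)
The plan is to mirror the argument behind Theorem~\ref{th-analysisB1}. The only new feature is bookkeeping: the $K$ source-wise budget constraints are replaced by a single aggregate constraint. I will prove the two inclusions separately. Throughout, I identify a feasible joint law $Q$ with the pair $(Q^{\mathcal N},m)$, where $m=1/Q(\mathcal N)\in[1/b,1/a]$. I also use the decomposition $Q=Q(\mathcal N)\,Q^{\mathcal N}+Q|_{\mathcal N^c}$.

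\textbf{Inclusion ``$\subseteq$''.} Take $Q$ with $\sum_k\theta_kW(Q,\widehat{\p}_k)\le\epsilon$ and $Q(\mathcal N)\in[a,b]$. First suppose each $W(Q,\widehat{\p}_k)$ is attained by an optimal coupling $\pi_k$; this holds because $c$ is lower semicontinuous and $\widehat{\p}_k$ is discrete. If it failed, I would work with $\varepsilon$-optimal couplings and use closedness at the end.

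Glue the $\pi_k$ along their common $Q$-marginal to obtain a joint law on $\Xi\times\Xi^K$. Its restriction to $\mathcal N^c\times\Xi^K$ is a measure on $\mathcal N^c\times\{\text{atoms}\}$, which can be indexed by tuples $\balpha\in\mathcal A$. Define:
\begin{itemize}
\item $r_{\balpha}$ as $m$ times the outside mass sent to tuple $\balpha$;
\item $\bs_{\balpha}$ as $m$ times the vector of integrated costs $\int c(\bxi,\widehat\bxi_{k,\alpha_k})$ over that mass, so that $\bs_{\balpha}\in r_{\balpha}\mathcal V_{\balpha}$ by the expectation representation of $\mathcal V_{\balpha}$;
\item $a_{k,j}$ as $m$ times the inside mass of $\pi_k$ coupled with atom $j$.
\end{itemize}
The marginal constraints of $\pi_k$ then give the balance equations, and $\sum_j a_{k,j}=mQ(\mathcal N)=1$.

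The normalized inside part of $\pi_k$ is a coupling of $Q^{\mathcal N}$ with $\sum_j a_{k,j}\delta_{\widehat\bxi_{k,j}}$. Its cost equals $m\,W(Q,\widehat{\p}_k)-\sum_{\balpha}s_{\balpha,k}$. Multiplying by $\theta_k$ and summing over $k$ yields the aggregate constraint, because $W(Q^{\mathcal N},\cdot)$ is at most that cost.

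\textbf{Inclusion ``$\supseteq$''.} Given $Q$ supported on $\mathcal N$ and feasible $(m,\ba,\br,\bs)$, take optimal couplings $\pi_k^{\rm in}$ of $Q$ with the reweighted centers. For each $\balpha$ with $r_{\balpha}>0$, write $\bs_{\balpha}/r_{\balpha}\in\mathcal V_{\balpha}$ as a convex combination of cost vectors of finitely many points of $\mathcal N^c$ (Carathéodory). This gives a finitely supported $P_{\balpha}\in\mathcal P(\mathcal N^c)$ with $\E_{P_{\balpha}}[c(\bxi,\widehat\bxi_{k,\alpha_k})]=s_{\balpha,k}/r_{\balpha}$.

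Set $\tilde Q=\frac1m\bigl(Q+\sum_{\balpha}r_{\balpha}P_{\balpha}\bigr)$. This is a probability measure by the balance equations summed over $j$, which give $m=1+\sum_{\balpha}r_{\balpha}$, and $\tilde Q(\mathcal N)=1/m\in[a,b]$. For each $k$, build a coupling of $\tilde Q$ with $\widehat{\p}_k$ as $\frac1m\bigl(\pi_k^{\rm in}+\sum_{\balpha}r_{\balpha}P_{\balpha}\otimes\delta_{\widehat\bxi_{k,\alpha_k}}\bigr)$. Its second marginal is $\frac1m\sum_j(a_{k,j}+\sum_{\alpha_k=j}r_{\balpha})\delta_{\widehat\bxi_{k,j}}=\widehat{\p}_k$. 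Hence
\begin{align*}
W(\tilde Q,\widehat{\p}_k)\le \tfrac1m\Bigl(W\bigl(Q,\textstyle\sum_ja_{k,j}\delta_{\widehat\bxi_{k,j}}\bigr)+\sum_{\balpha}s_{\balpha,k}\Bigr).
\end{align*}
Weighting by $\theta_k$ and summing, the aggregate constraint gives $\sum_k\theta_kW(\tilde Q,\widehat{\p}_k)\le\epsilon$. Finally, $\tilde Q^{\mathcal N}=Q$, which completes this inclusion.

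\textbf{Main obstacle.} The delicate step is attainment in the ``$\subseteq$'' direction, both for the $W(Q,\widehat{\p}_k)$ and for the inner $W$ on $\mathcal N$. With lower semicontinuous $c$ and a discrete second marginal, optimal couplings exist by standard OT theory, and I will invoke this. The remaining routine point is confirming that the scaled outside cost vectors lie in $r_{\balpha}\mathcal V_{\balpha}$, which holds by the expectation characterization stated after Theorem~\ref{th-analysisB1}. Since the feasibility system coincides with that of Theorem~\ref{th-analysisB1} except for the last line, I can alternatively cite that proof's coupling decomposition verbatim and only redo the final summation with weights $\theta_k$.
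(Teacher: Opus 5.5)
Your proposal is correct and is essentially the paper's proof. The ``$\subseteq$'' direction is the same glue-and-index-by-tuples bookkeeping. In ``$\supseteq$'' you write down $\tilde Q=\frac1m\bigl(Q+\sum_{\balpha}r_{\balpha}P_{\balpha}\bigr)$ and its $K$ couplings directly, whereas the paper glues the inside plans and routes through Lemma~\ref{lm:DMM} and the $\nu_{\balpha}$-representation \eqref{eq-sumreBZ}; your shortcut is harmless because the weighted constraint involves only pairwise costs.

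A small correction to your ``main obstacle'' paragraph. Attainment is needed only for $\pi_k^{\rm in}$ in ``$\supseteq$''; in ``$\subseteq$'' the inner $W$ only needs an upper bound. It holds by lower semicontinuity of $c$, so your $\varepsilon$-optimal fallback is never required. That fallback would in any case be delicate, because $\mathcal V_{\balpha}$ need not be closed.
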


The characterization of the weighted-distance conditional set has an implication analogous to that in Section~\ref{sec:tractbilityB1}. It shows that the induced conditional set can be viewed as a union of ordinary weighted-distance balls. More precisely, it has the same union structure as in \eqref{eq-unionI-exact}, except that the intersection balls are replaced by weighted-distance balls centered at
$
\left\{\sum_{j=1}^{N_k}a_{k,j}\delta_{\widehat{\bxi}_{k,j}}\right\}_{k\in[K]}
$
with radius
$
m\epsilon-\sum_{\balpha\in\mathcal A}\btheta^\top\bs_{\balpha}.
$

For fixed $(m,\ba,\br)$, 
the only remaining role of $\bs_{\balpha}$ is through the term $\btheta^\top\bs_{\balpha}$ that  records the transport budget consumed by the outside-$\mathcal N$ mass assigned to tuple $\balpha$.
%which reduces the available radius.
Hence the largest radius is obtained by minimizing this term over $r_{\balpha}\mathcal V_{\balpha}$:
\begin{align}\label{eq-BII-infradius}
\inf_{\bs_{\balpha}\in r_{\balpha}\mathcal V_{\balpha}}
\btheta^\top\bs_{\balpha}
=
r_{\balpha}
\inf_{\bgamma\in\mathcal V_{\balpha}}
\btheta^\top\bgamma
=
r_{\balpha}d_{\balpha}^{\btheta},
\end{align}
where $d_{\balpha}^{\btheta}$ is defined in \eqref{eq-dB1cap}. Therefore, if the infimum in \eqref{eq-BII-infradius} is attained for every $\balpha\in\mathcal A$, the union over $\{\bs_{\balpha}\}_{\balpha\in\mathcal A}$ can be written equivalently by replacing the radius with
$
m\epsilon-\sum_{\balpha\in\mathcal A}r_{\balpha}d_{\balpha}^{\btheta}.
$
This removes the variables $\{\bs_{\balpha}\}_{\balpha\in\mathcal A}$ from the representation and yields the following simplified representation:
\begin{align}
 &\overline{\mathcal B}^{\rm II}_{\mathcal N}
 \left(\{\widehat{\p}_k\}_{k\in[K]},\epsilon\right)\notag\\
 &:=
\left\{
\begin{array}{ll}
Q: Q\in\mathcal P(\Xi),~Q(\mathcal N)=1,~\text{and there exist }m,\\
\{a_{k,j}\}_{j\in[N_k],k\in[K]}
\text{ and }
\{r_{\balpha}\}_{\balpha\in\mathcal A}
\text{ such that}\\
m\in[1/b,1/a],\\
a_{k,j}\ge0 & \forall j\in[N_k],~k\in[K],\\
r_{\balpha}\ge0 & \forall \balpha\in\mathcal A,\\
\sum_{j=1}^{N_k}a_{k,j}=1 & \forall k\in[K],\\
a_{k,j}+\sum_{\balpha\in\mathcal A:\alpha_k=j}r_{\balpha}=m w_{k,j}
& \forall j\in[N_k],~k\in[K],\\
\sum_{k=1}^K\theta_k
W\left(Q,\sum_{j=1}^{N_k}a_{k,j}\delta_{\widehat{\bxi}_{k,j}}\right)
\le
m\epsilon-\sum_{\balpha\in\mathcal A}r_{\balpha}d_{\balpha}^{\btheta}
\end{array}
\right\}\label{eq2-B2-reduction}\\
&=\bigcup_{(m,\ba,\br)\in\mathfrak F^{\rm II}}
\left[
\mathcal B^{\rm II}
\left(
\left\{\sum_{j=1}^{N_k}a_{k,j}\delta_{\widehat\bxi_{k,j}}\right\}_{k\in[K]},
m\epsilon-\sum_{\balpha\in\mathcal A}r_{\balpha}d_{\balpha}^{\btheta}
\right)
\cap
\{Q\in\mathcal P(\Xi):Q(\mathcal N)=1\}
\right]\notag%\label{eq2-B2-reduction-union},
\end{align}
where
\begin{align*}
\mathfrak F^{\rm II}=
\left\{
\begin{array}{lll}
(m,\ba,\br):&
m\in[1/b,1/a],\quad a_{k,j}\ge0,\quad r_{\balpha}\ge0,\\
&\sum_{j=1}^{N_k}a_{k,j}=1
&\quad \forall k\in[K],\\
&a_{k,j}+\sum_{\balpha\in\mathcal A:\alpha_k=j}r_{\balpha}=m w_{k,j}
&\quad \forall j\in[N_k],~k\in[K]
\end{array}
\right\}.
%\label{eq-union-index-II}
\end{align*}

If the infimum in \eqref{eq-BII-infradius} is not attained, the boundary radius
$
m\epsilon-\sum_{\balpha\in\mathcal A}r_{\balpha}d_{\balpha}^{\btheta}
$
may not be attainable by any admissible choice of $\{\bs_{\balpha}\}_{\balpha\in\mathcal A}$. In that case, the exact union corresponds to the associated open weighted-distance balls, whereas \eqref{eq2-B2-reduction} gives their closed-radius enlargement. We therefore introduce a Slater-type condition for the original weighted-distance ambiguity set to ensure that passing from the open-radius representation to the closed-radius representation does not change the value of the inner robust optimization for any $\bbeta\in\mathcal D$.

\begin{assumption}[Slater condition for weighted-distance model]\label{assump:SlaterII}
There exists $P^{\circ}\in\mathcal P(\Xi)$
such that
\begin{align*}
P^{\circ}(\mathcal{N}) \in[a,b];\quad \sum_{k=1}^K\theta_kW\left(P^{\circ}, \widehat{\P}_k\right) < \varepsilon;\quad
\E^{P^{\circ}}[\ell(\bbeta,\bxi)\mid \bxi\in\mathcal N]<\infty,\quad  \bbeta\in\mathcal D.
\end{align*}
\end{assumption}

\begin{lemma}\label{lm-BII-eqclosure}
Let $\overline{\mathcal B}^{\rm II}_{\mathcal N}
 \left(\{\widehat{\p}_k\}_{k\in[K]},\epsilon\right)$ be defined in \eqref{eq2-B2-reduction}, and suppose that Assumption \ref{assump:SlaterII} holds.
 Then,
 \begin{align*}
\sup_{Q\in \mathcal B^{\rm II}_{\mathcal N}
 \left(\{\widehat{\p}_k\}_{k\in[K]},\epsilon\right)}\E_{Q}[\ell(\bbeta,\bxi)]=\sup_{Q\in \overline{\mathcal B}^{\rm II}_{\mathcal N}
 \left(\{\widehat{\p}_k\}_{k\in[K]},\epsilon\right)}\E_{Q}[\ell(\bbeta,\bxi)]\quad \forall \bbeta\in\mathcal D.
 \end{align*}
\end{lemma}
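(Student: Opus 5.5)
The inequality ``$\le$'' is immediate. For the reverse inequality, I will show that every $Q\in\overline{\mathcal B}^{\rm II}_{\mathcal N}$ is the conditional law of a joint law lying in a slightly enlarged weighted-distance ball. I will then pull that joint law back inside the original ball by mixing it with the Slater point $P^{\circ}$ from Assumption~\ref{assump:SlaterII}, and check that the conditional expected loss changes only slightly.

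\textbf{Step 1 (easy inclusion).} Take $Q\in\mathcal B^{\rm II}_{\mathcal N}(\{\widehat{\p}_k\}_{k\in[K]},\epsilon)$ with parameters $(m,\ba,\br,\bs)$ as in Theorem~\ref{th-analysisB2}. By \eqref{eq-BII-infradius}, $\btheta^\top\bs_{\balpha}\ge r_{\balpha}d^{\btheta}_{\balpha}$ for every $\balpha\in\mathcal A$. Hence the radius $m\epsilon-\sum_{\balpha}\btheta^\top\bs_{\balpha}$ is at most $m\epsilon-\sum_{\balpha}r_{\balpha}d^{\btheta}_{\balpha}$, so $(m,\ba,\br)$ certifies $Q\in\overline{\mathcal B}^{\rm II}_{\mathcal N}$. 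This gives $\mathcal B^{\rm II}_{\mathcal N}\subseteq\overline{\mathcal B}^{\rm II}_{\mathcal N}$, and therefore ``$\le$''.

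\textbf{Step 2 (the closed set sits inside enlarged exact sets).} Fix $Q\in\overline{\mathcal B}^{\rm II}_{\mathcal N}$ with parameters $(m,\ba,\br)$, and fix $\delta>0$.
\begin{itemize}
\item By the definition of the infimum in \eqref{eq-BII-infradius}, choose $\bs_{\balpha}\in r_{\balpha}\mathcal V_{\balpha}$ with $\btheta^\top\bs_{\balpha}\le r_{\balpha}d^{\btheta}_{\balpha}+\delta$. When $r_{\balpha}=0$ we have $\bs_{\balpha}=0$ and nothing is lost.
\item Then
\begin{align*}
\sum_{k}\theta_kW\Big(Q,\sum_j a_{k,j}\delta_{\widehat\bxi_{k,j}}\Big)\le m\epsilon-\sum_{\balpha}r_{\balpha}d^{\btheta}_{\balpha}\le m(\epsilon+\eta)-\sum_{\balpha}\btheta^\top\bs_{\balpha},
\qquad \eta:=|\mathcal A|\delta a.
\end{align*}
Here I used $m\ge 1/b\ge 1$, so $|\mathcal A|\delta\le m\eta'$ with $\eta'=|\mathcal A|\delta$; any $\eta\downarrow0$ as $\delta\downarrow 0$ works.
\item Theorem~\ref{th-analysisB2}, applied with radius $\epsilon+\eta$, then gives $Q\in\mathcal B^{\rm II}_{\mathcal N}(\{\widehat{\p}_k\},\epsilon+\eta)$.
\item Concretely, there is $\widetilde Q\in\mathcal P(\Xi)$ with $\widetilde Q(\mathcal N)\in[a,b]$, $\widetilde Q^{\mathcal N}=Q$, and $\sum_k\theta_kW(\widetilde Q,\widehat{\p}_k)\le\epsilon+\eta$.
\end{itemize}

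\textbf{Step 3 (pulling back with the Slater point).} Let $\epsilon^{\circ}:=\sum_k\theta_kW(P^{\circ},\widehat{\p}_k)<\epsilon$, and set $Q_t=(1-t)\widetilde Q+tP^{\circ}$ for $t\in(0,1)$.
\begin{itemize}
\item The OT cost is jointly convex, since a mixture of couplings is a coupling of the mixtures. Therefore
\begin{align*}
\sum_k\theta_kW(Q_t,\widehat{\p}_k)\le(1-t)(\epsilon+\eta)+t\epsilon^{\circ},
\end{align*}
which is at most $\epsilon$ once $t\ge \eta/(\epsilon+\eta-\epsilon^{\circ})$. This threshold tends to $0$ as $\delta\to0$.
\item $Q_t(\mathcal N)\in[a,b]$, because this interval is convex.
\item Hence $Q_t^{\mathcal N}\in\mathcal B^{\rm II}_{\mathcal N}(\{\widehat{\p}_k\},\epsilon)$.
\item The conditional law is an explicit mixture:
\begin{align*}
Q_t^{\mathcal N}=\lambda_tQ+(1-\lambda_t)(P^{\circ})^{\mathcal N},\qquad \lambda_t=\frac{(1-t)\widetilde Q(\mathcal N)}{(1-t)\widetilde Q(\mathcal N)+tP^{\circ}(\mathcal N)}.
\end{align*}
Since $\widetilde Q(\mathcal N)\ge a>0$, we have $\lambda_t\to1$ as $t\to0$.
\item Consequently
\begin{align*}
\E_{Q_t^{\mathcal N}}[\ell]=\lambda_t\E_Q[\ell]+(1-\lambda_t)\E^{P^{\circ}}[\ell\mid\mathcal N].
\end{align*}
Using finiteness of $\E^{P^{\circ}}[\ell(\bbeta,\bxi)\mid\bxi\in\mathcal N]$, this tends to $\E_Q[\ell]$, or to $+\infty$ if $\E_Q[\ell]=+\infty$.
\end{itemize}
Letting $\delta\to0$ and then taking the supremum over $Q$ gives ``$\ge$''.

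The main point needing care is Step 2. One must confirm that the ``if'' direction of Theorem~\ref{th-analysisB2} genuinely produces a joint law for any admissible $(m,\ba,\br,\bs)$, including nonattained near-optimal $\bs_{\balpha}$, and that the radius accounting is exactly as stated. If that direction were only established for attained $\bs$, I would instead rebuild $\widetilde Q$ directly:
\begin{itemize}
\item place mass $r_{\balpha}/m$ on a distribution $P_{\balpha}\in\mathcal P(\mathcal N^c)$ whose cost vector equals $\bs_{\balpha}/r_{\balpha}$;
\item put mass $Q/m$ on $\mathcal N$;
\item glue the optimal couplings source by source.
\end{itemize}
A secondary point is the convention for the mixture when $\E_Q[\ell]$ is infinite, or when $\ell$ is not integrable from below. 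That case should be handled by a truncation argument, or by noting that the supremum is then trivially $+\infty$ on both sides.
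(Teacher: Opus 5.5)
Your proof is correct and follows essentially the paper's route: approximate the infimal outside cost by attainable $\bs_{\balpha}\in r_{\balpha}\mathcal V_{\balpha}$, obtain a joint law whose weighted budget is only slightly above $\epsilon$, mix with $P^{\circ}$, and let the conditional mixture weight tend to one. The main differences are that the paper builds that joint law by hand from a sequence in $\operatorname{cl}(\mathcal V_{\balpha})$, while you reuse Theorem~\ref{th-analysisB2} at radius $\epsilon+\eta$; this is legitimate because its reverse inclusion holds for every $\bs_{\balpha}\in r_{\balpha}\mathcal V_{\balpha}$, so you also skip the paper's claim that the infimum is attained on the closure. One slip to fix: $\eta=|\mathcal A|\delta a$ gives $m\eta\le|\mathcal A|\delta$ (because $ma\le1$), which is the wrong direction, so take $\eta=|\mathcal A|\delta$, which works since $m\ge 1/b\ge 1$.
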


{\color{black}
We are now ready to derive a tractable reformulation of the weighted-distance robust problem under Assumption~\ref{assump:SlaterII}. Lemma~\ref{lm-BII-eqclosure} allows us to work with the DRO problem induced by the conditional set defined in \eqref{eq2-B2-reduction}.
The argument follows the same sequence as in Section~\ref{sec:tractbilityB1}: after fixing the outer variables in \eqref{eq2-B2-reduction}, the inner maximization becomes an unconditional weighted-distance DRO problem supported on $\mathcal N$, with fixed reference distributions
$
\sum_j a_{k,j}\delta_{\widehat\bxi_{k,j}}
$
and radius
$
m\epsilon-\sum_{\balpha}r_{\balpha}d_{\balpha}^{\btheta}.
$
We first dualize this fixed inner problem, then interchange the outer maximization with the dual minimization, and finally dualize the remaining finite-dimensional linear program.

A technical point is that, to the best of our knowledge, a strong duality result for the unconditional weighted-distance DRO problem needed in the first step is not directly available in the literature. We therefore establish this result in Theorem~\ref{th-unconB2} of Section~\ref{app:pfTHsum}. Similar to the argument in Section \ref{sec:tractbilityB1}, Assumptions~\ref{assump:growthI} and \ref{assump:costI} are used to ensure that strong duality remains valid when the radius lies on the boundary of its feasible region.
}

\begin{theorem}[Weighted-distance robust counterpart]\label{th:sumB2Duality}
Suppose that {\color{black}Assumptions \ref{assump:growthI}, \ref{assump:costI} and \ref{assump:SlaterII} hold}, and
let $d_{\balpha}^{\btheta}$ be defined in \eqref{eq-dB1cap}. 
%Suppose that the outer description in \eqref{eq2-mainthB2} is value-exact for the loss $\ell(\bbeta,\cdot)$ and that the fixed-center weighted-distance duality used in the proof is exact for every outer tuple generated by \eqref{eq2-mainthB2}. Then, 
The weighted-distance conditional DRO problem
\begin{align*}%\label{prob:B2inner}
\min_{\bbeta\in\mathcal D}
\sup_{Q\in\mathcal B^{\rm II}\left(\{\widehat{\p}_k\}_{k\in[K]},\epsilon\right),Q(\mathcal N)\in[a,b]} \E_Q[\ell(\bbeta,\bxi)\mid\bxi\in \mathcal N]
\end{align*} 
is equivalent to
\begin{align}\tag{${\rm P}_K^{\rm II}$}\label{prob-sum}
\begin{array}{lll}\inf & 
\sum_{k=1}^K\varrho_k-\frac{1}{b}\tau_1+\frac{1}{a}\tau_2\\
\text{\rm s.t.} & \bbeta\in\mathcal D,\ \lambda, \tau_1, \tau_2\in\R_+\\
&\varrho_k\in\R,~\bm\varphi_{k}\in\R^{N_k},~\bm\psi_k\in\R_+^{N_k},~\bm\gamma_k\in\R^{N_k} &\forall k\in[K]\\
&\eta_{\balpha}\in\R_+ &\forall \balpha\in\mathcal A\\ 
& \lambda \epsilon-\sum_{k=1}^K\sum_{j=1}^{N_k}{\varphi_{k,j}}{w_{k,j}}+\tau_1-\tau_2=0\\
&\gamma_{k,j}-\varrho_k+\varphi_{k,j}+\psi_{k,j}=0 &\forall j\in[N_k],~k\in[K]\\
&-\lambda d_{\balpha}^{\btheta}+\sum_{k=1}^K\varphi_{k,\alpha_k}+\eta_{\balpha}=0 &\forall \balpha\in\mathcal A\\
& \sup _{\bxi \in \mathcal N} \left\{\ell(\bbeta,\bxi)-\lambda\sum_{k=1}^K \theta_k c(\bxi,\widehat{\bxi}_{k, \alpha_k})\right\} \leq \sum_{k=1}^K \gamma_{k, \alpha_k} ~~~~~~& \forall \balpha \in \mathcal{A}.
\end{array}
\end{align}
\end{theorem}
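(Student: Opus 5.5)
The argument follows the route announced just before the statement: reduce to the closed-radius set, dualize the fixed inner problem, swap the order of optimization, and then dualize a finite linear program.

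\emph{Step 1: reduce to a max-min problem.} Fix $\bbeta\in\mathcal D$. The conditional objective is $\E_{Q^{\mathcal N}}[\ell(\bbeta,\bxi)]$, so the inner supremum equals $\sup_{Q\in\mathcal B^{\rm II}_{\mathcal N}}\E_Q[\ell]$. By Lemma~\ref{lm-BII-eqclosure}, under Assumption~\ref{assump:SlaterII}, this equals the supremum over $\overline{\mathcal B}^{\rm II}_{\mathcal N}$ in \eqref{eq2-B2-reduction}. Using the union representation, the value becomes
\[
\sup_{(m,\ba,\br)\in\mathfrak F^{\rm II},\ \rho\ge 0}\;\sup\Big\{\E_Q[\ell]:Q(\mathcal N)=1,\ \textstyle\sum_k\theta_kW(Q,\sum_ja_{k,j}\delta_{\widehat\bxi_{k,j}})\le\rho\Big\},
\]
where $\rho:=m\epsilon-\sum_{\balpha}r_{\balpha}d^{\btheta}_{\balpha}$.

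\emph{Step 2: dualize the inner problem.} I apply the fixed-center weighted-distance strong duality (Theorem~\ref{th-unconB2}), which holds for boundary radii, including $\rho=0$, under Assumptions~\ref{assump:growthI} and \ref{assump:costI}. Since the weighted sum of OT costs to $K$ discrete centres is a multi-marginal transport problem, the dual should read
\[
\inf_{\lambda\ge0,\ \bm\gamma}\ \lambda\rho+\sum_{k}\sum_j a_{k,j}\gamma_{k,j}
\quad\text{s.t.}\quad \sup_{\bxi\in\mathcal N}\Big\{\ell(\bbeta,\bxi)-\lambda\sum_k\theta_kc(\bxi,\widehat\bxi_{k,\alpha_k})\Big\}\le\sum_k\gamma_{k,\alpha_k}\quad\forall\balpha\in\mathcal A.
\]
This yields the last constraint family of \eqref{prob-sum}.

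\emph{Step 3: exchange sup and inf.} The resulting function is bilinear in $((m,\ba,\br),(\lambda,\bm\gamma))$, and $\mathfrak F^{\rm II}$ is a polytope, hence compact since $m\le 1/a$ bounds $\ba$ and $\br$. The dual feasible set is convex. Sion's minimax theorem therefore allows the exchange, giving
\[
\inf_{\lambda,\bm\gamma\ \text{feasible}}\ \max_{(m,\ba,\br)\in\mathfrak F^{\rm II}}\Big\{\lambda m\epsilon-\lambda\sum_{\balpha}r_{\balpha}d^{\btheta}_{\balpha}+\sum_{k,j}a_{k,j}\gamma_{k,j}\Big\}.
\]
One subtlety is $d^{\btheta}_{\balpha}=+\infty$, for example when $\mathcal N^c=\emptyset$. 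In that case the corresponding $r_{\balpha}$ is forced to $0$, and these indices are dropped before the exchange.

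\emph{Step 4: dualize the finite LP.} For fixed $(\lambda,\bm\gamma)$, the inner maximum is a linear program in $(m,\ba,\br)$ with the following constraints and multipliers:
\begin{itemize}
\item $m\ge 1/b$, multiplier $\tau_1$;
\item $m\le1/a$, multiplier $\tau_2$;
\item $\sum_ja_{k,j}=1$, multiplier $\varrho_k$;
\item the balance equations, multiplier $\varphi_{k,j}$;
\item $\ba\ge0$ and $\br\ge0$, with slacks $\psi_{k,j}$ and $\eta_{\balpha}$.
\end{itemize}
LP duality holds because the primal is feasible whenever the set is nonempty, which Assumption~\ref{assump:SlaterII} guarantees, and bounded. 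Stationarity in each variable gives the constraints of \eqref{prob-sum}:
\begin{itemize}
\item in $m$: $\lambda\epsilon-\sum\varphi_{k,j}w_{k,j}+\tau_1-\tau_2=0$;
\item in $a_{k,j}$: $\gamma_{k,j}-\varrho_k+\varphi_{k,j}+\psi_{k,j}=0$;
\item in $r_{\balpha}$: $-\lambda d^{\btheta}_{\balpha}+\sum_k\varphi_{k,\alpha_k}+\eta_{\balpha}=0$.
\end{itemize}
The dual objective is $\sum_k\varrho_k-\tau_1/b+\tau_2/a$. Merging the outer infimum over $\bbeta$ with the infimum over the dual variables then yields \eqref{prob-sum}.

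\emph{Main obstacle.} I expect the difficulty to lie in Steps 2–3. Strong duality must hold uniformly over all outer tuples, including $\rho=0$ and tuples with $\ba$ at the boundary; this is where Theorem~\ref{th-unconB2} with boundary radii is indispensable. The minimax exchange needs compactness only on the $\mathfrak F^{\rm II}$ side, which holds automatically; this is simpler than the intersection case, where $\mathcal V_{\balpha}$ had to be compactified. Finally, one must check that the signs of $\tau_1,\tau_2$ match the orientation of the interval constraint on $m$.
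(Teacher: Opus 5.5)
Your proposal is correct and follows the paper's proof essentially step for step: you reduce via Theorem~\ref{th-analysisB2} and Lemma~\ref{lm-BII-eqclosure} to the closed-radius union, apply the fixed-center duality of Theorem~\ref{th-unconB2}, exchange the order by Sion's theorem using compactness of the allocation polytope, and dualize the finite LP with the same multipliers and signs. The only loose end is the side condition $\rho\ge 0$ in your Step~1 supremum, which silently disappears by Step~3; you can simply drop it, because for $\rho<0$ the inner dual is also $-\infty$ (let $\lambda\to\infty$ and use the growth bound of Assumption~\ref{assump:growthI}).
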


Formulation \eqref{prob-sum} has the same structure as the intersection counterparts in \eqref{prob-intersection-general} and \eqref{prob-capK=2}. The finite-dimensional variables encode how empirical mass is retained inside $\mathcal N$ and how the transport budget is priced, while the remaining analytical work is confined to the supremum over $\mathcal N$. Appendix \ref{app:morerefined} gives more explicit reformulations of this supremum under additional structure on the loss function and conditioning event. %These reformulations allow for direct implementation of the numerical experiments discussed in Section~\ref{sec:num}.

\section{Tractable Reformulation of the Barycenter-Based Model}\label{sec:tractabilityB3}
In this section,
we consider the barycenter-based formulation. Once the Wasserstein barycenter has been computed, the multi-source ambiguity set reduces to a single-source OT ball centered at the barycenter. The tractability question therefore becomes a single-source conditional DRO problem, with the barycenter serving as the reference empirical measure.

%A direct result from Theorem \ref{th-analysisB1} is to consider the situation that there is a single source data, which is denoted by $\{\widehat{\bxi}_{j}\}_{j\in[N]}$. 

Throughout this section, suppose that the barycenter has the form 
\begin{align}\label{eq-barycenter}
\widehat{\p}_{\rm bar}=\sum_{j=1}^N w_j\delta_{\widehat{\bxi}_{j}},
\end{align}
where $\{\widehat{\bxi}_j\}_{j\in[N]}\subseteq \Xi$, $\bw\ge 0$ and $\sum_{j=1}^N w_j=1$. 
In this case,
$$
\mathcal B^{\rm III}\left(\{\widehat{\p}_k\}_{k\in[K]},\epsilon\right)=\mathcal B(\widehat{\p}_{\rm bar},\epsilon).
%=\mathcal B^{\rm I}(\widehat{\p}_{\rm bar},\epsilon)
$$
As in Section \ref{sec:tractbilityB1_general}, we focus on the induced conditional set
\begin{align*}%\label{eq-singleuncertaintyN}
\mathcal B^{\rm{III}}_{\mathcal N}\left(\{\widehat{\p}_k\}_{k\in[K]},\epsilon\right)
%=\mathcal B_{\mathcal N}^{\rm I}(\widehat{\p}_{\rm bar},\epsilon)
:=\left\{Q^{\mathcal N}\in\mathcal P(\Xi):~Q\in \mathcal B(\widehat{\p}_{\rm bar},\epsilon),~Q(\mathcal N)\in[a,b]\right\}.
\end{align*}
In the single-source case, where the source distribution is the barycenter $\widehat{\p}_{\rm bar}$, the three ambiguity-set constructions coincide. Consequently, $\mathcal B_{\mathcal N}^{\rm III}$ can be characterized either from Theorem~\ref{th-analysisB1} or from Theorem~\ref{th-analysisB2}.
The following corollary summarizes the resulting characterization. 
Its proof is omitted.

% Appendix~\ref{app:proofBarycenter} gives two derivations, one from each of Theorems~\ref{th-analysisB1} and \ref{th-analysisB2}, showing that the two multi-source characterizations agree in this single-source specialization.

\begin{corollary}[Exact characterization of single-source conditional set]\label{co-singledata}
Suppose that the barycenter $\widehat{\p}_{\rm bar}$ is defined in \eqref{eq-barycenter}. The barycenter-based conditional set can be represented as follows:
\begin{align*}%\label{eq-cosingleIII}
\mathcal B^{\rm{III}}_{\mathcal N}\left(\{\widehat{\p}_k\}_{k\in[K]},\epsilon\right)
=\left\{\begin{array}{ll}
Q\in\mathcal P(\Xi): Q(\mathcal N)=1,~{\rm and~
there~exist}\\
m,
\{a_j\}_{j\in[N]}~and~\{v_j\}_{j\in[N]}~{\rm such~that}\\
m\in\left[{1}/{b},{1}/{a}\right],~\sum_{j=1}^N a_j=1,\\
~0\le  a_j\le mw_j,~v_j\in\mathcal V_j~~&\forall j\in[N],\\
Q\in\mathcal B\left(\sum_{{j}=1}^Na_{j} \delta_{\widehat{\bxi}_{j}},m\epsilon -\sum_{j=1}^N (mw_j-a_j)v_j\right)
\end{array}\right\},
\end{align*}
where
$
\mathcal V_j={\rm conv}(\{c(\bxi,\widehat{\bxi}_{j}): \bxi\in\mathcal N^c\})
$ for $j\in[N]$.
\end{corollary}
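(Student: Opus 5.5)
The plan is to obtain Corollary~\ref{co-singledata} by specializing Theorem~\ref{th-analysisB1} to $K=1$. The source is $\widehat{\p}_1=\widehat{\p}_{\rm bar}$ with $N_1=N$, weights $w_{1,j}=w_j$ and radius $\epsilon_1=\epsilon$. Since $\mathcal B^{\rm III}(\{\widehat{\p}_k\}_{k\in[K]},\epsilon)=\mathcal B(\widehat{\p}_{\rm bar},\epsilon)=\mathcal B^{\rm I}(\{\widehat{\p}_{\rm bar}\},\epsilon)$, the induced conditional sets coincide as well. Hence $\mathcal B^{\rm III}_{\mathcal N}$ is exactly the set in \eqref{eq-mainthB1-exact} with $K=1$, and the remaining work is to rewrite that system in the form stated.

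With $K=1$, the multi-index set is $\mathcal A=[N]$, and each $\balpha$ is a single index $j$. The balance constraint becomes $a_j+r_j=mw_j$. Together with $r_j\ge 0$, this is equivalent to $0\le a_j\le mw_j$, with $r_j=mw_j-a_j$ determined by $a_j$. The constraint $\sum_j a_{1,j}=1$ carries over unchanged.

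The set $\mathcal V_{\balpha}$ reduces to $\mathcal V_j={\rm conv}(\{c(\bxi,\widehat\bxi_j):\bxi\in\mathcal N^c\})$, which is an interval in $\R$. The perspective constraint $s_j\in r_j\mathcal V_j$ is handled by writing $s_j=r_jv_j$ for some $v_j\in\mathcal V_j$. When $r_j>0$, set $v_j=s_j/r_j$. When $r_j=0$, the convention $0\mathcal V_j=\{0\}$ forces $s_j=0$, and any $v_j\in\mathcal V_j$ works. This requires $\mathcal V_j\neq\emptyset$, i.e.\ $\mathcal N^c\neq\emptyset$.

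With these substitutions, the radius becomes $m\epsilon-\sum_j s_j=m\epsilon-\sum_j(mw_j-a_j)v_j$. The ball constraint is then $Q\in\mathcal B(\sum_j a_j\delta_{\widehat\bxi_j},\,m\epsilon-\sum_j(mw_j-a_j)v_j)$, as claimed. The converse direction is immediate: given $(m,\ba,\bv)$, set $r_j=mw_j-a_j$ and $s_j=r_jv_j$, which recovers a feasible tuple for Theorem~\ref{th-analysisB1}.

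The main point requiring care is the degenerate case. If $\mathcal N^c=\emptyset$, then $\mathcal V_j$ is empty, and the corollary as written would be vacuous. In that case $Q(\mathcal N)=1$ always holds, so $m=1$ (requiring $b=1$) and $r_j=0$ necessarily. I would handle this either by adopting the convention that $v_j$ is irrelevant when $mw_j=a_j$, or by assuming $\mathcal N^c\ne\emptyset$.

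Alternatively, the same computation can be run from Theorem~\ref{th-analysisB2} with $K=1$ and $\theta_1=1$, which serves as a consistency check.
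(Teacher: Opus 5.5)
Your proposal is correct and follows the paper's intended argument. The paper omits the proof, noting only that the corollary follows by specializing Theorem~\ref{th-analysisB1} (or Theorem~\ref{th-analysisB2}) to the single source $\widehat{\p}_{\rm bar}$; this is exactly your substitution $\mathcal A=[N]$, $r_j=mw_j-a_j$, $s_j=r_jv_j$. Your caveat about $\mathcal N^c=\emptyset$ is also well taken: the stated form with $v_j\in\mathcal V_j$ implicitly needs $\mathcal V_j\neq\emptyset$, whereas the convention $0\mathcal V_{\balpha}=\{0\}$ in Theorem~\ref{th-analysisB1} covers that case.
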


Define
\begin{align}\label{eq-certaindiSD}
d_j=\inf_{\bxi\in\mathcal N^c} c(\bxi,\widehat{\bxi}_j),
\qquad j\in[N].
\end{align}
Observe that the term $mw_j-a_j$ is always nonnegative.
If the optimization problem in \eqref{eq-certaindiSD} admits a minimizer for every $j\in[N]$, then the single-source conditional set admits the following simplified representation
\begin{align}
\overline{\mathcal B}^{\rm{III}}_{\mathcal N}\left(\{\widehat{\p}_k\}_{k\in[K]},\epsilon\right)
&:=\left\{\begin{array}{ll}
Q\in\mathcal P(\Xi): Q(\mathcal N)=1,~{\rm and~
there~exist}~m~{\rm and}~
\{a_j\}_{j\in[N]}\\
{\rm such~that}~
m\in\left[{1}/{b},{1}/{a}\right],~\sum_{j=1}^N a_j=1,~0\le  a_j\le mw_j~~&\forall j\in[N],\\
Q\in\mathcal B\left(\sum_{{j}=1}^Na_{j} \delta_{\widehat{\bxi}_{j}},m\epsilon -\sum_{j=1}^N (mw_j-a_j)d_j\right)
\end{array}\right\}\label{eq-B3closure}\\
&=\bigcup_{(m,\ba)\in\mathfrak F^{\rm III}}
\left[
\mathcal B
\left(
\sum_{j=1}^{N}a_{j}\delta_{\widehat\bxi_{j}},
m\epsilon-\sum_{j=1}^N (mw_j-a_j)d_j
\right)
\cap
\{Q\in\mathcal P(\Xi):Q(\mathcal N)=1\}
\right],\notag
\end{align}
where 
\begin{align*}
\mathfrak F^{\rm III}=
\left\{
\begin{array}{l}
(m,\ba):
m\in[1/b,1/a],\quad\sum_{j=1}^N a_j=1,\quad 0\le a_{j}\le mw_j\quad\forall j\in[N]
\end{array}
\right\}.
\end{align*}

\begin{remark}[Trimming method]
\cite{E22} consider conditional DRO problems of the form
$\sup_{Q\in\mathcal B}\E_Q[\ell(\bm\beta,\bxi)\mid\bxi\in\mathcal N]$, where the ambiguity set $\mathcal B$ consists of all distributions $Q$ in $\mathcal B(\widehat{\p},\epsilon)$ satisfying $Q(\mathcal N)=\eta$, and $\widehat{\p}$ denotes the empirical distribution with $w_j=1/N$ for all $j\in[N]$.
%^with $\widehat{\p}=(1/N)\sum_{j=1}^N \delta_{\widehat{\bxi}_j}$. 
This conditional problem can be written equivalently as the following unconditional problem:
\begin{align*}
\sup_{Q\in\mathcal B_{\mathcal N}}\E_Q[\ell(\bm\beta,\bxi)],
\end{align*}
where $\mathcal B_{\mathcal N}:=\{Q^{\mathcal N}: Q\in\mathcal B\}$.
% , and $Q^\mathcal N$ denotes the conditional measure of $Q$ on $\mathcal N$ defined as $Q^{\mathcal N}(A)=Q(A\cap \mathcal N)/Q(\mathcal N)$ for all $A\subseteq \Xi$.
Their trimming approach can be viewed through the induced-set lens above. Instead of deriving an exact characterization of $\mathcal B_{\mathcal N}$,
% the set of all joint conditional distributions $Q^{\mathcal N}$ from the ambiguity set $\mathcal B$, that is equivalent to the set $\mathcal B_{\mathcal N}^{\rm III}(\{\widehat{\p}_k\}_{k\in[K]},\epsilon)$ defined in \eqref{eq-singleuncertaintyN} with $\widehat{\p}_{\rm bar}=\widehat{\p}$ and $a=b=\eta$, 
\cite{E22} propose a relaxation based on trimming:
\begin{align}\label{eq-deftrimmingset}
\left\{\begin{array}{l}
Q\in\mathcal P(\Xi):
Q(\mathcal N)=1,~and~there~exist~\mathbf a=\{a_j\}_{j\in[N]}~such~that\\
Q\in\mathcal B\left(\sum_{j=1}^Na_{j} \delta_{\widehat{\bxi}_{j}},\frac{\epsilon}{\eta}\right),~
\sum_{j=1}^N a_j=1,~0\le \mathbf a\le \frac{1}{N\eta}
\end{array}\right\}. 
\end{align}
Specializing the representation \eqref{eq-B3closure} to $\widehat{\p}_{\rm bar}=\widehat{\p}$ and $a=b=\eta$ gives the corresponding closed description of the induced set $\mathcal B_{\mathcal N}$, which is exact when the infima defining $d_j$ are attained:
\begin{align*}
\left\{\begin{array}{l}
Q\in\mathcal P(\Xi): Q(\mathcal N)=1,~and~
there~exist~\mathbf a=\{a_j\}_{j\in[N]}~such~that\\
Q\in\mathcal B\left(\sum_{j=1}^Na_{j} \delta_{\widehat{\bxi}_{j}},\frac{\epsilon}{\eta}-\sum_{j=1}^N\left(\frac{1}{N\eta}-a_j\right)d_j \right),~\sum_{j=1}^N a_j=1,~0\le \mathbf a\le \frac{1}{N\eta}
\end{array}\right\}. 
\end{align*}
In contrast, the relaxed set in \cite{E22} defined by \eqref{eq-deftrimmingset} omits the additional cost term $\sum_{j=1}^N (1/(N\eta)- a_{j})d_{j}$ that appears in the radius of each OT ball. This term accounts for the transport budget assigned to the part of the joint law outside $\mathcal N$.
\end{remark}

{\color{black}
Finally, we obtain the tractable reformulation of the barycenter-based robust problem by specializing Theorem~\ref{th:sumB2Duality} to the single-source ambiguity set centered at $\widehat{\p}_{\rm bar}$. As in Sections~\ref{sec:tractbilityB1} and \ref{sec:tractabilityB2}, we impose Assumption~\ref{assump:growthI}, Assumption~\ref{assump:costI}, and the following Slater-type condition for the barycenter-based model.

\begin{assumption}[Slater condition for barycenter-based model]\label{assump:SlaterIII}
There exists $P^{\circ}\in\mathcal P(\Xi)$
such that
\begin{align*}
P^{\circ}(\mathcal{N}) \in[a,b];\quad W\left(P^{\circ}, \widehat{\P}_{\rm bar}\right) < \varepsilon;\quad
\E^{P^{\circ}}[\ell(\bbeta,\bxi)\mid\bxi\in\mathcal N]<\infty,\quad  \bbeta\in\mathcal D.
\end{align*}
\end{assumption}

}

\begin{corollary}[Barycenter robust counterpart]\label{co:barycenterB3Duality}
Let the barycenter $\widehat{\p}_{\rm bar}$, $d_j$ and $\overline{\mathcal B}_{\mathcal N}^{\rm III}$ be defined in \eqref{eq-barycenter}, \eqref{eq-certaindiSD} and \eqref{eq-B3closure}, respectively. Suppose that Assumption  \ref{assump:SlaterIII} holds.
Then
 \begin{align}\label{eq-B3eqclosure}
\sup_{Q\in \mathcal B^{\rm III}_{\mathcal N}
 \left(\{\widehat{\p}_k\}_{k\in[K]},\epsilon\right)}\E_{Q}[\ell(\bbeta,\bxi)]=\sup_{Q\in \overline{\mathcal B}^{\rm III}_{\mathcal N}
 \left(\{\widehat{\p}_k\}_{k\in[K]},\epsilon\right)}\E_{Q}[\ell(\bbeta,\bxi)]\quad \forall \bbeta\in\mathcal D.
 \end{align}
Suppose further that Assumptions \ref{assump:growthI} and \ref{assump:costI} hold. Then the barycenter-based conditional DRO problem
\begin{align}\label{prob:B3inner}
\min_{\bbeta\in\mathcal D}
\sup_{Q\in\mathcal B^{\rm III}\left(\{\widehat{\p}_k\}_{k\in[K]},\epsilon\right),Q(\mathcal N)\in[a,b]} \E_Q[\ell(\bbeta,\bxi)\mid\bxi\in \mathcal N]
\end{align} 
is equivalent to
\begin{align}\tag{${\rm P}^{\rm III}$}\label{prob-barycenterB3}
\begin{array}{lll}\inf & 
\varrho-\frac{1}{b}\tau_1+\frac{1}{a}\tau_2\\
\text{\rm s.t.} & \bbeta\in\mathcal D,\ \lambda,\tau_1,\tau_2\in\R_+,~\varrho\in\R\\
& \varphi_j,\psi_j\in\R_+,~\gamma_j\in\R &\forall j\in[N]\\
& \lambda \epsilon-\lambda\sum_{j=1}^Nw_jd_j+\tau_1-\tau_2+\sum_{j=1}^N w_j\varphi_j=0~~~~~~\\
&\gamma_{j}+\lambda d_j-\varrho-\varphi_{j}+\psi_{j}=0 &\forall j\in[N]\\
& \sup _{\bxi \in \mathcal N} \left\{\ell(\bbeta,\bxi)-\lambda  c(\bxi,\widehat{\bxi}_{j})\right\} \leq \gamma_j & \forall j \in [N].
\end{array}
\end{align}
\end{corollary}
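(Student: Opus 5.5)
The plan is to treat the barycenter model as the single-source case $K=1$, $\theta_1=1$, $\widehat{\p}_1=\widehat{\p}_{\rm bar}$ of the weighted-distance model. Both parts of Corollary~\ref{co:barycenterB3Duality} should then follow from Lemma~\ref{lm-BII-eqclosure} and Theorem~\ref{th:sumB2Duality}. The only further work is an LP reparametrization that turns \eqref{prob-sum} into \eqref{prob-barycenterB3}.

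\textbf{Step 1: the closure identity \eqref{eq-B3eqclosure}.} With $K=1$, $\theta_1=1$ and $N_1=N$, the multi-index set is $\mathcal A=[N]$, and each tuple $\balpha$ is a single index $j$. The balance equation $a_{1,j}+r_j=mw_j$ with $r_j\ge0$ is equivalent to $0\le a_j\le mw_j$ with $r_j=mw_j-a_j$. Also $d^{\btheta}_{\balpha}=d_j$, and the weighted distance reduces to the single OT cost $W(Q,\cdot)$. Under these identifications, $\mathcal B^{\rm II}_{\mathcal N}$ coincides with $\mathcal B^{\rm III}_{\mathcal N}$ (consistent with Corollary~\ref{co-singledata}), and $\overline{\mathcal B}^{\rm II}_{\mathcal N}$ coincides with $\overline{\mathcal B}^{\rm III}_{\mathcal N}$ from \eqref{eq-B3closure}. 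Assumption~\ref{assump:SlaterII} becomes exactly Assumption~\ref{assump:SlaterIII}, so Lemma~\ref{lm-BII-eqclosure} gives \eqref{eq-B3eqclosure} directly.

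\textbf{Step 2: specializing the robust counterpart.} Under the additional Assumptions~\ref{assump:growthI} and \ref{assump:costI}, Theorem~\ref{th:sumB2Duality} with $K=1$ yields an equivalent program with variables $\varrho,\lambda,\tau_1,\tau_2\ge0$ and $\varphi_j\in\R,\psi_j\ge0,\gamma_j,\eta_j\ge0$. Its constraints are
\begin{align*}
&\lambda\epsilon-\textstyle\sum_j\varphi_jw_j+\tau_1-\tau_2=0,\\
&\gamma_j-\varrho+\varphi_j+\psi_j=0,\\
&-\lambda d_j+\varphi_j+\eta_j=0,\\
&\textstyle\sup_{\bxi\in\mathcal N}\{\ell-\lambda c(\bxi,\widehat\bxi_j)\}\le\gamma_j.
\end{align*}

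\textbf{Step 3: matching \eqref{prob-barycenterB3}.} I would eliminate the original $\varphi_j$ through $\varphi_j=\lambda d_j-\eta_j$ and rename $\eta_j$ as the new nonnegative variable $\varphi_j$. The budget equation then becomes
\[
\lambda\epsilon-\lambda\textstyle\sum_jw_jd_j+\sum_jw_j\varphi_j+\tau_1-\tau_2=0,
\]
and the second constraint becomes $\gamma_j+\lambda d_j-\varrho-\varphi_j+\psi_j=0$. This is exactly \eqref{prob-barycenterB3}, with the same objective and the same supremum constraint.

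\textbf{Main obstacle.} The main difficulty is to check that Theorem~\ref{th:sumB2Duality} really applies verbatim when $K=1$. In particular, the $d_j$ may be infinite if $\mathcal N^c=\emptyset$. In that degenerate case I would argue that $a_j=mw_j$ is forced, which requires $m=1$ and is consistent with $a$ and $b$. I would then check that the constraint involving $\lambda d_j$ is handled by the convention $\infty\cdot0$, or else treat this case separately through Corollary~\ref{co-singledata}. If the Theorem's proof uses only the union structure and the closure lemma, the specialization is immediate.
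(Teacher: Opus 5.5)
Your proposal is correct and follows essentially the same route as the paper. Both specialize Lemma~\ref{lm-BII-eqclosure} and Theorem~\ref{th:sumB2Duality} to the single source $\widehat{\p}_{\rm bar}$ (with $K=1$, $\theta_1=1$), then remove the equality $-\lambda d_j+\varphi_j+\eta_j=0$ by substituting $\varphi_j=\lambda d_j-\varphi_j'$ with $\varphi_j'\ge0$.

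Your extra remark about $d_j=+\infty$ when $\mathcal N^c=\emptyset$ concerns a degenerate case that the paper does not treat. Also, in your variable list, keep $\varrho$ and $\gamma_j$ free; only $\lambda,\tau_1,\tau_2,\psi_j,\eta_j$ are sign-constrained.
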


{\color{black}
\begin{remark}
Note that, in Corollary~\ref{co:barycenterB3Duality}, Assumptions~\ref{assump:growthI} and~\ref{assump:costI} are used to ensure that strong duality for the associated unconditional DRO problems holds for all admissible radii. In the single-source case, the boundary issue reduces to the right-continuity of the corresponding worst-case value at zero radius. For this purpose, the two assumptions can be weakened. Specifically, \citet[Remark~3]{ZYG24} provide the following sufficient condition: there exists a continuous concave function $\varphi:[0,\infty)\to[0,\infty)$ with $\varphi(0)=0$ such that
$
\ell(\bxi)-\ell(\widehat{\bxi})
\le
\varphi(c(\bxi,\widehat{\bxi}))
$
for all $\bxi\in\Xi$ and all source observations $\widehat{\bxi}$.
This condition is more general than the combined requirements imposed in Assumptions~\ref{assump:growthI} and~\ref{assump:costI} in two respects. First, it does not require the cost function to be coercive. Second, it replaces the linear cost-Lipschitz bound in Assumption~\ref{assump:growthI} by a concave one. In particular, it allows  $\varphi$ that may have infinite derivative at zero, and hence covers loss functions that are not cost-Lipschitz in the linear sense.
\end{remark}

}

%%%%%%%%%%%%%%%%%%%%%%%%%%%%%%%%%%%%%%%%%%%%%%%%%%%%%%%%%%
\section{Feasible Regions of Radii}\label{sec:feasible}
%\subsection{Feasible radii for two source distributions}
The radii in the ambiguity sets must be large enough for the model to admit at least one joint law that assigns probability in $[a,b]$ to the conditioning event. This section characterizes the corresponding feasible regions and provides a basis for selecting radii in implementation. For the three models, define
\begin{align}\label{eq-radiiregion1}
\mathcal E^{\rm I}=\left\{\bepsilon\in\R_+^K: \exists Q\in\mathcal P(\Xi) \text{ s.t. }  Q\in\mathcal B^{\rm I}\left(\{\widehat{\p}_k\}_{k\in[K]},\bepsilon\right)\text{ and }Q(\mathcal N)\in[a,b]\right\},
\end{align}
\begin{align}\label{eq-radiiregion2}
\mathcal E^{\rm II}=\left\{\epsilon\in\R_+: \exists Q\in\mathcal P(\Xi) \text{ s.t. }  Q\in\mathcal B^{\rm II}\left(\{\widehat{\p}_k\}_{k\in[K]},\epsilon\right)\text{ and }Q(\mathcal N)\in[a,b]\right\}
\end{align}
and 
\begin{align}\label{eq-radiiregion3}
\mathcal E^{\rm III}=\left\{\epsilon\in\R_+: \exists Q\in\mathcal P(\Xi) \text{ s.t. }  Q\in\mathcal B^{\rm III}\left(\{\widehat{\p}_k\}_{k\in[K]},\epsilon\right)\text{ and }Q(\mathcal N)\in[a,b]\right\}.
\end{align}

% To characterize the regions defined above, we focus on deriving their Pareto frontiers. 
% Conceptually, obtaining these Pareto frontiers entails a level of complexity comparable to solving problem \eqref{prob-mainprob} with the corresponding ambiguity sets. 

%Characterizing these regions is itself an optimization problem of comparable difficulty to the original robust optimization problem \eqref{prob-mainprob}. 

For $\mathcal E^{\rm I}$, we focus on the two-source case ($K=2$), where Corollary~\ref{cor:two-source-intersection} provides an explicit line-segment structure. The following result characterizes the interior of $\mathcal E^{\rm I}$, denoted by $(\mathcal E^{\rm I})^{\circ}$.

% In this section, we aim to establish the region of radii $\bepsilon$ such that the feasible set is not empty for the case of $K=2$. 
% The following result characterizes the interior of $\mathcal E^{\rm I}$, which is denoted by $(\mathcal E^{\rm I})^{\circ}$.

\begin{proposition}\label{prop:radiiregion1}
For $K=2$, let $\mathcal E^{\rm I}$ be defined in \eqref{eq-radiiregion1}.
% {\color{red}Suppose that the Lagrangian duality step used in the proof is exact for every $\epsilon_1$ strictly above the left endpoint in \eqref{eq-regionradii1}.}
The radii $\bepsilon$ lie in  $(\mathcal E^{\rm I})^{\circ}$ if and only if
%there exists a permutation $\sigma$ of $[K]$ such that 
\begin{align}\label{eq-regionradii1}
\begin{array}{llll}
\epsilon_1
> &\inf &\sum_{j=1}^{N_{1}}p_j\inf_{\bxi\in\mathcal
N}c(\bxi,\widehat{\bxi}_{1,j})+\sum_{j=1}^{N_{1}}q_j\inf_{\bxi\in\mathcal N^c}c(\bxi,\widehat{\bxi}_{1,j})\\
&{\rm s.t.} &\sum_{j=1}^{N_{1}}p_j\in[a,b]\\
&&p_j+q_j=w_{1,j},~p_j\ge 0,~q_j\ge 0 &\forall j\in[N_{1}]
\end{array}
\end{align}
and
\begin{align}\label{eq-regionradii2}
\begin{array}{llll}
\epsilon_2
>\sup_{\lambda\ge 0}&\inf 
&\sum_{\balpha\in\mathcal A}p_{\balpha}\left(c(\bxi_{\balpha},\widehat{\bxi}_{2,\alpha_{2}})+\lambda c(\bxi_{\balpha},\widehat{\bxi}_{1,\alpha_{1}})\right)\\
&&+\sum_{\balpha\in\mathcal A}q_{\balpha}\left(c(\bxi_{\balpha}',\widehat{\bxi}_{2,\alpha_{2}})+\lambda c(\bxi_{\balpha}',\widehat{\bxi}_{1,\alpha_{1}})\right)-\lambda\epsilon_1\\
&{\rm s.t.} &\sum_{\balpha\in\mathcal A}p_{\balpha}\in[a,b]\\
&&p_{\balpha}\ge 0,~q_{\balpha}\ge 0,~\bxi_{\balpha}\in\mathcal N,~\bxi_{\balpha}'\in\mathcal N^c
%~\bxi_{\balpha}\in\mathcal N,~\bxi_{\balpha}'\in\mathcal N^c 
&\forall \balpha\in\mathcal A\\
&&\sum_{\balpha\in\mathcal A,\alpha_{k}=j} \{p_{\balpha}+q_{\balpha}\}=w_{k,j}&\forall j\in[N_{k}], k=1,2.
% && s_{\balpha}\ge c(\bxi_{\balpha},\widehat{\bxi}_{2,\alpha_{2}})+\lambda c(\bxi_{\balpha},\widehat{\bxi}_{1,\alpha_{1}}) &\forall \balpha\in\mathcal A\\
% && t_{\balpha}\ge c(\bxi_{\balpha}',\widehat{\bxi}_{2,\alpha_{2}})+\lambda c(\bxi_{\balpha}',\widehat{\bxi}_{1,\alpha_{1}}) &\forall \balpha\in\mathcal A.
\end{array}
\end{align}
%where $\mathcal A_k=\times_{i=1}^k [N_{i}]$ as the set of $k$-dimensional multi-indices.
\end{proposition}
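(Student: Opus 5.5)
Since $W(\cdot,\widehat\p_k)$ is convex along mixtures and the constraint $Q(\mathcal N)\in[a,b]$ is linear, $\mathcal E^{\rm I}$ is an upward-closed convex subset of $\R_+^2$. The plan is to describe it as the epigraph-type set $\{\bepsilon:\epsilon_1\ge \underline\epsilon_1,\ \epsilon_2\ge g(\epsilon_1)\}$ with $g$ convex and nonincreasing, and then identify its interior as $\{\epsilon_1>\underline\epsilon_1,\ \epsilon_2>g(\epsilon_1)\}$, where $\underline\epsilon_1$ and $g$ are the right-hand sides of \eqref{eq-regionradii1} and \eqref{eq-regionradii2}. Upward closedness follows because enlarging a radius enlarges the ball. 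Convexity follows by mixing two feasible $Q$'s and mixing their couplings.

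\textbf{Step 1 (projection onto $\epsilon_1$).} Set $\underline\epsilon_1=\inf\{W(Q,\widehat\p_1):Q(\mathcal N)\in[a,b]\}$. Any coupling of $\widehat\p_1$ with such a $Q$ sends mass $p_j$ of atom $j$ into $\mathcal N$ and mass $q_j=w_{1,j}-p_j$ into $\mathcal N^c$, with $\sum_j p_j=Q(\mathcal N)\in[a,b]$. Its cost is therefore at least the objective in \eqref{eq-regionradii1}. Conversely, choosing near-minimizing points in $\mathcal N$ and $\mathcal N^c$ for each atom gives a discrete $Q$ whose cost approaches that objective. Hence $\underline\epsilon_1$ equals the infimum in \eqref{eq-regionradii1}. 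Since $\epsilon_2$ can be taken large (costs are finite), $\epsilon_1>\underline\epsilon_1$ is necessary and sufficient for some $\epsilon_2$ to make $\bepsilon$ interior.

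\textbf{Step 2 (the frontier for fixed $\epsilon_1$).} For $\epsilon_1>\underline\epsilon_1$, let $g(\epsilon_1)=\inf\{W(Q,\widehat\p_2):W(Q,\widehat\p_1)\le\epsilon_1,\ Q(\mathcal N)\in[a,b]\}$. I would restrict $Q$ to discrete laws built from multi-indices $\balpha$: the two couplings are glued along $Q$ (gluing lemma), and atoms of $Q$ are replaced by points $\bxi_\balpha\in\mathcal N$ or $\bxi'_\balpha\in\mathcal N^c$, carrying masses $p_\balpha,q_\balpha$ whose marginals recover $w_{k,j}$. This is the same decomposition used for Theorem~\ref{th-analysisB1}. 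Replacing, for each $\balpha$, the conditional distribution on $\mathcal N$ (resp. $\mathcal N^c$) by a single point does not preserve both cost coordinates. It does, however, preserve any fixed scalarization $c(\cdot,\widehat\bxi_{2,\alpha_2})+\lambda c(\cdot,\widehat\bxi_{1,\alpha_1})$ up to $\delta$.

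So I would form the Lagrangian in the $W_1$ constraint only. The problem "minimize the $W_2$ cost subject to $W_1\le\epsilon_1$" is a convex program in the joint measure: linear objective, linear constraints, and the mass constraint. Strong Lagrangian duality holds because $\epsilon_1>\underline\epsilon_1$ supplies a Slater point. Thus $g(\epsilon_1)=\sup_{\lambda\ge0}\inf_Q\{\cdot+\lambda(\cdot-\epsilon_1)\}$. For fixed $\lambda$, the inner infimum over measures reduces, by the pointwise argument above, exactly to the finite inner problem in \eqref{eq-regionradii2}.

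\textbf{Step 3 (interior).} Convexity and upward closedness give interior $=\{\epsilon_1>\underline\epsilon_1,\epsilon_2>g(\epsilon_1)\}$, since $g$ is convex and hence continuous on the open interval. Conversely, boundary points with $\epsilon_2=g(\epsilon_1)$ or $\epsilon_1=\underline\epsilon_1$ cannot be interior by the definitions of the infima.

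The main obstacle is Step 2: justifying exact Lagrangian duality over the infinite-dimensional set of joint laws, and the reduction of the inner infimum to the pointwise choice of $\bxi_\balpha,\bxi'_\balpha$. I would handle duality by working in the convex image set $\{(W_1\text{-cost},W_2\text{-cost})\}\subseteq\R^2$ of feasible couplings and applying a separating-hyperplane argument. Strict feasibility $\epsilon_1>\underline\epsilon_1$ then rules out a vertical separating hyperplane, and this avoids any compactness requirement.
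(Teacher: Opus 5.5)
Your proposal is correct and takes essentially the same route as the paper. Both arguments describe $\mathcal E^{\rm I}$ through $\underline\epsilon_1$ and a convex nonincreasing frontier $g$ (the paper's $m$ and $f$). Both compute $\underline\epsilon_1$ by splitting each atom's mass between $\mathcal N$ and $\mathcal N^c$, and both obtain \eqref{eq-regionradii2} by dualizing the single $W(\cdot,\widehat{\p}_1)$ constraint and then reducing the scalarized inner infimum pointwise after the multi-margin decomposition of Lemma~\ref{lm:DMM}. The one difference is in how the exchange $g(\epsilon_1)=\sup_{\lambda\ge 0}\inf$ is justified: you use Lagrangian duality with a Slater point (a separating hyperplane in the $\R^2$ cost image), while the paper uses biconjugation $f=f^{**}$ of the convex value function at an interior point of its domain, which is the same fact in conjugate form.
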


The optimization problems in \eqref{eq-regionradii1} and \eqref{eq-regionradii2} are generally difficult for arbitrary cost functions $c$ and conditioning sets $\mathcal N$. The following corollary addresses a practically important two-source norm case in the spirit of Corollary~\ref{cor:two-source-intersection}. Under these conditions, both problems reduce to linear programs.

\begin{corollary}\label{prop:special}\label{co:special}
Suppose $\mathcal N = \{\bx_0\} \times \mathcal Y$ for some $\bx_0 \in \mathcal X$, $\operatorname{cl}(\mathcal N^c)=\Xi$, and the cost function is separable as $c(\bxi, \bxi') = \|\bx - \bx'\|_{\mathcal X} + \|\by - \by'\|_{\mathcal Y}$, where $\|\cdot\|_{\mathcal X}$ and $\|\cdot\|_{\mathcal Y}$ are norms on $\mathcal X$ and $\mathcal Y$, respectively. Then, both problems in \eqref{eq-regionradii1} and \eqref{eq-regionradii2} can be reformulated as linear programs. The problem in \eqref{eq-regionradii1} is equivalent to
\begin{align*}
\begin{array}{lll}
\inf & \sum_{j=1}^{N_1}p_j \|\widehat{\bx}_{1,j}-\bx_0\|_{\mathcal X}\\
{\rm s.t.} & \sum_{j=1}^{N_1}p_j\in[a,b]\\
&0\le p_j\le w_{1,j} &\forall j\in[N_1],
\end{array}
\end{align*}
and the problem in \eqref{eq-regionradii2} is equivalent to
\begin{align}\label{eq-feasibleradiiI}
\begin{array}{lll}
\inf & 
\sum_{\balpha\in\mathcal A} 
p_{\balpha}\|\widehat{\bx}_{2,\alpha_2}-\bx_0\|_{\mathcal X}+\tau_3\\
{\rm s.t.} &\bm\tau\in \R_+^3\\
&\sum_{\balpha\in\mathcal A}p_{\balpha}\in[a,b]\\
&p_{\balpha}\ge 0,~q_{\balpha}\ge 0
%~\bxi_{\balpha}\in\mathcal N,~\bxi_{\balpha}'\in\mathcal N^c 
&\forall \balpha\in\mathcal A\\
&\sum_{\balpha\in\mathcal A,\alpha_{k}=j} \{p_{\balpha}+q_{\balpha}\}=w_{k,j}&\forall j\in[N_{k}], k=1,2\\
& \sum_{\balpha\in\mathcal A}\{p_{\balpha}d_{\balpha}^{\mathcal Y}+q_{\balpha}d_{\balpha}\}-\tau_2-\tau_3=0\\
& \sum_{\balpha\in\mathcal A} p_{\balpha}\|\widehat{\bx}_{1,\balpha_1}-\bx_0\|_{\mathcal X}-\epsilon_1+\tau_1+\tau_2=0,\\
\end{array}
\end{align}
where $d_{\balpha}=c(\widehat{\bxi}_{1,\alpha_1},\widehat{\bxi}_{2,\alpha_2})$ that has been defined in \eqref{eq-dthmainB1K=2}, and $d_{\balpha}^{\mathcal Y}=\|\widehat{\by}_{1,\alpha_1}-\widehat{\by}_{2,\alpha_2}\|_{\mathcal Y}$ for $\balpha\in\mathcal A$.
\end{corollary}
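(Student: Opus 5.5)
We specialize the two problems in Proposition~\ref{prop:radiiregion1} using the structure $\mathcal N=\{\bx_0\}\times\mathcal Y$, $\operatorname{cl}(\mathcal N^c)=\Xi$, and the separable norm cost. The first step is to evaluate the two elementary infima. For any $\widehat\bxi=(\widehat\bx,\widehat\by)$ we have
$$\inf_{\bxi\in\mathcal N}c(\bxi,\widehat\bxi)=\|\widehat\bx-\bx_0\|_{\mathcal X},$$
attained at $\bxi=(\bx_0,\widehat\by)$. We also have $\inf_{\bxi\in\mathcal N^c}c(\bxi,\widehat\bxi)=0$, because $\mathcal N^c$ is dense in $\Xi$ and $c(\cdot,\widehat\bxi)$ is continuous. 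Substituting these into \eqref{eq-regionradii1} leaves the $q_j$ only in the constraint $p_j+q_j=w_{1,j}$. Eliminating them gives $0\le p_j\le w_{1,j}$, which is exactly the first stated LP.

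For \eqref{eq-regionradii2}, we first perform the inner minimization over the points $\bxi_{\balpha},\bxi'_{\balpha}$ for each fixed $\lambda\ge0$ and fixed masses.

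\emph{Inside part.} Write $\bxi_{\balpha}=(\bx_0,\by)$. The cost becomes
$$\|\widehat\bx_{2,\alpha_2}-\bx_0\|+\lambda\|\widehat\bx_{1,\alpha_1}-\bx_0\|+\inf_{\by}\bigl\{\|\by-\widehat\by_{2,\alpha_2}\|+\lambda\|\by-\widehat\by_{1,\alpha_1}\|\bigr\}.$$
By the triangle inequality (the two-point line-segment argument of Corollary~\ref{cor:two-source-intersection}), the last infimum equals $\min\{1,\lambda\}d^{\mathcal Y}_{\balpha}$.

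\emph{Outside part.} By density and continuity, $\inf_{\bxi\in\mathcal N^c}\{c(\bxi,\widehat\bxi_{2,\alpha_2})+\lambda c(\bxi,\widehat\bxi_{1,\alpha_1})\}=\min\{1,\lambda\}d_{\balpha}$.

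Hence the bracket in \eqref{eq-regionradii2} becomes
$$\sum_{\balpha} p_{\balpha}\|\widehat\bx_{2,\alpha_2}-\bx_0\|+\lambda\Bigl(\sum_{\balpha} p_{\balpha}\|\widehat\bx_{1,\alpha_1}-\bx_0\|-\epsilon_1\Bigr)+\min\{1,\lambda\}\,D(p,q),$$
with $D=\sum_{\balpha}(p_{\balpha}d^{\mathcal Y}_{\balpha}+q_{\balpha}d_{\balpha})$. The remaining inner infimum is over the transportation polytope in $(p,q)$ with $\sum p\in[a,b]$. The objective is linear in $(p,q)$ for fixed $\lambda$ and concave in $\lambda$, and the $(p,q)$-feasible set is compact and convex. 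So by a minimax theorem (Sion) I will swap to $\inf_{p,q}\sup_{\lambda\ge0}$.

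Next I compute the supremum over $\lambda$ in closed form. Set $A=\sum p\,\|\widehat\bx_1-\bx_0\|-\epsilon_1$ and let $g(\lambda)=\lambda A+\min\{1,\lambda\}D$, which is piecewise linear with $D\ge0$. Its supremum is $+\infty$ if $A>0$, and $\max\{0,A+D\}$ if $A\le0$. Writing this as an LP in slack variables: $A\le0$ becomes $A+\tau_1+\tau_2=0$ with $\tau_1,\tau_2\ge0$. Then $\max\{0,A+D\}=\max\{0,D-\tau_1-\tau_2\}$, which we want to represent as the minimal $\tau_3\ge0$ with $D-\tau_2-\tau_3=0$. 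This matches \eqref{eq-feasibleradiiI} once we check that optimizing jointly over $(\tau_1,\tau_2,\tau_3)$ reproduces $\max\{0,D+A\}$.

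To check that, note that given $A\le0$, the slacks satisfy $\tau_1+\tau_2=-A$, and $\tau_2$ is free in $[0,\min\{-A,D\}]$. Taking $\tau_2=\min\{-A,D\}$ gives $\tau_3=D-\tau_2=\max\{0,D+A\}$, which is the minimum. The constraint set is otherwise unchanged, so the two objectives agree.

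I expect the main obstacle to be justifying the $\sup$–$\inf$ interchange and the handling of $+\infty$ when the constraint $A\le0$ is infeasible. On that case the left side of \eqref{eq-regionradii2} is $+\infty$ and the LP is infeasible, so both sides equal $+\infty$ and they match. Compactness of the $(p,q)$ polytope makes Sion applicable.
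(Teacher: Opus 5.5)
Your proposal is correct and follows the paper's proof. You evaluate the elementary infima over $\mathcal N$ and $\mathcal N^c$, using density of $\mathcal N^c$ and the triangle inequality to obtain the $\min\{1,\lambda\}$ factors. You then interchange $\sup_{\lambda\ge 0}$ and $\inf_{(p,q)}$ by Sion's theorem, using compactness of the transportation polytope, and finally eliminate $\lambda$. The only cosmetic difference is that the paper writes the inner supremum as an LP in $(\lambda,t)$ with $t\le\lambda$, $t\le 1$ and takes its LP dual to produce $\tau_1,\tau_2,\tau_3$, whereas you compute $\sup_{\lambda\ge0}\{\lambda A+\min\{1,\lambda\}D\}$ in closed form and check the slack representation directly; both yield the same linear program.
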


%\subsection{Feasible region of the radius}
% \textbf{Discussion on the feasible region of the radius $\epsilon$.}
% To ensure that the problem~\eqref{prob:B2inner} is well-posed, the radius parameter $\epsilon$ must be chosen such that the corresponding ambiguity set is nonempty. Specifically, $\epsilon$ should lie in the following admissible set:
% \begin{align}\label{eq-radiiregion2}
% \mathcal{E}^{\rm II} := \left\{ \epsilon \in \mathbb{R}_+ : \exists Q \in \mathcal{P}(\Xi) \text{ s.t. } Q \in \mathcal{B}^{\rm II} \left( \{ \widehat{\p}_k \}_{k \in [K]}, \epsilon \right) \text{ and } Q(\mathcal{N}) \in [a, b] \right\}.
% \end{align}

We next turn to $\mathcal E^{\rm II}$. Since increasing the radius preserves feasibility, $\mathcal E^{\rm II}$ is an interval of all radii above a threshold. The central task is therefore to identify its left endpoint, which the following result expresses as a linear program.
% Define
% \begin{align}\label{eq-disN}
% h_{\balpha}^{\btheta}=\inf_{\bxi\in \mathcal N} \sum_{k=1}^K \theta_k c(\bxi,\widehat{\bxi}_{k,\alpha_k}).
% \end{align}

\begin{proposition}\label{prop:feasibleradiiII}
The left endpoint of $\mathcal E^{\rm II}$ defined by \eqref{eq-radiiregion2} is equal to the optimal value of the following linear program:
\begin{align*}%\label{eq-regionradiiII}
\begin{array}{llll}
&\inf
&\sum_{\balpha\in\mathcal A}p_{\balpha}h_{\balpha}^{\btheta}+\sum_{\balpha\in\mathcal A}q_{\balpha}d_{\balpha}^{\btheta}\\
&{\rm s.t.} &\sum_{\balpha\in\mathcal A}p_{\balpha}\in[a,b]\\
&&p_{\balpha}\ge 0,~q_{\balpha}\ge 0
%~\bxi_{\balpha}\in\mathcal N,~\bxi_{\balpha}'\in\mathcal N^c 
&\forall \balpha\in\mathcal A\\
&&\sum_{\balpha\in\mathcal A,\alpha_{k}=j} \{p_{\balpha}+q_{\balpha}\}=w_{k,j}~~~~~~&\forall j\in[N_{k}], k\in[K],
\end{array}
\end{align*}
where $h_{\balpha}^{\btheta}=\inf_{\bxi\in \mathcal N} \sum_{k=1}^K \theta_k c(\bxi,\widehat{\bxi}_{k,\alpha_k})$,
and
$d_{\balpha}^{\btheta}$ is defined in \eqref{eq-dB1cap}.
\end{proposition}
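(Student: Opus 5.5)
We identify the left endpoint $\epsilon^\star:=\inf\mathcal E^{\rm II}$ with
\[
\inf\Bigl\{\textstyle\sum_k\theta_kW(Q,\widehat{\p}_k): Q\in\mathcal P(\Xi),\ Q(\mathcal N)\in[a,b]\Bigr\},
\]
which holds directly from the definition \eqref{eq-radiiregion2} together with monotonicity of the balls in $\epsilon$. We then show that this infimum equals the optimal value of the stated linear program (LP) by proving two inequalities. The structural tool is the multi-marginal representation of the weighted OT sum: for any $Q$,
\[
\sum_k\theta_kW(Q,\widehat{\p}_k)=\inf\Bigl\{\textstyle\E_\pi\bigl[\sum_k\theta_kc(\bxi,\widehat\bxi_{k,\alpha_k})\bigr]\Bigr\},
\]
where $\pi$ ranges over joint laws of $(\bxi,\balpha)$ whose $\bxi$-marginal is $Q$ and whose $\alpha_k$-marginals are $w_{k,\cdot}$. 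This is the same decomposition used in Theorem~\ref{th-analysisB2}.

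\textbf{Lower bound ($\epsilon^\star\ge$ LP).} Fix a feasible $Q$ and, for each $k$, take near-optimal couplings $\pi_k$ between $Q$ and $\widehat{\p}_k$. Glue them along $Q$, i.e.\ make the indices conditionally independent given $\bxi$; this yields a joint law of $(\bxi,\balpha)$ whose expected weighted cost equals $\sum_k\theta_k\E_{\pi_k}[c]$. Set
\[
p_{\balpha}=\pi(\bxi\in\mathcal N,\balpha),\qquad q_{\balpha}=\pi(\bxi\in\mathcal N^c,\balpha).
\]
Then $\sum_{\balpha}p_{\balpha}=Q(\mathcal N)\in[a,b]$, and the marginal constraints $\sum_{\alpha_k=j}(p_{\balpha}+q_{\balpha})=w_{k,j}$ hold. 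On $\{\bxi\in\mathcal N\}\times\{\balpha\}$ the integrand is at least $h^{\btheta}_{\balpha}$, and on $\mathcal N^c$ it is at least $d^{\btheta}_{\balpha}$. Hence the LP value is at most $\sum_k\theta_kW(Q,\widehat{\p}_k)+\delta$ for every $\delta>0$.

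\textbf{Upper bound ($\epsilon^\star\le$ LP).} Given a feasible $(p,q)$ and $\delta>0$, choose for each $\balpha$ points $\bxi_{\balpha}\in\mathcal N$ and $\bxi'_{\balpha}\in\mathcal N^c$ that are $\delta$-optimal for $h^{\btheta}_{\balpha}$ and $d^{\btheta}_{\balpha}$ respectively. If one of these sets is empty, the corresponding infimum is $+\infty$ and the matching variables are forced to zero in any finite-value solution. Define
\[
Q=\sum_{\balpha}\bigl(p_{\balpha}\delta_{\bxi_{\balpha}}+q_{\balpha}\delta_{\bxi'_{\balpha}}\bigr).
\]
Then $Q(\mathcal N)=\sum_{\balpha} p_{\balpha}\in[a,b]$, and the transport plan sending mass $p_{\balpha}$ (resp.\ $q_{\balpha}$) from $\widehat\bxi_{k,\alpha_k}$ to $\bxi_{\balpha}$ (resp.\ $\bxi'_{\balpha}$) is a valid coupling with $\widehat{\p}_k$ by the marginal constraints. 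Therefore
\[
\sum_k\theta_kW(Q,\widehat{\p}_k)\le\text{LP objective}+\delta .
\]

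\textbf{Main obstacle.} The only delicate point is the gluing step in the lower bound. It needs a disintegration of each $\pi_k$ given $\bxi$, which exists because $\Xi$ is Polish and $\widehat{\p}_k$ has finite support, so conditional probabilities are simply ratios of densities. One must also handle the possibility that the infimum defining $\epsilon^\star$ is not attained. This is harmless, since the statement concerns the left endpoint as an infimum; whether that endpoint itself belongs to $\mathcal E^{\rm II}$ is not claimed.
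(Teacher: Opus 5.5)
Your proposal is correct and follows essentially the same route as the paper. You identify the left endpoint with $\inf\{\sum_k\theta_kW(Q,\widehat{\p}_k):Q(\mathcal N)\in[a,b]\}$ and pass to a multi-marginal representation (the paper cites Lemma~\ref{lm:DMM}, while you glue the near-optimal couplings along $Q$ explicitly). You bound the integrand below by $h_{\balpha}^{\btheta}$ on $\mathcal N$ and by $d_{\balpha}^{\btheta}$ on $\mathcal N^c$, and you obtain the reverse inequality by placing the masses $p_{\balpha},q_{\balpha}$ on $\delta$-optimal points, which makes precise steps the paper leaves implicit.
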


In the two-source separable-cost setting of Corollary~\ref{co:special}, Proposition~\ref{prop:feasibleradiiII} reduces to the following linear program:
\begin{align}\label{eq-regionradiiII}
\begin{array}{llll}
&\inf 
&\sum_{\balpha\in\mathcal A}p_{\balpha}\left(\min\{\theta_1,\theta_2\}\|\widehat{\by}_{1,\alpha_1}-\widehat{\by}_{2,\alpha_2}\|+\sum_{k=1}^2 \theta_k\|\bx_0-\widehat{\bx}_{k,\alpha_k}\|\right)\\
&&+\sum_{\balpha\in\mathcal A}q_{\balpha}\min\{\theta_1,\theta_2\}\|\widehat{\bxi}_{1,\alpha_1}-\widehat{\bxi}_{2,\alpha_2}\|\\
&{\rm s.t.} &\sum_{\balpha\in\mathcal A}p_{\balpha}\in[a,b]\\
&&p_{\balpha}\ge 0,~q_{\balpha}\ge0
&\forall \balpha\in\mathcal A\\
&&\sum_{\balpha\in\mathcal A,\alpha_{k}=j} (p_{\balpha}+q_{\balpha})=w_{k,j}&\forall j\in[N_{k}], k=1,2.
\end{array}
\end{align}

The set $\mathcal E^{\rm III}$ is the single-source counterpart of $\mathcal E^{\rm II}$ after replacing the empirical sources by the barycenter. When the barycenter has the form specified in \eqref{eq-barycenter}, the following corollary follows directly from Proposition \ref{prop:feasibleradiiII}.

\begin{corollary}\label{co-fsradii3}
The left endpoint of $\mathcal E^{\rm III}$ defined by \eqref{eq-radiiregion3} is equal to the optimal value of the following linear program:
\begin{align*}
\begin{array}{llll}
&\inf 
&\sum_{j=1}^N p_{j}h_{j}+\sum_{j=1}^Nq_{j}d_{j}\\
&{\rm s.t.} &\sum_{j=1}^Np_{j}\in[a,b]\\
&&p_{j}\ge 0,~q_{j}\ge 0,~p_{j}+q_{j}=w_{j}~~~~~~&\forall j\in[N],
\end{array}
\end{align*}
where $h_{j}=\inf_{\bxi\in\mathcal N} c(\bxi,\widehat{\bxi}_{j})$, and $d_j$ is defined in \eqref{eq-certaindiSD}.
\end{corollary}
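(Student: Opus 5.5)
The plan is to specialize Proposition~\ref{prop:feasibleradiiII} to the single-source case $K=1$, $\theta_1=1$, with the source law taken to be $\widehat{\p}_{\rm bar}=\sum_{j=1}^N w_j\delta_{\widehat{\bxi}_j}$ from \eqref{eq-barycenter}. By definition, $\mathcal B^{\rm III}(\{\widehat{\p}_k\}_{k\in[K]},\epsilon)=\mathcal B(\widehat{\p}_{\rm bar},\epsilon)$, and this ball coincides with the weighted-distance ball $\mathcal B^{\rm II}(\{\widehat{\p}_{\rm bar}\},\epsilon)$ built from the single source $\widehat{\p}_{\rm bar}$ with weight $\theta_1=1$. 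Hence $\mathcal E^{\rm III}$ in \eqref{eq-radiiregion3} equals the set $\mathcal E^{\rm II}$ in \eqref{eq-radiiregion2} for this one-source instance, and the two sets have the same left endpoint.

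Next I will read off the linear program of Proposition~\ref{prop:feasibleradiiII} in this instance. With $K=1$, the multi-index set is $\mathcal A=[N]$, and each $\balpha$ is a single index $j$. The marginal constraint $\sum_{\balpha:\alpha_1=j}(p_{\balpha}+q_{\balpha})=w_{1,j}$ becomes $p_j+q_j=w_j$. The coefficients reduce as follows:
\begin{align*}
h^{\btheta}_{\balpha}&=\inf_{\bxi\in\mathcal N}c(\bxi,\widehat{\bxi}_j)=h_j,\\
d^{\btheta}_{\balpha}&=\inf_{\bxi\in\mathcal N^c}c(\bxi,\widehat{\bxi}_j)=d_j.
\end{align*}
The second identity uses \eqref{eq-dB1cap} and agrees with \eqref{eq-certaindiSD}. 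The mass constraint $\sum_j p_j\in[a,b]$ and the nonnegativity constraints carry over unchanged, so the resulting program is exactly the one displayed in the corollary.

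The only point needing care is whether Proposition~\ref{prop:feasibleradiiII} imposes structural requirements on the sources that $\widehat{\p}_{\rm bar}$ might fail. I will check that its proof uses only finite support and nonnegative weights summing to one, both of which hold under \eqref{eq-barycenter}; the atoms $\widehat{\bxi}_j$ need not be original samples. As a sanity check, I can also argue directly. Any feasible $Q$ can be split into mass from atom $j$ sent to $\mathcal N$ (amount $p_j$) and to $\mathcal N^c$ (amount $q_j$). This split incurs cost at least $\sum_j p_jh_j+q_jd_j$, which gives the lower bound. Conversely, sending the mass to near-minimizers in $\mathcal N$ and $\mathcal N^c$ attains the bound up to any tolerance, which gives the matching upper bound on the infimum.
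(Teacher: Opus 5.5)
Your proposal is correct and follows the paper's route: the corollary is obtained by specializing Proposition~\ref{prop:feasibleradiiII} to the single source $\widehat{\p}_{\rm bar}$ with $K=1$ and $\theta_1=1$. Then $\mathcal A=[N]$, $h^{\btheta}_{\balpha}=h_j$ and $d^{\btheta}_{\balpha}=d_j$, and the marginal constraints reduce to $p_j+q_j=w_j$. Your direct splitting argument is just that proposition's proof written out in the one-source case, and checking that it uses only finite support and normalized nonnegative weights is the right thing to verify.
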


% {\color{black} THIS PART MAY NOT BE NEEDED.

% Below, we provide a practically computable approximation of the smallest radius for which condition~\eqref{eq-radiiregion2} holds via a two-step procedure.

% \begin{enumerate}
% \item \textbf{Compute a lower bound.} 
%     This is the smallest $\epsilon$ such that the ambiguity set $\mathcal B^{\rm II}\left(\{\widehat{\p}_k\}_{k\in [K]},\epsilon\right)$ is nonempty. This value corresponds to the optimal value of the barycenter problem defined in~\eqref{def:barycenter}, and we denote it by $\epsilon_L$.
    
% \item \textbf{Search for the smallest radius in an interval.} 
% We set a large enough radius as an upper bound, denoted $\epsilon_U$, and construct a grid of candidate radii uniformly spaced over the interval $[\epsilon_L, \epsilon_U]$. Let $s_M = (\epsilon_U - \epsilon_L)/M$, and define the candidate radii as:
%     $$
%     \epsilon_m = \epsilon_L + m s_M, \quad m \in[M].
%     $$
%     For each candidate radius $\epsilon_m$, we solve the problem~\eqref{prob-sum} with $\ell \equiv 0$. The smallest $\epsilon_m$ such that the optimal value of the problem~\eqref{prob-sum} is not $\infty$ is then identified as the smallest radius for which~\eqref{eq-radiiregion2} holds.
% \end{enumerate}
% }

%%%%%%%%%%%%%%%%%%%%%%%%%%%%%%%%%%%%%%%%%%%%%%%%%%%%%%%%%%
\section{Beyond Expectation: Conditional Risk Measures}\label{sec:extension}
The preceding sections optimize conditional expected loss. The same framework also applies to conditional risk measures that admit a variational representation. This section states the required minimax result and shows that such risk measures can be handled by the tractable counterparts developed above.

Consider risk functionals of the form
\begin{align}\label{eq-rho}
\mathcal R_{Q}(Z)=\inf_{\mathbf t\in\R^n} \E_Q[\phi(Z,\mathbf t)],~~Q\in\mathcal P(\R),
\end{align}
where $\phi$ is a real-valued function on $\R^{n+1}$. 
Let $\ell:\Xi\to\R$ and $\mathcal N\subseteq \Xi$, and suppose now that $Q\in\mathcal P(\Xi)$.
The corresponding conditional risk is
\begin{align*}
\mathcal R_Q(\ell(\bxi)\mid\bxi\in\mathcal N)=\inf_{\mathbf t\in\R^n} \E_Q[\phi(\ell(\bxi),\mathbf t)\mid\bxi\in\mathcal N].
\end{align*}
The associated worst-case conditional risk over an ambiguity set $\mathcal P\subseteq\mathcal P(\Xi)$ is
\begin{align}\label{prob:conDROriskmeasure}
\sup_{Q\in\mathcal P}\mathcal R_Q(\ell(\bxi)\mid\bxi\in\mathcal N)=\sup_{Q\in\mathcal P}\inf_{\mathbf t\in\R^n} \E_Q[\phi(\ell(\bxi),\mathbf t)\mid\bxi\in\mathcal N].
\end{align}
% A core idea is that if the sup and inf can be exchanged in \eqref{prob:conDROriskmeasure}, then we can first solve the supremum  conditional expectation problem, which has been addressed in our paper and the literature for different ambiguity sets. 
If the supremum and infimum in \eqref{prob:conDROriskmeasure} can be interchanged, then the problem reduces to a family of worst-case conditional expectation problems of the type studied in the preceding sections.

We impose two assumptions on $\phi$ to justify this interchange.

\begin{assumption}\label{ass:phi1}
The function $\phi(z,\mathbf t)$ is convex and lower-semicontinuous in $\mathbf t$ for any fixed $z\in\R$.
\end{assumption}

\begin{assumption}\label{ass:phi2}
For every probability law $P$ on $\mathbb R$ with $R(P,\mathbf t):=\E_P[\phi(Z,\mathbf t)]<\infty$ for at least one $\mathbf t$, the function $R(P,\mathbf t)$ is coercive in $\mathbf t$; that is, $R(P,\mathbf t)\to\infty$ whenever $\|\mathbf t\|\to\infty$.
\end{assumption}

% \begin{example}[Optimized Certainty Equivalent]
% The optimized certainty equivalent (OCE) introduced by  \cite{BT07} has the form of \eqref{eq-rho} with $\phi(z,t)=t+u(z-t)$, where $u:\R\to\R$ is increasing and convex. Note that the left derivative exist for $u$ as it is convex, denoted as $u'_-$, and we assume that $\lim_{z\to\infty}u_-'(z)>1$ and $\lim_{z\to-\infty}u_-'(z)<1$. Under this setting, it is straightforward to verify that Assumption \ref{ass:phi1} and \ref{ass:phi2} hold. In particular, conditional value at risk (CVaR) is a special case of the optimized certainty equivalent with $\phi(z,t)=t+\frac{1}{1-\delta}(z-t)_+$ for $\delta\in(0,1)$.
% \end{example}

These assumptions are satisfied by a broad class of risk functionals used in statistics and finance. We illustrate this with three classical examples.

{\color{black}
\begin{example}[Newsvendor problem]
The classical newsvendor problem fits the form in \eqref{eq-rho}. Let $Z$ denote random demand, and let $t$ be the order quantity. With overage cost $h>0$ and underage cost $b>0$, the newsvendor loss is
$
\phi(z,t)=h(t-z)_+ + b(z-t)_+ .
$
For fixed $z$, the function $t\mapsto \phi(z,t)$ is convex and continuous, and hence satisfies Assumption \ref{ass:phi1}. Assumption \ref{ass:phi2} also holds as $\lim_{|t|\to\infty}\phi(z,t)=\infty$ for any $z$.
\end{example}
}

\begin{example}[Optimized Certainty Equivalent]
The optimized certainty equivalent (OCE), introduced by \cite{BT07}, is a special case of \eqref{eq-rho} with $\phi(z, t) = t + u(z - t)$ for an increasing and convex function $u: \mathbb{R} \to \mathbb{R}$. Hence, $\phi$ satisfies Assumption \ref{ass:phi1}.
Since $u$ is convex, its left derivative $u'_-$ exists everywhere. If $\lim_{z \to \infty} u'_-(z) > 1$ and $\lim_{z \to -\infty} u'_-(z) < 1$, then Assumption \ref{ass:phi2} also holds. 
Conditional value-at-risk (CVaR) is the special case that $\phi(z, t) = t + (z - t)_+/(1 - \eta)$ for confidence level $\eta \in (0,1)$, and satisfies both assumptions. 
\end{example}

\begin{example}[Deviation Measures]
\cite{RU13} proposed a general framework for deviation measures using risk quadrangles. Deviation measures induced by expectation quadrangles can be expressed in the form of \eqref{eq-rho} by setting $\phi(z, t) = u(z - t)$ for a suitable function $u:\R\to\R$. In this context, \cite{RU13} interpreted $\E_Q[u(Z)]$ as quantifying the ``nonzeroness" of the random variable $Z$, making Assumption \ref{ass:phi2} natural.
Mean absolute deviation, with $\phi(z, t) = |z - t|$, and variance, with $\phi(z, t) = (z - t)^2$, both fall within this framework, and satisfy Assumptions \ref{ass:phi1} and \ref{ass:phi2}.
\end{example}

% \begin{example}[Deviation measures]
% \cite{RU13} introduced a criterion to define  deviation measures through risk quadrangles. Deviation measures induced by expectation quadrangles can be defined as the form in \eqref{eq-rho} with $\phi(z,t)=u(z-t)$. Since \cite{RU13} suggested that $\E_Q[\phi(Z)]$ measures the ``nonzeroness" of $Z$, our Assumptions \ref{ass:phi1} and \ref{ass:phi2} are significant in this case. In particular, the mean absolute deviation and  the variance are special cases with $\phi(z,t)=|z-t|$ and $\phi(z,t)=(z-t)^2$, respectively.
% \end{example}

We now establish the minimax result for conditional risk functionals.
\begin{theorem}\label{th-minimax}
Let $A\subseteq (0,1]$ be an interval, and let $\mathcal P\subseteq \mathcal P(\Xi)$, $\ell:\Xi\to\R$, and $\phi:\R^{n+1}\to\R$. Define $\mathcal P_{\mathcal N}^{\ell}=\{Q^{\mathcal N}\circ\ell^{-1}: Q\in\mathcal P,\ Q(\mathcal N)\in A\}$. Suppose that $\mathcal P$ is convex, $\mathcal P_{\mathcal N}^{\ell}$ is nonempty, $R(P,\mathbf t)=\E_P[\phi(Z,\mathbf t)]$ is finite for every $P\in\mathcal P_{\mathcal N}^{\ell}$ and every $\mathbf t\in\R^n$, and $\inf_{\mathbf t\in\R^n}\sup_{P\in\mathcal P_{\mathcal N}^{\ell}} R(P,\mathbf t)<\infty$. Suppose further that $\phi$ satisfies Assumptions \ref{ass:phi1} and \ref{ass:phi2}. Then,
\begin{align*}%\label{eq-minimaxDRO}
\sup_{Q\in\mathcal P,Q(\mathcal N)\in A} \mathcal R_Q(\ell(\bxi)\mid\bxi\in\mathcal N)
% &=\sup_{Q\in\mathcal P, Q(\mathcal N)\in A}\inf_{\mathbf t\in\R^n} \E_Q[\phi(\ell(\bxi),\mathbf t)|\bxi\in\mathcal N]\notag\\
=\inf_{\mathbf t\in\R^n} \sup_{Q\in\mathcal P, Q(\mathcal N)\in A}\E_Q[\phi(\ell(\bxi),\mathbf t)\mid\bxi\in\mathcal N].
\end{align*}
\end{theorem}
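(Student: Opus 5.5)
The plan is to reduce the statement to a minimax theorem applied to the pushed-forward conditional laws. For every $Q\in\mathcal P$ with $Q(\mathcal N)\in A$, the law $Q^{\mathcal N}\circ\ell^{-1}=:P_Q$ satisfies $\E_Q[\phi(\ell(\bxi),\mathbf t)\mid\bxi\in\mathcal N]=R(P_Q,\mathbf t)$. Both sides of the claim can therefore be written as $\sup_{P\in\mathcal P_{\mathcal N}^{\ell}}\inf_{\mathbf t}R(P,\mathbf t)$ and $\inf_{\mathbf t}\sup_{P\in\mathcal P_{\mathcal N}^{\ell}}R(P,\mathbf t)$, respectively. The inequality "$\le$" is trivial, so all the work is in the reverse inequality.

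The first step is to establish convexity of $\mathcal P_{\mathcal N}^{\ell}$. This is the point where conditioning creates a difficulty: the map $Q\mapsto Q^{\mathcal N}$ is not affine. Take $Q_1,Q_2\in\mathcal P$ with $Q_i(\mathcal N)=p_i\in A$, and a target mixture $\mu P_1+(1-\mu)P_2$ of the conditional laws. Set $Q=\lambda Q_1+(1-\lambda)Q_2$ with $\lambda$ chosen so that $\lambda p_1/(\lambda p_1+(1-\lambda)p_2)=\mu$. Such a $\lambda\in[0,1]$ exists because this ratio is continuous and monotone in $\lambda$, running from $0$ to $1$. Then
\begin{align*}
Q^{\mathcal N}=\frac{\lambda p_1Q_1^{\mathcal N}+(1-\lambda)p_2Q_2^{\mathcal N}}{\lambda p_1+(1-\lambda)p_2}=\mu Q_1^{\mathcal N}+(1-\mu)Q_2^{\mathcal N}.
\end{align*}
Moreover $Q\in\mathcal P$ by convexity of $\mathcal P$, and $Q(\mathcal N)\in[p_1,p_2]\subseteq A$ because $A$ is an interval. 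Pushing forward by $\ell$ preserves mixtures, so $\mathcal P_{\mathcal N}^{\ell}$ is convex.

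Next, $R(P,\mathbf t)$ is affine, hence concave, in $P$, and convex and lower semicontinuous in $\mathbf t$. Convexity follows from Assumption~\ref{ass:phi1} and linearity of expectation. Lower semicontinuity follows from Fatou's lemma, provided $\phi$ is bounded below locally. If it is not, I would instead use the fact that a finite convex function on $\R^n$ is continuous. This holds here since $R(P,\cdot)$ is finite everywhere by hypothesis.

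To obtain compactness in $\mathbf t$, I would use coercivity together with a Sion-type minimax theorem in which only the $\mathbf t$-side needs compactness. Let $v=\inf_{\mathbf t}\sup_P R(P,\mathbf t)<\infty$. Fix $P_0\in\mathcal P_{\mathcal N}^{\ell}$ and some $c>v$. By Assumption~\ref{ass:phi2}, the sublevel set $T=\{\mathbf t:R(P_0,\mathbf t)\le c\}$ is compact and convex. Any $\mathbf t\notin T$ has $\sup_PR(P,\mathbf t)\ge R(P_0,\mathbf t)>c$, so restricting to $T$ does not change the inf–sup.

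The main obstacle is that I cannot apply Sion's theorem directly on $T\times\mathcal P_{\mathcal N}^{\ell}$ and then remove $T$ on the sup–inf side, because $\inf_{\mathbf t\in T}R(P,\mathbf t)$ may exceed $\inf_{\mathbf t}R(P,\mathbf t)$ for $P\neq P_0$. To handle this, I would work with finite subsets: for each finite $\{P_1,\dots,P_m\}$ containing $P_0$, consider the convex hull $\mathcal C$ of these laws together with $P_0$. Sion's theorem applies on $T\times\mathcal C$, since $\mathcal C$ is a finite-dimensional simplex and $T$ is compact. Alternatively, use the perturbation $P_\delta=(1-\delta)P+\delta P_0$, for which $R(P_\delta,\mathbf t)\ge(1-\delta)\inf R(P,\cdot)+\delta R(P_0,\mathbf t)$ forces minimizers into a compact set. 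Combining this with the finite-intersection property of the compact sets $\{\mathbf t\in T:R(P,\mathbf t)\le \sup_{P'}\inf_{\mathbf t'}R(P',\mathbf t')+\eta\}$ (as in the standard proof of Sion's theorem via Ky Fan), I would conclude that $\inf_{\mathbf t}\sup_P R\le\sup_P\inf_{\mathbf t}R+\eta$ for every $\eta>0$. Verifying this finite-intersection step carefully is where I expect most of the effort to go.
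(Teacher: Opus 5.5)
\textbf{Verdict: genuine gap at the finite-family step.}

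Several parts of your plan are fine and match the paper. The reduction to $\mathcal P_{\mathcal N}^{\ell}$ and the convexity argument are fine; the reweighting is the same as in the paper. Restricting the inf--sup to the compact sublevel set $T$ of $R(P_0,\cdot)$ is also correct. Passing to finite hulls that contain $P_0$ mirrors the paper's use of $\mathrm{conv}(\mathcal Q_S\cup\{Q_0\})$.

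The gap is the step you defer. For a finite family $P_1,\dots,P_m$ you need $\min_{\mathbf t\in T}\max_i R(P_i,\mathbf t)\le w:=\sup_{P}\inf_{\mathbf t\in\R^n}R(P,\mathbf t)$. Neither device you propose delivers this.
\begin{itemize}
\item Sion on $T\times\mathcal C$ only gives $\min_T\max_{\mathcal C}R=\max_{P\in\mathcal C}\min_{\mathbf t\in T}R(P,\mathbf t)$. The right-hand side still involves $\min_T$ rather than $\inf_{\R^n}$. This is exactly the obstacle you identified, and passing to a finite hull does not remove it.
\item The perturbation bound $R(P_\delta,\mathbf t)\ge(1-\delta)\inf R(P,\cdot)+\delta R(P_0,\mathbf t)$ only confines near-minimizers of $R(P_\delta,\cdot)$ to $\{R(P_0,\cdot)\le (w-(1-\delta)\inf R(P,\cdot))/\delta\}$. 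That set depends on $P$ and is not uniformly bounded, because nothing bounds $\inf_{\mathbf t}R(P,\cdot)$ from below uniformly over $\mathcal P_{\mathcal N}^{\ell}$.
\end{itemize}

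The missing idea is the paper's polytope lemma. Let $\mathcal C=\mathrm{conv}\{P_0,\dots,P_m\}$.
\begin{enumerate}
\item Apply Assumption~\ref{ass:phi2} to the finitely many vertices to get $r_0$ with $\min_i R(P_i,\mathbf t)>w$ whenever $\|\mathbf t\|>r_0$.
\item Since $R$ is affine in $P$, every $Q\in\mathcal C$ satisfies $R(Q,\mathbf t)>w\ge\inf R(Q,\cdot)$ off $B_{r_0}$. Hence $\inf_{\R^n}R(Q,\cdot)=\inf_{B_{r_0}}R(Q,\cdot)$ simultaneously for all $Q\in\mathcal C$.
\item Sion on $B_{r_0}\times\mathcal C$ then gives $\inf_{\R^n}\max_i R(P_i,\cdot)\le\min_{B_{r_0}}\max_{\mathcal C}R=\sup_{\mathcal C}\inf_{\R^n}R\le w$.
\end{enumerate}
Alternatively, apply Sion with the compactness placed on the simplex of mixture weights, which needs no compactness in $\mathbf t$ at all.

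Now $\max_i R(P_i,\cdot)$ is convex, continuous and coercive, so it attains this value at some $\mathbf t^*$. Because $P_0$ is a vertex, $R(P_0,\mathbf t^*)\le w\le v<c$, so $\mathbf t^*\in T$. This is precisely the finite intersection property for the compact sets $\{\mathbf t\in T: R(P,\mathbf t)\le w\}$. With it, your compactness argument closes the proof directly, even with $\eta=0$. It would then be a tidier substitute for the paper's dense-sequence and uniform-convergence step in the general convex case.
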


A related minimax result for unconditional risk functionals is discussed, for example, in \citet[Section~5.2]{ZYG24}. Suppose that $\mathcal P$ is convex and that $\bt\mapsto\phi(z,\bt)$ is convex and lower semicontinuous for every $z$. Consider the following equation:
\begin{align*}
\sup_{Q\in\mathcal P}\inf_{\bt\in\R^n}\E_Q[\phi(Z,\bt)]
=
\inf_{\bt\in\R^n}\sup_{Q\in\mathcal P}\E_Q[\phi(Z,\bt)].
\end{align*}
A direct sufficient condition for this equality is the existence of a common compact set $C\subseteq\R^n$ such that
\begin{align}\label{eq-UCsuffconminmax}
\inf_{\bt\in\R^n}\E_Q[\phi(Z,\bt)]
=
\inf_{\bt\in C}\E_Q[\phi(Z,\bt)]
\qquad
\forall\,Q\in\mathcal P.
\end{align}
Under this uniform compact-reduction condition, the minimization over $\bt$ can be restricted to $C$, after which a standard minimax theorem applies. The same argument can be used in the conditional setting by replacing $\mathcal P$ with the induced family of conditional loss distributions $\mathcal P_{\mathcal N}^{\ell}$.

However, condition~\eqref{eq-UCsuffconminmax} depends jointly on the structure of $\phi$ and the ambiguity set $\mathcal P$, since a single compact set must work uniformly for all $Q\in\mathcal P$. Verifying such a uniform condition may be difficult, particularly for ambiguity sets with complex multi-source constructions. By contrast, Assumptions~\ref{ass:phi1} and \ref{ass:phi2} are formulated independently of the particular ambiguity set. Together with the convexity and nonemptiness of $\mathcal P_{\mathcal N}^{\ell}$ and the finiteness conditions in Theorem~\ref{th-minimax}, these assumptions justify the minimax interchange without requiring the explicit construction of a compact set that is uniform over the entire ambiguity set.

% Note that the multi-Source ambiguity sets, denoted as $\mathcal B^{\rm I}$, $\mathcal B^{\rm II}$ and $\mathcal B^{\rm III}$, considered in this paper are all convex. By Theorem {th-minimax}, the conditional robust optimization problem in \eqref{prob-mainprob} with expectation replaced by the risk functional defined in \eqref{eq-rho} still admits the form of \eqref{prob-mainprob} with additional decision variable $\bt$ and the loss function replaced by $\phi(\ell(\bxi,\bt))$. This direct result is presented in the following corollary, where its proof is omitted.

The multi-source ambiguity sets $\mathcal{B}^{\rm I}$, $\mathcal{B}^{\rm II}$, and $\mathcal{B}^{\rm III}$ are convex. By Theorem~\ref{th-minimax}, replacing the conditional expectation %\eqref{prob-mainprob} 
by the risk functional in \eqref{eq-rho} preserves the same optimization structure: one adds the auxiliary decision variable $\bt$ and replaces the loss by $\phi(\ell(\bbeta,\bxi),\bt)$. The following corollary records this implication; the proof is immediate from Theorem~\ref{th-minimax}.

\begin{corollary}\label{co:beyondE}
Let $\mathcal B\in \{\mathcal{B}^{\rm I}, \mathcal{B}^{\rm II}, \mathcal{B}^{\rm III}\}$, and let the risk functional $\mathcal R$ be defined by \eqref{eq-rho}, where $\phi$ satisfies Assumptions \ref{ass:phi1} and \ref{ass:phi2}. 
Define
$
\mathcal P_{\mathcal N}^{\ell(\bbeta,\cdot)}
=
\left\{
Q^{\mathcal N}\circ(\ell(\bbeta,\cdot))^{-1}:
Q\in\mathcal P,\ Q(\mathcal N)\in[a,b]
\right\}.
$
Suppose that $R(P,\bt):=\E_P[\phi(Z,\bt)]$ is finite for every $P\in\mathcal P_{\mathcal N}^{\ell(\bbeta,\cdot)}$, $\bbeta\in\mathcal D$ and $\bt\in\R^n$, and that
$
\inf_{\bt\in\R^n}
\sup_{P\in\mathcal P_{\mathcal N}^{\ell(\bbeta,\cdot)}}
R(P,\bt)<\infty
$
for every $\bbeta\in\mathcal D$.
Then, the conditional risk problem
\begin{align*}
\inf_{\bbeta \in \mathcal{D}} \;
\sup_{\substack{Q \in \mathcal{B}\left(\{ \widehat{\mathbb{P}}_k \}_{k \in [K]}, \boldsymbol{\epsilon} \right) \\ Q(\mathcal{N}) \in [a, b]}} 
\mathcal R_Q(\ell(\bbeta, \boldsymbol{\xi}) \mid \boldsymbol{\xi} \in \mathcal{N})
\end{align*}
is equivalent to
\begin{align*}
\inf_{\bbeta\in\mathcal D,\mathbf t\in\R^n} \sup_{\substack{Q \in \mathcal{B}\left(\{ \widehat{\mathbb{P}}_k \}_{k \in [K]}, \boldsymbol{\epsilon} \right) \\ Q(\mathcal{N}) \in [a, b]}}\E_Q[\phi(\ell(\bbeta,\bxi),\mathbf t)\mid\bxi\in\mathcal N].
\end{align*}
\end{corollary}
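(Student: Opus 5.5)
The plan is to fix $\bbeta\in\mathcal D$ and apply Theorem~\ref{th-minimax} with $\mathcal P=\mathcal B(\{\widehat{\p}_k\}_{k\in[K]},\bepsilon)$, $A=[a,b]$ and loss $\ell(\bbeta,\cdot)$. The resulting pointwise identity is then combined with an interchange of the two infima.

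\textbf{Step 1: convexity of the three sets.} The hypotheses of Theorem~\ref{th-minimax} require $\mathcal P$ to be convex, so this is checked first for each of $\mathcal B^{\rm I}$, $\mathcal B^{\rm II}$ and $\mathcal B^{\rm III}$. The key fact is that $Q\mapsto W(Q,P)$ is convex for any fixed $P$. To see this, take $Q_1,Q_2$ and $\pi_i\in\Pi(Q_i,P)$. The mixture $t\pi_1+(1-t)\pi_2$ lies in $\Pi(tQ_1+(1-t)Q_2,P)$, and its expected cost is the same mixture of the two costs; taking infima gives convexity. Three consequences follow:
\begin{itemize}
\item each ball $\mathcal B(\widehat{\p}_k,\epsilon_k)$ is convex, hence so is their intersection $\mathcal B^{\rm I}$;
\item $\sum_k\theta_kW(\cdot,\widehat{\p}_k)$ is convex because $\btheta\ge0$, so its sublevel set $\mathcal B^{\rm II}$ is convex;
\item $\mathcal B^{\rm III}$ is a single ball around $\widehat{\p}_{\rm bar}$, hence convex.
\end{itemize}

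\textbf{Step 2: pointwise minimax.} The remaining hypotheses of Theorem~\ref{th-minimax} are exactly the standing assumptions of the corollary, applied to $\mathcal P_{\mathcal N}^{\ell(\bbeta,\cdot)}$:
\begin{itemize}
\item finiteness of $R(P,\bt)$ for every $P\in\mathcal P_{\mathcal N}^{\ell(\bbeta,\cdot)}$ and $\bt\in\R^n$;
\item finiteness of $\inf_{\bt}\sup_P R(P,\bt)$;
\item Assumptions~\ref{ass:phi1} and~\ref{ass:phi2} on $\phi$.
\end{itemize}
Nonemptiness of $\mathcal P_{\mathcal N}^{\ell(\bbeta,\cdot)}$ is also needed; I would argue it is implicit, since the inf-sup is assumed finite, and a sup over an empty set would be $-\infty$ and make the problem degenerate. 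For each $\bbeta$, Theorem~\ref{th-minimax} then gives
\[
\sup_{Q}\mathcal R_Q(\ell(\bbeta,\bxi)\mid\bxi\in\mathcal N)=\inf_{\bt\in\R^n}\sup_{Q}\E_Q[\phi(\ell(\bbeta,\bxi),\bt)\mid\bxi\in\mathcal N],
\]
where both suprema range over $Q\in\mathcal B$ with $Q(\mathcal N)\in[a,b]$. The identification $\E_Q[\phi(\ell,\bt)\mid\bxi\in\mathcal N]=\E_{Q^{\mathcal N}\circ\ell^{-1}}[\phi(Z,\bt)]$ is immediate from the definition of $Q^{\mathcal N}$ in \eqref{eq-cddf}.

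\textbf{Step 3: combine the infima.} Take the infimum over $\bbeta\in\mathcal D$ on both sides of the identity from Step~2. Two consecutive infima, first over $\bt$ and then over $\bbeta$, equal a single joint infimum over $(\bbeta,\bt)$, which is exactly the claimed equivalent problem.

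The only step that needs care is the convexity of $\mathcal B$ in Step~1. The one subtle point there is that for $\mathcal B^{\rm III}$ the barycenter is fixed before the ball is formed, so no convexity in the center is needed.
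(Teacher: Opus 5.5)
Your proposal is correct and is essentially the paper's argument. The paper likewise observes that $\mathcal B^{\rm I}$, $\mathcal B^{\rm II}$ and $\mathcal B^{\rm III}$ are convex, applies Theorem~\ref{th-minimax} for each fixed $\bbeta$ with $\mathcal P=\mathcal B$ and $A=[a,b]$, and merges the two infima.

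One correction to your nonemptiness remark. The hypothesis only says $\inf_{\bt}\sup_{P}R(P,\bt)<\infty$, not that this quantity is finite. An empty $\mathcal P_{\mathcal N}^{\ell(\bbeta,\cdot)}$ gives inf-sup equal to $-\infty$, which satisfies that hypothesis, so nonemptiness does not follow from it. The gap is harmless, though: if the set is empty, both problems equal $-\infty$ trivially, and the equivalence still holds.
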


\section{Numerical Experiments}\label{sec:num}

In this section, we conduct two numerical experiments for an assortment optimization problem. The first is a controlled simulation with one target distribution and two heterogeneous source distributions, covering multiple combinations of source sample sizes and 50 independent replications. The second uses Walmart sales data from the M5 competition and follows a rolling, look-ahead-free evaluation procedure. In both experiments, model parameters are selected using separate validation observations, and the selected models are evaluated on held-out test observations. We report both conditional and unconditional results for the seven methods considered.

\subsection{Assortment Model and Competing Methods}
\label{sec:num-assortment}

The decision maker selects at most $B$ products from a universe of $d$
substitutable products.  Let
$$
\mathcal D_B:=\left\{\bbeta\in\{0,1\}^d:
\sum_{j=1}^d v_j\le B\right\}
$$
denote the feasible assortments, where $v_j=1$ means that product $j$ is
offered. Denote by $\bxi=(\bX,\bY)$, where $\bY\in\R_+^d$ represents random demand shares and $\bX$ is the vector of covariates.
For prices $\bp\in\mathbb R_+^d$ and $\bY\in\mathbb R_+^d$, the realized revenue is
$R(\bbeta,\bY)=\sum_{j=1}^d p_jv_jY_j$.  If the target distribution $\P_T$ were
known, the state-dependent assortment at $\bX=\bx_0$ would solve
\begin{equation*}
\max_{\bbeta\in\mathcal D_B}
\E_{\P_T}\!\left[R(\bbeta,\bY)\mid\bX=\bx_0\right].
%\label{eq:num-oracle}
\end{equation*}

We replace the unknown target law by an ambiguity set $\mathcal B$.
% associated with model family $m$ and candidate parameter vector $h$.  
The
conditional and unconditional prescriptions used in the experiments are
\begin{align*}
&\widehat\bbeta^{\,\mathrm C}(\bx_0)
\in\argmax_{\bbeta\in\mathcal D_B}\;
\inf_{\substack{Q\in\mathcal B\\
Q(\bX=\bx_0)\in[a,1]}}
\E_Q\!\left[R(\bbeta,\bY)\mid\bX=\bx_0\right];\\
%\label{eq:num-conditional}\\
&\widehat\bbeta^{\,\mathrm U}
\in\argmax_{\bbeta\in\mathcal D_B}\;
\inf_{Q\in\mathcal B}\E_Q[R(\bbeta,\bY)].
%\label{eq:num-unconditional}
\end{align*}
% Thus, $a$ is a lower bound on the probability assigned to the conditioning
% event; it is not a bandwidth parameter.  
Conditional models use a separate form of
$\ell_1$ transportation cost on the joint vector $\bxi=(\bX,\bY)$, 
that is, $c(\bxi_1,\bxi_2)=\|\bX_1-\bX_2\|_{\mathcal X}+\|\bY_1-\bY_2\|_{\mathcal Y}$,
whereas their
unconditional counterparts use the $\ell_1$ cost on $\bY$.  We enumerate all
full-budget assortments and solve the reformulations developed in
Sections~\ref{sec:tractbilityB1}--\ref{sec:tractabilityB3}.

We compare seven model families.  \emph{Intersection}, \emph{Weighted distance}, and
\emph{Barycenter} correspond to ambiguity sets $\mathcal B^{\rm I}$,
$\mathcal B^{\rm II}$, and $\mathcal B^{\rm III}$, respectively.
\emph{Single S} and \emph{Single J} place one OT ball around the empirical law
of the corresponding source.  The \emph{target benchmark} uses the same
single-center construction around an independent target sample and provides
an information benchmark.
\emph{Pooled method} discards source labels and centers one OT ball at the combined
empirical law
\begin{equation*}
\widehat \P_{\rm pool}
=\frac{1}{N_S+N_J}\sum_{k\in\{S,J\}}\sum_{j=1}^{N_k}\delta_{\widehat{\bxi}_{k,j}}.
%\label{eq:num-centers}
\end{equation*}
% Except for the prespecified $50{:}50$ sensitivity design in the controlled
% experiment, each pooled observation receives equal mass, so that
% $\omega_S=N_S/(N_S+N_J)$.  We set $\omega_S=0.40$ for that $50{:}50$ design
% and $\omega_S=8/38$ in the M5 experiment.
Following the heterogeneous-data construction in \citet{REMK24}, the
Barycenter method centers a single OT ball at an equal-weight
1-Wasserstein barycenter,
\begin{equation*}
\widehat \P_{\rm bar}\in
\argmin_{Q}\left\{
\frac{1}{2}W(Q,\widehat \P_S)+
\frac{1}{2}W(Q,\widehat \P_J)\right\}.
%\label{eq:num-barycenter}
\end{equation*}
With two empirical measures and the 1-Wasserstein distance, both source
measures are valid barycenter representatives.  We therefore treat the center
choice $\widehat \P_{\rm bar}\in\{\widehat \P_S,\widehat \P_J\}$ as a tuning
decision and select it on the validation sample together with the radius.

\paragraph{\bf Feasible radii and validation.}
The conditional ambiguity set must contain at least one distribution assigning
probability in $[a,1]$ to $\bX=\bx_0$.  We compute the smallest admissible
radii from the feasibility programs in Section~\ref{sec:feasible}.  Let $\epsilon_{\min}(\P;\bx_0,a)$ denote the minimum feasible radius associated with a single reference distribution $\P$. For the intersection model, let $\epsilon_{J,\min}(\epsilon_S;\bx_0,a)$ denote the minimum radius around $\widehat\P_J$ when the radius around $\widehat\P_S$ is fixed at $\epsilon_S$. This boundary is obtained from the optimization problem in \eqref{eq-feasibleradiiI} of Corollary~\ref{co:special}. Finally, let $\epsilon_{\min}^{\rm II}(\bx_0,a,\theta)$ denote the corresponding feasibility boundary for the weighted model, as determined by the optimization problem in \eqref{eq-regionradiiII}.
Writing
$D=W(\widehat \P_S,\widehat \P_J)$, candidate values $(a,\theta,\delta)$ are
mapped to conditional radii as follows:
\begin{equation*}
\begin{aligned}
&\epsilon_S
=\max\!\left\{\theta D,\,
\epsilon_{\min}(\widehat \P_S;\bx_0,a)+10^{-6}\right\},
&
\epsilon_J
&=\epsilon_{J,\min}(\epsilon_S;\bx_0,a)+\delta\epsilon_S,
&&\text{Intersection},\\
&\epsilon
=\epsilon_{\min}^{\rm II}(\bx_0,a,\theta)+\delta,
&&&&\text{Weighted distance},\\
&\epsilon
=\epsilon_{\min}(\P;\bx_0,a)+\delta,
&&&&\text{single-center models}.
\end{aligned}
%\label{eq:num-radius-conditional}
\end{equation*}
Consequently, $\delta$ measures slack above an endogenously determined
feasibility boundary.  For the unconditional comparison, let
$D_{\bY}=W(\widehat \P_S^{\bY},\widehat \P_J^{\bY})$, where $\widehat \P_S^{\bY}$ and $\widehat \P_J^{\bY}$ are marginal distributions of $\widehat \P_S$ and $\widehat \P_J$ on $\bY$, respectively.  Intersection uses
$$
(\epsilon_S,\epsilon_J)
=\bigl(\theta(1+\delta)D_{\bY},\,
(1-\theta)(1+\delta)D_{\bY}\bigr),
$$
weighted model uses $\epsilon=\min\{\theta,1-\theta\}D_{\bY}+\delta$, and every
single-center model uses $\epsilon=\delta$.

Table~\ref{tab:num-grid} gives the complete candidate grid used in both
experiments. 

\begin{table}[t]
\caption{Candidate parameters used for validation}\label{tab:num-grid}
\centering
%\footnotesize
\setlength{\tabcolsep}{4pt}
\begin{tabular}{lcccc}
\hline
Model & $a$ & $\theta$ or center & Conditional $\delta$
& Unconditional $\delta$\\
\hline
Target benchmark & $A$ & -- & $D_{\rm C}$ & $D_{\rm U}$\\
Intersection & $A$ & $\Theta$ & $D_{\rm I}$ & $D_{\rm I}$\\
Weighted distance & $A$ & $\Theta$ & $D_{\rm W}$ & $D_{\rm W}$\\
Pooled method & $A$ & -- & $D_{\rm C}$ & $D_{\rm U}$\\
Barycenter & $A$ & $\{S,J\}$ & $D_{\rm C}$ & $D_{\rm U}$\\
Single S & $A$ & -- & $D_{\rm C}$ & $D_{\rm U}$\\
Single J & $A$ & -- & $D_{\rm C}$ & $D_{\rm U}$\\
\hline
\end{tabular}
\vspace{0.5ex}

\begin{minipage}{0.96\textwidth}\footnotesize\raggedright
\emph{Notes.}
$A=\{0.05,0.10,0.15\}$,
$\Theta=\{0.10,0.30,0.50,0.70,0.90\}$,
$D_{\rm C}=D_{\rm I}=D_{\rm W}=\{10^{-4}, 10^{-3},10^{-2},10^{-1},1\}$, and
$D_{\rm U}=\{10^{-4},5\times10^{-4},10^{-3},10^{-2},10^{-1},
1,5,10,20,50\}$.
\end{minipage}
\end{table}

% For model $m$, replication or fold $r$, and validation sample
% $\{(\bX_i^{\rm val},\bY_i^{\rm val})\}_{i=1}^{n_{\rm val}}$, we select
% \begin{equation}
% \widehat h_{m,r}\in\argmax_{h\in\mathcal H_m}
% \frac{1}{n_{\rm val}}\sum_{i=1}^{n_{\rm val}}
% R\!\left(\widehat\bv_{m,h}(\bX_i^{\rm val}),
% \bY_i^{\rm val}\right),
% \label{eq:num-validation}
% \end{equation}
% where $\widehat\bv_{m,h}$ is the conditional prescription in
% \eqref{eq:num-conditional}; for an unconditional model, the same fixed
% decision $\widehat\bv_{m,h}^{\,\mathrm U}$ is evaluated at every validation
% outcome.  Ties are resolved deterministically by the order of the grid.

\subsection{Controlled simulation}\label{sec:num-controlled}

\paragraph{\bf Data-generating process.}
We consider four products with unit prices and set $B=1$.  The scalar state
has the same marginal distribution under the target and both sources:
$X\sim\mathrm{Unif}[-1,1]$.  Let
$\bc=(-0.75,-0.25,0.25,0.75)$ denote product locations and
$\bg=(1,0.5,-0.5,-1)$ encode the direction of the source distortion.  Conditional
on $X=x$, the purchase probabilities in domain $k\in\{T,S,J\}$ are
\begin{equation}
\pi_j^k(x)=
\frac{\exp\{-5(x-c_j)^2+b_kg_j\}}
{\sum_{s=1}^4\exp\{-5(x-c_s)^2+b_kg_s\}},
\qquad
b_T=0,\quad b_S=3,\quad b_J=-3.
\label{eq:num-dgp}
\end{equation}
The target conditional law is thus located between two sources with
equal-and-opposite biases.  After drawing
$\mathbf C\sim\mathrm{Multinomial}(80,\bm\pi^k(x))$, we set
$\bY=\mathbf C/80$.  The optimal product changes with $x$, while neither
source is uniformly closer to the target over the entire state space.

\paragraph{\bf Sampling and evaluation.}
We study the following combinations of sample sizes:
$$
(N_S,N_J)\in
\{(10,10),(10,30),(10,50),(30,30),(30,50),(50,50)\}.
$$
For every size pair, we generate 50 independent replications.  Single S and
Single J receive $N_S$ and $N_J$ observations, respectively.  The target
benchmark receives $N_S+N_J$ independent target observations, matching the
total amount of data available to a multi-source method.

Within each replication, an additional 20 target observations are used only for validation.  
% Equation~\eqref{eq:num-validation} is computed from the 20
% realized rewards for every candidate in Table~\ref{tab:num-grid}.  
The selected
candidate is then evaluated on 100 new target observations.  Source,
benchmark, validation, and test samples are mutually independent.
Table~\ref{tab:num-controlled} reports the mean out-of-sample normalized revenue, together with its standard error over 50 replications.

% If $Z_{m,r}$ denotes the average of the 100 test
% rewards for method $m$ in replication $r$, Table~\ref{tab:num-controlled}
% reports
% \begin{equation}
% \overline Z_m=\frac{1}{50}\sum_{r=1}^{50}Z_{m,r},
% \qquad
% \operatorname{SE}(\overline Z_m)=
% \left\{\frac{1}{50\cdot49}
% \sum_{r=1}^{50}(Z_{m,r}-\overline Z_m)^2\right\}^{1/2}.
% \label{eq:num-se}
% \end{equation}

\begin{table}[ht]
\caption{Out-of-sample normalized revenue
(mean and standard error over 50 replications)}
\label{tab:num-controlled}
\centering
\scriptsize
\setlength{\tabcolsep}{1.5pt}
\begin{tabular*}{\textwidth}{
@{\extracolsep{\fill}}cc@{\hspace{3pt}\vrule\hspace{5pt}}ccccccc@{}}
\hline
$N_S$ & $N_J$ &
\shortstack{Target benchmark} &
Intersection & Weighted distance & Pooled method & Barycenter &
\shortstack{Single S} & \shortstack{Single J}\\
\hline
\multicolumn{9}{l}{\emph{Panel A: Conditional models}}\\[0.2ex]
10 & 10 & 65.7 (0.18) & \textbf{59.4 (0.94)} & 57.9 (1.09) & 55.4 (0.81) & 46.2 (0.78) & 45.0 (0.97) & 43.6 (0.75)\\
10 & 30 & 66.0 (0.21) & 59.7 (1.32) & \textbf{59.8 (0.97)} & 54.4 (0.78) & 44.8 (0.67) & 43.3 (0.86) & 43.0 (0.64)\\
10 & 50 & 66.2 (0.14) & \textbf{61.4 (1.07)} & 60.8 (0.98) & 51.6 (0.58) & 45.5 (0.73) & 44.0 (0.92) & 45.0 (0.46)\\
30 & 30 & 65.9 (0.20) & \textbf{61.2 (0.88)} & 60.5 (0.81) & 60.3 (0.63) & 45.5 (0.50) & 44.6 (0.58) & 45.2 (0.52)\\
30 & 50 & 66.1 (0.19) & \textbf{62.6 (0.61)} & 62.0 (0.63) & 60.8 (0.63) & 44.5 (0.53) & 43.7 (0.57) & 44.6 (0.54)\\
50 & 50 & 65.8 (0.19) & \textbf{62.8 (0.41)} & 62.7 (0.31) & 61.2 (0.57) & 45.0 (0.41) & 44.8 (0.43) & 44.9 (0.41)\\[0.5ex]
\multicolumn{9}{l}{\emph{Panel B: Unconditional models}}\\[0.2ex]
10 & 10 & 25.3 (0.40) & 24.6 (0.41) & 24.7 (0.41) & 25.1 (0.42) & 24.6 (0.41) & \textbf{25.3 (0.39)} & 24.3 (0.43)\\
10 & 30 & 24.7 (0.37) & \textbf{25.1 (0.38)} & 24.7 (0.38) & 24.8 (0.37) & 24.9 (0.38) & 24.6 (0.43) & 24.7 (0.38)\\
10 & 50 & 24.6 (0.41) & \textbf{25.3 (0.41)} & 25.1 (0.44) & 24.9 (0.44) & 24.8 (0.45) & 24.1 (0.47) & 24.7 (0.45)\\
30 & 30 & 24.4 (0.42) & \textbf{25.5 (0.39)} & 25.1 (0.41) & 24.8 (0.41) & 24.8 (0.43) & 25.0 (0.40) & 24.6 (0.43)\\
30 & 50 & 24.2 (0.47) & 25.1 (0.46) & \textbf{25.2 (0.45)} & 24.9 (0.50) & 25.1 (0.48) & 24.0 (0.49) & 24.8 (0.50)\\
50 & 50 & 24.4 (0.48) & \textbf{24.8 (0.45)} & 24.6 (0.47) & 24.7 (0.44) & 24.3 (0.49) & 24.4 (0.47) & 23.8 (0.49)\\
\hline
\end{tabular*}
\vspace{0.5ex}

\begin{minipage}{0.96\textwidth}\footnotesize\raggedright
\emph{Notes.}
All reported means and standard errors have been multiplied by 100.
Boldface marks the best data-driven method in that row, excluding the target
benchmark.
\end{minipage}
\end{table}

Panel~A of Table~\ref{tab:num-controlled} shows that the Intersection and Weighted models outperform all other conditional methods across every size pair.
Intersection ranks first in five of the six designs, while Weighted is
marginally better at $(N_S,N_J)=(10,30)$.  
The difference is economically
meaningful when the source sizes are unbalanced.  
At $(N_S,N_J)=(10,50)$, for example,
Intersection and Weighted achieve 0.614 and 0.608, compared with 0.516 for
Pooled and at most 0.450 for either single-source method.  Because Source J is both larger and systematically biased, assigning equal weight to every pooled observation gives this source a disproportionate influence and shifts the pooled reference distribution away from the target. By contrast, Intersection and Weighted retain the two empirical source distributions separately: Intersection imposes an individual Wasserstein constraint relative to each source, whereas Weighted controls a weighted combination of the two source-wise Wasserstein distances.

The advantage also becomes more stable as both source laws are estimated more
accurately.  From $(10,10)$ to $(50,50)$, the Intersection mean rises from
0.594 to 0.628 and its standard error decreases from 0.0094 to 0.0041.
Panel~B provides a useful contrast: all unconditional rewards remain close to
0.25 and no method dominates uniformly.  Hence, the gains in Panel~A are not
explained simply by access to more observations.  They arise from combining
source-level restrictions with the state-dependent preference structure in
\eqref{eq:num-dgp}.

\subsection{M5 Rolling Experiment}\label{sec:num-m5}

\paragraph{\bf Data and state construction.}
The second experiment uses the public M5 Forecasting--Accuracy data, which
contain daily Walmart unit sales, weekly sell prices, and calendar variables.
The target store is WI\_3 and the source stores are CA\_3 and TX\_2.  We retain
six items from department FOODS\_1:
FOODS\_1\_032, FOODS\_1\_004, FOODS\_1\_204, FOODS\_1\_043,
FOODS\_1\_046, and FOODS\_1\_018.  The decision maker selects $B=2$ products.
For each store-day, item sales are multiplied by contemporaneous prices and
normalized by total revenue across the six items.  The realized reward is
therefore the fraction of six-item revenue captured by the selected
assortment.

The conditioning variable is a scalar pre-decision market state.  Its raw
features include day-of-week and month effects, SNAP and event indicators,
current sell prices, lagged total revenue, and lagged and seven-day rolling
revenue shares.  At each validation or test origin, these features are
standardized using only the trailing estimation window.  On the target store,
ridge regression maps the standardized features to the first principal
component of historical revenue shares.  The fitted scalar index is then
standardized on the same target history and applied to both source stores.
All demand-based inputs are lagged through day $t-1$, so the state used for the
decision on day $t$ contains no contemporaneous demand information.

\paragraph{\bf Rolling protocol.}
The department, six products, and two sources are fixed using the initial
design period $d_1$--$d_{1000}$.  Hyperparameters are selected from
Table~\ref{tab:num-grid} using 18 chronological validation origins, arranged
as three folds of six evenly spaced dates.  At every origin $t$, the state map
and empirical distributions are re-estimated from the 730-day window ending
at $t-1$.  State-stratified sampling then selects $N_S=8$ observations from
CA\_3, $N_J=30$ from TX\_2, and 60 target observations for the benchmark.
The validation score is the average realized revenue over the 18 origins, and
one candidate is retained for each model.

The selected candidate is held fixed over 40 evenly spaced test origins from
$d_{1531}$ to $d_{1913}$.  At every test date we roll the 730-day window
forward and recompute the state map, empirical samples, ambiguity set, and
optimal assortment using only information available before the test outcome.
Table~\ref{tab:num-m5} reports the average of the 40 test rewards.  Parentheses
contain standard errors across test origins.  
% Because these rewards constitute
% one sequential backtest rather than independent replications, the reported
% standard errors summarize temporal variation and should not be interpreted as
% independent Monte Carlo uncertainty.

\begin{table}[t]
\caption{Out-of-sample normalized revenue
(mean and standard error over 40 test origins)}
\label{tab:num-m5}
\centering
\begin{tabular}{lcc}
\hline
Model & Conditional & Unconditional\\
\hline
Target benchmark~~~ & .547 (.031) & .410 (.043)\\
Intersection & \textbf{.509 (.035)} & .360 (.041)\\
Weighted distance & .504 (.034) & .409 (.043)\\
Pooled method & .467 (.038) & \textbf{.416 (.042)}\\
Barycenter & .454 (.038) & .360 (.041)\\
Single S & .480 (.038) & .360 (.041)\\
Single J & .454 (.038) & .400 (.032)\\
\hline
\end{tabular}
\vspace{0.5ex}

\begin{minipage}{0.82\textwidth}\footnotesize\raggedright
\emph{Notes.} Boldface marks the best data-driven model in each column,
excluding the target benchmark.
\end{minipage}
\end{table}

The conditional results in Table~3 are consistent with those of the controlled experiment. Excluding the target benchmark, Intersection and Weighted achieve the highest average rewards, at 0.509 and 0.504, respectively. 
% These values represent improvements of 9.1\% and 8.0\% over conditional Pooled. Both methods also outperform the conditional Barycenter and the two single-source benchmarks. 
These findings suggest that retaining source-specific distributional information can improve conditional decisions even when the source samples are small and unbalanced, with $(N_S,N_J)=(8,30)$, and the conditioning state must be estimated from historical sales data.

The unconditional results provide a different perspective from the conditional comparison. Among the unconditional methods, Pooled method achieves the highest average reward of 0.416, followed by Weighted at 0.409, whereas Intersection achieves 0.360. Because these methods all use the same two source samples, the superior conditional performance of Intersection and Weighted cannot be attributed simply to their access to multiple datasets. The results suggest that their advantage is associated with retaining source-specific distributional information while adapting the assortment decision to the observed market state.

\appendix

\section{Proofs of results in Section \ref{sec:framework}}

\begin{proof}[Proof of Proposition \ref{prop:weightset}]
For $Q\in \mathcal B^{\rm{II}}(\{\widehat{\p}_k\}_{k\in[K]},\epsilon_{\min})$, we have
\begin{align*}
\epsilon_{\min}\ge \sum_{k=1}^K  \theta_k W(Q,\widehat{\p}_k)\ge \theta_kW(Q,\widehat{\p}_k)=\frac{\epsilon_{\min}}{\epsilon_k}W(Q,\widehat{\p}_k)~~\forall k\in[K].
\end{align*}
This yields
$
W(Q,\widehat{\p}_k)\le \epsilon_k
$
for all $k\in[K]$.
Hence, $Q\in \mathcal B^{\rm I}(\{\widehat{\p}_k\}_{k\in[K]},\bm\epsilon)$, which implies that $\mathcal B^{\rm{II}}(\{\widehat{\p}_k\}_{k\in[K]},\epsilon_{\min})\subseteq\mathcal B^{\rm I}(\{\widehat{\p}_k\}_{k\in[K]},\bm\epsilon)$.

For $Q\in \mathcal B^{\rm I}(\{\widehat{\p}_k\}_{k\in[K]},\bm\epsilon)$, we have
$
{ W(Q,\widehat{\p}_k)}/{\epsilon_k}\le 1
$
for all $ k\in[K]$,
and thus,
\begin{align*}
\sum_{k=1}^K  \theta_k W(Q,\widehat{\p}_k)=\epsilon_{\min}\sum_{k=1}^K  \frac{ W(Q,\widehat{\p}_k)}{\epsilon_k}\le K\epsilon_{\min}.
\end{align*}
This yields $Q\in \mathcal B^{\rm II}(\{\widehat{\p}_k\}_{k\in[K]},K\epsilon_{\min})$, which further implies that 
$$\mathcal B^{\rm I}(\{\widehat{\p}_k\}_{k\in[K]},\bm\epsilon)\subseteq\mathcal B^{\rm{II}}(\{\widehat{\p}_k\}_{k\in[K]},K\epsilon_{\min}).$$ Hence, we complete the proof.
\end{proof}

\section{Proofs of results in Section \ref{sec:tractbilityB1}}

\subsection{Proof of Theorem \ref{th-analysisB1}}

The following decomposition result will be used in the proofs of both Theorems~\ref{th-analysisB1} and \ref{th-analysisB2}.

\begin{lemma}[Multi-margin decomposition]\label{lm:DMM}
For any $\overline{\pi}\in \Pi(\mathbb P,\widehat{\mathbb P}_1,\ldots,\widehat{\mathbb P}_K)$, there exist finite Borel measures $\nu_{\balpha}\in\mathcal M_+(\Xi)$, $\balpha\in\mathcal A$, such that
\begin{align*}
\overline{\pi}
=\sum_{\balpha\in\mathcal A}
\nu_{\balpha}\otimes
\delta_{(\widehat{\bxi}_{1,\alpha_1},\ldots,\widehat{\bxi}_{K,\alpha_K})}.
\end{align*}
\end{lemma}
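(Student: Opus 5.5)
The plan is to disintegrate $\overline{\pi}$ with respect to its last $K$ coordinates. Since each $\widehat{\p}_k$ is supported on the finite set $\{\widehat\bxi_{k,j}\}_{j\in[N_k]}$, the joint marginal of $\overline{\pi}$ on $\Xi^K$ is concentrated on the finite set of tuples $\{(\widehat\bxi_{1,\alpha_1},\ldots,\widehat\bxi_{K,\alpha_K}):\balpha\in\mathcal A\}$. Concretely, for each $k$ the $k$-th source marginal of $\overline{\pi}$ is $\widehat{\p}_k$, so the set where the $(k+1)$-st coordinate lies outside $\{\widehat\bxi_{k,j}\}_j$ has $\overline{\pi}$-measure zero. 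Taking the union over $k$ of these null sets, $\overline{\pi}$ is concentrated on
$$
\Xi\times\bigcup_{\balpha\in\mathcal A}\{(\widehat\bxi_{1,\alpha_1},\ldots,\widehat\bxi_{K,\alpha_K})\}.
$$

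The next step is to define the candidate measures explicitly. For each $\balpha\in\mathcal A$, let $T_{\balpha}$ denote the tuple $(\widehat\bxi_{1,\alpha_1},\ldots,\widehat\bxi_{K,\alpha_K})$ and set
$$
\nu_{\balpha}(A):=\overline{\pi}\bigl(A\times\{T_{\balpha}\}\bigr),\qquad A\subseteq\Xi \text{ Borel}.
$$
Each $\nu_{\balpha}$ is clearly a finite Borel measure on $\Xi$. To prevent double counting when sample points repeat (for instance if $\widehat\bxi_{k,j}=\widehat\bxi_{k,j'}$), I first work with the distinct tuples, or equivalently I assign each distinct tuple's mass to a single representative multi-index and put $\nu_{\balpha}=0$ for the remaining indices that produce the same tuple. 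With this convention, the sets $A\times\{T_{\balpha}\}$ for distinct tuples are disjoint.

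It then remains to verify that $\overline{\pi}=\sum_{\balpha}\nu_{\balpha}\otimes\delta_{T_{\balpha}}$. Both sides are finite measures on $\Xi^{K+1}$, so it suffices to check agreement on measurable rectangles $A\times B$ with $B\subseteq\Xi^K$ Borel; the $\pi$-$\lambda$ theorem then extends the identity to all Borel sets. By the concentration established above, $\overline{\pi}(A\times B)=\sum_{\balpha:\,T_{\balpha}\in B}\overline{\pi}(A\times\{T_{\balpha}\})$, summing over distinct tuples. The right-hand side is $\sum_{\balpha}\nu_{\balpha}(A)\delta_{T_{\balpha}}(B)$, which gives the same quantity. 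The only genuinely delicate point is the bookkeeping for coincident atoms, and the representative-index convention handles it; the marginal constraints are not needed beyond the concentration step.
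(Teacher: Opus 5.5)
Your proof is correct and follows the paper's: you use the same definition $\nu_{\balpha}(A)=\overline{\pi}(A\times\{T_{\balpha}\})$, note that the source marginals force $\overline{\pi}$ to be concentrated on finitely many tuples, check the identity on rectangles, and extend it by uniqueness of finite measures. The only difference is how coincident atoms are handled: the paper aggregates repeated observations, while you zero out duplicate indices. Unlike aggregation or a weight-proportional split of each shared tuple's mass, your convention does not preserve the per-index identity $\sum_{\balpha:\alpha_k=j}\nu_{\balpha}(\Xi)=w_{k,j}$ that the paper relies on downstream, although that identity is not part of the lemma as stated.
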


\begin{proof}[Proof of Lemma \ref{lm:DMM}]
After aggregating repeated observations, which does not change the empirical laws or the OT balls, we may treat the atoms in each empirical marginal as distinct. Because each empirical marginal $\widehat{\mathbb P}_k$ is finitely supported, define for each $\balpha\in\mathcal A$ the finite measure
\begin{align*}
\nu_{\balpha}(A)
:=\overline{\pi}\left(A\times\{\widehat{\bxi}_{1,\alpha_1}\}\times\cdots\times\{\widehat{\bxi}_{K,\alpha_K}\}\right),
\quad A\subseteq\Xi.
\end{align*}
The sets
$\{\widehat{\bxi}_{1,\alpha_1}\}\times\cdots\times\{\widehat{\bxi}_{K,\alpha_K}\}$,
$\balpha\in\mathcal A$, partition the support of the discrete source marginals. Hence, for every measurable rectangle $A\times A_1\times\cdots\times A_K$,
\begin{align*}
\overline{\pi}(A\times A_1\times\cdots\times A_K)
=\sum_{\balpha\in\mathcal A}
\nu_{\balpha}(A)\prod_{k=1}^K\id_{\{\widehat{\bxi}_{k,\alpha_k}\in A_k\}},
\end{align*}
which proves the displayed decomposition by the uniqueness of finite measures. 
\end{proof}

~\\
\begin{proof}[Proof of Theorem \ref{th-analysisB1}]
To simplify the notation, let
$\mathcal B^{\rm I}:=\mathcal B^{\rm I}(\{\widehat{\p}_k\}_{k\in[K]},\bm\epsilon)$,
$\mathcal B_{\mathcal N}^{\rm I}:=\mathcal B_{\mathcal N}^{\rm I}(\{\widehat{\p}_k\}_{k\in[K]},\bm\epsilon)$,
and let $\mathcal R$ denote the set on the right-hand side of
\eqref{eq-mainthB1-exact}. Recall that
\begin{align*}
\mathcal B_{\mathcal N}^{\rm I}
=\{P^{\mathcal N}:P\in\mathcal B^{\rm I},\ P(\mathcal N)\in[a,b]\}.
\end{align*}
We first derive an equivalent representation of the joint laws in $\mathcal B^{\rm I}$ whose mass on $\mathcal N$ lies in $[a,b]$. This representation will be used in both directions of the proof. By the optimal-coupling result of \citet[Theorem 4.1]{V08}, $W(P,\widehat{\p}_k)\le\epsilon_k$ if and only if there exists $\pi_k\in\Pi(P,\widehat{\p}_k)$ with
$\E_{\pi_k}[c(\bxi,\bxi_k)]\le\epsilon_k$. Given such plans
$\{\pi_k\}_{k\in[K]}$, the gluing lemma yields
$\overline{\pi}\in\Pi(P,\widehat{\p}_1,\ldots,\widehat{\p}_K)$ whose
$(\bxi,\bxi_k)$-marginal is $\pi_k$ for every $k$. Therefore,
\begin{align*}
\left\{P\in\mathcal B^{\rm I}:P(\mathcal N)\in[a,b]\right\}
=\left\{
\begin{array}{l}
P\in\mathcal P(\Xi):~\overline{\pi}\in
\Pi(P,\widehat{\p}_1,\ldots,\widehat{\p}_K),~P(\mathcal N)\in[a,b],\\
\displaystyle
\int_{\Xi^{K+1}}c(\bxi,\bxi_k)\,d\overline{\pi}(\bxi,\bxi_1,\ldots,\bxi_K)
\le\epsilon_k,\quad k\in[K]
\end{array}
\right\}.
\end{align*}
Applying Lemma~\ref{lm:DMM}, the same set is equivalently the set of laws
$P=\sum_{\balpha\in\mathcal A}\nu_{\balpha}$ for which
\begin{align}
&\nu_{\balpha}\in\mathcal M_+(\Xi) && \forall \balpha\in\mathcal A,\label{eq-capnufinitemeasure}\\
&\sum_{\balpha\in\mathcal A:\alpha_k=j}\nu_{\balpha}(\Xi)=w_{k,j}
&& \forall j\in[N_k],~k\in[K],\label{eq-capsumnu}\\
&\sum_{\balpha\in\mathcal A}\nu_{\balpha}(\mathcal N)\in[a,b],\label{eq-capconstraintprob}\\
&\sum_{\balpha\in\mathcal A}\int_{\Xi}c(\bxi,\widehat{\bxi}_{k,\alpha_k})\,d\nu_{\balpha}(\bxi)
\le\epsilon_k
&& \forall k\in[K].\label{eq-capconstraint}
\end{align}
Consequently, the induced conditional set can be written as
\begin{align}\label{eq-reBZ}
\mathcal B_{\mathcal N}^{\rm I}
=\left\{
\frac{\sum_{\balpha\in\mathcal A}\nu_{\balpha}(\mathcal N)\nu_{\balpha}^{\mathcal N}}
{\sum_{\balpha\in\mathcal A}\nu_{\balpha}(\mathcal N)}
:\ \eqref{eq-capnufinitemeasure}-\eqref{eq-capconstraint}\ {\rm hold}
\right\},
\end{align}
where terms with $\nu_{\balpha}(\mathcal N)=0$ are immaterial.

We now show $\mathcal B_{\mathcal N}^{\rm I}\subseteq\mathcal R$. Let
$R\in\mathcal B_{\mathcal N}^{\rm I}$, and choose
$\{\nu_{\balpha}\}_{\balpha\in\mathcal A}$ satisfying
\eqref{eq-capnufinitemeasure}-\eqref{eq-capconstraint} and representing
$R$ as in \eqref{eq-reBZ}. Define
\begin{align*}
\begin{array}{ll}
p_{\balpha}:=\nu_{\balpha}(\mathcal N),\quad
q_{\balpha}:=\nu_{\balpha}(\mathcal N^c),\quad
t_{\balpha}:=mp_{\balpha},\quad
r_{\balpha}:=mq_{\balpha},\quad
\mu_{\balpha}:=\nu_{\balpha}^{\mathcal N},
& \balpha\in\mathcal A,\\[2mm]
m:=\displaystyle\frac{1}{\sum_{\balpha\in\mathcal A}p_{\balpha}},\quad
a_{k,j}:=\displaystyle\sum_{\balpha\in\mathcal A:\alpha_k=j}t_{\balpha},
& j\in[N_k],~k\in[K].
\end{array}
\end{align*}
where $\mu_{\balpha}$ may be chosen arbitrarily on $\mathcal N$ when
$p_{\balpha}=0$, since it is then multiplied by $t_{\balpha}=0$.
Then $R=\sum_{\balpha\in\mathcal A}t_{\balpha}\mu_{\balpha}$,
$R(\mathcal N)=1$, $m\in[1/b,1/a]$, and
\begin{align*}
\sum_{j=1}^{N_k}a_{k,j}=1,\quad
a_{k,j}+\sum_{\balpha\in\mathcal A:\alpha_k=j}r_{\balpha}=mw_{k,j},
\quad j\in[N_k],~k\in[K],
\end{align*}
where the second equality follows from \eqref{eq-capsumnu}.

It remains to identify the exact outside-cost variables and verify the inside OT balls. For every $\balpha$ with $r_{\balpha}>0$, define
\begin{align*}
v_{\balpha,k}:=
\int_{\Xi}c(\bxi,\widehat{\bxi}_{k,\alpha_k})\,d\nu_{\balpha}^{\mathcal N^c}(\bxi),
\quad k\in[K].
\end{align*}
Then $\bv_{\balpha}:=(v_{\balpha,1},\ldots,v_{\balpha,K})\in\mathcal V_{\balpha}$ by the definition of $\mathcal V_{\balpha}$. Set
$s_{\balpha}:=r_{\balpha}\bv_{\balpha}$ when $r_{\balpha}>0$ and
$s_{\balpha}:=0$ when $r_{\balpha}=0$. Hence
$s_{\balpha}\in r_{\balpha}\mathcal V_{\balpha}$ for all $\balpha\in\mathcal A$.
Applying \eqref{eq-capconstraint}, we obtain, for every $k\in[K]$,
\begin{align*}
\epsilon_k
&\ge
\sum_{\balpha\in\mathcal A}\int_{\Xi}
c(\bxi,\widehat{\bxi}_{k,\alpha_k})\,\d\nu_{\balpha}(\bxi)\\
&=\sum_{\balpha\in\mathcal A}\int_{\Xi}
c(\bxi,\widehat{\bxi}_{k,\alpha_k})\,\d\left(\frac{t_{\balpha}\mu_{\balpha}+r_{\balpha}\nu_{\balpha}^{\mathcal N^c}}{m}\right)(\bxi)\\
&=
\frac{1}{m}\sum_{\balpha\in\mathcal A}t_{\balpha}
\int_{\Xi}c(\bxi,\widehat{\bxi}_{k,\alpha_k})\,d\mu_{\balpha}(\bxi)
+\frac{1}{m}\sum_{\balpha\in\mathcal A}s_{\balpha,k}\\
&=\frac{1}{m}\sum_{j=1}^{N_k}   \sum_{\balpha\in\mathcal A,\alpha_k=j}
\int_{\Xi}c(\bxi,\widehat{\bxi}_{k,j})\,\d \left(t_{\balpha}\mu_{\balpha}\right)(\bxi)
+\frac{1}{m}\sum_{\balpha\in\mathcal A}s_{\balpha,k}\\
&=
\frac{1}{m}\sum_{j=1}^{N_k}
a_{k,j}\int_{\Xi}c(\bxi,\widehat{\bxi}_{k,j})\,
\d\left(\frac{\sum_{\balpha\in\mathcal A:\alpha_k=j}t_{\balpha}\mu_{\balpha}}{a_{k,j}}\right)(\bxi)
+\frac{1}{m}\sum_{\balpha\in\mathcal A}s_{\balpha,k}\\
&\ge
\frac{1}{m}
W\left(R,\sum_{j=1}^{N_k}a_{k,j}\delta_{\widehat{\bxi}_{k,j}}\right)
+\frac{1}{m}\sum_{\balpha\in\mathcal A}s_{\balpha,k}.
\end{align*}
The last inequality follows from the definition of OT cost and the identity
\begin{align*}
\sum_{j=1}^{N_k}
a_{k,j}\left(
\frac{\sum_{\balpha\in\mathcal A:\alpha_k=j}t_{\balpha}\mu_{\balpha}}{a_{k,j}}
\right)
=\sum_{\balpha\in\mathcal A}t_{\balpha}\mu_{\balpha}=R,
\end{align*}
with zero-$a_{k,j}$ terms omitted. Therefore,
\begin{align*}
W\left(R,\sum_{j=1}^{N_k}a_{k,j}\delta_{\widehat{\bxi}_{k,j}}\right)
\le
m\epsilon_k-\sum_{\balpha\in\mathcal A}s_{\balpha,k},
\quad k\in[K],
\end{align*}
which proves $R\in\mathcal R$.

We next prove the reverse inclusion $\mathcal R\subseteq\mathcal B_{\mathcal N}^{\rm I}$. Let $R\in\mathcal R$, and let
$m$, $\{a_{k,j}\}$, $\{r_{\balpha}\}$, and $\{\bs_{\balpha}\}$ satisfy the conditions in \eqref{eq-mainthB1-exact}. For each source $k$, the inside-ball constraint gives a transport plan from $R$ to $\sum_j a_{k,j}\delta_{\widehat\bxi_{k,j}}$ with cost at most $m\epsilon_k-\sum_{\balpha}s_{\balpha,k}$. Gluing these $K$ plans along their common first marginal $R$ and applying Lemma~\ref{lm:DMM}, there exist finite measures
$\mu_{\balpha}$ supported on $\mathcal N$ such that
\begin{align}
&\sum_{\balpha\in\mathcal A}\mu_{\balpha}=R,\label{eq-conB1converse12a}\\
&\sum_{\balpha\in\mathcal A:\alpha_k=j}\mu_{\balpha}(\mathcal N)=a_{k,j}
&& \forall j\in[N_k],~k\in[K],\label{eq-conB1converse12b}\\
&\sum_{\balpha\in\mathcal A}\int_{\Xi}
c(\bxi,\widehat{\bxi}_{k,\alpha_k})\,d\mu_{\balpha}(\bxi)
\le m\epsilon_k-\sum_{\balpha\in\mathcal A}s_{\balpha,k}
&& \forall k\in[K].\label{eq-conB1converse12c}
\end{align}
Since $\bs_{\balpha}\in r_{\balpha}\mathcal V_{\balpha}$, for every
$\balpha$ with $r_{\balpha}>0$ there exists
$P_{\balpha}\in\mathcal P(\mathcal N^c)$ such that
\begin{align}
s_{\balpha,k}
=r_{\balpha}\E_{P_{\balpha}}
\left[c(\bxi,\widehat{\bxi}_{k,\alpha_k})\right],
\quad k\in[K].\label{eq-conB1conversesk}
\end{align}
This is exactly the meaning of $\mathcal V_{\balpha}$ as the set of attainable source-wise outside-cost vectors.
Set $\zeta_{\balpha}:=r_{\balpha}P_{\balpha}$ when $r_{\balpha}>0$ and
$\zeta_{\balpha}:=0$ when $r_{\balpha}=0$.
Define finite measures
\begin{align*}
\nu_{\balpha}:=\frac{1}{m}\mu_{\balpha}
+\frac{1}{m}\zeta_{\balpha}.
\end{align*}
We verify that these measures satisfy \eqref{eq-capnufinitemeasure}-\eqref{eq-capconstraint}.
First, using \eqref{eq-conB1converse12b} and the source-balance condition
in \eqref{eq-mainthB1-exact},
\begin{align*}
\sum_{\balpha\in\mathcal A:\alpha_k=j}\nu_{\balpha}(\Xi)
=\frac{1}{m}\left(
a_{k,j}+\sum_{\balpha\in\mathcal A:\alpha_k=j}r_{\balpha}
\right)
=w_{k,j}.
\end{align*}
Second,
\begin{align*}
\sum_{\balpha\in\mathcal A}\nu_{\balpha}(\mathcal N)
=\frac{1}{m}\sum_{\balpha\in\mathcal A}\mu_{\balpha}(\mathcal N)
=\frac{1}{m}\in[a,b],
\end{align*}
and the conditional law induced by $\sum_{\balpha}\nu_{\balpha}$ is
\begin{align*}
\frac{\sum_{\balpha\in\mathcal A}\nu_{\balpha}(\mathcal N)\nu_{\balpha}^{\mathcal N}}
{\sum_{\balpha\in\mathcal A}\nu_{\balpha}(\mathcal N)}
=\sum_{\balpha\in\mathcal A}\mu_{\balpha}=R.
\end{align*}
Finally, for each $k\in[K]$, \eqref{eq-conB1converse12c} and
\eqref{eq-conB1conversesk} give
\begin{align*}
\sum_{\balpha\in\mathcal A}\int_{\Xi}
c(\bxi,\widehat{\bxi}_{k,\alpha_k})\,d\nu_{\balpha}(\bxi)
&=
\frac{1}{m}\sum_{\balpha\in\mathcal A}\int_{\Xi}
c(\bxi,\widehat{\bxi}_{k,\alpha_k})\,d\mu_{\balpha}(\bxi)
+\frac{1}{m}\sum_{\balpha\in\mathcal A}s_{\balpha,k}\le \epsilon_k.
\end{align*}
Thus \eqref{eq-capnufinitemeasure}-\eqref{eq-capconstraint} hold, and the representation \eqref{eq-reBZ} implies
$R\in\mathcal B_{\mathcal N}^{\rm I}$. This proves
$\mathcal R\subseteq\mathcal B_{\mathcal N}^{\rm I}$ and completes the proof. 
\end{proof}

\subsection{Proof of Theorem \ref{th:intersectionDual}}\label{sec:proofTHintersectionDual}

{\color{black}
To prove Theorem~\ref{th:intersectionDual}, we first consider an unconditional DRO problem over an intersection of OT balls. Suppose that $\ell:\Xi\to\R$ is measurable and upper semicontinuous. Let $\{\p_k\}_{k\in[K]}$ be discrete reference distributions of the form
$$
\p_k=\sum_{j=1}^{N_k}p_{k,j}\delta_{\widehat{\bxi}_{k,j}},
\qquad k\in[K],
$$
and let $\bepsilon:=(\epsilon_1,\dots,\epsilon_K)\in\R_+^K$ denote the vector of radii. The problem is formulated as
\begin{align}\label{eq-innersupremumI}
\mathcal L^{\rm I}(\bepsilon)
:=
\sup_{Q\in \mathcal B^{\rm I}(\{\p_k\}_{k\in[K]},\bepsilon),~Q(\mathcal N)=1}
\E_Q[\ell(\bxi)] .
\end{align}

\begin{theorem}\label{lm:boundaryI}
The function $\mathcal L^{\rm I}$ is concave on its effective domain. For every $\bepsilon$ in the relative interior of its effective domain, $\mathcal L^{\rm I}(\bepsilon)$ equals the optimal value of the following dual problem:
\begin{align}\label{eq-dualboundaryI}
\begin{array}{lll}
\min & \displaystyle \sum_{k=1}^K\lambda_k\epsilon_k+\sum_{k=1}^K \sum_{j=1}^{N_k} p_{k,j} \gamma_{k,j} & \\[1mm]
{\rm s.t.} 
& \lambda_k \ge 0,\ \bm{\gamma}_k \in \mathbb{R}^{N_k} & \forall k \in[K] \\[1mm]
& \displaystyle 
\sup_{\bxi \in \mathcal N}
\left\{
\ell(\bxi)-\sum_{k=1}^K \lambda_k c\left(\bxi, \widehat{\bxi}_{k, \alpha_k}\right)
\right\}
\le
\sum_{k=1}^K \gamma_{k, \alpha_k}
~~~~~~& \forall \balpha \in \mathcal A .
\end{array}
\end{align}
Moreover, suppose that Assumptions~\ref{assump:growthI} holds for the loss function $\ell$ given in \eqref{eq-innersupremumI}, that is, $\ell(\bxi)-\ell(\widehat{\bxi})\le L c(\bxi,\widehat{\bxi})$ for some $L>0$, and all $\bxi\in\mathcal N$ and all source observations $\widehat{\bxi}$. Suppose also that Assumption \ref{assump:costI} holds.
Then, $\mathcal L^{\rm I}$ is proper and upper semicontinuous. As a result, the above strong duality holds for all $\bepsilon$ in the effective domain of $\mathcal L^{\rm I}$, including boundary radii.
\end{theorem}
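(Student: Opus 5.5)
The plan is to prove the three claims in order: concavity, then duality on the relative interior via a Lagrangian/perturbation argument, then properness and upper semicontinuity so that duality extends to the boundary.

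\emph{Concavity.} Take feasible $Q^0$ for $\bepsilon^0$ and $Q^1$ for $\bepsilon^1$, and let $t\in[0,1]$. Mixing the corresponding couplings shows that $Q^t=(1-t)Q^0+tQ^1$ satisfies $W(Q^t,\p_k)\le(1-t)\epsilon^0_k+t\epsilon^1_k$ for every $k$. Also $Q^t(\mathcal N)=1$, and $\E_{Q^t}[\ell]$ is the same mixture of the two objective values. Hence the hypograph of $\mathcal L^{\rm I}$ is convex.

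\emph{Duality on the relative interior.} First I rewrite the primal as a linear program over measures. As in the proof of Theorem~\ref{th-analysisB1}, gluing the $K$ optimal couplings and applying Lemma~\ref{lm:DMM} writes every feasible $Q$ as $\sum_{\balpha}\mu_{\balpha}$. Here the $\mu_{\balpha}\in\mathcal M_+(\mathcal N)$ satisfy the marginal constraints $\sum_{\balpha:\alpha_k=j}\mu_{\balpha}(\mathcal N)=p_{k,j}$ and the cost constraints $\sum_{\balpha}\int c(\bxi,\widehat\bxi_{k,\alpha_k})\,d\mu_{\balpha}\le\epsilon_k$.

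Next I dualize only the cost constraints with multipliers $\blambda\ge0$. For fixed $\blambda$, the inner supremum over $\{\mu_{\balpha}\}$ with the fixed discrete marginals reduces as follows. Each $\mu_{\balpha}$ should concentrate on near-maximizers of $\ell-\sum_k\lambda_k c(\cdot,\widehat\bxi_{k,\alpha_k})$ over $\mathcal N$. What remains is a finite multi-marginal transportation LP in the masses $\mu_{\balpha}(\mathcal N)$. Finite LP duality for this LP produces the variables $\bgamma_k$ and the semi-infinite constraints in \eqref{eq-dualboundaryI}.

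This shows that the dual value equals $\inf_{\blambda\ge0}\sup_{\bepsilon'}\{\mathcal L^{\rm I}(\bepsilon')-\blambda^\top(\bepsilon'-\bepsilon)\}$, which is the concave upper-closed hull of $\mathcal L^{\rm I}$ evaluated at $\bepsilon$. Weak duality is immediate. On the relative interior of the effective domain, a concave function coincides with its closure. If $\mathcal L^{\rm I}$ is finite there, it has a supergradient, which gives an attaining $\blambda$. If $\mathcal L^{\rm I}=+\infty$ somewhere, concavity forces $+\infty$ on the whole relative interior, and the dual is infeasible there, so the two sides agree trivially.

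\emph{Boundary radii.} Under Assumption~\ref{assump:growthI}, any feasible $Q$ coupled to $\p_1$ gives $\E_Q[\ell]\le\sum_j p_{1,j}\ell(\widehat\bxi_{1,j})+L\epsilon_1<\infty$. So $\mathcal L^{\rm I}$ is proper, and it is bounded above locally uniformly in $\bepsilon$.

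For upper semicontinuity, I take $\bepsilon^n\to\bepsilon$ and near-optimal $Q^n$, together with their glued multi-marginal plans $\overline\pi^n$. The uniformly bounded transport costs and the coercivity in Assumption~\ref{assump:costI} make $\{Q^n\}$ tight. Since the other marginals are fixed and discrete, $\{\overline\pi^n\}$ is tight as well. By Prokhorov's theorem a subsequence converges weakly to some $\overline\pi$. Lower semicontinuity of $c$ then gives $\int c_k\,d\overline\pi\le\liminf\int c_k\,d\overline\pi^n\le\epsilon_k$, so the limit first marginal $Q$ is feasible for $\bepsilon$.

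To control the objective, I split $\ell(\bxi)=[\ell(\bxi)-Lc(\bxi,\bxi_1)]+Lc(\bxi,\bxi_1)$ on the coupling space. The bracket is upper semicontinuous and bounded above by $\max_j\ell(\widehat\bxi_{1,j})$, so its integral is upper semicontinuous under weak convergence.

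\emph{Main obstacle.} I expect two difficulties here. First, the term $L\int c_1\,d\overline\pi^n$ can lose mass in the limit, since lower semicontinuity gives the inequality in the wrong direction. I would fix this by noting that any lost transport cost is slack in the limiting budget. That slack can be reused by perturbing $Q$ inside $\mathcal N$ along a near-optimal direction for the fixed radius $\bepsilon$. Equivalently, I would bound $\limsup\mathcal L^{\rm I}(\bepsilon^n)$ by $\E_Q[\ell-Lc_1]+L\epsilon_1$ and compare this with the value achieved at $\bepsilon$ using the concavity already established. Second, I must ensure that the limit $Q$ still satisfies $Q(\mathcal N)=1$. For this I would first pass to ${\rm cl}(\mathcal N)$ and then use upper semicontinuity of $\ell$ together with approximation from inside $\mathcal N$.

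Once upper semicontinuity is established, a proper concave upper semicontinuous function equals its closure on the whole effective domain. Hence the duality from the relative-interior step holds at boundary radii as well.
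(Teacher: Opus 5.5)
Your concavity step, the relative-interior duality via the concave biconjugate, properness, and the tightness/Prokhorov setup all agree with the paper. For the intersection case the paper imports the interior duality from the literature, but it carries out exactly your conjugate computation for the weighted analogue, Theorem~\ref{th-unconB2}.

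The gap is in the step you flag as the main obstacle, which your sketch does not close. Write $R:=\E_{\overline\pi}[\ell]$ and $C_k:=\int c(\bxi,\bxi_k)\,d\overline\pi$ for the limit plan. Splitting only with $c(\cdot,\bxi_1)$ gives $r-R\le L(\epsilon_1-C_1)$, so at best you learn there is slack in the first budget. That is useless if, say, $\epsilon_2^n\downarrow\epsilon_2$ while $C_2=\epsilon_2$: slack in budget $1$ cannot repair budget $2$. Concavity alone cannot help either, since a concave function may jump down at a boundary point of its domain, and that is exactly what must be ruled out. Perturbing toward near-optimal points for the fixed radius $\bepsilon$ only produces values bounded by $\mathcal L^{\rm I}(\bepsilon)$. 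The feasible points at $\bepsilon$ you need must instead be built from the near-optimal $\overline\pi^n$ for $\bepsilon^n$, whose values approach $r$.

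The missing idea is to perform your split for every $k$, not just $k=1$. Since $\ell(\bxi)-Lc(\bxi,\widehat\bxi_{k,j})\le\ell(\widehat\bxi_{k,j})$ for each $k$, the same argument yields $r-R\le L(\epsilon_k-C_k)$ for all $k\in[K]$ simultaneously. So either $r\le R\le\mathcal L^{\rm I}(\bepsilon)$, or $C_k<\epsilon_k$ for every $k$. In the latter case, set $t_n:=\max_k(\epsilon^n_k-\epsilon_k)_+/\bigl((\epsilon^n_k-\epsilon_k)_++\epsilon_k-C_k\bigr)$, so that $t_n\to0$. The mixture $(1-t_n)\overline\pi^n+t_n\overline\pi$ has $k$-th cost at most $(1-t_n)\epsilon^n_k+t_nC_k\le\epsilon_k$, so it is feasible at $\bepsilon$. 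Its value $(1-t_n)R_n+t_nR$ tends to $r$, after first passing to a subsequence with $\mathcal L^{\rm I}(\bepsilon^n)\to r$. Hence $\mathcal L^{\rm I}(\bepsilon)\ge r$. In value-function form this is your concavity idea made to work: by monotonicity and concavity, $\mathcal L^{\rm I}(\bepsilon)\ge\mathcal L^{\rm I}\bigl((1-t_n)\bepsilon^n+t_n(C_1,\dots,C_K)\bigr)\ge(1-t_n)\mathcal L^{\rm I}(\bepsilon^n)+t_nR$. This needs slack in every coordinate, which only the all-$k$ split provides.

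On your second obstacle, the paper simply treats the weak limit as feasible. This tacitly requires that the limit not charge ${\rm cl}(\mathcal N)\setminus\mathcal N$, for example that $\mathcal N$ be closed. Your proposed repair would not work: upper semicontinuity of $\ell$ and lower semicontinuity of $c$ both point the wrong way when mass is pushed from ${\rm cl}(\mathcal N)\setminus\mathcal N$ into $\mathcal N$. Such moves can lower the objective and raise the transport cost.
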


% {\color{red} The current proof for the upper semicontinuity is not completed.}

\begin{proof}[Proof of Theorem \ref{lm:boundaryI}]
Denote by $D$ the effective domain of $\mathcal L^{\rm I}$.
Let $\bepsilon,\bepsilon'\in D$, and suppose that $Q\in\mathcal B^{\rm I}(\{\p_k\}_{k\in[K]},\bepsilon)$, $Q'\in\mathcal B^{\rm I}(\{\p_k\}_{k\in[K]},\bepsilon')$ satisfy $Q(\mathcal N)=Q'(\mathcal N)=1$.
By the convexity of the mapping $P\mapsto W(P,\p_k)$ for any $k\in[K]$, it is straightforward to verify that $\lambda Q+(1-\lambda)Q'\in \mathcal B^{\rm I}(\{\p_k\}_{k\in[K]},\lambda\bepsilon+(1-\lambda)\bepsilon')$ for all $\lambda\in(0,1)$. Therefore,
\begin{align*}
\widetilde{\mathcal L}^{\rm I}\left(\bepsilon,Q\right)= \E_{Q}[\ell(\bxi)]-\delta_{\mathcal F(\bepsilon)}(Q),
\end{align*}
where 
$$\mathcal F(\bepsilon):=\mathcal B^{\rm I}\left(\{\p_k\}_{k\in[K]},\bepsilon\right)\cap \{Q: Q(\mathcal N)=1\}$$
and $\delta_{\mathcal F(\epsilon)}(Q)$ is the extended-valued indicator, equal to $0$ if $Q\in\mathcal F(\epsilon)$ and $+\infty$ otherwise, is jointly concave on $D\times \mathcal P(\Xi)$.
This yields the concavity of $\mathcal L^{\rm I}$ on $D$. By Proposition~EC.1 and Theorem~EC.1 of \cite{WCW24}, together with their proofs, the dual formulation in \eqref{eq-dualboundaryI} can be obtained from the biconjugate of $\mathcal L^{\rm I}$. Consequently, $\mathcal L^{\rm I}$ coincides with the optimal value of the dual problem for every $\bepsilon$ in the relative interior of its effective domain.
It remains to show that, under Assumptions~\ref{assump:growthI} and~\ref{assump:costI}, the function $\mathcal L^{\rm I}$ is proper and upper semicontinuous. In this case, $\mathcal L^{\rm I}$ coincides with its biconjugate on its effective domain; see, e.g., \citet[Theorem~12.2]{R70}. Therefore, the same dual formulation remains valid for all $\bepsilon$ in the effective domain, including boundary points. 

To see that $\mathcal L^{\rm I}$ is not identically $-\infty$, note that the  probability measures supported on $\mathcal N$ can be made feasible by increasing the radii. Since $\ell$ is real-valued, this yields $\mathcal L^{\rm I}(\bepsilon)>-\infty$ for some $\bepsilon$. Next we show that $\mathcal L^{\rm I}$ is not identically $\infty$. Following a similar decomposition method in the proof of Theorem 2 \cite{REMK24}, we obtain that $\mathcal L^{\rm I}(\bepsilon)$ is equivalent to 
\begin{align}\label{eq-equnconditionalDROI}
\begin{array}{lll}\text { sup } & \sum_{\balpha \in \mathcal{A}} \int_{\mathcal N} \ell(\bxi) \mathrm{d} \nu_{\balpha}(\bxi) & \\ \text { s.t. } & \nu_{\balpha} \in \mathcal{M}_{+}(\mathcal N) & \forall \alpha \in \mathcal{A} \\ & \sum_{\substack{\balpha \in \mathcal{A}, \alpha_k=j}}\nu_{\balpha}(\mathcal N) =p_{k, j} & \forall j \in\left[N_k\right], \forall k \in[K] \\ & \sum_{\balpha \in \mathcal{A}} \int_{\mathcal N} c\left(\bxi, \widehat{\bxi}_{k, \alpha_k}\right) \mathrm{d} \nu_{\balpha}(\bxi) \leq \varepsilon_k ~~~& \forall k \in[K].
\end{array}
\end{align}
Choose $\bepsilon$ satisfying $\mathcal L^{\rm I}(\bepsilon)>-\infty$.
For any feasible solution $\{\nu_{\balpha}\}_{\balpha\in\mathcal A}$ of \eqref{eq-equnconditionalDROI}, there exists $L>0$ such that
\begin{align*}
\sum_{\balpha \in \mathcal{A}} \int_{\mathcal N} \ell(\bxi) \mathrm{d} \nu_{\balpha}(\bxi)
&\le \sum_{\balpha\in\mathcal A} \int_{\mathcal N}\left(\ell(\widehat{\bxi}_{1,\alpha_1})+L c(\bxi,\widehat{\bxi}_{1,\alpha_1})\right)\d \nu_{\balpha}(\bxi)\\
&=\ell(\widehat{\bxi}_{1,\alpha_1})+L\sum_{\balpha\in\mathcal A}\int_{\mathcal N}c(\bxi,\widehat{\bxi}_{1,\alpha_1}))\d \nu_{\balpha}(\bxi)\\
&\le \ell(\widehat{\bxi}_{1,\alpha_1})+L\epsilon_1<\infty,
\end{align*}
where the first inequality follows from Assumption~\ref{assump:growthI} applied to $\ell$ and from the finiteness of $\mathcal A$. This yields $\mathcal L^{\rm I}(\bepsilon)\in\R$.  Hence, we have proved that $\mathcal L^{\rm I}$ is proper. 

We now prove the upper semicontinuity of $\mathcal L^{\rm I}$ on $D$.
Let $\bepsilon^n\to\bepsilon$ with $\bepsilon^n,\bepsilon\in D$. Denote by 
\begin{align*}
r:=\limsup_{n\to\infty} \mathcal L^{\rm I}(\bepsilon^n).
\end{align*}
% Passing
% to a subsequence if necessary, we may assume that
% $$
% \lim_{n\to\infty}\mathcal L^{\rm I}(\bepsilon^n)
% =
% \limsup_{n\to\infty}\mathcal L^{\rm I}(\bepsilon^n)
% =:r .
% $$
It suffices to prove that $r\le \mathcal L^{\rm I}(\bepsilon)$.
For each $n$, choose a feasible solution
$\{\nu^n_{\balpha}\}_{\balpha\in\mathcal A}$ of
\eqref{eq-equnconditionalDROI} with $\bepsilon$ replaced by $\bepsilon^n$
such that
$$
R_n:=
\sum_{\balpha\in\mathcal A}
\int_{\mathcal N}\ell(\bxi)\d\nu^n_{\balpha}(\bxi)
\ge
\mathcal L^{\rm I}(\bepsilon^n)-\frac1n .
$$
Hence, we have
$$
\limsup_{n\to\infty} R_n=r.
$$
The marginal constraints of \eqref{eq-equnconditionalDROI} imply
$
\sum_{\balpha\in\mathcal A}\nu^n_{\balpha}(\mathcal N)=1 .
$
It follows from $\bepsilon^n\to\bepsilon$ that the cost constraints of \eqref{eq-equnconditionalDROI} imply that,
for every $k\in[K]$,
$$
\sup_n
\sum_{\balpha\in\mathcal A}
\int_{\mathcal N}
c(\bxi,\widehat{\bxi}_{k,\alpha_k})\d\nu^n_{\balpha}(\bxi)
<\infty .
$$
By Assumption~\ref{assump:costI}, this yields tightness of
$\{\nu^n_{\balpha}\}_{n\in\mathbb N}$ for every
$\balpha\in\mathcal A$. Since $\mathcal A$ is finite, Prokhorov's theorem yields a subsequence, still denoted by $n$, and
finite measures $\{\nu_{\balpha}\}_{\balpha\in\mathcal A}\subseteq \mathcal M_+(\Xi)$ such that
$$
\nu^n_{\balpha}\Rightarrow\nu_{\balpha},
\qquad \balpha\in\mathcal A,
$$
where $\Rightarrow$ denotes the weak convergence. This implies that $\{\nu_{\balpha}\}_{\balpha\in\mathcal A}$ satisfies the marginal constraints of \eqref{eq-equnconditionalDROI}, that is, $\sum_{\substack{\balpha \in \mathcal{A}, \alpha_k=j}}\nu_{\balpha}(\mathcal N) =p_{k, j}$  for all $j \in\left[N_k\right]$ and $k \in[K]$.
Define
\begin{align*}
C_k=
\sum_{\balpha\in\mathcal A}
\int_{\mathcal N}
c(\bxi,\widehat{\bxi}_{k,\alpha_k})\d\nu_{\balpha}(\bxi)\quad{\rm and}\quad C^n_k=
\sum_{\balpha\in\mathcal A}
\int_{\mathcal N}
c(\bxi,\widehat{\bxi}_{k,\alpha_k})\d\nu^n_{\balpha}(\bxi),\quad k\in[K],n\in\N.
\end{align*}
By the Portmanteau theorem (see e.g., Theorem~2.1 of \cite{B13}) applied to the nonnegative lower semicontinuous
cost functions, for every $k\in[K]$, we have
\begin{align*}
C_k\le
\liminf_{n\to\infty}
\sum_{\balpha\in\mathcal A}
\int_{\mathcal N}
c(\bxi,\widehat{\bxi}_{k,\alpha_k})\d\nu^n_{\balpha}(\bxi)
\le \liminf_{n\to\infty}\epsilon_k^n=\epsilon_k.
\end{align*}
Hence, $\{\nu_{\balpha}\}_{\balpha\in\mathcal A}$ is feasible for
\eqref{eq-equnconditionalDROI} with radius vector $\bepsilon$. Denote its
objective value by
$$
R:=
\sum_{\balpha\in\mathcal A}
\int_{\mathcal N}\ell(\bxi)\d\nu_{\balpha}(\bxi).
$$
It is straightforward to see that $R\le \mathcal L^{\rm I}(\bepsilon)$ as $\{\nu_{\balpha}\}_{\balpha\in\mathcal A}$ is feasible.
Further,
for each $k\in[K]$ and
$\balpha\in\mathcal A$, define
$$
h_{k,\balpha}(\bxi):=
\ell(\widehat{\bxi}_{k,\alpha_k})
+
L c(\bxi,\widehat{\bxi}_{k,\alpha_k})
-
\ell(\bxi),
\qquad \bxi\in\mathcal N,
$$
where $L>0$ is the Lipschitz constant in Assumption \ref{assump:growthI}, and thus, $h_{k,\balpha}(\bxi)\ge 0$ for all $\bxi\in\mathcal N$.
Define
\begin{align*}
H_k=
\sum_{\balpha\in\mathcal A}\int_{\mathcal N}
h_{k,\balpha}(\bxi)\d\nu_{\balpha}(\bxi)\quad{\rm and}\quad 
H^n_k=\sum_{\balpha\in\mathcal A}\int_{\mathcal N}
h_{k,\balpha}(\bxi)\d\nu^n_{\balpha}(\bxi),\quad k\in[K],n\in\N.
\end{align*}
Since
$c(\cdot,\widehat{\bxi}_{k,\alpha_k})$ is lower semicontinuous and
$\ell$ is upper semicontinuous, $h_{k,\balpha}$ is lower semicontinuous.
Thus, the Portmanteau theorem (see e.g., Theorem~2.1 of \cite{B13}) gives
\begin{align}\label{eq-lowersemiH_k}
H_k\le \liminf_{n\to\infty} H_k^n\quad \forall k\in[K].
\end{align}
Using the marginal constraints of \eqref{eq-equnconditionalDROI}, for every $k\in[K]$ one can check that
$$
R_n=\sum_{\balpha\in\mathcal A}
\int_{\mathcal N}\ell(\bxi)\d\nu^n_{\balpha}(\bxi)
=
\sum_{j=1}^{N_k}p_{k,j}\ell(\widehat{\bxi}_{k,j})
+
L C^n_k-H^n_k.
$$
Similarly,
$$
R=\sum_{\balpha\in\mathcal A}
\int_{\mathcal N}\ell(\bxi)\d\nu_{\balpha}(\bxi)=
\sum_{j=1}^{N_k}p_{k,j}\ell(\widehat{\bxi}_{k,j})
+
L C_k-H_k.
$$
It follows
that
\begin{align}\label{eq-rRinequality}
r-R&=\limsup_{n\to\infty} R_n-R\notag\\
&=\limsup_{n\to\infty} \left\{\sum_{j=1}^{N_k}p_{k,j}\ell(\widehat{\bxi}_{k,j})
+ L C^n_k-H^n_k\right\}-\sum_{j=1}^{N_k}p_{k,j}\ell(\widehat{\bxi}_{k,j})
-
L C_k+H_k \notag\\
&= L(\epsilon_k-C_k)-\liminf_{n\to\infty}H_k^n+H_k\notag\\
&\le L(\epsilon_k-C_k),
\qquad k\in[K],
\end{align}
where the first inequality follows from $C^n_k\le\epsilon^n_k$ and $\epsilon^n_k\to\epsilon_k$, and the last inequality is due to \eqref{eq-lowersemiH_k}.
If $r\le R$, then $r\le R\le\mathcal L^{\rm I}(\bepsilon)$, and the desired inequality follows. Suppose
instead that $r>R$. Then \eqref{eq-rRinequality} implies
$$
\epsilon_k-C_k\ge \frac{r-R}{L}>0,
\qquad k\in[K].
$$
For sufficiently large $n$, define
\begin{align}\label{eq-deftn}
t_n=
\max_{k\in[K]}
\frac{(\epsilon^n_k-\epsilon_k)_+}
{(\epsilon^n_k-\epsilon_k)_+ + \epsilon_k-C_k},
\end{align}
where $x_+=\max\{0,x\}$ denotes the positive part of $x\in\R$.
Then, $0\le t_n\le1$ and $t_n\to0$. Set
$$
\widetilde\nu^n_{\balpha}:=
(1-t_n)\nu^n_{\balpha}+t_n\nu_{\balpha},
\qquad \balpha\in\mathcal A .
$$
The marginal constraints of \eqref{eq-equnconditionalDROI} are preserved because both $\{\nu_{\balpha}\}_{\balpha\in\mathcal A}$ and $\{\nu_{\balpha}^n\}_{\balpha\in\mathcal A}$ satisfy this constraints. 
Below we show that the cost constraints of \eqref{eq-equnconditionalDROI} with radii $\bepsilon$ also holds for $\{\widetilde\nu_{\balpha}\}_{\balpha\in\mathcal A}$. By the definition of $t_n$ in\eqref{eq-deftn}, we have
\begin{align}\label{eq-tninequality}
(\epsilon_k-C_k)t_n\ge (1-t_n)(\epsilon_k^n-\epsilon_k)_+\quad \forall k\in[K].
\end{align}
Therefore,
for every $k\in[K]$,
\begin{align*}
\sum_{\balpha\in\mathcal A}
\int_{\mathcal N}
c(\bxi,\widehat{\bxi}_{k,\alpha_k})\d\widetilde\nu^n_{\balpha}(\bxi)-\epsilon_k
&=(1-t_n)C^n_k+t_nC_k-\epsilon_k\\
&\le (1-t_n)\epsilon_k^n+t_nC_k-\epsilon_k\\
&=(1-t_n)(\epsilon_k^n-\epsilon_k)-t_n(\epsilon_k-C_k) \\
&\le (1-t_n)(\epsilon_k^n-\epsilon_k)_+-t_n(\epsilon_k-C_k)\le 0,
\end{align*}
where we have used \eqref{eq-tninequality} in the last inequality. 
This yields that $\{\widetilde\nu^n_{\balpha}\}_{\balpha\in\mathcal A}$ is feasible for
\eqref{eq-equnconditionalDROI} with radii  $\bepsilon$. Its
objective value is $(1-t_n)R_n+t_nR\le \mathcal L^{\rm I}(\bepsilon)$. On the other hand, 
\begin{align*}
\limsup_{n\to\infty}\{(1-t_n)R_n+t_nR\}=\limsup_{n\to\infty}R_n=r.
\end{align*}
Hence, $\mathcal L^{\rm I}(\bepsilon)\ge r$. 
Hence $\mathcal L^{\rm I}$ is upper semicontinuous on $D$. This competes the proof.
\end{proof}

}

~\\
\begin{proof}[Proof of Theorem \ref{th:intersectionDual}]
Fix any $\bar{\bbeta}\in\mathcal D$ and write
$\ell_{\bar{\bbeta}}(\bxi):=\ell(\bar{\bbeta},\bxi)$. To avoid notational clutter in the derivation, we write $\ell(\bxi)$ for this fixed loss and restore the minimization over $\bbeta$ at the end. Let
\begin{align*}
V^{\rm I}(\bar{\bbeta}):=\sup_{Q\in\mathcal B_{\mathcal N}^{\rm I}}\int_{\mathcal N}\ell_{\bar{\bbeta}}(\bxi)\,dQ(\bxi).
\end{align*}
By \eqref{prob-intersection-induced}, the original intersection-based conditional DRO problem is $\min_{\bbeta\in\mathcal D}V^{\rm I}(\bbeta)$.
The proof proceeds by following the two reductions developed in the main text: first use Theorem~\ref{th-analysisB1} to express the conditional set as a union of fixed-center intersection problems, then dualize the resulting finite-dimensional allocation problem.

Let $C$ denote the set of outer variables $(m,\ba,\br,\bs)$ satisfying
\begin{align*}
m\in[1/b,1/a],\quad
a_{k,j}\ge0,\quad r_{\balpha}\ge0,\quad
\sum_{j=1}^{N_k}a_{k,j}=1,
\end{align*}
and
\begin{align*}
a_{k,j}+\sum_{\balpha\in\mathcal A:\alpha_k=j}r_{\balpha}=m w_{k,j},
\quad
\bs_{\balpha}\in r_{\balpha}\mathcal V_{\balpha},
\end{align*}
for all admissible $k,j,\balpha$. For $(m,\ba,\br,\bs)\in C$, define
\begin{align*}
\widehat{\p}_k^{\ba}:=\sum_{j=1}^{N_k}a_{k,j}\delta_{\widehat\bxi_{k,j}},
\qquad
\rho_k(m,\ba,\br,\bs):=m\epsilon_k-\sum_{\balpha\in\mathcal A}s_{\balpha,k}.
\end{align*}
Theorem~\ref{th-analysisB1} turns the conditional ambiguity set into a union over the finite-dimensional allocation variables. Hence
\begin{align}
V^{\rm I}(\bar{\bbeta})
=
\sup_{(m,\ba,\br,\bs)\in C}
\sup_{\substack{Q\in\cap_{k=1}^K \mathcal B(\widehat{\p}_k^{\ba},\rho_k(m,\ba,\br,\bs))\\
Q(\mathcal N)=1}}
\E_Q[\ell(\bxi)].
\label{eq-pf-T2-fixed-outer}
\end{align}
For a fixed allocation $(m,\ba,\br,\bs)$, the inner supremum in \eqref{eq-pf-T2-fixed-outer} is a fixed-center intersection DRO problem on $\mathcal N$. By Theorem \ref{lm:boundaryI}, this inner problem is equivalent to
\begin{align}
&\inf_{\substack{\blambda\in\mathbb R_+^K\\ \gamma_{k,j}\in\mathbb R}}
\left\{
\sum_{k=1}^K\lambda_k\left(m\epsilon_k-\sum_{\balpha\in\mathcal A}s_{\balpha,k}\right)
+\sum_{k=1}^K\sum_{j=1}^{N_k}a_{k,j}\gamma_{k,j}
\right\}\notag\\
&\hspace{0.8in}
\text{\rm s.t.}\quad
\sup_{\bxi\in\mathcal N}
\left\{\ell(\bxi)-\sum_{k=1}^K\lambda_kc(\bxi,\widehat{\bxi}_{k,\alpha_k})\right\}
\le \sum_{k=1}^K\gamma_{k,\alpha_k},
\quad \balpha\in\mathcal A.
\label{eq-pf-T2-inner-dual}
\end{align}

{
Denote by $D$ the set of $(\blambda,\bgamma)$ satisfying the constraints in \eqref{eq-pf-T2-inner-dual}. The set $D$ is convex. The set $C$ is also convex because of the convexity of each perspective set $\{(r,s):r\ge0,\ s\in r\mathcal V_{\balpha}\}$. The objective is affine in each block of variables. Therefore, we can apply a standard minimax theorem if $C$ is compact. Note that $m\in[1/b,1/a]$, the simplex constraints on $\ba$ and the identity $\sum_{\balpha}r_{\balpha}=m-1$. A sufficient condition for the compactness of $C$ is that $\mathcal V_{\balpha}$ is compact for all $\balpha\in\mathcal A$.
We next show that, although $\mathcal V_{\balpha}$ is generally noncompact, the noncompact sets $\mathcal V_{\balpha}$ can be replaced by compact sets without changing the value of \eqref{eq-pf-T2-fixed-outer} under Assumption \ref{assump:SlaterI}. The construction proceeds in two steps. We first establish that replacing $\mathcal V_{\balpha}$ with its closure is equivalent, and then show that the closure can be further calibrated to be bounded.

\underline{Equivalence with the closure of $\mathcal V_{\balpha}$}: 
Denote by $\overline{\mathcal V}_{\balpha}={\rm cl}(\mathcal V_{\balpha})$.
Let $\overline{C}$ be the allocation set obtained by replacing
$
\bs_{\balpha}\in r_{\balpha}\mathcal V_{\balpha}
$
with
$
\bs_{\balpha}\in r_{\balpha} \overline{\mathcal V}_{\balpha}.
$
Let $\overline{V}^{\rm I}(\bar\bbeta)$ denote the corresponding value of \eqref{eq-pf-T2-fixed-outer} with $\overline{C}$.
Since $\mathcal V_{\balpha}\subseteq\overline{\mathcal V}_{\balpha}$, we have
$
V^{\rm I}(\bar\bbeta)
\le
\overline{V}^{\rm I}(\bar\bbeta).
$
To see the reverse inequality, take any
$(m,\ba,\br,\bs)\in \overline{C}$
and any feasible $Q$ in the inner problem of \eqref{eq-pf-T2-fixed-outer}, that is,
$$
Q\in
\bigcap_{k=1}^K
\mathcal B\left(\widehat {\p}_k^{\ba},\rho_k\right),
\qquad
Q(\mathcal N)=1,
$$
where $\rho_k:=\rho_k(m,\ba,\br,\bs)$.
% where
% $$
% \widehat P_k^{\ba}:=\sum_{j=1}^{N_k}a_{k,j}\delta_{\widehat\xi_{k,j}},
% \qquad
% \rho_k(m,\ba,\br,\bs)
% :=
% m\varepsilon_k-\sum_{\balpha\in\mathcal A}s_{\balpha,k}.
% $$
Let $p=1/m$. The identity 
$
1+\sum_{\balpha\in\mathcal A}r_{\balpha}=m,
$
implies
$
p\left(1+\sum_{\balpha\in\mathcal A}r_{\balpha}\right)=1.
$
For each $\balpha$, choose
$$
\bs_{\balpha}^n\in r_{\balpha}\mathcal V_{\balpha}
\quad\text{such that}\quad
\bs_{\balpha}^n\to \bs_{\balpha}.
$$
This is possible because
$
\bs_{\balpha}\in r_{\balpha}\overline{\mathcal V}_{\balpha}.
$
By the definition of $\mathcal V_{\balpha}$ in \eqref{eq-outside-cost-vector}, each $\bs_{\balpha}^n$ can be represented by a probability measure $\mu_{\balpha}^n$ on $\mathcal N^c$ such that
\begin{align}\label{eq-20260707-1}
r_{\balpha}
\int_{\mathcal N^c}
c(\bxi,\widehat\bxi_{k,\alpha_k})\,\d\mu_{\balpha}^n(\bxi)
=
s_{\balpha,k}^n,
\qquad k\in[K].
\end{align}
Define
\begin{align}\label{eq-20260707-3}
P_n
=
pQ+
p\sum_{\balpha\in\mathcal A}r_{\balpha}\mu_{\balpha}^n.
\end{align}
Then
$
P_n(\Xi)=p+p\sum_{\balpha\in\mathcal A} r_{\balpha}=1,
$
which implies that
$P_n$ is a probability measure. Moreover, we have
$$
P_n(\mathcal N)=p,
\qquad
(P_n)^{\mathcal N}=Q.
$$
Note that $Q\in\mathcal B(\widehat{\p}^{\ba}_k,\rho_k)$. There 
exist couplings $\pi_k^n$ between $Q$ and $\widehat{\p}_k^{\ba}$  such that
\begin{align}\label{eq-20260707-2}
\int c(\bxi_1, \bxi_2) \d\pi_k^n( \bxi_1,  \bxi_2) \leq \rho_k+\frac{1}{n}.
\end{align}
Constructing a coupling between $P_n$ and $\widehat{\p}_k$  as follows:
$$
\Gamma_k^n=p \pi_k^n+p \sum_{\balpha \in \mathcal{A}} r_{\balpha}\left(  \mu_{\balpha}^n  \otimes \delta_{\widehat{\bxi}_{k, \alpha_k}}\right).
$$
Indeed, the conclusion that $P_n$ is the first marginal is straightforward. 
The second marginal of $\Gamma_k^n$ has the same support of $\widehat{\p}_k$, where the probability mass on $\widehat{\bxi}_{k,j}$ is
\begin{align*}
p a_{k, j}+p \sum_{\balpha: \alpha_k=j} r_{\balpha}=p m w_{k, j}=w_{k, j}.
\end{align*}
This yields that $\widehat{\p}_k$ is the second marginal. Therefore,
\begin{align*}
W\left(P_n, \widehat{\P}_k\right) &\leq \int c(\bxi_1,\bxi_2) \d \Gamma_k^n(\bxi_1,\bxi_2)\\
&=p\int c(\bxi_1,\bxi_2)\d \pi_k^n(\bxi_1,\bxi_2)+p\sum_{\balpha\in\mathcal A}r_{\balpha}\int c(\bxi,\widehat{\bxi}_{k,\alpha_k})\d \mu_{\balpha}^n(\bxi)\\
&\le p \left(\rho_k+\frac{1}{n}\right)+p\sum_{\balpha\in\mathcal A}s_{\balpha,k}^n\\
&=p\left(m\epsilon_k-\sum_{\balpha\in\mathcal A}s_{\balpha,k}+\frac{1}{n}\right)+p\sum_{\balpha\in\mathcal A}s_{\balpha,k}^n\\
&=\epsilon_k+\frac{p}{n}+p\sum_{\balpha\in\mathcal A}(s_{\balpha,k}^n-s_{\balpha,k})\longrightarrow \epsilon_k\quad {\rm as}~n\to\infty,
\end{align*}
where the second inequality follows from \eqref{eq-20260707-1} and \eqref{eq-20260707-2}, we have used $\rho_k=m\epsilon_k-\sum_{\balpha\in\mathcal A}s_{\balpha,k}$ in the second equality, and the convergence is due to $\bs_{\balpha}^n\to \bs_{\balpha}$ for all $\balpha\in\mathcal A$. 
For $\lambda\in(0,1)$,
define 
\begin{align*}
\widetilde{P}_{n,\lambda}=(1-\lambda) P_n+\lambda P^{\circ},
\end{align*}
where $P^\circ$ is the distribution satisfying the conditions in Assumption \ref{assump:SlaterI}. Note that, for all $k\in[K]$, $W(P^\circ, \widehat{\p}_k)<\epsilon_k$, $W(P_n,\widehat{\P}_k)\to \epsilon_k$ as $n\to\infty$, and
\begin{align*}
W(\widetilde{P}_{n,\lambda}, \widehat{\p}_k)
\le (1-\lambda)W({P}_n, \widehat{\p}_k)+\lambda W(P^\circ, \widehat{\p}_k).
\end{align*}
For large enough $n$, it holds that $W(\widetilde{P}_{n,\lambda}, \widehat{\p}_k)\le \epsilon_k$ for all $k\in[K]$. 
Now we are able to choose a sequence of $\{\lambda_n\}_{n\in\N}\subseteq (0,1)$ such that $\lambda_n\downarrow 0$ and $\widetilde{P}_{n,\lambda_n}\in\mathcal B^{\rm I}(\{\widehat{\p}_k\}_{k\in[K]},\bepsilon)$.
Moreover, we have $\widetilde{P}_{n,\lambda_n}(\mathcal N)\in[a,b]$ as ${P}_n(\mathcal N)=p=1/m\in[a,b]$, and ${P}^{\circ}(\mathcal N)\in[a,b]$ from Assumption \ref{assump:SlaterI}. 
% This yields
% \begin{align*}
% \widetilde{P}_{n,\lambda}^{\mathcal N}\in \mathcal B_{\mathcal N}^{\rm I}(\{\widehat{\p}_k\}_{k\in[K]},\bepsilon).
% \end{align*}
In fact, $V^{\rm I}(\bar{\bbeta})$ is the optimal value of the robust optimization problem:
$$V^{\rm I}(\bar{\bbeta})=\sup_{P\in \mathcal B^{\rm I}(\{\widehat{\p}_k\}_{k\in[K]},\bepsilon), P(\mathcal N)\in[a,b]}\E_P[\ell(\bxi)\mid\bxi\in\mathcal N].$$
Because $\widetilde{P}_{n,\lambda_n}$ is included in the feasible set of the above problem,
this gives
\begin{align}\label{eq-20260707-4}
V^{\rm I}(\bar{\bbeta})\ge \E_{\widetilde{P}_{n,\lambda_n}}[\ell(\bxi)\mid\bxi\in\mathcal N],\quad \forall n.
\end{align}
Recall from \eqref{eq-20260707-3} that
$
P_n=pQ+p\sum_{\balpha\in\mathcal A}r_{\balpha}\mu_{\balpha}^n.
$
Among the measures appearing in this decomposition, $Q$ is the only probability measure that assigns positive mass to $\mathcal N$.
By some standard calculation, we have
\begin{align*}
\widetilde{P}_{n,\lambda_n}^{\mathcal N}=
\frac{\left(1-\lambda_n\right) p Q+\lambda_n P^{\circ}(\mathcal N) (P^{\circ})^{\mathcal N}}{\left(1-\lambda_n\right) p+\lambda_n P^{\circ}(\mathcal N)}.
\end{align*}
Then
\begin{align}\label{eq-20260707-5}
\E_{\widetilde{P}_{n,\lambda_n}}[\ell(\bxi)\mid\bxi\in\mathcal N] 
&= \E_{\widetilde{P}_{n,\lambda_n}^{\mathcal N}}[\ell(\bxi)]\notag\\
&=\frac{(1-\lambda_n)p}{\left(1-\lambda_n\right) p+\lambda_n P^{\circ}(\mathcal N)}\E_Q[\ell(\bxi)]+\frac{\lambda_n P_{\circ}(\mathcal N)}{\left(1-\lambda_n\right) p+\lambda_n P^{\circ}(\mathcal N)}\E_{P^\circ}[\ell(\bxi)\mid\bxi\in\mathcal N]\notag\\
&\longrightarrow \E_Q[\ell(\bxi)]\quad{\rm as}~n\to\infty,
\end{align}
where we have used $\E_{P^\circ}[\ell(\bxi)\mid\bxi\in\mathcal N]<\infty$ in the convergence step.
Because $Q$ is an arbitrary element of the feasible set of \eqref{eq-pf-T2-fixed-outer} after replacing $\mathcal V_{\balpha}$ by $\overline{\mathcal V}_{\balpha}$, combining 
\eqref{eq-20260707-4} and \eqref{eq-20260707-5} implies that
$
V^{\rm I}(\bar{\bbeta})\ge \overline{V}^{\rm I}(\bar{\bbeta}).
$
This completes the proof of this step.

\underline{Calibration to a compact set.}
We now show that the closed sets $\overline{\mathcal V}_{\balpha}$ can be further replaced by compact sets. 
Define
$$
T_{\balpha}(\bxi)
:=
\left(
c(\bxi,\widehat\bxi_{1,\alpha_1}),
\dots,
c(\bxi,\widehat\bxi_{K,\alpha_K})
\right).
$$
Choose any $\bxi_{\balpha}^0\in\mathcal N^c$ and set
$$
\bv_{\balpha}^0:=T_{\balpha}(\bxi_{\balpha}^0).
$$
Define
$$
\mathcal K_{\balpha}
:=
\left\{
\bxi\in\mathcal N^c:
c(\bxi,\widehat\bxi_{k,\alpha_k})\le v_{\balpha,k}^0
\text{ for some }k\in[K]
\right\}
=\mathcal N^c\cap \left(\bigcup_{k\in[K]}\left\{\bxi: c(\bxi,\widehat\bxi_{k,\alpha_k})\le v_{\balpha,k}^0\right\}\right)
$$
and 
\begin{align}\label{eq-20260707-6}
\mathcal U_{\balpha}
:=
\operatorname{cl}\left(\operatorname{conv}
\left\{
T_{\balpha}(\bxi):\bxi\in\mathcal K_{\balpha}
\right\}\right).
\end{align}
By the condition that $\bxi\mapsto c(\bxi,\widehat{\bxi})$ is coercive in Assumption \ref{assump:costI}, $T_{\balpha}(\mathcal K_{\balpha})$ is bounded. Hence $\mathcal U_{\balpha}$ is closed, bounded, and convex in $\mathbb R^K$, and is therefore compact. Below we adopt the notion  $\widetilde{C}$ as the allocation set obtained by replacing
$
\bs_{\balpha}\in r_{\balpha}\mathcal V_{\balpha}
$
with
$
\bs_{\balpha}\in r_{\balpha} {\mathcal U}_{\balpha},
$
and 
let $\widetilde{V}^{\rm I}(\bar\bbeta)$ denote the corresponding value of \eqref{eq-pf-T2-fixed-outer} with $\widetilde{C}$.

We claim that for every $\bv\in\overline{\mathcal V}_{\balpha}$, there exists $\bu\in\mathcal U_{\balpha}$ such that
$
\bu\le \bv
$
componentwise. Indeed, if $\bxi\in\mathcal K_{\balpha}$, then $T_{\balpha}(\bxi)\in\mathcal U_{\balpha}$. If $\bxi\notin\mathcal K_{\balpha}$, then
$
T_{\balpha}(\bxi)\ge \bv_{\balpha}^0
$
componentwise, while $\bv_{\balpha}^0\in\mathcal U_{\balpha}$. The claim follows by taking convex combinations and then closures.

Now take any $(m,\ba,\br,\bs)\in \overline{C}$. For each $\balpha$, choose $\tilde{\bs}_{\balpha}\in r_{\balpha}\mathcal U_{\balpha}$ such that
$
\tilde{\bs}_{\balpha}\le \bs_{\balpha}
$
componentwise. Then
$(m,\ba,\br,\tilde{\bs})\in \widetilde{C},$
and
$$
\rho_k(m,\ba,\br,\tilde{\bs})
=
m\varepsilon_k-\sum_{\balpha\in\mathcal A}\tilde s_{\balpha,k}
\ge
m\varepsilon_k-\sum_{\balpha\in\mathcal A}s_{\balpha,k}
=
\rho_k(m,\ba,\br,\bs).
$$
Thus the inner feasible set under $(m,\ba,\br,\tilde{\bs})$ contains the inner feasible set under $(m,\ba,\br,\bs)$. Therefore,
$$
\overline{V}^{\rm I}(\bar\bbeta)
\le
\widetilde{V}^{\rm I}(\bar\bbeta).
$$
The reverse inequality follows from
$
\mathcal U_{\balpha}\subseteq\overline{\mathcal V}_{\balpha}
$ for all $\balpha\in\mathcal A$.
Hence, we have $\widetilde{V}^{I}(\bar\bbeta)
=
\overline{V}^{\rm I}(\bar\bbeta)$. Combining with the result in the first step yields
$$
\widetilde{V}^{\rm I}(\bar\bbeta)
=
\overline{V}^{\rm I}(\bar\bbeta)
=
V^{I}(\bar\bbeta).
$$
This completes the proof of the second step.

Denote the objective function of the problem in \eqref{eq-pf-T2-inner-dual} by $J(m,\ba,\br,\bs;\blambda,\bgamma)$.
Up to now, we have proven that 
\begin{align*}
V^{\rm I}(\bar\bbeta)=\sup_{(m,\ba,\br,\bs)\in \widetilde{C}}\inf_{(\blambda,\bgamma)\in D}J(m,\ba,\br,\bs;\blambda,\bgamma),
\end{align*}
where $\widetilde{C}$ is the allocation set obtained by replacing
$
\bs_{\balpha}\in r_{\balpha}\mathcal V_{\balpha}
$
with
$
\bs_{\balpha}\in r_{\balpha} {\mathcal U}_{\balpha} 
$, and $\mathcal U_{\balpha}$ is defined in \eqref{eq-20260707-6}. The compactness of $\mathcal U_{\balpha}$ implies that its induced feasible set $\widetilde{C}$ is compact. 
Therefore Sion's minimax theorem \citep{S58} gives
\begin{align*}
V^{\rm I}(\bar\bbeta)=\inf_{(\blambda,\bgamma)\in D}\sup_{(m,\ba,\br,\bs)\in \widetilde{C}}J(m,\ba,\br,\bs;\blambda,\bgamma).
\end{align*}
By the arguments in the proof of the step ``Calibration to a compact set,'' fix $(m,\ba,\br)$, and for any $\bs_{\balpha}\in r_{\balpha}\overline{\mathcal V}_{\balpha}$, there exists $\widetilde{\bs}_{\balpha}\in r_{\balpha}\mathcal U_{\balpha}$ such that
$
\widetilde{\bs}_{\balpha}\le \bs_{\balpha}
$
componentwise. Since $J(m,\ba,\br,\bs;\blambda,\bgamma)$ is decreasing in $\bs$ in the componentwise sense, we obtain
\begin{align*}
\sup_{(m,\ba,\br,\bs)\in \widetilde{C}}J(m,\ba,\br,\bs;\blambda,\bgamma)\ge \sup_{(m,\ba,\br,\bs)\in \overline{C}}J(m,\ba,\br,\bs;\blambda,\bgamma),\quad \forall \blambda\in \R_K^+,~\gamma_{k,j}\in\R.
\end{align*}
The reverse inequality also holds as $\widetilde{C}\subseteq \overline{C}$. Therefore, 
\begin{align*}
\inf_{(\blambda,\bgamma)\in D}\sup_{(m,\ba,\br,\bs)\in \overline{C}}J(m,\ba,\br,\bs;\blambda,\bgamma)=\inf_{(\blambda,\bgamma)\in D}\sup_{(m,\ba,\br,\bs)\in \widetilde{C}}J(m,\ba,\br,\bs;\blambda,\bgamma).
\end{align*}
By the continuity of $J(m,\ba,\br,\bs;\blambda,\bgamma)$ in $\bs$, we further obtain
\begin{align*}
\inf_{(\blambda,\bgamma)\in D}\sup_{(m,\ba,\br,\bs)\in C}J(m,\ba,\br,\bs;\blambda,\bgamma)=\inf_{(\blambda,\bgamma)\in D}\sup_{(m,\ba,\br,\bs)\in \overline{C}}J(m,\ba,\br,\bs;\blambda,\bgamma).
\end{align*}
Therefore,
\begin{align}
V^{\rm I}(\bar{\bbeta})
&=\inf_{(\blambda,\bgamma)\in D}\sup_{(m,\ba,\br,\bs)\in C}J(m,\ba,\br,\bs;\blambda,\bgamma)\notag\\
&=
\inf_{(\blambda,\bgamma)\in D}
\sup_{(m,\ba,\br,\bs)\in C}
\left\{
m\sum_{k=1}^K\lambda_k\epsilon_k
-\sum_{\balpha\in\mathcal A}\blambda^\top \bs_{\balpha}
+\sum_{k=1}^K\sum_{j=1}^{N_k}a_{k,j}\gamma_{k,j}
\right\}.
\label{eq-pf-T2-minimax}
\end{align}
}

% Therefore Sion's minimax theorem \citep{S58} gives
% \begin{align}
% V^{\rm I}(\bar{\bbeta})
% =
% \inf_{(\blambda,\bgamma)\in D}
% \sup_{(m,\ba,\br,\bs)\in C}
% \left\{
% m\sum_{k=1}^K\lambda_k\epsilon_k
% -\sum_{\balpha\in\mathcal A}\blambda^\top \bs_{\balpha}
% +\sum_{k=1}^K\sum_{j=1}^{N_k}a_{k,j}\gamma_{k,j}
% \right\}.
% \label{eq-pf-T2-minimax}
% \end{align}

For fixed $(\blambda,\bgamma)\in D$, the perspective constraints $\bs_{\balpha}\in r_{\balpha}\mathcal V_{\balpha}$ can be eliminated before the final LP dualization. If $(m,\ba,\br)$ satisfies all constraints defining $C$ except $\bs_{\balpha}\in r_{\balpha}\mathcal V_{\balpha}$, then
\begin{align*}
\sup_{\{\bs_{\balpha}\in r_{\balpha}\mathcal V_{\balpha}\}_{\balpha\in\mathcal A}}
\left\{-\sum_{\balpha\in\mathcal A}\blambda^\top \bs_{\balpha}\right\}
&=
-\sum_{\balpha\in\mathcal A}r_{\balpha}
\inf_{\bgamma\in\mathcal V_{\balpha}}\blambda^\top\bgamma\\
&=
-\sum_{\balpha\in\mathcal A}r_{\balpha}d_{\balpha}^{\blambda}.
\end{align*}
Thus the remaining supremum in \eqref{eq-pf-T2-minimax} reduces to the finite-dimensional linear program
\begin{align}
\sup_{m,\ba,\br}\quad
&m\sum_{k=1}^K\lambda_k\epsilon_k
+\sum_{k=1}^K\sum_{j=1}^{N_k}a_{k,j}\gamma_{k,j}
-\sum_{\balpha\in\mathcal A}r_{\balpha}d_{\balpha}^{\blambda}\notag\\
\text{\rm s.t.}\quad
&m\in[1/b,1/a],\quad a_{k,j}\ge0,\quad r_{\balpha}\ge0,\notag\\
&\sum_{j=1}^{N_k}a_{k,j}=1,\quad k\in[K],\notag\\
&a_{k,j}+\sum_{\balpha\in\mathcal A:\alpha_k=j}r_{\balpha}=m w_{k,j},
\quad j\in[N_k],\ k\in[K].
\label{eq-pf-T2-outer-lp}
\end{align}
The feasible region of \eqref{eq-pf-T2-outer-lp} is a compact polytope. Indeed, $m$ is bounded, each $a_k$ lies in a simplex, and summing the balance equations over $j$ gives $\sum_{\balpha}r_{\balpha}=m-1$. Hence standard finite-dimensional linear programming duality applies.

To dualize \eqref{eq-pf-T2-outer-lp}, let $\varrho_k\in\R$ be the multiplier for $\sum_j a_{k,j}=1$, written as $1-\sum_j a_{k,j}=0$, and let $\varphi_{k,j}\in\R$ be the multiplier for $a_{k,j}+\sum_{\balpha:\alpha_k=j}r_{\balpha}-m w_{k,j}=0$.
The resulting Lagrangian is
\begin{align*}
&\sum_{k=1}^K\varrho_k
+m\left(\sum_{k=1}^K\lambda_k\epsilon_k-\sum_{k=1}^K\sum_{j=1}^{N_k}\varphi_{k,j}w_{k,j}\right)\\
&\quad+\sum_{k=1}^K\sum_{j=1}^{N_k}a_{k,j}
\left(\gamma_{k,j}-\varrho_k+\varphi_{k,j}\right)
+\sum_{\balpha\in\mathcal A}r_{\balpha}
\left(\sum_{k=1}^K\varphi_{k,\alpha_k}-d_{\balpha}^{\blambda}\right).
\end{align*}
Taking the supremum over $m\in[1/b,1/a]$ is equivalently represented by variables $\eta^+,\eta^-\ge0$ satisfying
\begin{align*}
\sum_{k=1}^K\lambda_k\epsilon_k-\sum_{k=1}^K\sum_{j=1}^{N_k}\varphi_{k,j}w_{k,j}
=\eta^+-\eta^-,
\end{align*}
with objective contribution $\eta^+/a-\eta^-/b$. Taking the supremum over $a_{k,j}\ge0$ and $r_{\balpha}\ge0$ gives the dual feasibility conditions
\begin{align*}
\gamma_{k,j}-\varrho_k+\varphi_{k,j}\le0,\qquad
\sum_{k=1}^K\varphi_{k,\alpha_k}\le d_{\balpha}^{\blambda}.
\end{align*}
Equivalently, introducing $\psi_{k,j}\ge0$, the dual of \eqref{eq-pf-T2-outer-lp} is
\begin{align}
\inf\quad
&\sum_{k=1}^K\varrho_k
+\eta^+/a-\eta^-/b\notag\\
\text{\rm s.t.}\quad
&\sum_{k=1}^K\lambda_k\epsilon_k-\sum_{k=1}^K\sum_{j=1}^{N_k}\varphi_{k,j}w_{k,j}=\eta^+-\eta^-,\notag\\
&\gamma_{k,j}-\varrho_k+\varphi_{k,j}+\psi_{k,j}=0,
\quad j\in[N_k],\ k\in[K],\notag\\
&\sum_{k=1}^K\varphi_{k,\alpha_k}\le d_{\balpha}^{\blambda},
\quad \balpha\in\mathcal A,\notag\\
&\varrho_k\in\mathbb R,\quad \varphi_{k,j}\in\mathbb R,\quad
\eta^+,\eta^-,\psi_{k,j}\ge0.
\label{eq-pf-T2-outer-lp-dual}
\end{align}

Combining \eqref{eq-pf-T2-minimax} with \eqref{eq-pf-T2-outer-lp-dual}, and then eliminating $\bgamma,\bm{\varrho},\bm{\psi}$, yields the compact form in the theorem for the fixed decision $\bar{\bbeta}$. To see the elimination explicitly, let
\begin{align*}
\kappa:=\sum_{k=1}^K\varrho_k.
\end{align*}
From $\gamma_{k,j}-\varrho_k+\varphi_{k,j}+\psi_{k,j}=0$ and $\psi_{k,j}\ge0$, we have
$\gamma_{k,j}\le \varrho_k-\varphi_{k,j}$. Therefore the inner-dual constraint in \eqref{eq-pf-T2-inner-dual} implies
\begin{align*}
\sup_{\bxi\in\mathcal N}
\left\{\ell(\bxi)-\sum_{k=1}^K\lambda_kc(\bxi,\widehat\bxi_{k,\alpha_k})\right\}
+\sum_{k=1}^K\varphi_{k,\alpha_k}
\le \kappa,\qquad \balpha\in\mathcal A.
\end{align*}
Conversely, if this inequality holds for some $\kappa$, choose any $\varrho_k$ with $\sum_k\varrho_k=\kappa$, set $\psi_{k,j}=0$, and set $\gamma_{k,j}=\varrho_k-\varphi_{k,j}$; then the constraints involving $\bgamma,\bm{\varrho},\bm{\psi}$ are satisfied. Thus the eliminated formulation is exactly \eqref{prob-intersection-general} with $\bbeta=\bar{\bbeta}$. Since $\bar{\bbeta}\in\mathcal D$ was arbitrary, minimizing this representation over $\bbeta\in\mathcal D$ gives \eqref{prob-intersection-general}.

\end{proof}

\subsection{Proofs of Proposition \ref{prop:scalarized-outside} and Corollary \ref{cor:two-source-intersection}}

\begin{proof}[Proof of Proposition \ref{prop:scalarized-outside}]
Fix $\balpha\in\mathcal A$ and $\blambda\in\mathbb R_+^K$. Since $\mathcal N^c\subseteq\Xi$, we have
\begin{align*}
\inf_{\bxi\in\Xi}\sum_{k=1}^K\lambda_kc(\bxi,\widehat\bxi_{k,\alpha_k})
\le
d_{\balpha}^{\blambda}.
\end{align*}
For the reverse inequality, fix any $\bar\bxi\in\Xi$. Because $\operatorname{cl}(\mathcal N^c)=\Xi$, there exists a sequence $\bxi_n\in\mathcal N^c$ with $\bxi_n\to\bar\bxi$. Continuity of $c(\cdot,\widehat\bxi_{k,\alpha_k})$ gives
\begin{align*}
\sum_{k=1}^K\lambda_kc(\bxi_n,\widehat\bxi_{k,\alpha_k})
\to
\sum_{k=1}^K\lambda_kc(\bar\bxi,\widehat\bxi_{k,\alpha_k}).
\end{align*}
Hence $d_{\balpha}^{\blambda}\le \sum_{k=1}^K\lambda_kc(\bar\bxi,\widehat\bxi_{k,\alpha_k})$. Taking the infimum over $\bar\bxi\in\Xi$ gives \eqref{eq-convex-dalpha}. The problem in \eqref{eq-convex-dalpha} is convex because $\Xi$ is convex and each function $c(\cdot,\widehat\bxi_{k,\alpha_k})$ is convex with a nonnegative multiplier $\lambda_k$.

Let
\begin{align*}
f_k(\bxi):=\lambda_kc(\bxi,\widehat\bxi_{k,\alpha_k}),\quad k\in[K].
\end{align*}
Using \eqref{eq-convex-dalpha}, the scalarization constraint \eqref{eq-intersection-outside-cut} is
\begin{align*}
\sum_{k=1}^K\varphi_{k,\alpha_k}
\le
\inf_{\bxi\in\Xi}\sum_{k=1}^K f_k(\bxi).
\end{align*}
Equivalently, writing the constraint $\bxi\in\Xi$ through the indicator function $\delta_\Xi$, the right-hand side is
\begin{align*}
\inf_{\bxi}\left\{\sum_{k=1}^K f_k(\bxi)+\delta_\Xi(\bxi)\right\}.
\end{align*}
Since each $f_k$ is finite and continuous convex and $\Xi$ is nonempty, the Fenchel-Rockafellar conditions hold for $\sum_k f_k+\delta_\Xi$. Hence strong duality applies and the infimum above equals the dual value
\begin{align*}
\sup_{\substack{z_{\balpha,k},v_{\balpha}\\ \sum_k z_{\balpha,k}+v_{\balpha}=0}}
-\sum_{k=1}^K f_k^*(z_{\balpha,k})-\sigma_{\Xi}(v_{\balpha}).
\end{align*}
Therefore, \eqref{eq-intersection-outside-cut} is equivalent to the existence of $z_{\balpha,k}$ and $v_{\balpha}$ such that
\begin{align*}
\sum_{k=1}^K\varphi_{k,\alpha_k}
+\sum_{k=1}^K f_k^*(z_{\balpha,k})
+\sigma_{\Xi}(v_{\balpha})\le0,\qquad
\sum_{k=1}^Kz_{\balpha,k}+v_{\balpha}=0,
\end{align*}
which is \eqref{eq-fenchel-outside}--\eqref{eq-fenchel-balance}.

If $c(\bxi,\widehat\bxi)=\|\bxi-\widehat\bxi\|$, then
\begin{align*}
\left(\lambda_k\|\cdot-\widehat\bxi_{k,\alpha_k}\|\right)^*(z)
=z^\top\widehat\bxi_{k,\alpha_k}+\delta_{\{\|z\|_*\le\lambda_k\}}(z),
\end{align*}
which yields \eqref{eq-norm-outside}--\eqref{eq-norm-outside-balance}. 
\end{proof}

~\\
\begin{proof}[Proof of Corollary \ref{cor:two-source-intersection}]
Let $\bx_1:=\widehat\bxi_{1,\alpha_1}$, $\bx_2:=\widehat\bxi_{2,\alpha_2}$, and $d_{\balpha}:=\|\bx_1-\bx_2\|$. For any $\bxi\in\mathcal N^c$,
\begin{align*}
\lambda_1\|\bxi-\bx_1\|+\lambda_2\|\bxi-\bx_2\|
\ge
\min\{\lambda_1,\lambda_2\}\left(\|\bxi-\bx_1\|+\|\bxi-\bx_2\|\right)
\ge
\min\{\lambda_1,\lambda_2\}\|\bx_1-\bx_2\|,
\end{align*}
where the second inequality is the triangle inequality. Hence $d_{\balpha}^{\blambda}\ge \min\{\lambda_1,\lambda_2\}d_{\balpha}$. If $\lambda_1\le\lambda_2$, then $\bx_2\in\Xi=\operatorname{cl}(\mathcal N^c)$, so there exists $\bxi_n\in\mathcal N^c$ with $\bxi_n\to \bx_2$. By continuity of the norm,
\begin{align*}
d_{\balpha}^{\blambda}
\le
\lim_{n\to\infty}
\left(\lambda_1\|\bxi_n-\bx_1\|+\lambda_2\|\bxi_n-\bx_2\|\right)
=\lambda_1d_{\balpha}.
\end{align*}
The case $\lambda_2\le\lambda_1$ follows symmetrically by approximating $\bx_1$. This proves \eqref{eq-two-source-dlambda}.

It remains to derive the two-source robust counterpart. The fixed-loss counterpart underlying Theorem \ref{th:intersectionDual} with $K=2$ contains the scalarization constraint
\begin{align*}
\sum_{k=1}^2\varphi_{k,\alpha_k}\le d_{\balpha}^{\blambda}.
\end{align*}
The identity \eqref{eq-two-source-dlambda} implies
\begin{align*}
d_{\balpha}^{\blambda}=\min\{\lambda_1,\lambda_2\}d_{\balpha}.
\end{align*}
The inequality above is equivalently represented by variables $\zeta_1,\zeta_2,\eta_{\balpha}\ge 0$ satisfying
\begin{align*}
-\lambda_1+\lambda_2+\zeta_1-\zeta_2=0,\qquad
(-\lambda_1+\zeta_1)d_{\balpha}+\sum_{k=1}^2\varphi_{k,\alpha_k}+\eta_{\balpha}=0.
\end{align*}
Indeed, the first equality means that $\theta:=\lambda_1-\zeta_1=\lambda_2-\zeta_2$ for some $\theta\le\min\{\lambda_1,\lambda_2\}$. The second equality and $\eta_{\balpha}\ge0$ impose $\sum_k\varphi_{k,\alpha_k}\le \theta d_{\balpha}$; since $\theta$ can be chosen as $\min\{\lambda_1,\lambda_2\}$, this is equivalent to $\sum_k\varphi_{k,\alpha_k}\le\min\{\lambda_1,\lambda_2\}d_{\balpha}$.

The scalar constraint in \eqref{prob-intersection-general} is equivalently written with $\tau_1,\tau_2\ge0$ as
\begin{align*}
\sum_{k=1}^2 \lambda_k\epsilon_k-\sum_{k=1}^2\sum_{j=1}^{N_k}\varphi_{k,j}w_{k,j}+\tau_1-\tau_2=0,
\end{align*}
with objective contribution $-\tau_1/b+\tau_2/a$. Finally, the inside constraints in \eqref{prob-intersection-general} can be written in the separated form used in \eqref{prob-capK=2}. Specifically,
\begin{align*}
\sup_{\bxi\in\mathcal N}\left\{\ell(\bbeta,\bxi)-\sum_{k=1}^2\lambda_k\|\bxi-\widehat\bxi_{k,\alpha_k}\|\right\}
+\sum_{k=1}^2\varphi_{k,\alpha_k}\le \kappa,\quad \balpha\in\mathcal A,
\end{align*}
is equivalent to the existence of $\gamma_{k,j}$, $\varrho_k$, and $\psi_{k,j}\ge0$ such that
\begin{align*}
\sup_{\bxi\in\mathcal N}\left\{\ell(\bbeta,\bxi)-\sum_{k=1}^2\lambda_k\|\bxi-\widehat\bxi_{k,\alpha_k}\|\right\}
\le \sum_{k=1}^2\gamma_{k,\alpha_k},\qquad
\gamma_{k,j}-\varrho_k+\varphi_{k,j}+\psi_{k,j}=0,
\end{align*}
with $\kappa=\varrho_1+\varrho_2$. One direction follows by summing the inequalities $\gamma_{k,j}+\varphi_{k,j}\le\varrho_k$. For the reverse direction, given any feasible $\kappa$, set $\varrho_1=\kappa$, $\varrho_2=0$, $\gamma_{1,j}=\kappa-\varphi_{1,j}$, $\gamma_{2,j}=-\varphi_{2,j}$, and $\psi_{k,j}=0$. Combining these equivalent representations gives \eqref{prob-capK=2}. 
\end{proof}

\section{Proofs of results in Section \ref{sec:tractabilityB2}}

\subsection{Proofs of Theorem \ref{th-analysisB2} and Lemma \ref{lm-BII-eqclosure}}

\begin{proof}[Proof of Theorem \ref{th-analysisB2}]
To simplify the notation, we let $\mathcal B^{\rm II}:=\mathcal B^{\rm{II}}(\{\widehat{\p}_k\}_{k\in[K]},\epsilon)$, $\mathcal B^{\rm II}_{\mathcal N}:=\mathcal B^{\rm{II}}_{\mathcal N}(\{\widehat{\p}_k\}_{k\in[K]},\epsilon)$, and denote by $\mathcal R$ the set defined in \eqref{eq-mainthB2}.
Recall the relation that $\mathcal B_{\mathcal N}^{\rm II}=\{Q^{\mathcal N}: Q\in\mathcal B^{\rm II}, Q(\mathcal N)\in[a,b]\}$.
Similar to the proof of Theorem \ref{th-analysisB1},
one can verify that 
\begin{align}\label{eq-sumreBZ}
\mathcal B_{\mathcal N}^{\rm II}=\left\{\frac{\sum_{\balpha\in\mathcal A}\nu_{\balpha}(\mathcal N)\nu_{\balpha}^{\mathcal N}}{\sum_{\balpha\in\mathcal A}\nu_{\balpha}(\mathcal N)}:~\text{\eqref{eq-sumnufinitemeasure}-\eqref{eq-sumconstraint} hold}
\right\},
\end{align}
where terms with $\nu_{\balpha}(\mathcal N)=0$ are immaterial and the conditions are
\begin{align}
%\begin{array}{lll}
&\nu_{\balpha}\in \mathcal M_+(\Xi) &&\forall \balpha\in\mathcal A\label{eq-sumnufinitemeasure}\\
&\sum_{\balpha\in\mathcal A,\alpha_k=j}\nu_{\balpha}(\Xi)=w_{k,j} &&\forall j\in[N_k],~\forall k\in [K]\label{eq-sumsumnu}\\
&\sum_{\balpha\in\mathcal A} \nu_{\balpha}(\mathcal N)\in [a,b]\label{eq-sumconstraintprob}\\
&\sum_{k=1}^K \sum_{\balpha\in\mathcal A}\int_{\Xi}\theta_kc(\bxi,\widehat{\bxi}_{k,\alpha_k})\d \nu_{\balpha}(\bxi)\le \epsilon.\label{eq-sumconstraint}
%\end{array}
\end{align}
Below we aim to utilize the representation in \eqref{eq-sumreBZ} to prove that $\mathcal B^{\rm II}_{\mathcal N}=\mathcal R$.

We first show $\mathcal B_{\mathcal N}^{\rm II}\subseteq \mathcal R$. For $R\in\mathcal B_{\mathcal N}^{\rm II}$, assume that $\{\nu_{\balpha}\}_{\balpha\in\mathcal A}$ satisfy \eqref{eq-sumnufinitemeasure}-\eqref{eq-sumconstraint}, and $R$ can be represented by them as in \eqref{eq-sumreBZ}. Similar to the proof of Theorem \ref{th-analysisB1}, we define
\begin{align*}
\begin{array}{ll}
p_{\balpha}:=\nu_{\balpha}(\mathcal N),\quad
q_{\balpha}:=\nu_{\balpha}(\mathcal N^c),\quad
t_{\balpha}:=mp_{\balpha},\quad
r_{\balpha}:=mq_{\balpha},\quad
\mu_{\balpha}:=\nu_{\balpha}^{\mathcal N},
& \balpha\in\mathcal A,\\[2mm]
m:=\displaystyle\frac{1}{\sum_{\balpha\in\mathcal A}p_{\balpha}},\quad
a_{k,j}:=\displaystyle\sum_{\balpha\in\mathcal A:\alpha_k=j}t_{\balpha},
& j\in[N_k],~k\in[K].
\end{array}
\end{align*}
where $\mu_{\balpha}$ may be chosen arbitrarily on $\mathcal N$ when
$p_{\balpha}=0$, since it is then multiplied by $t_{\balpha}=0$.
Then $R=\sum_{\balpha\in\mathcal A}t_{\balpha}\mu_{\balpha}$,
$R(\mathcal N)=1$, $m\in[1/b,1/a]$, and
\begin{align*}
\sum_{j=1}^{N_k}a_{k,j}=1,\quad
a_{k,j}+\sum_{\balpha\in\mathcal A:\alpha_k=j}r_{\balpha}=mw_{k,j},
\quad j\in[N_k],~k\in[K],
\end{align*}
For every $\balpha$ with $r_{\balpha}>0$, define
\begin{align*}
v_{\balpha,k}:=
\int_{\Xi}c(\bxi,\widehat{\bxi}_{k,\alpha_k})\,d\nu_{\balpha}^{\mathcal N^c}(\bxi),
\quad k\in[K].
\end{align*}
Then $\bv_{\balpha}:=(v_{\balpha,1},\ldots,v_{\balpha,K})\in\mathcal V_{\balpha}$ by the definition of $\mathcal V_{\balpha}$. Set
$\bs_{\balpha}:=r_{\balpha}\bv_{\balpha}$ when $r_{\balpha}>0$ and
$\bs_{\balpha}:=0$ when $r_{\balpha}=0$. Hence
$\bs_{\balpha}\in r_{\balpha}\mathcal V_{\balpha}$ for all $\balpha$.
To show $R\in \mathcal R$,
it remains to verify that 
\begin{align}\label{eq-1mainthB2}
\sum_{k=1}^K \theta_k W\left(R,\sum_{j=1}^{N_k}a_{k,j}\delta_{\widehat{\bxi}_{k,j}}\right)\le m\epsilon -\sum_{\balpha\in\mathcal A}\btheta^{\top}\bs_{\balpha}.
\end{align}
Indeed, applying \eqref{eq-sumconstraint} yields
\begin{align*}
\epsilon
&\ge \sum_{k=1}^K \sum_{\balpha\in\mathcal A}\int_{\Xi}\theta_kc(\bxi,\widehat{\bxi}_{k,\alpha_k})\d \nu_{\balpha}(\bxi)\notag\\
% &=\sum_{k=1}^K \sum_{\balpha\in\mathcal A}\int_{\Xi}\theta_kc(\bxi,\widehat{\bxi}_{k,\alpha_k})\d \left(\nu_{\balpha}(\mathcal N)\nu_{\balpha}^{\mathcal N}(\bxi)+\nu_{\balpha}(\mathcal N^c)\nu_{\balpha}^{\mathcal N^c}(\bxi)\right)\notag\\
&=\sum_{k=1}^K \sum_{\balpha\in\mathcal A}\int_{\Xi}\theta_kc(\bxi,\widehat{\bxi}_{k,\alpha_k})\d \left(\frac{1}{m}t_{\balpha}\mu_{\balpha}(\bxi)+\frac{1}{m}r_{\balpha}\nu_{\balpha}^{\mathcal N^c}(\bxi)\right)\notag\\
&=\frac{1}{m}\sum_{k=1}^K \sum_{\balpha\in\mathcal A}t_{\balpha}\int_{\Xi}\theta_kc(\bxi,\widehat{\bxi}_{k,\alpha_k})\d \mu_{\balpha}(\bxi)+ \frac{1}{m}\sum_{\balpha\in\mathcal A} \btheta^{\top} \bs_{\balpha}\notag\\
% &\ge \frac{1}{m}\sum_{k=1}^K \sum_{\balpha\in\mathcal A}t_{\balpha}\int_{\Xi}\theta_kc(\bxi,\widehat{\bxi}_{k,\alpha_k})\d \mu_{\balpha}(\bxi)+\frac{1}{m}\sum_{\balpha\in\mathcal A}r_{\balpha}d_{\balpha}^{\btheta}\\%\label{eq-chIIfirstdirection}\\
&=\frac{1}{m}\sum_{k=1}^K \sum_{j=1}^{N_k}\sum_{\balpha\in\mathcal A,\alpha_k=j}t_{\balpha}\int_{\Xi}\theta_kc(\bxi,\widehat{\bxi}_{k,j})\d \mu_{\balpha}(\bxi)+\frac{1}{m}\sum_{\balpha\in\mathcal A} \btheta^{\top} \bs_{\balpha}\notag\\
&=\frac{1}{m}\sum_{k=1}^K \sum_{j=1}^{N_k}a_{k,j}\int_{\Xi}\theta_kc(\bxi,\widehat{\bxi}_{k,j})\d\left(\frac{\sum_{\balpha\in\mathcal A,\alpha_k=j}t_{\balpha}\mu_{\balpha}(\bxi)}{a_{k,j}}\right)+\frac{1}{m}\sum_{\balpha\in\mathcal A} \btheta^{\top} \bs_{\balpha}\notag\\
&\ge \frac{1}{m}\sum_{k=1}^K\theta_k{W\left(R,\sum_{j=1}^{N_k}a_{k,j}\delta_{\widehat{\bxi}_{k,j}}\right)}+\frac{1}{m}\sum_{\balpha\in\mathcal A} \btheta^{\top} \bs_{\balpha}, \notag
\end{align*}
where the last inequality holds because of the definition of the OT cost $W$ and 
\begin{align*}
\sum_{j=1}^{N_k}a_{k,j}\left(\frac{\sum_{\balpha\in\mathcal A,\alpha_k=j}t_{\balpha}\mu_{\balpha}(\bxi)}{a_{k,j}}\right)
=\sum_{\balpha\in\mathcal A}t_{\balpha}\mu_{\balpha}=R.
\end{align*}
This yields \eqref{eq-1mainthB2}, and we have concluded that $R\in\mathcal R$, which gives $\mathcal B_{\mathcal N}^{\rm II}\subseteq \mathcal R$.

We next prove the reverse inclusion $\mathcal R\subseteq\mathcal B_{\mathcal N}^{\rm II}$. Let $R\in\mathcal R$, and let
$m$, $\{a_{k,j}\}$, $\{r_{\balpha}\}$, and $\{\bs_{\balpha}\}$ satisfy the conditions in \eqref{eq-mainthB2}. Choose transport plans from $R$ to $\{\sum_{j=1}^{N_k} a_{k,j}\delta_{\widehat{\bxi}_{k,j}}\}_{k\in[K]}$ whose weighted total cost is smaller than $m\epsilon-\sum_{\balpha\in\mathcal A} \btheta^{\top}\bs_{\balpha}$. Since these plans share the same first marginal $R$, the gluing lemma and Lemma \ref{lm:DMM} give finite measures $\mu_{\balpha}$ on $\mathcal N$ such that
\begin{align}
&\sum_{\balpha\in\mathcal A}\mu_{\balpha}=R,\notag\\
&\sum_{\balpha\in\mathcal A:\alpha_k=j}\mu_{\balpha}(\mathcal N)=a_{k,j}
&& \forall j\in[N_k],~k\in[K],\notag\\
&\sum_{k=1}^{K}\sum_{\balpha\in\mathcal A}\int_{\Xi}\theta_k
c(\bxi,\widehat{\bxi}_{k,\alpha_k})\,d\mu_{\balpha}(\bxi)
\le m\epsilon-\sum_{\balpha\in\mathcal A}\btheta^{\top} \bs_{\balpha}.\label{eq-conB2converse12c}
\end{align}

Since $s_{\balpha}\in r_{\balpha}\mathcal V_{\balpha}$, for every
$\balpha$ with $r_{\balpha}>0$ there exists
$P_{\balpha}\in\mathcal P(\mathcal N^c)$ such that
\begin{align}
s_{\balpha,k}
=r_{\balpha}\E_{P_{\balpha}}
\left[c(\bxi,\widehat{\bxi}_{k,\alpha_k})\right],
\quad k\in[K].\label{eq-conB2conversesk}
\end{align}
Set $\zeta_{\balpha}:=r_{\balpha}P_{\balpha}$ when $r_{\balpha}>0$ and
$\zeta_{\balpha}:=0$ when $r_{\balpha}=0$.
Define finite measures
\begin{align*}
\nu_{\balpha}:=\frac{1}{m}\mu_{\balpha}
+\frac{1}{m}\zeta_{\balpha}.
\end{align*}
We verify that these measures satisfy \eqref{eq-sumnufinitemeasure}-\eqref{eq-sumconstraint}.
Note that the construction of $\bs_{\balpha}$, $P_{\balpha}$, $\zeta_{\balpha}$ and $\nu_{\balpha}$ are the same to those in the proof of Theorem \ref{th-analysisB1}.
The following results that has been given in the proof of Theorem \ref{th-analysisB1} can be directly applied:
\begin{align*}
\sum_{\balpha\in\mathcal A:\alpha_k=j}\nu_{\balpha}(\Xi)
=\frac{1}{m}\left(
a_{k,j}+\sum_{\balpha\in\mathcal A:\alpha_k=j}r_{\balpha}
\right)
=w_{k,j},
\end{align*}
\begin{align*}
\sum_{\balpha\in\mathcal A}\nu_{\balpha}(\mathcal N)
=\frac{1}{m}\sum_{\balpha\in\mathcal A}\mu_{\balpha}(\mathcal N)
=\frac{1}{m}\in[a,b],
\end{align*}
and the conditional law induced by $\sum_{\balpha}\nu_{\balpha}$ is
\begin{align*}
\frac{\sum_{\balpha\in\mathcal A}\nu_{\balpha}(\mathcal N)\nu_{\balpha}^{\mathcal N}}
{\sum_{\balpha\in\mathcal A}\nu_{\balpha}(\mathcal N)}
=\sum_{\balpha\in\mathcal A}\mu_{\balpha}=R.
\end{align*}
Finally, \eqref{eq-conB2converse12c} and
\eqref{eq-conB2conversesk} give
\begin{align*}
\sum_{k=1}^K \sum_{\balpha\in\mathcal A}\int_{\Xi}\theta_kc(\bxi,\widehat{\bxi}_{k,\alpha_k})\d \nu_{\balpha}(\bxi)=\frac{1}{m} \sum_{k=1}^K \sum_{\balpha\in\mathcal A}\int_{\Xi}\theta_kc(\bxi,\widehat{\bxi}_{k,\alpha_k})\d \mu_{\balpha}(\bxi)+\frac{1}{m} \sum_{\balpha\in\mathcal A} \btheta^{\top} \bs_{\balpha}\le \epsilon.
\end{align*}
Thus \eqref{eq-sumnufinitemeasure}-\eqref{eq-sumconstraint} hold, and the representation \eqref{eq-sumreBZ} implies
$R\in\mathcal B_{\mathcal N}^{\rm II}$. This proves
$\mathcal R\subseteq\mathcal B_{\mathcal N}^{\rm II}$ and completes the proof. 
\end{proof}

~\\
\begin{proof}[Proof of Lemma \ref{lm-BII-eqclosure}]
Let $C$ be the feasible set of allocation variables $(m,\ba,\br,\bs)$ in \eqref{eq-mainthB2}. For each $\balpha\in\mathcal A$, set
$
\overline{\mathcal V}_{\balpha}:=\operatorname{cl}(\mathcal V_{\balpha}),
$
and let $\overline C$ be the allocation set obtained from $C$ by replacing $\bs_{\balpha}\in r_{\balpha}\mathcal V_{\balpha}$ with $\bs_{\balpha}\in r_{\balpha}\overline{\mathcal V}_{\balpha}$. We denote by $\widetilde{\mathcal B}_{\mathcal N}^{\rm II}$ the corresponding induced conditional set, i.e., the set in \eqref{eq-mainthB2} with each $\mathcal V_{\balpha}$ replaced by $\overline{\mathcal V}_{\balpha}$. It is straightforward to see that the infimum of \eqref{eq-BII-infradius} is attained when $\mathcal V_{\balpha}$ is replaced by $\overline{\mathcal V}_{\balpha}$. Hence, we have $\widetilde{\mathcal B}_{\mathcal N}^{\rm II}=\overline{\mathcal B}_{\mathcal N}^{\rm II}$. Below we will show that 
\begin{align}\label{eq-eqclosure}
\sup_{Q\in \mathcal B^{\rm II}_{\mathcal N}
}\E_{Q}[\ell(\bbeta,\bxi)]=\sup_{Q\in \widetilde{\mathcal B}^{\rm II}_{\mathcal N}
 }\E_{Q}[\ell(\bbeta,\bxi)]\quad \forall \bbeta\in\mathcal D.
 \end{align}

The proof of \eqref{eq-eqclosure} and the associated constructions are analogous to those in the step ``Equivalence with the closure of $\mathcal V_{\balpha}$'' in the proof of Theorem \ref{th-analysisB1}. For completeness, we provide the details here.
For $\bar{\bbeta}\in\mathcal D$, we write $\ell(\cdot):=\ell(\bar{\bbeta},\cdot)$, and
let ${V}^{\rm II}(\bar\bbeta)$ and $\widetilde{V}^{\rm II}(\bar\bbeta)$ denote
the values of the left and right-hand sides of \eqref{eq-eqclosure}, respectively. 
For $(m,\ba,\br,\bs)\in \overline{C}$, define
\begin{align*}
\widehat{\p}_k^{\ba}=\sum_{j=1}^{N_k}a_{k,j}\delta_{\widehat\bxi_{k,j}},
\qquad
\rho(m,\ba,\br,\bs)=m\epsilon-\sum_{\balpha\in\mathcal A}\btheta^{\top} \bs_{\balpha}.
\end{align*}
Hence,
\begin{align}
\widetilde{V}^{\rm II}(\bar{\bbeta})
=
\sup_{(m,\ba,\br,\bs)\in \overline{C}}
\sup_{\substack{Q\in \mathcal B^{\rm II}\left(\{\widehat{\p}_k^{\ba}\}_{k\in[K]},\rho(m,\ba,\br,\bs)\right)\\
Q(\mathcal N)=1}}
\E_Q[\ell(\bxi)].
\label{eq-pf-T2II-fixed-outer}
\end{align}
Since $\mathcal V_{\balpha}\subseteq\overline{\mathcal V}_{\balpha}$, we have
$
V^{\rm II}(\bar\bbeta)
\le
\widetilde{V}^{\rm II}(\bar\bbeta).
$
To see the reverse inequality, take any
$(m,\ba,\br,\bs)\in \overline{C}$
and any feasible $Q$ in the inner problem of \eqref{eq-pf-T2II-fixed-outer}, that is,
$$
\sum_{k=1}^K \theta_k W\left(Q,\widehat{\p}_k^{\ba}\right)\le \rho,
\qquad
Q(\mathcal N)=1,
$$
where $\rho:=\rho(m,\ba,\br,\bs)$.
% where
% $$
% \widehat P_k^{\ba}:=\sum_{j=1}^{N_k}a_{k,j}\delta_{\widehat\xi_{k,j}},
% \qquad
% \rho_k(m,\ba,\br,\bs)
% :=
% m\varepsilon_k-\sum_{\balpha\in\mathcal A}s_{\balpha,k}.
% $$
Let $p=1/m$. The identity 
$
1+\sum_{\balpha\in\mathcal A}r_{\balpha}=m,
$
implies
$
p\left(1+\sum_{\balpha\in\mathcal A}r_{\balpha}\right)=1.
$
For each $\balpha$, choose
$$
\bs_{\balpha}^n\in r_{\balpha}\mathcal V_{\balpha}
\quad\text{such that}\quad
\bs_{\balpha}^n\to \bs_{\balpha}.
$$
This is possible because
$
\bs_{\balpha}\in r_{\balpha}\overline{\mathcal V}_{\balpha},
$
and $\overline{\mathcal V}_{\balpha}$ is the closure of $\mathcal V_{\balpha}$
By the definition of $\mathcal V_{\balpha}$ in \eqref{eq-outside-cost-vector}, each $\bs_{\balpha}^n$ can be represented by a probability measure $\mu_{\balpha}^n$ on $\mathcal N^c$ such that
\begin{align}\label{eq-20260710-1}
r_{\balpha}
\int_{\mathcal N^c}
c(\bxi,\widehat\bxi_{k,\alpha_k})\,\mu_{\balpha}^n(\d\bxi)
=
s_{\balpha,k}^n,
\qquad k\in[K].
\end{align}
Define
\begin{align}\label{eq-20260710-3}
P_n
=
pQ+
p\sum_{\balpha\in\mathcal A}r_{\balpha}\mu_{\balpha}^n.
\end{align}
Then
$
P_n(\Xi)=p+p\sum_{\balpha\in\mathcal A} r_{\balpha}=1,
$
which implies that
$P_n$ is a probability measure. Moreover, we have
$$
P_n(\mathcal N)=p,
\qquad
(P_n)^{\mathcal N}=Q.
$$
Note that $\sum_{k=1}^K \theta_k W(Q,\widehat{\p}_k^{\ba})\le \rho$. There 
exist couplings $\pi_k^n$ between $Q$ and $\widehat{\p}_k^{\ba}$  such that
\begin{align}\label{eq-20260710-2}
\sum_{k=1}^K\theta_k\int c(\bxi_1, \bxi_2) \d \pi_k^n( \bxi_1, \bxi_2) \leq \rho+\frac{1}{n}.
\end{align}
% {\color{black}
% By gluing lemma, there exists $\widetilde{\pi}^n\in\Pi(Q,\widehat{\p}_1^{\ba},\dots,\widehat{\p}_K^{\ba})$ whose
% $(\bxi,\bxi_k)$-marginal is $\pi_k^n$ for every $k$, and thus
% \begin{align*}
% \sum_{k=1}^K\theta_k\int c(\bxi_0, \bxi_k) \d \widetilde{\pi}^n(\bxi_0, \bxi_1,\dots,\bxi_K) \leq \rho+\frac{1}{n}.
% \end{align*}
% }
Constructing a coupling between $P_n$ and $\widehat{\p}_k$  as follows:
$$
\Gamma_k^n=p \pi_k^n+p \sum_{\balpha \in \mathcal{A}} r_{\balpha}\left(  \mu_{\balpha}^n  \otimes \delta_{\widehat{\bxi}_{k, \alpha_k}}\right).
$$
Indeed, the conclusion that $P_n$ is the first marginal is straightforward. 
The second marginal of $\Gamma_k^n$ has the same support of $\widehat{\p}_k$, where the probability mass on $\widehat{\bxi}_{k,j}$ is
\begin{align*}
p a_{k, j}+p \sum_{\balpha: \alpha_k=j} r_{\balpha}=p m w_{k, j}=w_{k, j}.
\end{align*}
This yields that $\widehat{\p}_k$ is the second marginal. 
% By gluing lemma, there exists $\widetilde{\Gamma}^n\in\Pi(P_n,\widehat{\p}_1,\dots,\widehat{\p}_K)$ whose
% $(\bxi,\bxi_k)$-marginal is $\Gamma_k^n$ for every $k\in[K]$.
Therefore,
\begin{align}
\sum_{k=1}^K \theta_k W\left(P_n, \widehat{\P}_k\right) &\le \sum_{k=1}^K\theta_k\int c(\bxi_1,\bxi_2) \d {\Gamma}_k^n(\bxi_1,\bxi_2)\notag\\
&=\sum_{k=1}^K \theta_k \left(p\int c(\bxi_1,\bxi_2)\d \pi_k^n(\bxi_1,\bxi_2)+p\sum_{\balpha\in\mathcal A}r_{\balpha}\int c(\bxi,\widehat{\bxi}_{k,\alpha_k})\d \mu_{\balpha}^n(\bxi)\right)\notag\\
&\le p \left(\rho+\frac{1}{n}\right)+p\sum_{\balpha\in\mathcal A}\btheta^{\top}\bs_{\balpha}^n\notag\\
&=p\left(m\epsilon-\sum_{\balpha\in\mathcal A}\btheta^{\top}\bs_{\balpha}+\frac{1}{n}\right)+p\sum_{\balpha\in\mathcal A}\btheta^{\top}\bs_{\balpha}^n\notag\\
&=\epsilon+\frac{p}{n}+p\sum_{\balpha\in\mathcal A}\btheta^{\top}(\bs_{\balpha}^n-\bs_{\balpha})\longrightarrow \epsilon\quad {\rm as}~n\to\infty,\label{eq-20260710-6}
\end{align}
where the second inequality follows from \eqref{eq-20260710-1} and \eqref{eq-20260710-2}, we have used $\rho=m\epsilon-\sum_{\balpha\in\mathcal A}\btheta^{\top}\bs_{\balpha}$ in the second equality, and the convergence is due to $\bs_{\balpha}^n\to \bs_{\balpha}$. 
For $\lambda\in(0,1)$,
define 
\begin{align*}
\widetilde{P}_{n,\lambda}=(1-\lambda) P_n+\lambda P^{\circ},
\end{align*}
where $P^\circ$ is the probability measure satisfying the conditions in Assumption \ref{assump:SlaterII}. Note that
\begin{align*}
\sum_{k=1}^K \theta_k W(P^\circ, \widehat{\p}_k)<\epsilon\quad{\rm and}\quad W(\widetilde{P}_{n,\lambda}, \widehat{\p}_k)
\le (1-\lambda)W({P}_n, \widehat{\p}_k)+\lambda W(P^\circ, \widehat{\p}_k).
\end{align*}
Combining with \eqref{eq-20260710-6},  it holds that, for large enough $n$,
\begin{align*}
\sum_{k=1}^K \theta_k W(\widetilde{P}_{n,\lambda},\widehat{\p}_k)<\epsilon.
\end{align*}
Now we are able to choose a sequence of $\{\lambda_n\}_{n\in\N}\subseteq (0,1)$ such that $\lambda_n\downarrow 0$ and $\widetilde{P}_{n,\lambda_n}\in\mathcal B^{\rm II}(\{\widehat{\p}_k\}_{k\in[K]},\epsilon)$ for all $n$.
Moreover, we have $\widetilde{P}_{n,\lambda_n}(\mathcal N)\in[a,b]$ as ${P}_n(\mathcal N)=p=1/m\in[a,b]$, and ${P}^{\circ}(\mathcal N)\in[a,b]$ from Assumption \ref{assump:SlaterII}. 
% This yields
% \begin{align*}
% \widetilde{P}_{n,\lambda}^{\mathcal N}\in \mathcal B_{\mathcal N}^{\rm I}(\{\widehat{\p}_k\}_{k\in[K]},\bepsilon).
% \end{align*}
In fact, $V^{\rm II}(\bar{\bbeta})$ is the optimal value of the robust optimization problem:
$$V^{\rm II}(\bar{\bbeta})=\sup_{P\in \mathcal B^{\rm II}(\{\widehat{\p}_k\}_{k\in[K]},\epsilon), P(\mathcal N)\in[a,b]}\E_P[\ell(\bxi)\mid\bxi\in\mathcal N].$$
Because $\widetilde{P}_{n,\lambda_n}$ is included in the feasible set of the above problem,
this gives
\begin{align}\label{eq-20260710-4}
V^{\rm II}(\bar{\bbeta})\ge \E_{\widetilde{P}_{n,\lambda_n}}[\ell(\bxi)\mid\bxi\in\mathcal N],\quad \forall n.
\end{align}
Recall from \eqref{eq-20260710-3} that
$
P_n=pQ+p\sum_{\balpha\in\mathcal A}r_{\balpha}\mu_{\balpha}^n.
$
Among the measures appearing in this decomposition, $Q$ is the only probability measure that assigns positive mass to $\mathcal N$.
By some standard calculation, we have
\begin{align*}
\widetilde{P}_{n,\lambda_n}^{\mathcal N}=
\frac{\left(1-\lambda_n\right) p Q+\lambda_n P^{\circ}(\mathcal N) (P^{\circ})^{\mathcal N}}{\left(1-\lambda_n\right) p+\lambda_n P^{\circ}(\mathcal N)}.
\end{align*}
Then
\begin{align}\label{eq-20260710-5}
\E_{\widetilde{P}_{n,\lambda_n}}[\ell(\bxi)\mid\bxi\in\mathcal N] 
&= \E_{\widetilde{P}_{n,\lambda_n}^{\mathcal N}}[\ell(\bxi)]\notag\\
&=\frac{(1-\lambda_n)p}{\left(1-\lambda_n\right) p+\lambda_n P^{\circ}(\mathcal N)}\E_Q[\ell(\bxi)]+\frac{\lambda_n P^{\circ}(\mathcal N)}{\left(1-\lambda_n\right) p+\lambda_n P^{\circ}(\mathcal N)}\E_{P^\circ}[\ell(\bxi)\mid\bxi\in\mathcal N]\notag\\
&\longrightarrow \E_Q[\ell(\bxi)]\quad{\rm as}~n\to\infty,
\end{align}
where we have used $\E_{P^\circ}[\ell(\bxi)\mid\bxi\in\mathcal N]<\infty$ in the convergence step.
Because $Q$ is an arbitrary element of the feasible set of \eqref{eq-pf-T2II-fixed-outer}, combining 
\eqref{eq-20260710-4} and \eqref{eq-20260710-5} implies that
$
V^{\rm II}(\bar{\bbeta})\ge \widetilde{V}^{\rm II}(\bar{\bbeta}).
$
This completes the proof.
\end{proof}

\subsection{Proof of Theorem \ref{th:sumB2Duality}}\label{app:pfTHsum}

Let $\{\p_k\}_{k\in[K]}$ be discrete distributions with the same form as those in Section \ref{sec:proofTHintersectionDual}, and let $\ell:\Xi\to\R$ be a measurable and upper semicontinuous loss function.
To prove Theorem \ref{th:sumB2Duality}, we need to first consider the unconditional DRO problem as follows:
\begin{align}\label{prob:unconB2}
\mathcal L^{\rm II}(\epsilon):=\sup_{Q\in \mathcal B^{\rm{II}}\left(\{\p_k\}_{k\in[K]}, \epsilon\right), Q(\mathcal N)=1} \E_{Q}[\ell(\bxi)].
\end{align}

{\color{black}
\begin{theorem}\label{th-unconB2}
The function $\mathcal L^{\rm II}$ is concave on its effective domain. For every $\epsilon$ in the relative interior of its effective domain, $\mathcal L^{\rm II}(\epsilon)$ equals the optimal value of the following dual problem:
\begin{align}\label{prob0:thunconditionB2}
\begin{array}{lll}
\min & \lambda\epsilon+\sum_{k=1}^K \sum_{j=1}^{N_k} p_{k, j} \gamma_{k, j} & \\
{\rm s.t. } & \lambda \ge 0, \bm{\gamma}_k \in \mathbb{R}^{N_k} & \forall k \in[K] \\
& \sup _{\bxi \in \mathcal N} \left\{\ell(\bxi)-\lambda\sum_{k=1}^K \theta_k c\left(\bxi, \widehat{\bxi}_{k, \alpha_k}\right)\right\} \le \sum_{k=1}^K \gamma_{k, \alpha_k} ~~~& \forall \balpha \in \mathcal{A}.
\end{array}
\end{align}
Moreover, suppose that Assumptions~\ref{assump:growthI} holds for the loss function $\ell$ given in \eqref{prob:unconB2}, that is, $\ell(\bxi)-\ell(\widehat{\bxi})\le L c(\bxi,\widehat{\bxi})$ for some $L>0$, and all $\bxi\in\mathcal N$ and all source observations $\widehat{\bxi}$. Suppose also that Assumption \ref{assump:costI} holds.
Then, $\mathcal L^{\rm II}$ is proper and upper semicontinuous. As a result, the above strong duality holds for all $\epsilon$ in the effective domain of $\mathcal L^{\rm II}$, including boundary radii.
\end{theorem}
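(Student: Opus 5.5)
The proof will follow the template of Theorem~\ref{lm:boundaryI} closely, with the $K$ separate cost constraints replaced by one aggregate weighted constraint. The plan has four steps.

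\textbf{Step 1: reduction to a measure program.} Glue the $K$ couplings along their common first marginal $Q$ and apply Lemma~\ref{lm:DMM}. This shows that $\mathcal L^{\rm II}(\epsilon)$ equals the supremum of $\sum_{\balpha}\int_{\mathcal N}\ell\,\d\nu_{\balpha}$ over $\nu_{\balpha}\in\mathcal M_+(\mathcal N)$ subject to:
\begin{itemize}
\item the marginal constraints $\sum_{\balpha:\alpha_k=j}\nu_{\balpha}(\mathcal N)=p_{k,j}$;
\item the single aggregate cost constraint $\sum_{k}\sum_{\balpha}\int_{\mathcal N}\theta_k c(\bxi,\widehat\bxi_{k,\alpha_k})\,\d\nu_{\balpha}\le\epsilon$.
\end{itemize}
Conversely, any feasible $\{\nu_{\balpha}\}$ gives $Q=\sum_{\balpha}\nu_{\balpha}$ whose weighted OT cost is at most $\epsilon$. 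Concavity of $\mathcal L^{\rm II}$ on its effective domain follows immediately: the feasible set is convex jointly in $(\epsilon,\{\nu_{\balpha}\})$ and the objective is linear.

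\textbf{Step 2: duality in the relative interior.} Lagrangian duality for this semi-infinite linear program would use
\begin{itemize}
\item a scalar multiplier $\lambda\ge0$ for the cost constraint;
\item multipliers $\gamma_{k,j}$ for the marginal constraints.
\end{itemize}
The supremum over each $\nu_{\balpha}\in\mathcal M_+(\mathcal N)$ is finite exactly when $\sup_{\bxi\in\mathcal N}\{\ell(\bxi)-\lambda\sum_k\theta_kc(\bxi,\widehat\bxi_{k,\alpha_k})\}\le\sum_k\gamma_{k,\alpha_k}$, which yields \eqref{prob0:thunconditionB2}. Weak duality is immediate. For strong duality when $\epsilon$ is in the relative interior, I would show that the dual equals the concave biconjugate of $\mathcal L^{\rm II}$, citing the perturbation/conjugate argument of \cite{WCW24} (Proposition EC.1, Theorem EC.1) as in the proof of Theorem~\ref{lm:boundaryI}. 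Alternatively, one can observe that \eqref{prob:unconB2} is an intersection-type problem with the single aggregate cost $\sum_k\theta_kc(\bxi,\bxi_k)$ on the multi-margin coupling, so that argument transfers verbatim. A concave function agrees with its closure on the relative interior, which gives the claim there.

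\textbf{Step 3: properness.} $\mathcal L^{\rm II}>-\infty$ for large $\epsilon$, since any law supported on $\mathcal N$ has finite weighted cost to discrete centers when $c$ is finite (Assumption~\ref{assump:costI}). For the upper bound, fix any $k$ with $\theta_k>0$ and use Assumption~\ref{assump:growthI}:
\[
\sum_{\balpha}\int\ell\,\d\nu_{\balpha}\le\sum_j p_{k,j}\ell(\widehat\bxi_{k,j})+L\epsilon/\theta_k<\infty .
\]

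\textbf{Step 4: upper semicontinuity (main obstacle).} Take $\epsilon^n\to\epsilon$, pick near-optimal $\nu^n_{\balpha}$, and let $r:=\limsup_n\mathcal L^{\rm II}(\epsilon^n)$.
\begin{enumerate}
\item The costs are uniformly bounded, so coercivity gives tightness for each $\balpha$ with $\theta_k>0$ for some $k$. Prokhorov then yields $\nu^n_{\balpha}\Rightarrow\nu_{\balpha}$.
\item Portmanteau applied to the lsc cost gives aggregate cost $C\le\epsilon$ for the limit, so the limit is feasible with value $R\le\mathcal L^{\rm II}(\epsilon)$.
\item Write the objective as $\sum_jp_{k,j}\ell(\widehat\bxi_{k,j})+L'C^n-H^n$, where $H^n=\sum_{\balpha}\int h_{\balpha}\,\d\nu^n_{\balpha}$ with the nonnegative lsc function
\[
h_{\balpha}=\sum_k\tfrac{\theta_k}{\Theta}\bigl(\ell(\widehat\bxi_{k,\alpha_k})+(L/\theta_{\min})\theta_kc(\cdot,\widehat\bxi_{k,\alpha_k})\bigr)-\ell,
\]
using a convex combination over $k$ with weights $\theta_k/\Theta$, $\Theta=\sum_k\theta_k$. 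This gives $r-R\le L'(\epsilon-C)$.
\item If $r>R$, then $\epsilon-C>0$. Mix $\widetilde\nu^n=(1-t_n)\nu^n+t_n\nu$ with $t_n=(\epsilon^n-\epsilon)_+/((\epsilon^n-\epsilon)_++\epsilon-C)\to0$. This is feasible at $\epsilon$ and has value tending to $r$, so $\mathcal L^{\rm II}(\epsilon)\ge r$.
\end{enumerate}
The delicate part is the bookkeeping in item 3. Components with $\theta_k=0$ must be dropped; I would assume $\btheta\ne0$ and handle zero weights by taking $k$ only over the support of $\btheta$. Once usc and properness hold, \citet[Theorem~12.2]{R70} extends the duality to boundary radii.
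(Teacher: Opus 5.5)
Your route is the paper's. It uses the multi-margin measure program obtained by gluing and Lemma~\ref{lm:DMM}, and gets strong duality in the relative interior via the concave biconjugate in $\epsilon$. (The paper computes $(\mathcal L^{\rm II})^*$ explicitly and then dualizes the finite LP over coupling weights, which is what your Lagrangian with $\lambda$ and $\gamma_{k,j}$ amounts to.) Upper semicontinuity then follows from the tightness/Portmanteau/mixing argument of Theorem~\ref{lm:boundaryI}. Your direct properness bound is a cleaner substitute for the paper's detour through Proposition~\ref{prop:weightset}.

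The step that fails as written is item 3 of Step 4. In your $h_{\balpha}$ the coefficient of $c(\cdot,\widehat\bxi_{k,\alpha_k})$ is $\frac{\theta_k}{\Theta}\cdot\frac{L\theta_k}{\theta_{\min}}$, i.e.\ proportional to $\theta_k^2$. Write $X_k^n:=\sum_{\balpha}\int_{\mathcal N}c(\bxi,\widehat\bxi_{k,\alpha_k})\,\d\nu^n_{\balpha}(\bxi)$, and $X_k$ for the same quantity under the limit $\nu_{\balpha}$. Then
\[
H^n=\sum_{k}\frac{\theta_k}{\Theta}\sum_{j}p_{k,j}\ell(\widehat\bxi_{k,j})+\frac{L}{\theta_{\min}\Theta}\sum_{k}\theta_k^2X_k^n-R_n .
\]
The middle term is not a multiple of the constrained quantity $C^n=\sum_k\theta_kX_k^n$ unless all positive $\theta_k$ are equal. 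So $R_n=\mathrm{const}+L'C^n-H^n$ holds for no choice of $L'$. From $C^n\le\epsilon^n$ and $C=\sum_k\theta_kX_k$, the natural bound is only
\[
r-R\le\frac{L}{\theta_{\min}\Theta}\left(\theta_{\max}\epsilon-\theta_{\min}C\right),
\]
which stays positive at $C=\epsilon$. You therefore do not obtain the implication $r>R\Rightarrow C<\epsilon$, and that implication is exactly what makes the convex mixing in item 4 feasible at radius $\epsilon$.

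The repair is to make the cost coefficient proportional to $\theta_k$. Keeping your weights, drop the factor $\theta_k/\theta_{\min}$:
\[
h_{\balpha}=\sum_k\frac{\theta_k}{\Theta}\left(\ell(\widehat\bxi_{k,\alpha_k})+L\,c(\cdot,\widehat\bxi_{k,\alpha_k})\right)-\ell\ \ge 0\quad\text{on }\mathcal N .
\]
This is nonnegative because each bracket is nonnegative by Assumption~\ref{assump:growthI} and the weights sum to one. It yields exactly $R_n=\sum_k\frac{\theta_k}{\Theta}\sum_jp_{k,j}\ell(\widehat\bxi_{k,j})+\frac{L}{\Theta}C^n-H^n$, and hence $r-R\le\frac{L}{\Theta}(\epsilon-C)$. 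This version needs neither $\theta_{\min}$ nor any special treatment of zero weights.

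Alternatively, use the paper's choice: sum $h_{k,\balpha}=\ell(\widehat\bxi_{k,\alpha_k})+\frac{L\theta_k}{\theta_{\min}}c(\cdot,\widehat\bxi_{k,\alpha_k})-\ell$ over $k$ with unit weights. That gives $KR_n=\sum_k\sum_jp_{k,j}\ell(\widehat\bxi_{k,j})+\frac{L}{\theta_{\min}}C^n-H_n$. With either correction, the rest of your Step 4 goes through as in the paper.
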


\begin{proof}
By the convexity of the mapping $Q\mapsto W(Q,P)$, the proof of the concavity of $\mathcal L^{\rm II}$ is analogous to the corresponding argument in the proof of Lemma~\ref{lm:boundaryI}. We therefore omit the details.

Next, we apply a similar proof of Theorem 1 in \cite{ZYG24} to verify the dual formulation 
 \eqref{prob0:thunconditionB2}.
Define
\begin{align*}
\epsilon^*=\min_{Q: Q(\mathcal N)=1} \sum_{k=1}^K \theta_k W(Q,{\p}_k).
\end{align*}
It is straightforward to see that 
the weighted-distance set $\mathcal B^{\rm II}(\{\p_k\}_{k\in[K]},\epsilon)\cap \{Q:Q(\mathcal N)=1\}$ is empty if $\epsilon<\epsilon^*$.
Consider the Legendre transform of $\mathcal L^{\rm II}$:
\begin{align*}
\left(\mathcal L^{\rm II}\right)^*(\lambda)&:=\sup _{\varepsilon \geq \epsilon^*}\left\{\mathcal L^{\rm II}(\varepsilon)-{\lambda}{\varepsilon}\right\} \\
& =\sup _{\varepsilon \geq \epsilon^*} \sup_{Q\in\mathcal P(\Xi),Q(\mathcal N)=1}\left\{\mathbb{E}_{Q}[\ell(\bxi)]-\lambda{\varepsilon}: \sum_{k=1}^K\theta_kW\left(Q, \p_k\right) \leq \varepsilon\right\} \\
& =\sup_{Q\in\mathcal P(\Xi), Q(\mathcal N)=1} \sup _{\varepsilon \geq\epsilon^*}\left\{\mathbb{E}_{Q}[\ell(\bxi)]-\lambda{\varepsilon}: \sum_{k=1}^K\theta_kW\left({Q}, \p_k\right) \leq \varepsilon\right\} \\
& =\sup_{Q\in\mathcal P(\Xi), Q(\mathcal N)=1}\left\{\mathbb{E}_{Q}[\ell(\bxi)]-\lambda\sum_{k=1}^K \theta_k W\left(Q, \p_k\right)\right\} \\
& =\sup_{Q\in\mathcal P(\Xi), Q(\mathcal N)=1}\left\{\mathbb{E}_{Q}[\ell(\bxi)]-\lambda\sum_{k=1}^K \theta_k \inf _{\pi_k \in \Pi(Q, \p_k)} \mathbb{E}_{\left(\bxi,\widehat{\bxi}_k\right) \sim \pi_k}\left[c\left(\bxi,\widehat{\bxi}_k\right)\right]\right\}, \\
& =\sup_{Q\in\mathcal P(\Xi), Q(\mathcal N)=1, \pi_k\in\Pi(Q,\p_k)~\forall k\in[K]} \mathbb{E}_{\bxi \sim Q,\left(\bxi,\widehat{\bxi}_k\right) \sim \pi_k, \forall k \in[K]}\left[\ell(\bxi)-\lambda\sum_{k \in[K]} \theta_k c\left(\bxi,\widehat{\bxi}_k\right)\right]\\
&=\sup_{Q\in\mathcal P(\mathcal N), \pi_k\in\Pi(Q,\p_k)~\forall k\in[K]} \mathbb{E}_{\bxi \sim Q,\left(\bxi,\widehat{\bxi}_k\right) \sim \pi_k, \forall k \in[K]}\left[\ell(\bxi)-\lambda\sum_{k \in[K]} \theta_k c\left(\bxi,\widehat{\bxi}_k\right)\right]
\end{align*}
where we have used the definition of optimal transport cost of $W$ in the fifth equality.  
Therefore,
\begin{align}
\left(\mathcal L^{\rm II}\right)^*(\lambda)&=\sup_{Q\in\mathcal P(\mathcal N), \pi_k\in\Pi(Q,\p_k)~\forall k\in[K]} \mathbb{E}_{\bxi \sim Q,\left(\bxi,\widehat{\bxi}_k\right) \sim \pi_k, \forall k \in[K]}\left[\ell(\bxi)-\lambda\sum_{k \in[K]} \theta_k c\left(\bxi,\widehat{\bxi}_k\right)\right]\notag\\
& \ge\sup_{\left(\widehat{\bxi}_1, \dots, \widehat{\bxi}_K\right) \sim P\in\Pi(\p_1,\dots,\p_K), \pi \in \Pi_{P}, \pi\in\mathcal P(\mathcal N\times \Xi^K)} \mathbb{E}_{\left(\bxi,\widehat{\bxi}_1, \dots, \widehat{\bxi}_K\right) \sim \pi}\left[\ell(\bxi)-\lambda\sum_{k=1}^{K} \theta_k c\left(\bxi,\widehat{\bxi}_k\right)\right]\label{eq1-unconB2}\\
&=\sup_{\left(\widehat{\bxi}_1, \dots, \widehat{\bxi}_K\right) \sim P\in\Pi(\p_1,\dots,\p_K)} \sup _{\pi\in \Pi_{P},\ \pi\in\mathcal P(\mathcal N\times \Xi^K)} \mathbb{E}_{\left(\bxi,\widehat{\bxi}_1, \dots, \widehat{\bxi}_K\right) \sim \pi}\left[\ell(\bxi)-\lambda\sum_{k=1}^{K} \theta_k c\left(\bxi,\widehat{\bxi}_k\right)\right]\notag\\
&=\sup_{\pi \in\Pi(\p_1,\dots,\p_K)}  \mathbb{E}_{\left(\widehat{\bxi}_1, \dots, \widehat{\bxi}_K\right) \sim \pi}\left[\sup_{\bxi\in\mathcal N}\left\{\ell(\bxi)-\lambda\sum_{k=1}^{K} \theta_k c\left(\bxi,\widehat{\bxi}_k\right)\right\}\right],
%,\label{eq1-unconB2}
\end{align}
where the last equality follows from the discreteness of the reference distributions $\{\p_k\}_{k\in[K]}$, which allows the interchangeability principle to be applied; see \citet[Proposition~1 and Example~1]{ZYG24}.
On the other hand, for any $Q\in\mathcal P(\mathcal N)$ and $\pi_k\in\Pi(Q,\p_k)$ for $k\in[K]$,  an iterative application of the Gluing Lemma \citep[Chapter 1]{V08} shows that there exists a multi-margin transportation plan ${\pi} \in \Pi\left(Q, {\mathbb{P}}_1, \dots, {\mathbb{P}}_K\right)$ with $P_{0, k \#} {\pi}=\pi_k$ for all $k \in[K]$, where $P_{0, k}$ is defined as usual through $P_{0, k}\left(\bxi, \bxi_1, \ldots, \bxi_K\right)=\left(\bxi, \bxi_k\right)$. Therefore, the converse direction of \eqref{eq1-unconB2} also holds.
% we have
% \begin{align}\label{eq2-unconB2}
% \left(\mathcal L^{\rm II}\right)^*(\lambda)&=\sup_{Q\in\mathcal P(\mathcal N), \pi_k\in\Pi(Q,\p_k)~\forall k\in[K]} \mathbb{E}_{\bxi \sim Q,\left(\bxi,\widehat{\bxi}_k\right) \sim \pi_k, \forall k \in[K]}\left[\ell(\bxi)-\lambda\sum_{k \in[K]} \theta_k c\left(\bxi,\widehat{\bxi}_k\right)\right]\notag\\
% & \le\sup_{\left(\widehat{\bxi}_1, \dots, \widehat{\bxi}_K\right) \sim P\in\Pi(\p_1,\dots,\p_K)} \sup _{\pi\sim \Pi_{P}} \mathbb{E}_{\left(\bZ,\widehat{\bZ}_1, \dots, \widehat{\bZ}_K\right) \sim \pi}\left[\ell(\bZ)-\lambda\sum_{k=1}^{K} \theta_k c_{\mathcal Z}\left(\bZ,\widehat{\bZ}_k\right)\right].
% \end{align}
This yields
\begin{align}\label{eq-LII*}
\left(\mathcal L^{\rm II}\right)^*(\lambda)=\sup_{\pi \in\Pi(\p_1,\dots,\p_K)}  \mathbb{E}_{\left(\widehat{\bxi}_1, \dots, \widehat{\bxi}_K\right) \sim \pi}\left[\sup_{\bxi\in\mathcal N}\left\{\ell(\bxi)-\lambda\sum_{k=1}^{K} \theta_k c\left(\bxi,\widehat{\bxi}_k\right)\right\}\right].
\end{align}
% It then follows from the interchangeability principle in Definition \ref{def:IP} that 
% \begin{align*}
% \left(\mathcal L^{\rm II}\right)^*(\lambda)=\sup_{\pi\in\Pi(\p_1^{\mathcal Z},\dots,\p_K^{\mathcal Z})} \mathbb{E}_{\left(\widehat{\bZ}_1, \dots, \widehat{\bZ}_K\right) \sim \pi}\left[\sup_{\bz\in\mathcal Z}\left\{\ell(\bz)-\lambda\sum_{k=1}^{K} \theta_k c_{\mathcal Z}\left(\bz,\widehat{\bZ}_k\right)\right\}\right].
% \end{align*}
Note that $\left(\mathcal L^{\rm II}\right)^*(\lambda)=\infty$ if $\lambda<0$.
The Legendre transform of $\left(\mathcal L^{\rm II}\right)^*$ has the form: 
\begin{align}\label{eq3-unconB2}
\left(\mathcal L^{\rm II}\right)^{**}(\epsilon)&=\inf_{\lambda\ge 0}\left\{\lambda\epsilon+\left(\mathcal L^{\rm II}\right)^{*}(\lambda)\right\}\notag\\
&=\inf_{\lambda\ge 0}\left\{\lambda\epsilon+\sup_{\pi\in\Pi(\p_1,\dots,\p_K)} \mathbb{E}_{\left(\widehat{\bxi}_1, \dots, \widehat{\bxi}_K\right) \sim \pi}\left[\sup_{\bxi\in\mathcal N}\left\{\ell(\bxi)-\lambda\sum_{k=1}^{K} \theta_k c\left(\bxi,\widehat{\bxi}_k\right)\right\}\right]\right\}.
\end{align}
Note that $\mathcal L^{\rm II}(\cdot)$ is concave, and $(\epsilon^*,\infty)$ is the interior of the effective domain. The concave biconjugacy theorem \citep[Theorem 12.2]{R70} gives, for any $\epsilon> \epsilon^*$
\begin{align}\label{prob:thunconditionB2}
\mathcal L^{\rm II}(\epsilon)
=
\left(\mathcal L^{\rm II}\right)^{**}(\epsilon)
&=
\inf_{\lambda\ge0}\left\{\lambda\epsilon+\left(\mathcal L^{\rm II}\right)^*(\lambda)\right\}\notag\\
&=\min_{\lambda \geq 0}\left\{\lambda\epsilon+\sup_{\pi\in\Pi({\p}_1,\dots,{\p}_K)}\E_{(\widehat{\bxi}_1,\dots,\widehat{\bxi}_K)\sim \pi}\left[\sup _{\bxi \in \mathcal{N}}\left\{\ell(\bxi)-\lambda\sum_{k=1}^K \theta_k c\left( \bxi,\widehat{\bxi}_k\right)\right\}\right]\right\}.
\end{align}
Since $\{\p_k\}_{k\in [K]}$ are discrete, any joint distribution in $\Pi(\p_1,\dots,\p_K)$ has the form
\begin{align*}
\pi=\sum_{\balpha\in\mathcal A}q_{\balpha} \delta_{\left(\widehat{\bxi}_{1,\alpha_1},\dots,\widehat{\bxi}_{K,\alpha_K}\right)};\quad q_{\balpha}\ge 0\quad\forall \balpha\in\mathcal A;\quad \sum_{\balpha\in\mathcal A, \alpha_k=j}q_{\balpha}=p_{k,j}\quad\forall j\in[N_k],k\in[K].
\end{align*}
For fixed $\lambda\ge0$, define
\begin{align*}
g_{\balpha}(\lambda):=
\sup _{\bxi \in \mathcal N}
\left\{\ell(\bxi)-\lambda\sum_{k=1}^K \theta_k c\left(\bxi, \widehat{\bxi}_{k, \alpha_k}\right)\right\},
\quad \balpha\in\mathcal A.
\end{align*}
The supremum over $\pi\in\Pi(\p_1,\dots,\p_K)$ in \eqref{prob:thunconditionB2} is therefore the optimal value of the finite linear program
\begin{align*}
\begin{array}{ll}
\max_{\bq} & \displaystyle\sum_{\balpha\in\mathcal A}q_{\balpha}g_{\balpha}(\lambda)\\
{\rm s.t.} & q_{\balpha}\ge0 \quad \forall \balpha\in\mathcal A,\\
&\displaystyle\sum_{\balpha\in\mathcal A:\alpha_k=j}q_{\balpha}=p_{k,j}
\quad \forall j\in[N_k],~k\in[K].
\end{array}
\end{align*}
By finite-dimensional linear programming duality, this maximum is equal to
\begin{align*}
\begin{array}{ll}
\min_{\bm\gamma_1,\ldots,\bm\gamma_K} & \displaystyle\sum_{k=1}^K\sum_{j=1}^{N_k}p_{k,j}\gamma_{k,j}\\
{\rm s.t.} & g_{\balpha}(\lambda)\le \displaystyle\sum_{k=1}^K\gamma_{k,\alpha_k}
\quad \forall \balpha\in\mathcal A,\\
& \bm\gamma_k\in\mathbb R^{N_k}\quad \forall k\in[K].
\end{array}
\end{align*}
Substituting this dual representation into \eqref{prob:thunconditionB2} yields exactly \eqref{prob0:thunconditionB2}. 

We now turn to the ``Moreover'' part.
The properness of $\mathcal L^{\rm II}$ can be directly derived from Proposition \ref{prop:weightset} and
the properness of $\mathcal L^{\rm I}$ shown in Theorem \ref{lm:boundaryI}.
The proof of upper semicontinuity and the associated constructions are analogous to those used in the proof of Theorem~\ref{lm:boundaryI}. For completeness, we provide a streamlined argument, omitting details that have already been established there.

Following a similar decomposition method, we obtain that $\mathcal L^{\rm II}(\epsilon)$ is equivalent to 
\begin{align}\label{eq-equnconditionalDROII}
\begin{array}{lll}\text { sup } & \sum_{\balpha \in \mathcal{A}} \int_{\mathcal N} \ell(\bxi) \mathrm{d} \nu_{\balpha}(\bxi) & \\ \text { s.t. } & \nu_{\balpha} \in \mathcal{M}_{+}(\mathcal N) & \forall \alpha \in \mathcal{A} \\ & \sum_{\substack{\balpha \in \mathcal{A}, \alpha_k=j}}\nu_{\balpha}(\mathcal N) =p_{k, j} & \forall j \in\left[N_k\right], \forall k \in[K] \\ & \sum_{k=1}^K\sum_{\balpha \in \mathcal{A}} \int_{\mathcal N} \theta_kc\left(\bxi, \widehat{\bxi}_{k, \alpha_k}\right) \mathrm{d} \nu_{\balpha}(\bxi) \leq \varepsilon.
\end{array}
\end{align}
Let $\epsilon_n\to\epsilon$ with $\epsilon_n,\epsilon\in [\epsilon^*,\infty)$. Denote by 
\begin{align*}
r:=\limsup_{n\to\infty} \mathcal L^{\rm II}(\epsilon_n).
\end{align*}
% Passing
% to a subsequence if necessary, we may assume that
% $$
% \lim_{n\to\infty}\mathcal L^{\rm I}(\bepsilon^n)
% =
% \limsup_{n\to\infty}\mathcal L^{\rm I}(\bepsilon^n)
% =:r .
% $$
We aim to prove that $r\le \mathcal L^{\rm II}(\epsilon)$.
For each $n$, choose a feasible 
$\{\nu^n_{\balpha}\}_{\balpha\in\mathcal A}$ of
\eqref{eq-equnconditionalDROII} with $\epsilon$ replaced by $\epsilon_n$
such that
$$
R_n:=
\sum_{\balpha\in\mathcal A}
\int_{\mathcal N}\ell(\bxi)\d\nu^n_{\balpha}(\bxi)
\ge
\mathcal L^{\rm II}(\epsilon_n)-\frac1n .
$$
It holds that
$$
\limsup_{n\to\infty} R_n=r.
$$
Moreover, we have
$$
\sup_n
\sum_{k=1}^K\sum_{\balpha\in\mathcal A}
\int_{\mathcal N}\theta_k
c(\bxi,\widehat{\bxi}_{k,\alpha_k})\d\nu^n_{\balpha}(\bxi)
<\infty .
$$
Assumption~\ref{assump:costI} yields the tightness of
$\{\nu^n_{\balpha}\}_{n\in\mathbb N}$ for every
$\balpha\in\mathcal A$. Prokhorov's theorem yields a subsequence, still denoted by $n$, and
finite measures $\{\nu_{\balpha}\}_{\balpha\in\mathcal A}\subseteq \mathcal M_+(\Xi)$ such that
$$
\nu^n_{\balpha}\Rightarrow\nu_{\balpha},
\qquad \balpha\in\mathcal A.
$$
This implies that $\{\nu_{\balpha}\}_{\balpha\in\mathcal A}$ satisfies the marginal constraints of \eqref{eq-equnconditionalDROII}.
Define
\begin{align*}
C=
\sum_{k=1}^K\sum_{\balpha\in\mathcal A}
\int_{\mathcal N}\theta_k
c(\bxi,\widehat{\bxi}_{k,\alpha_k})\d\nu_{\balpha}(\bxi)\quad{\rm and}\quad C_n=
\sum_{k=1}^K\sum_{\balpha\in\mathcal A}
\int_{\mathcal N}\theta_k
c(\bxi,\widehat{\bxi}_{k,\alpha_k})\d\nu^n_{\balpha}(\bxi),\quad n\in\N.
\end{align*}
By the Portmanteau theorem, we have
\begin{align*}
C\le
\liminf_{n\to\infty}
\sum_{k=1}^K\sum_{\balpha\in\mathcal A}\int_{\mathcal N}\theta_k
c(\bxi,\widehat{\bxi}_{k,\alpha_k})\d\nu^n_{\balpha}(\bxi)
= 
\liminf_{n\to\infty}C_n
\le\liminf_{n\to\infty}\epsilon_n=\epsilon.
\end{align*}
Hence, $\{\nu_{\balpha}\}_{\balpha\in\mathcal A}$ is feasible for
\eqref{eq-equnconditionalDROII} with radius $\epsilon$. Denote its
objective value by
$$
R:=
\sum_{\balpha\in\mathcal A}
\int_{\mathcal N}\ell(\bxi)\d\nu_{\balpha}(\bxi).
$$
Then $R\le \mathcal L^{\rm II}(\epsilon)$.
Further, we assume without loss of generality that $\min_{k\in[K]}\theta_k>0$, and 
for each $k\in[K]$ and
$\balpha\in\mathcal A$, define
$$
h_{k,\balpha}(\bxi):=
\ell(\widehat{\bxi}_{k,\alpha_k})
+
\widetilde{L}\theta_k c(\bxi,\widehat{\bxi}_{k,\alpha_k})
-
\ell(\bxi),
\qquad \bxi\in\mathcal N,
$$
where 
\begin{align*}
\widetilde{L}=\frac{L}{\min_{k\in[K]}\theta_k},
\end{align*}
and
$L>0$ is the Lipschitz constant in Assumption \ref{assump:growthI}.  Hence, $h_{k,\balpha}(\bxi)\ge 0$ for all $\bxi\in\mathcal N$.
Define
\begin{align*}
H=
\sum_{k=1}^K\sum_{\balpha\in\mathcal A}\int_{\mathcal N}
h_{k,\balpha}(\bxi)\d\nu_{\balpha}(\bxi)\quad{\rm and}\quad 
H_n=\sum_{k=1}^K\sum_{\balpha\in\mathcal A}\int_{\mathcal N}
h_{k,\balpha}(\bxi)\d\nu^n_{\balpha}(\bxi),\quad n\in\N.
\end{align*}
The Portmanteau theorem gives
\begin{align*}%\label{eq-lowersemiH_kII}
H\le \liminf_{n\to\infty} H_n.
\end{align*}
One can check that
\begin{align*}
&K R_n=\sum_{\balpha\in\mathcal A}
\int_{\mathcal N}\ell(\bxi)\d\nu^n_{\balpha}(\bxi)
=
\sum_{k=1}^{K}\sum_{j=1}^{N_k}p_{k,j}\ell(\widehat{\bxi}_{k,j})
+
\widetilde{L} C_n-H_n;\\
&K R=\sum_{\balpha\in\mathcal A}
\int_{\mathcal N}\ell(\bxi)\d\nu_{\balpha}(\bxi)=\sum_{k=1}^K
\sum_{j=1}^{N_k}p_{k,j}\ell(\widehat{\bxi}_{k,j})
+
\widetilde{L} C-H.
\end{align*}
It follows
that
\begin{align}\label{eq-rRinequalityII}
K(r-R)&=\limsup_{n\to\infty} (KR_n-KR)\notag\\
&=\limsup_{n\to\infty} \left\{\sum_{k=1}^K\sum_{j=1}^{N_k}p_{k,j}\ell(\widehat{\bxi}_{k,j})
+ \widetilde{L} C_n-H_n\right\}-\sum_{k=1}^K\sum_{j=1}^{N_k}p_{k,j}\ell(\widehat{\bxi}_{k,j})
-
\widetilde{L} C+H \notag\\
&= \widetilde{L}(\epsilon-C)-\liminf_{n\to\infty}H_n+H\notag\\
&\le L(\epsilon-C),
\qquad k\in[K].
\end{align}
If $r\le R$, then $r\le R\le\mathcal L^{\rm II}(\bepsilon)$, and the desired inequality follows. Suppose
instead that $r>R$. Then \eqref{eq-rRinequalityII} implies
$$
\epsilon-C\ge \frac{K(r-R)}{L}>0,
\qquad k\in[K].
$$
For sufficiently large $n$, define
\begin{align*}%\label{eq-deftn}
t_n=
\max_{k\in[K]}
\frac{(\epsilon_n-\epsilon)_+}
{(\epsilon_n-\epsilon)_+ + \epsilon-C}.
\end{align*}
Then, $0\le t_n\le1$ and $t_n\to0$. Set
$$
\widetilde\nu^n_{\balpha}:=
(1-t_n)\nu^n_{\balpha}+t_n\nu_{\balpha},
\qquad \balpha\in\mathcal A .
$$
The marginal constraints of \eqref{eq-equnconditionalDROII} are preserved.
% Below we show that the cost constraints of \eqref{eq-equnconditionalDROII} with radius $\epsilon$ also holds for $\{\widetilde\nu_{\balpha}\}_{\balpha\in\mathcal A}$. 
% By the definition of $t_n$ in\eqref{eq-deftn}, we have
% \begin{align}\label{eq-tninequality}
% (\epsilon_k-C_k)t_n\ge (1-t_n)(\epsilon_k^n-\epsilon_k)_+\quad \forall k\in[K].
% \end{align}
Moreover, one can check that
\begin{align*}
\sum_{k=1}^K\sum_{\balpha\in\mathcal A}
\int_{\mathcal N}\theta_k
c(\bxi,\widehat{\bxi}_{k,\alpha_k})\d\widetilde\nu^n_{\balpha}(\bxi)-\epsilon
&=(1-t_n)C_n+t_nC-\epsilon\le 0.
\end{align*}
Hence, $\{\widetilde\nu^n_{\balpha}\}_{\balpha\in\mathcal A}$ is feasible for
\eqref{eq-equnconditionalDROII} with radii $\epsilon$. Its
objective value is $(1-t_n)R_n+t_nR\le \mathcal L^{\rm II}(\epsilon)$. On the other hand, 
\begin{align*}
\limsup_{n\to\infty}\{(1-t_n)R_n+t_nR\}=\limsup_{n\to\infty}R_n=r.
\end{align*}
This implies $\mathcal L^{\rm II}(\epsilon)\ge r$. 
Hence $\mathcal L^{\rm II}$ is upper semicontinuous on $D$. This competes the proof.
\end{proof}

}

~\\
\begin{proof}[Proof of Theorem \ref{th:sumB2Duality}]
Fix any $\bar{\bbeta}\in\mathcal D$ and write
$\ell_{\bar{\bbeta}}(\bxi):=\ell(\bar{\bbeta},\bxi)$. To simplify notation, we write $\ell(\bxi)$ for this fixed loss throughout the derivation and restore the minimization over $\bbeta$ at the end. By Theorem \ref{th-analysisB2} and Lemma \ref{lm-BII-eqclosure}, the robust evaluation value for the fixed decision $\bar{\bbeta}$ satisfies
% $\sup_{Q\in\mathcal B^{\rm I}(\bepsilon),Q(\mathcal N)\in[a,b]} \E_Q[\ell(\bxi)|\bxi\in \mathcal N]$ is equivalent to
\begin{align}
&\sup_{Q\in\mathcal B^{\rm II}\left(\{\widehat{\p}_k\}_{k\in[K]},\epsilon\right),Q(\mathcal N)\in[a,b]} \E_Q[\ell(\bxi)\mid\bxi\in \mathcal N]\notag\\
&=\sup_{Q\in\mathcal B_{\mathcal N}^{\rm II}\left(\{\widehat{\p}_k\}_{k\in[K]},\epsilon\right)} \E_Q[\ell(\bxi)]\notag\\
&=\sup_{m, \{a_{k,j}\}_{j\in[N_k],k\in[K]}, \{r_{\balpha}\}_{\balpha\in\mathcal A}}\sup_{Q\in\mathcal B^{\rm II}\left(\left\{\sum_{j=1}^{N_k}a_{k,j}\delta_{\widehat{\bxi}_{k,j}}\right\}_{k\in[K]},\epsilon m-\sum_{\balpha\in\mathcal A}r_{\balpha}d_{\balpha}^{\btheta}\right), Q(\mathcal N)=1}\E_{Q}[\ell(\bxi)]\label{eq-B2SD1}\\
&\begin{array}{lll}
{\rm s.t.}~~&\sum_{j=1}^{N_k}a_{k,j}=1 &\forall k\in[K]\\
&a_{k,j}+\sum_{\balpha\in\mathcal A,\alpha_k=j} r_{\balpha}=m w_{k,j},~a_{k,j}\ge 0~~&\forall j\in[N_k], \forall k\in[K]\\
&m\in\left[\frac{1}{b},\frac{1}{a}\right],~\mathbf r\ge 0.
\end{array}\label{eq-B2conSD1}
\end{align}
By Theorem \ref{th-unconB2},
the inner supremum problem of \eqref{eq-B2SD1} is equivalent to
\begin{align}\label{eq-B2conSD2}
\begin{array}{lll}\inf & 
\lambda\left(\epsilon m-\sum_{\balpha\in\mathcal A}r_{\balpha}d_{\balpha}^{\btheta}\right)+\sum_{k=1}^K\sum_{j=1}^{N_k}a_{k,j}\gamma_{k,j}\\
\text {s.t.} & \lambda\ge 0,~\bm\gamma_k \in \mathbb{R}^{N_k} & \forall k\in[K] \\ 
& \sup _{\bxi \in \mathcal N} \left\{\ell(\bxi)-\lambda\sum_{k=1}^K \theta_k c(\bxi,\widehat{\bxi}_{k, \alpha_k})\right\} \leq \sum_{k=1}^K \gamma_{k, \alpha_k} & \forall \balpha \in \mathcal{A}.
\end{array}
\end{align}
Denote by $C$ and $D$ the sets of all variables satisfying all conditions in \eqref{eq-B2conSD1} and \eqref{eq-B2conSD2}, respectively.
For the fixed decision $\bar{\bbeta}$, the robust evaluation problem is equivalent to
\begin{align*}
\sup_{\left\{m,\{a_{k,j}\}_{j\in[N_k],k\in[K]}, \{r_{\balpha}\}_{\balpha\in\mathcal A}\right\}\in C}\inf_{\left\{\lambda,\{\bm\gamma_k\}_{k\in[K]}\right\}\in D} 
\left\{\lambda\left(\epsilon m-\sum_{\balpha\in\mathcal A}r_{\balpha}d_{\balpha}^{\btheta}\right)+\sum_{k=1}^K\sum_{j=1}^{N_k}a_{k,j}\gamma_{k,j}\right\}.
\end{align*}
Since $C$ is a compact convex polytope ($m$ lies in a box, each $a_k$ in a simplex, and $\sum_{\balpha}r_{\balpha}=m-1$ with $\mathbf r\ge0$), $D$ is convex, and the objective is bilinear, Sion's minimax theorem \citep{S58} justifies the interchange. Thus the value is
\begin{align}\label{B1conSD3}
\inf_{\left\{\lambda,\{\bm\gamma_k\}_{k\in[K]}\right\}\in D} \sup_{\left\{m,\{a_{k,j}\}_{j\in[N_k],k\in[K]}, \{r_{\balpha}\}_{\balpha\in\mathcal A}\right\}\in C}
\left\{\lambda\left(\epsilon m-\sum_{\balpha\in\mathcal A}r_{\balpha}d_{\balpha}^{\btheta}\right)+\sum_{k=1}^K\sum_{j=1}^{N_k}a_{k,j}\gamma_{k,j}\right\}.
\end{align}
Note that the inner supremum problem of \eqref{B1conSD3} is a linear program. By standard calculation, we have the following duality for it:
\begin{align*}
\begin{array}{lll}\inf & 
\sum_{k=1}^K\varrho_k-\frac{1}{b}\tau_1+\frac{1}{a}\tau_2\\
\text {s.t. } & \varrho_k\in\R,~\bm\varphi_{k}\in\R^{N_k},~\bm\psi_k\in\R_+^{N_k} &\forall k\in[K]\\
&\tau_1,\tau_2\in\R_+\\
&\eta_{\balpha}\in\R_+ &\forall \balpha\in\mathcal A\\ 
& \lambda \epsilon-\sum_{k=1}^K\sum_{j=1}^{N_k}{\varphi_{k,j}}{w_{k,j}}+\tau_1-\tau_2=0\\
&\gamma_{k,j}-\varrho_k+\varphi_{k,j}+\psi_{k,j}=0 &\forall j\in[N_k],~k\in[K]\\
&-\lambda d_{\balpha}^{\btheta}+\sum_{k=1}^K\varphi_{k,\alpha_k}+\eta_{\balpha}=0 &\forall \balpha\in\mathcal A.
\end{array}
\end{align*}
Combining with \eqref{B1conSD3} gives \eqref{prob-sum} for the fixed decision $\bar{\bbeta}$.
%, with $\ell(\cdot)$ interpreted as $\ell(\bar{\bbeta},\cdot)$. 
Since $\bar{\bbeta}$ was arbitrary, minimizing the resulting representation over $\bbeta\in\mathcal D$ gives the stated formulation.
\end{proof}

\section{Proofs of results in Section \ref{sec:tractabilityB3}}\label{app:proofBarycenter}

\begin{proof}[Proof of Corollary \ref{co:barycenterB3Duality}]
Equation~\eqref{eq-B3eqclosure} follows directly from Lemma~\ref{lm-BII-eqclosure} specialized to the single-source case.

Applying Theorem~\ref{th:sumB2Duality} to the single-source setting, with reference distribution $\widehat{\p}_{\rm bar}$, we obtain that problem~\eqref{prob:B3inner} is equivalent to
\begin{align*}%\tag{${\rm P}^{\rm II}$}\label{prob-sum}
\begin{array}{lll}\inf & 
\varrho-\frac{1}{b}\tau_1+\frac{1}{a}\tau_2\\
\text{\rm s.t.} & \bbeta\in\mathcal D,\ \lambda, \tau_1, \tau_2\in\R_+,~\varrho\in\R\\
&\varphi_{j}\in\R,~\psi_j\in\R_+,~\gamma_j\in\R,~\eta_j\in\R_+ &\forall j\in[N]\\
& \lambda \epsilon-\sum_{j=1}^{N}w_j\varphi_{j}+\tau_1-\tau_2=0\\
&\gamma_{j}-\varrho+\varphi_{j}+\psi_{j}=0 &\forall j\in[N]\\
&-\lambda d_{j}+\varphi_{j}+\eta_{j}=0 &\forall j\in[N]\\
& \sup _{\bxi \in \mathcal N} \left\{\ell(\bbeta,\bxi)-\lambda c(\bxi,\widehat{\bxi}_{j})\right\} \leq  \gamma_{j} & \forall j \in [N].
\end{array}
\end{align*}
Since $\eta_j\ge 0$ and $\varphi_j\in\R$,
the constraint $-\lambda d_{j}+\varphi_{j}+\eta_{j}=0$ can be replaced by the constraints: $\varphi_j=\lambda d_j-\varphi_j'$ and $\varphi_j'\ge 0$. Substituting $\varphi_j=\lambda d_j-\varphi_j'$ into the other constraints yields the desired result.\end{proof}

\section{Proofs of results in Section \ref{sec:feasible}}

\begin{proof}[Proof of Proposition \ref{prop:radiiregion1}]
Define
\begin{align*}
&m=\inf\left\{W\left(Q,\widehat{\p}_1\right): Q(\mathcal N)\in[a,b]\right\};\notag\\
&f(\epsilon)=\inf\left\{W\left(Q,\widehat{\p}_2\right): W\left(Q,\widehat{\p}_1\right)\le \epsilon,~Q(\mathcal N)\in[a,b]\right\},~~\epsilon\ge m.%\label{eq-regionradiif}
\end{align*}
By the definition of $\mathcal E^{\rm I}$ in \eqref{eq-radiiregion1}, we have
\begin{align*}
\mathcal E^{\rm I}=\left\{\bepsilon\in\R^2_+: \mbox{There exists $Q$ such that $W(Q,\widehat{\p}_1)\le \epsilon_1$, $W(Q,\widehat{\p}_2)\le \epsilon_2$ and $Q(\mathcal N)\in[a,b]$}\right\}.
\end{align*}
From the above expression, $\mathcal E^{\rm I}=\{\bepsilon\in\R_+^2:\epsilon_1\ge m,\ \epsilon_2\ge f(\epsilon_1)\}$ and $\mathcal E^{\rm I}$ is upward closed. Moreover, the set $\{(Q,\epsilon_1):W(Q,\widehat\p_1)\le\epsilon_1,\ Q(\mathcal N)\in[a,b]\}$ is convex and the objective $W(Q,\widehat\p_2)$ is convex in $Q$, so $f$ is convex and nonincreasing on $[m,\infty)$ and hence continuous on $(m,\infty)$. Combining upward closedness with this continuity yields
\begin{align*}
(\mathcal E^{\rm I})^{\circ}=\{\bepsilon\in\R^2_+: \epsilon_1> m,~\epsilon_2>f(\epsilon_1)\}.
\end{align*}
Below we aim to prove that $m$ and $f(\epsilon_1)$ are the optimal values of the optimization problems in \eqref{eq-regionradii1} and \eqref{eq-regionradii2}, respectively. 

We first consider $m$. By Lemma \ref{lm:DMM}, $m$ equals the optimal value of the problem:
\begin{align}\label{eq-regionradiipf1}
\begin{array}{lll}
\inf &\sum_{j=1}^{N_1} \int_{\Xi} c(\bxi,\widehat{\bxi}_{1,j})\d\mu_j(\bxi)\\
{\rm s.t.} &\mu_j\in\mathcal M_+(\Xi),~\mu_j(\Xi)=w_{1,j}~~&\forall j\in[N_1]\\
& \sum_{j=1}^{N_1}\mu_j(\mathcal N)\in[a,b].
\end{array}
\end{align}
Let $p_j=\mu_j(\mathcal N)$ and $q_j=\mu_j(\mathcal N^c)$, and it holds that $\mu_j=p_j\mu_j^{\mathcal N}+q_j\mu_j^{\mathcal N^c}$. Substituting them into \eqref{eq-regionradiipf1} yields
\begin{align*}
\begin{array}{lll}
\inf &\sum_{j=1}^{N_1} \int_{\Xi} p_j c(\bxi,\widehat{\bxi}_{1,j})\d\mu_j^{\mathcal N}(\bxi)+\sum_{j=1}^{N_1} \int_{\Xi} q_j c(\bxi,\widehat{\bxi}_{1,j})\d\mu_j^{\mathcal N^c}(\bxi)\\
{\rm s.t.} &\sum_{j=1}^{N_{1}}p_j\in[a,b]\\
&p_j+q_j=w_{1,j},~p_j\ge 0,~q_j\ge 0 &\forall j\in[N_{1}].
\end{array}
\end{align*}
Since $\mu_j^{\mathcal N}\in\mathcal P(\mathcal N)$ and $\mu_j^{\mathcal N^c}\in\mathcal P(\mathcal N^c)$, it is straightforward to check that the above problem is equivalent to \eqref{eq-regionradii1}. Hence, we have verified that $m$ is equal to the optimal value of \eqref{eq-regionradii1}.

Let us now focus on $f(\epsilon_1)$ with $\epsilon_1> m$.
Define
\begin{align*}
F\left(\epsilon,Q\right)= W\left(Q,\widehat{\p}_2\right)+\delta_{\mathcal F(\epsilon)}(Q),
\end{align*}
where 
$\mathcal F(\epsilon):=\{Q: W(Q,\widehat{\p}_1)\le \epsilon,~Q(\mathcal N)\in[a,b]\}$,
and $\delta_{\mathcal F(\epsilon)}(Q)$ is the extended-valued indicator, equal to $0$ if $Q\in\mathcal F(\epsilon)$ and $+\infty$ otherwise. By the convexity of the mapping $Q\mapsto W(Q,\widehat{\p}_2)$ and the set $\mathcal F(\epsilon)$, we have that $F(\epsilon,Q)$ is jointly convex in $(\epsilon,Q)$
Note that $f(\epsilon)=\inf_{Q}F(\epsilon,Q)$, and thus, $f(\epsilon)$ is convex. Consider its conjugate function
\begin{align*}
f^*(\lambda)&=\sup_{\epsilon\ge m}\{\lambda\epsilon-f(\epsilon)\}\\
&=\sup_{\epsilon\ge m}\sup_{Q:Q(\mathcal N)\in[a,b], W(Q,\widehat{\p}_1)\le \epsilon} \left\{\lambda\epsilon- W(Q,\widehat{\p}_2)\right\}\\
&=\sup_{Q:Q(\mathcal N)\in[a,b], W(Q,\widehat{\p}_1)\le \epsilon} \left\{\lambda\epsilon- W(Q,\widehat{\p}_2)\right\},
% &=\sup_{Q:Q(\mathcal N)\in[a,b]}\left\{\lambda W(Q,\widehat{\p}_1)- W(Q,\widehat{\p}_2)\right\}.
\end{align*}
where the last equality holds because $W(Q,\widehat{\p}_1)\ge m$ whenever $Q(\mathcal N)\in[a,b]$. It is straightforward to see that $f^*(\lambda)=\infty$ if $\lambda>0$. For $\lambda\le 0$, we have
\begin{align*}
f^*(\lambda)=\sup_{Q:Q(\mathcal N)\in[a,b]} \left\{\lambda W(Q,\widehat{\p}_1)- W(Q,\widehat{\p}_2)\right\}.
\end{align*}
Therefore, the conjugate function of $f^*$ can be represented as
\begin{align*}
f^{**}(\epsilon)&=\sup_{\lambda\le 0} \{\lambda\epsilon-f^*(\lambda)\}\\
&=\sup_{\lambda\le 0}\inf_{Q: Q(\mathcal N)\in[a,b]} \{\lambda\epsilon-\lambda W(Q,\widehat{\p}_1)+W(Q,\widehat{\p}_2)\}\\
&=\sup_{\lambda\ge 0}\inf_{Q: Q(\mathcal N)\in[a,b]} \{W(Q,\widehat{\p}_2)+\lambda W(Q,\widehat{\p}_1)-\lambda\epsilon\}.
\end{align*}
Note that the condition $\epsilon_1>m$ implies that $\epsilon_1$ lies in the interior of the effective domain of $f$. The biconjugacy theorem \citep[Theorem 12.2]{R70} gives
\begin{align*}
f(\epsilon_1)=f^{**}(\epsilon_1)
=\sup_{\lambda\ge 0}\inf_{Q: Q(\mathcal N)\in[a,b]} \{W(Q,\widehat{\p}_2)+\lambda W(Q,\widehat{\p}_1)-\lambda\epsilon_1\}
\end{align*}
By Lemma \ref{lm:DMM}, one can check with standard calculation that the above problem is equivalent to 
\begin{align}\label{eq-regionradiipf2}
\begin{array}{llll}
\sup_{\lambda\ge 0}&\inf &\sum_{\balpha\in\mathcal A} \int_{\Xi}\left( c(\bxi,\widehat{\bxi}_{2,\alpha_2})+\lambda c(\bxi,\widehat{\bxi}_{1,\balpha_1})\right)\d\mu_{\balpha}(\bxi)-\lambda\epsilon_1\\
&{\rm s.t.} &\mu_{\balpha}\in\mathcal M_+(\Xi) &\forall \balpha\in\mathcal A\\
&&\sum_{\balpha\in\mathcal A,\alpha_k=j}\mu_{\balpha}(\Xi)=w_{k,j}~~&\forall j\in[N_k],~k=1,2\\
&& \sum_{\balpha\in\mathcal A}\mu_{\balpha}(\mathcal N)\in[a,b].
\end{array}
\end{align}
Similar to the analysis for $m$, we can set $p_{\balpha}=\mu_{\balpha}(\mathcal N)$ and $q_{\balpha}=\mu_{\balpha}(\mathcal N^c)$, and thus, $\mu_{\balpha}=p_{\balpha}\mu_{\balpha}^{\mathcal N}+q_{\balpha}\mu_{\balpha}^{\mathcal N^c}$. Substituting them into \eqref{eq-regionradiipf2} with some standard calculations yields that the problem \eqref{eq-regionradiipf2} is equivalent to the problem in \eqref{eq-regionradii2}. This completes the proof.
\end{proof}

~\\
\begin{proof}[Proof of Corollary \ref{prop:special}]
Note that under this setting $\inf_{\bxi\in\mathcal
N}c(\bxi,\widehat{\bxi}_{1,j})=\|\widehat{\bx}_{1,j}-\bx_0\|_{\mathcal X}$ and $\inf_{\bxi\in\mathcal N^c}c(\bxi,\widehat{\bxi}_{1,j})=0$. Thus, the equivalent reformulation of the problem in \eqref{eq-regionradii1} holds. To see the equivalent reformulation of the problem in \eqref{eq-regionradii2}, it is straightforward to verify that 
\begin{align*}
\inf_{\bxi_{\balpha}\in\mathcal N}\left\{c(\bxi_{\balpha},\widehat{\bxi}_{2,\alpha_2})+\lambda c(\bxi_{\balpha},\widehat{\bxi}_{1,\balpha_1})\right\}=\|\widehat{\bx}_{2,\alpha_2}-\bx_0\|_{\mathcal X}+\lambda \|\widehat{\bx}_{1,\alpha_1}-\bx_0\|_{\mathcal X}+(\lambda\wedge 1)d_{\balpha}^{\mathcal Y}
\end{align*}
and 
\begin{align*}
\inf_{\bxi_{\balpha}'\in\mathcal N^c}\left\{c(\bxi_{\balpha}',\widehat{\bxi}_{2,\alpha_2})+\lambda c(\bxi_{\balpha}',\widehat{\bxi}_{1,\balpha_1})\right\}=(\lambda \wedge 1)d_{\balpha},
\end{align*}
where $\lambda\wedge 1=\min\{\lambda,1\}$.
Therefore, the problem in \eqref{eq-regionradii2} is equivalent to 
\begin{align}\label{eq-eqregionradii2}
\begin{array}{llll}
\sup_{\lambda\ge 0}&\inf_{\mathbf p,\bq} 
&\sum_{\balpha\in\mathcal A}p_{\balpha}\left(\|\widehat{\bx}_{2,\alpha_2}-\bx_0\|_{\mathcal X}+\lambda\|\widehat{\bx}_{1,\alpha_1}-\bx_0\|_{\mathcal X}\right)\\
&&+(\lambda \wedge1)\sum_{\balpha\in\mathcal A}\left(p_{\balpha}d_{\balpha}^{\mathcal Y}+q_{\balpha}d_{\balpha}\right)-\lambda\epsilon_1\\
&{\rm s.t.} &\sum_{\balpha\in\mathcal A}p_{\balpha}\in[a,b]\\
&&p_{\balpha},~q_{\balpha}\ge 0 &\forall \balpha\in\mathcal A\\
&&\sum_{\balpha\in\mathcal A,\alpha_{k}=j} (p_{\balpha}+q_{\balpha})= w_{k,j} &\forall j\in[N_{k}], k=1,2.
\end{array}
\end{align}
Note that the objective function in \eqref{eq-eqregionradii2} is  concave in $\lambda$ and linear in $(\bp,\bq)$, where the feasible set of $(\bp,\bq)$ is compact. It follows from a minimax theorem (see e.g., \cite{S58}) that \eqref{eq-eqregionradii2} is equivalent to
\begin{align*}%\label{eq-eqregionradii2}
\begin{array}{llll}
&\inf_{\mathbf p,\bq}\sup_{\lambda\ge 0}
&\sum_{\balpha\in\mathcal A}p_{\balpha}\left(\|\widehat{\bx}_{2,\alpha_2}-\bx_0\|_{\mathcal X}+\lambda\|\widehat{\bx}_{1,\alpha_1}-\bx_0\|_{\mathcal X}\right)\\
&&+(\lambda \wedge1)\sum_{\balpha\in\mathcal A}\left(p_{\balpha}d_{\balpha}^{\mathcal Y}+q_{\balpha}d_{\balpha}\right)-\lambda\epsilon_1\\
&{\rm s.t.} &\sum_{\balpha\in\mathcal A}p_{\balpha}\in[a,b]\\
&&p_{\balpha},~q_{\balpha}\ge 0 &\forall \balpha\in\mathcal A\\
&&\sum_{\balpha\in\mathcal A,\alpha_{k}=j} (p_{\balpha}+q_{\balpha})= w_{k,j} &\forall j\in[N_{k}], k=1,2.
\end{array}
\end{align*}
The inner supremum problem can be reformulated as
\begin{align*}
\begin{array}{lll}
&\sup_{\lambda,t}
&\sum_{\balpha\in\mathcal A}p_{\balpha}\left(\|\widehat{\bx}_{2,\alpha_2}-\bx_0\|_{\mathcal X}+\lambda\|\widehat{\bx}_{1,\alpha_1}-\bx_0\|_{\mathcal X}\right)+t\sum_{\balpha\in\mathcal A}\left(p_{\balpha}d_{\balpha}^{\mathcal Y}+q_{\balpha}d_{\balpha}\right)-\lambda\epsilon_1\\
&{\rm s.t.} &\lambda\ge 0,~t\le \lambda,~t\le 1.
\end{array}
\end{align*}
Finally, taking the dual of the above supremum problem leads to the desired equivalent representation.
\end{proof}

~~\\
\begin{proof}[Proof of  Proposition \ref{prop:feasibleradiiII}]
Let
\begin{align*}
\epsilon^*_{\rm II}:=
\inf\left\{
\sum_{k=1}^K\theta_k W(Q,\widehat{\p}_k):
Q\in\mathcal P(\Xi),\ Q(\mathcal N)\in[a,b]
\right\}.
\end{align*}
Because $\mathcal E^{\rm II}$ is upward closed, its left endpoint is
$\epsilon^*_{\rm II}$. We show that the displayed linear program has this
value.

By Lemma~\ref{lm:DMM}, the objective defining $\epsilon^*_{\rm II}$ can be
written as the value of
\begin{align*}
\begin{array}{llll}
\inf_{\{\mu_{\balpha}\}_{\balpha\in\mathcal A}}
&\displaystyle
\sum_{\balpha\in\mathcal A}\int_{\Xi}
\sum_{k=1}^K\theta_k c(\bxi,\widehat{\bxi}_{k,\alpha_k})
\d\mu_{\balpha}(\bxi)\\
{\rm s.t.}
&\mu_{\balpha}\in\mathcal M_+(\Xi) &\forall \balpha\in\mathcal A\\
&\displaystyle
\sum_{\balpha\in\mathcal A,\alpha_k=j}\mu_{\balpha}(\Xi)=w_{k,j}
&\forall j\in[N_k],~k\in[K]\\
&\displaystyle
\sum_{\balpha\in\mathcal A}\mu_{\balpha}(\mathcal N)\in[a,b].
\end{array}
\end{align*}
For each $\balpha$, set
$p_{\balpha}:=\mu_{\balpha}(\mathcal N)$ and
$q_{\balpha}:=\mu_{\balpha}(\mathcal N^c)$. Splitting
$\mu_{\balpha}$ into its restrictions to $\mathcal N$ and
$\mathcal N^c$ gives
\begin{align*}
\int_{\Xi}\sum_{k=1}^K\theta_k c(\bxi,\widehat{\bxi}_{k,\alpha_k})
\d\mu_{\balpha}(\bxi)
\ge
p_{\balpha}h_{\balpha}^{\btheta}
{}+
q_{\balpha}d_{\balpha}^{\btheta}.
\end{align*}
Therefore $\epsilon^*_{\rm II}$ is bounded below by the linear program in
the proposition. Conversely, given any feasible $(\bp,\bq)$ for that
linear program, choose points in $\mathcal N$ and $\mathcal N^c$ whose
costs are within an arbitrary $\delta>0$ of the infima defining
$h_{\balpha}^{\btheta}$ and $d_{\balpha}^{\btheta}$, and place masses
$p_{\balpha}$ and $q_{\balpha}$ on those points. The resulting measures
$\mu_{\balpha}$ satisfy the marginal and conditioning constraints above
and have objective value within $\delta\sum_{\balpha}(p_{\balpha}+q_{\balpha})$
of the linear-program value. Letting $\delta\downarrow0$ proves equality.
\end{proof}

\section{Proofs of results in Section \ref{sec:extension}}

\begin{proof}[Proof of Theorem \ref{th-minimax}]
Define 
\begin{align*}
\mathcal P_{\mathcal N}=\left\{Q^{\mathcal N}: Q\in\mathcal P,~Q(\mathcal N)\in A\right\},
\end{align*}
where we recall that $Q^{\mathcal N}$ is defined as $Q^{\mathcal N}(B)=Q(B\cap \mathcal N)/Q(\mathcal N)$ for all Borel $B\subseteq\Xi$.
Thus, we have $\mathcal P_{\mathcal N}^{\ell}=\left\{Q\circ\ell^{-1}: Q\in\mathcal P_{\mathcal N}\right\}$.
% Define
% \begin{align*}
% R(Q,\bt)=\E_Q[\phi(Z,\bt)],~~Q\in\mathcal P(\R),~\bt\in\R^n.
% \end{align*}
It holds that 
\begin{align*}
\sup_{Q\in\mathcal P,Q(\mathcal N)\in A} \mathcal R_Q(\ell(\bxi)\mid\bxi\in\mathcal N)
&=\sup_{Q\in\mathcal P, Q(\mathcal N)\in A}\inf_{\mathbf t\in\R^n} \E_Q[\phi(\ell(\bxi),\mathbf t)\mid\bxi\in\mathcal N]\\
&=\sup_{Q\in\mathcal P_{\mathcal N}}\inf_{\mathbf t\in\R^n} \E_Q[\phi(\ell(\bxi),\mathbf t)]\\
&=\sup_{Q\in\mathcal P_{\mathcal N}^{\ell}}\inf_{\mathbf t\in\R^n} \E_Q[\phi(Z,\mathbf t)]=\sup_{Q\in\mathcal P_{\mathcal N}^{\ell}}\inf_{\mathbf t\in\R^n} R(Q,\bt)
\end{align*}
and
\begin{align*}
\inf_{\mathbf t\in\R^n}\sup_{Q\in\mathcal P, Q(\mathcal N)\in A}\E_Q[\phi(\ell(\bxi),\mathbf t)\mid\bxi\in\mathcal N]
&=\inf_{\mathbf t\in\R^n}\sup_{Q\in\mathcal P_{\mathcal N}}\E_Q[\phi(\ell(\bxi),\mathbf t)]\\
&=\inf_{\mathbf t\in\R^n}\sup_{Q\in\mathcal P_{\mathcal N}^{\ell}}\E_Q[\phi(Z,\mathbf t)]
=\inf_{\mathbf t\in\R^n}\sup_{Q\in\mathcal P_{\mathcal N}^{\ell}} R(Q,\bt).
\end{align*}
Below we aim to prove the minimax result that 
\begin{align*}%\label{eq-minmax1}
\sup_{Q\in\mathcal P_{\mathcal N}^{\ell}}\inf_{\mathbf t\in\R^n} R(Q,\bt)=\inf_{\mathbf t\in\R^n} \sup_{Q\in\mathcal P_{\mathcal N}^{\ell}}R(Q,\bt),
\end{align*}
and this will complete the proof.
By the minimax inequality, it holds that 
\begin{align*}
\sup_{Q\in\mathcal P_{\mathcal N}^{\ell}}\inf_{\mathbf t\in\R^n} R(Q,\mathbf t)
\le\inf_{\mathbf t\in\R^n} \sup_{Q\in\mathcal P_{\mathcal N}^{\ell}}R(Q,\mathbf t).
\end{align*}
It remains to consider the other direction that 
\begin{align}\label{eq-minimaxsuff}
\sup_{Q\in\mathcal P_{\mathcal N}^{\ell}}\inf_{\mathbf t\in\R^n} R(Q,\mathbf t)
\ge\inf_{\mathbf t\in\R^n} \sup_{Q\in\mathcal P_{\mathcal N}^{\ell}}R(Q,\mathbf t).
\end{align}

To verify \eqref{eq-minimaxsuff},
we first claim 
that $\mathcal P_{\mathcal N}$ is a convex set, and as a result, $\mathcal P_{\mathcal N}^{\ell}$ is also a convex set. Indeed, for $R_1,R_2\in\mathcal P_{\mathcal N}$, there exist $Q_1,Q_2\in\mathcal P$ such that $R_i=Q_i^{\mathcal N}$ and $q_i:=Q_i(\mathcal N)\in A$ for $i=1,2$. Fix $\theta\in[0,1]$. If $\theta\in\{0,1\}$ the claim is immediate. Otherwise, define
\begin{align*}
\rho:=\frac{\theta q_2}{\theta q_2+(1-\theta)q_1}\in(0,1).
\end{align*}
Since $\mathcal P$ is convex, $\widetilde Q:=\rho Q_1+(1-\rho)Q_2$ belongs to $\mathcal P$. Moreover,
\begin{align*}
\widetilde Q(\mathcal N)=\rho q_1+(1-\rho)q_2
=\frac{q_1q_2}{\theta q_2+(1-\theta)q_1},
\end{align*}
which lies between $q_1$ and $q_2$ and therefore belongs to $A$ because $A$ is an interval. Finally, for every measurable $B\subseteq\Xi$,
\begin{align*}
\widetilde Q^{\mathcal N}(B)
&=\frac{\rho Q_1(B\cap\mathcal N)+(1-\rho)Q_2(B\cap\mathcal N)}
{\rho q_1+(1-\rho)q_2}\\
&=\theta Q_1^{\mathcal N}(B)+(1-\theta)Q_2^{\mathcal N}(B)
=\theta R_1(B)+(1-\theta)R_2(B).
\end{align*}
Thus $\theta R_1+(1-\theta)R_2\in\mathcal P_{\mathcal N}$, proving that $\mathcal P_{\mathcal N}$ is convex. The pushforward map $Q\mapsto Q\circ\ell^{-1}$ is linear, so $\mathcal P_{\mathcal N}^{\ell}$ is convex as well.

Next, we consider the case that $\mathcal P_{\mathcal N}^{\ell}$ is a convex polytope, which has the form $\mathcal P_{\mathcal N}^{\ell}={\rm conv}\{P_1,\dots,P_S\}$ where ``$\rm conv$" means the convex hull. We claim that $\bigcup_{Q\in\mathcal P_{\mathcal N}^{\ell}}\arg\min_{\mathbf t}R(Q,\mathbf t)$ is a subset of a compact set in $\R^n$. Indeed, denote by 
\begin{align}\label{eq-minmaxa}
a=\sup_{Q\in\mathcal P_{\mathcal N}^{\ell}}\inf_{\mathbf t\in\R^n}R(Q,\mathbf t),
\end{align}
and let $B_r=\{\mathbf t\in\R^n:\|\mathbf t\|\le r\}$ be a ball with radius $r\ge 0$. By Assumption \ref{ass:phi2}, there exists $r_0>0$ such that 
\begin{align}\label{eq-minmax2}
\min_{Q\in\{P_1,\dots,P_S\}}R(Q,\mathbf t)>a~~\forall \mathbf t\in (B_{r_0})^c.
\end{align}
% which implies 
% \begin{align*}
% \bigcup_{P\in\{P_1,\dots,P_k\}}\arg\min_{\mathbf t}R(P,\mathbf t)\subseteq B_{a_0}.
% \end{align*}
Let $Q\in\mathcal P_{\mathcal N}^{\ell}$ with the form $Q=\sum_{i=1}^S \lambda_i P_i$, where $\lambda_i\ge 0$ and $\sum_{i=1}^S\lambda_i=1$. It holds that
\begin{align*}
R(Q,\mathbf t)=R\left(\sum_{i=1}^S\lambda_iP_i,\mathbf t\right)
=\sum_{i=1}^S \lambda_i R(P_i,\mathbf t)\ge \min_{Q\in\{P_1,\dots,P_S\}}R(Q,\mathbf t)>a\ge \inf_{\mathbf t\in\R^n}R(Q,\mathbf t)~~\forall \mathbf t\in (B_{r_0})^c,
\end{align*}
where the second equality holds because $R(Q,\bt)$ is linear in $Q$, the second inequality is due to \eqref{eq-minmax2} and the last inequality follows from the definition of $a$ in \eqref{eq-minmaxa}.
This implies that $\arg\min_{\mathbf t} R(Q,\mathbf t)\subseteq B_{r_0}$ for all $Q\in\mathcal P_{\mathcal N}^{\ell}$, and hence, we have 
\begin{align}\label{eq-minimaxcompact}
\bigcup_{Q\in\mathcal P_{\mathcal N}^{\ell}}\arg\min_{\mathbf t} R(Q,\mathbf t)\subseteq B_{r_0},
\end{align}
where $B_{r_0}$ is a compact set.
% We claim that $\arg\min_{\mathbf t}R(P,\mathbf t)$ is a bounded set for all $P\in\mathcal P$. This is because Assumption \ref{ass:phi2} implies that $R(P,\mathbf t_n)\to\infty$ for any sequence $\{\mathbf t_n\}_{n\in\R}$ with $\|\mathbf t_n\|\to\infty$. 
% Hence, there exists a compact set $A$ such that 
Further, 
% we conclude that
% \begin{align}\label{eq-minimaxcompact}
% \sup_{P\in\mathcal P}\inf_{\mathbf t\in\R^m}R(P,\mathbf t)
% =\sup_{P\in\mathcal P}\inf_{\mathbf t\in B_{a_0}}R(P,\mathbf t).
% \end{align}
note that $R(Q,\mathbf t)$ is linear in $Q$. Assumption \ref{ass:phi1} implies that $R(Q,\mathbf t)$ is convex and lower-semicontinuous in $\mathbf t$, and $B_{r_0}$ is a compact set. We have
\begin{align*}
%\sup_{P\in\mathcal P} \rho(P)
\sup_{Q\in\mathcal P_{\mathcal N}^{\ell}}\inf_{\mathbf t\in\R^n}R(Q,\mathbf t)
=\sup_{Q\in\mathcal P_{\mathcal N}^{\ell}}\inf_{\mathbf t\in B_{r_0}}R(Q,\mathbf t)
=\inf_{\mathbf t\in B_{r_0}}\sup_{Q\in\mathcal P_{\mathcal N}^{\ell}}R(Q,\mathbf t)
\ge \inf_{\mathbf t\in \R^n}\sup_{Q\in\mathcal P_{\mathcal N}^{\ell}}R(Q,\mathbf t),
\end{align*}
where the first equality follows from \eqref{eq-minimaxcompact} and the second equality is due to a minimax theorem (e.g., \cite{S58}).
%The other direction that $\sup_{P\in\mathcal P}\inf_{\mathbf t\in\R^m}R(P,\mathbf t)\le \inf_{\mathbf t\in\R^m}\sup_{P\in\mathcal P}R(P,\mathbf t)$ is trivial. 
This yields \eqref{eq-minimaxsuff} when $\mathcal P_{\mathcal N}^{\ell}$ is a convex polytope.

Consider now that $\mathcal P_{\mathcal N}^{\ell}$ is a general convex set. For a set of distributions $\mathcal Q\subseteq \mathcal P(\R)$, denote by 
\begin{align*}
\mbox{
$\mathcal R_{\mathcal Q}(\mathbf t):=\sup_{Q\in\mathcal Q}R(Q,\mathbf t)=\sup_{Q\in\mathcal Q}\E_Q[\phi(Z,\mathbf t)]$,~~ $\mathbf t\in\R^n$,
}
\end{align*}
and $b=\inf_{\mathbf t\in\R^n}\mathcal R_{\mathcal P_{\mathcal N}^{\ell}}(\mathbf t)$.
We aim to verify that $\sup_{Q\in\mathcal P_{\mathcal N}^{\ell}}\inf_{\mathbf t\in\R^n}R(Q,\mathbf t)\ge b$.
From the definition of $b$, we have
\begin{align}\label{eq1-minimax}
\mathcal R_{\mathcal P_{\mathcal N}^{\ell}}(\mathbf t)\ge b~~\forall \mathbf t\in\R^n.
\end{align}
Let $Q_0\in\mathcal P_{\mathcal N}^{\ell}$. By Assumption \ref{ass:phi2}, there exists $r>0$ such that 
\begin{align}\label{eq2-minimax}
R(Q_0,\mathbf t)\ge b~~\forall \mathbf t\in (B_r)^c.
\end{align}
Let $\mathbb{D}=\{\mathbf t_i\}_{i\in\N}$ be a countable dense of $\R^n$, and let $\{Q_{i,j}\}_{i,j\in\N}\subseteq \mathcal P_{\mathcal N}^{\ell}$ be a sequence such that 
$$\lim_{j\to\infty}R(Q_{i,j},\mathbf t_i)= \mathcal R_{\mathcal P_{\mathcal N}^{\ell}}(\mathbf t_i)~~\forall i\in\N.$$ 
Define $\mathcal Q_S=\{Q_{1,S},Q_{2,S},\dots,Q_{S,S}\}\subseteq \mathcal P_{\mathcal N}^{\ell}$ for $S\in\N$. It holds that $\mathcal R_{\mathcal Q_S}(\mathbf t)\to \mathcal R_{\mathcal P_{\mathcal N}^{\ell}}(\mathbf t)$ as $S\to\infty$ for all $\bt\in\mathbb D$. Note that $\mathcal R_{\mathcal Q_S}:\R^n\to\R$ is convex as it is the supremum of a set of convex functions.
By Theorem 10.8 of \cite{R70} and noting that $B_r$ is a compact set, we have that $\{\mathcal R_{\mathcal Q_S}\}_{S\in\N}$ uniformly converges to $\mathcal R_{\mathcal P_{\mathcal N}^{\ell}}$ on $B_r$, i.e., 
\begin{align*}%\label{eq-minimax3}
\lim_{S\to\infty}\sup_{\mathbf t\in B_r}\left|\mathcal R_{\mathcal Q_S}(\mathbf t)-\mathcal R_{\mathcal P_{\mathcal N}^{\ell}}(\mathbf t)\right|=0.
\end{align*}
Combining with \eqref{eq1-minimax} implies that for any $\gamma>0$, we have 
\begin{align}\label{eq3-minimax}
\mathcal R_{\mathcal Q_S}(\mathbf t)\ge b-\gamma ~~\forall \mathbf t\in B_r
\end{align}
for large enough $S$.
Further define $\mathcal P_S={\rm conv}(\mathcal Q_S\cup \{Q_0\})\subseteq \mathcal P_{\mathcal N}^{\ell}$, and for $\gamma>0$ and large enough $S$,
\begin{align*}
\sup_{Q\in\mathcal P_{\mathcal N}^{\ell}}\inf_{\mathbf t\in\R^n} R(Q,\mathbf t)
&\ge\sup_{Q\in\mathcal P_S}\inf_{\mathbf t\in\R^n} R(Q,\mathbf t)
\ge \inf_{\mathbf t\in\R^n} \sup_{Q\in\mathcal P_S}R(Q,\mathbf t)
=\inf_{\mathbf t\in\R^n} \mathcal R_{\mathcal P_S}(\mathbf t)\\
&=\min\left\{ \inf_{\mathbf t\in B_r}\mathcal R_{\mathcal P_S}(\mathbf t), \inf_{\mathbf t\in (B_r)^c}\mathcal R_{\mathcal P_S}(\mathbf t)\right\}\\
&\ge \min\left\{ \inf_{\mathbf t\in B_r}\mathcal R_{\mathcal Q_S}(\mathbf t), \inf_{\mathbf t\in (B_r)^c}R(Q_0,\mathbf t)\right\}\\
&\ge \min\left\{ \inf_{\mathbf t\in B_r}\mathcal R_{\mathcal Q_S}(\mathbf t), b\right\}\ge \min\{b-\gamma,b\}=b-\gamma,
\end{align*}
where we have used the previous result for convex polytope in the second inequality by noting that $\mathcal P_S$ is a convex polytope, and the fourth and the fifth inequality follows from \eqref{eq2-minimax} and \eqref{eq3-minimax}, respectively.
Since $\gamma>0$ can be arbitrarily small, we conclude that $\sup_{Q\in\mathcal P_{\mathcal N}^{\ell}}\inf_{\mathbf t\in\R^n} R(Q,\mathbf t)\ge b$, and this verifies \eqref{eq-minimaxsuff} for a general convex set $\mathcal P_{\mathcal N}^{\ell}$. Hence, we complete the proof.
\end{proof}

\section{More refined formulations for the three models}\label{app:morerefined}

% In this section, we present more refined formulations of problems~\eqref{prob-capK=2}, \eqref{prob-sum}, and \eqref{prob-barycenterB3}, which are used in the the numerical experiments of Section \ref{sec:num}, as stated in Theorem~\ref{th:SDK=2}, Theorem~\ref{th:sumB2Duality}, and Corollary~\ref{co:barycenterB3Duality}, respectively, under additional structural assumptions. 

In this section, we present more refined formulations of the problems~\eqref{prob-capK=2}, \eqref{prob-sum}, and \eqref{prob-barycenterB3}, as stated in Corollary~\ref{th:SDK=2}, Theorem~\ref{th:sumB2Duality}, and Corollary~\ref{co:barycenterB3Duality}, respectively. 
%These formulations, developed under additional structural assumptions, are employed in the numerical experiments discussed in Section~\ref{sec:num}.
As they follow in a similar manner from the proof of Theorem~4.2 in \cite{EK18}, we omit the detailed proofs.
Throughout this appendix, we write the loss as $\ell(\bxi)$ for a fixed decision; when $\ell$ depends on $\bbeta$, the reformulations apply pointwise to $\ell(\bbeta,\cdot)$ and the outer minimization retains the constraint $\bbeta\in\mathcal D$.

Before presenting the results, we introduce some necessary preliminaries. In what follows, we assume that the cost function is a norm, i.e, $c(\bxi,\bxi')=\|\bxi-\bxi'\|$.
Denote by $\|\cdot\|_*$ the dual norm of $\|\cdot\|$, and $\sigma_{\mathcal N}$ is the support function on $\mathcal N$ defined as $\sigma_{\mathcal N}(\bm\eta)=\sup_{\bxi\in\mathcal N}\bm\eta^{\top}\bxi$. The conjugate function of $\ell$ on $\R^d$ is defined as $\ell^*(\bm\eta)=\sup_{\bxi\in\R^d}\{\bm\eta^{\top}\bxi-\ell(\bxi)\}$.

\begin{assumption}[Max-concave loss functions]\label{ass:lossfun1}
The conditional set $\mathcal N \subseteq \Xi$ is convex and closed.
The loss function has the form $\ell(\bxi)=\max_{l\in[L]}\ell_l(\bxi)$, where the negative constituent functions $-\ell_l$ are proper, convex, and lower semicontinuous for all $l \in [L]$. Moreover, we assume that $\ell_l$ is not identically $-\infty$ on $\mathcal N$ for all $l\in[L]$.
\end{assumption}

\begin{assumption}[Max-affine loss functions]\label{ass:lossfun2}
The conditional set has the form $\mathcal N=\{\bx_0\}\times\{\by:C\by\le \bg\}$, where $\bx_0\in\mathcal X$, $C\in \R^{m\times d_{\mathcal Y}}$ and $\bg\in \R^m$, and the polyhedron $\{\by:C\by\le\bg\}$ is bounded, so that $\mathcal N$ is compact.
The loss function has the form $\ell(\bxi)=\max_{l\in[L]}\{\bs_{l}^{\top}\by+t_{l}\}$, where $\bs_{l}\in\R^{d_{\mathcal Y}}$ and $t_l\in\R$ for all $l\in[L]$. 
\end{assumption}

\begin{proposition}\label{prop:refinedB1}
Suppose that Assumption \ref{ass:lossfun1} holds. Then, the problem \eqref{prob-capK=2} in Corollary \ref{th:SDK=2} is equivalent to
\begin{align*}%\tag{${\rm P}^{\rm I}$}\label{prob-capK=2}
\begin{array}{lll}\inf & 
\varrho_1+\varrho_2-\frac{1}{b}\tau_1+\frac{1}{a}\tau_2\\
 {\rm s.t. } & \varrho_k\in\R, \bm\varphi_{k}\in\R^{N_k}, \bm\psi_k\in\R_+^{N_k}, \zeta_k\in\R_+, \tau_k\in\R_+, \lambda_k \in \mathbb{R}_{+}, \bm\gamma_k \in \mathbb{R}^{N_k} &\forall k=1,2\\
&\eta_{\balpha}\in\R_+ &\forall \balpha\in\mathcal A\\ 
&\bz_{l,k,\alpha_k}\in\R^d,~\bv_{l,\balpha}\in\R^d &\forall \balpha\in\mathcal A, k=1,2,l\in[L]  \\
& \sum_{k=1}^2 \lambda_k\epsilon_k-\sum_{k=1}^2\sum_{j=1}^{N_k}{\varphi_{k,j}}{w_{k,j}}+\tau_1-\tau_2=0\\
&-\lambda_1+\lambda_2+\zeta_1-\zeta_2=0\\
&\gamma_{k,j}-\varrho_k+\varphi_{k,j}+\psi_{k,j}=0 &\forall j\in[N_k],~k=1,2\\
&(-\lambda_1+\zeta_1)d_{\balpha}+\sum_{k=1}^2\varphi_{k,\alpha_k}+\eta_{\balpha}=0 &\forall \balpha\in\mathcal A\\
% & \sup _{\bxi \in \mathcal N} \left\{\ell(\bxi)-\sum_{k=1}^2 \lambda_k \|\bxi-\widehat{\bxi}_{k, \alpha_k}\|\right\} \leq \sum_{k=1}^2 \gamma_{k, \alpha_k} & \forall \balpha \in \mathcal{A}\\
&(-\ell_l)^*\left(\sum_{k=1}^2\bz_{l,k,\alpha_k}-\bv_{l,\balpha}\right)+\sigma_{\mathcal N}(\bv_{l,\balpha})-\sum_{k=1}^2 \bz_{l,k,\alpha_k}^{\top}\widehat{\bxi}_{k, \alpha_k}\le \sum_{k=1}^2 \gamma_{k, \alpha_k} & \forall \balpha \in \mathcal{A}, l\in[L]\\
& \|\bz_{l,k,\alpha_k}\|_*\le \lambda_k &\forall \balpha\in\mathcal A, k=1,2, l\in[L].
\end{array}
\end{align*}
% where $\|\cdot\|_*$ is the dual norm of $\|\cdot\|$, and $\sigma_{\mathcal N}$ is the support function on $\mathcal N$ defined as $\sigma_{\mathcal N}(\bxi)=\sup_{\bm\eta\in\mathcal N}\bxi^{\top}\bm\eta$.
Moreover, if Assumption \ref{ass:lossfun2} holds, then the above problem reduces to
\begin{align*}%\tag{${\rm P}^{\rm I}$}\label{prob-capK=2}
\begin{array}{lll}\inf & 
\varrho_1+\varrho_2-\frac{1}{b}\tau_1+\frac{1}{a}\tau_2\\
 {\rm s.t. } & \varrho_k\in\R, \bm\varphi_{k}\in\R^{N_k}, \bm\psi_k\in\R_+^{N_k}, \zeta_k\in\R_+, \tau_k\in\R_+, \lambda_k \in \mathbb{R}_{+}, \bm\gamma_k \in \mathbb{R}^{N_k} &\forall k=1,2\\
&\eta_{\balpha}\in\R_+ &\forall \balpha\in\mathcal A\\ 
&\bz_{l,k,\alpha_k}\in\R^d,~\bomega_{l,\balpha}\in\R_+^m &\forall \balpha\in\mathcal A, k=1,2,l\in[L]  \\
& \sum_{k=1}^2 \lambda_k\epsilon_k-\sum_{k=1}^2\sum_{j=1}^{N_k}{\varphi_{k,j}}{w_{k,j}}+\tau_1-\tau_2=0\\
&-\lambda_1+\lambda_2+\zeta_1-\zeta_2=0\\
&\gamma_{k,j}-\varrho_k+\varphi_{k,j}+\psi_{k,j}=0 &\forall j\in[N_k],~k=1,2\\
&(-\lambda_1+\zeta_1)d_{\balpha}+\sum_{k=1}^2\varphi_{k,\alpha_k}+\eta_{\balpha}=0 &\forall \balpha\in\mathcal A\\
% & \sup _{\bxi \in \mathcal N} \left\{\ell(\bxi)-\sum_{k=1}^2 \lambda_k \|\bxi-\widehat{\bxi}_{k, \alpha_k}\|\right\} \leq \sum_{k=1}^2 \gamma_{k, \alpha_k} & \forall \balpha \in \mathcal{A}\\
&t_l+\bomega_{l,\balpha}^{\top}\bg-\sum_{k=1}^2\bz_{l,k,\alpha_k}^{\top}\left(\widehat{\bxi}_{k,\alpha_k}-(\bx_0,0,\dots,0)\right)\le \sum_{k=1}^2 \gamma_{k,\alpha_k} &\forall \balpha\in\mathcal A, l\in[L]\\
&-\sum_{k=1}^2\bz^{\mathcal Y}_{l,k,\alpha_k}+\bomega_{l,\balpha}^{\top}C=\bs_l &\forall \balpha\in\mathcal A, l\in[L]\\
& \|\bz_{l,k,\alpha_k}\|_*\le \lambda_k &\forall \balpha\in\mathcal A, k=1,2, l\in[L],
\end{array}
\end{align*}
where $\bz^{\mathcal Y}_{l,k,\alpha_k}$ denotes the vector composed of the components of $\bz_{l,k,\alpha_k}$ from the $(d_{\mathcal X}+1)$th to the $d$th dimensions.
\end{proposition}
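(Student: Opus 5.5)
The approach is to leave every finite-dimensional constraint of \eqref{prob-capK=2} untouched and reformulate only the semi-infinite constraints
\begin{align*}
\sup_{\bxi\in\mathcal N}\Big\{\ell(\bxi)-\sum_{k=1}^2\lambda_k\|\bxi-\widehat\bxi_{k,\alpha_k}\|\Big\}\le\sum_{k=1}^2\gamma_{k,\alpha_k},\qquad \balpha\in\mathcal A .
\end{align*}
Since $\ell=\max_{l}\ell_l$, each such constraint splits into $L$ constraints, one for each $\ell_l$. It therefore suffices to show that, for fixed $\balpha$, $l$ and $\lambda_1,\lambda_2\ge0$, the value $\sup_{\bxi\in\mathcal N}\{\ell_l(\bxi)-\sum_k\lambda_k\|\bxi-\widehat\bxi_{k,\alpha_k}\|\}$ equals
\begin{align*}
\inf\Big\{(-\ell_l)^*\Big(\sum_{k}\bz_{k}-\bv\Big)+\sigma_{\mathcal N}(\bv)-\sum_k\bz_k^\top\widehat\bxi_{k,\alpha_k}:\ \|\bz_k\|_*\le\lambda_k\Big\},
\end{align*}
and that this infimum is attained. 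Once attainment holds, the "$\le$" constraint is equivalent to the existence of $(\bz,\bv)$ satisfying the displayed inequalities, which gives the first formulation.

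The key step is a Fenchel--Rockafellar duality argument, following \citet[Theorem~4.2]{EK18}. First write each norm as a maximum over its dual ball, $\lambda_k\|\bxi-\widehat\bxi_k\|=\max_{\|\bz_k\|_*\le\lambda_k}\bz_k^\top(\bxi-\widehat\bxi_k)$. This turns the supremum into a $\sup_{\bxi}\inf_{\bz}$ of a function that is concave in $\bxi$ (because $\ell_l$ is concave) and affine in $\bz$ over a compact convex set. Sion's theorem then lets us swap the order to $\inf_{\bz}\sup_{\bxi\in\mathcal N}$.

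The inner supremum is $\sup_{\bxi}\{\ell_l(\bxi)-\delta_{\mathcal N}(\bxi)-(\sum_k\bz_k)^\top\bxi\}+\sum_k\bz_k^\top\widehat\bxi_k$ up to sign conventions. This is the conjugate of the sum $-\ell_l+\delta_{\mathcal N}$, which by infimal convolution equals $\inf_{\bv}\{(-\ell_l)^*(\sum_k\bz_k-\bv)+\sigma_{\mathcal N}(\bv)\}$. I expect the main obstacle here: the conjugate-of-sum formula needs a qualification condition, such as $\mathrm{ri}(\mathrm{dom}(-\ell_l))\cap\mathrm{ri}(\mathcal N)\ne\emptyset$. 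Assumption~\ref{ass:lossfun1} only states that $\ell_l$ is not identically $-\infty$ on $\mathcal N$, so I would follow Eisenbarth-Kuhn's treatment: use the closure form of the conjugate-of-sum identity, noting that the infimum over $\bv$ may be unattained but is approached as a limit, and that an inequality "$\le$" which holds in the limit is captured by the existential variables, because adding slack to the $\gamma$ variables costs nothing in the limit. Signs need to be tracked carefully so that the final expression matches $(-\ell_l)^*(\sum_k\bz_k-\bv)+\sigma_{\mathcal N}(\bv)-\sum_k\bz_k^\top\widehat\bxi_k$.

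For the second formulation, specialize to $\ell_l(\bxi)=\bs_l^\top\by+t_l$. Then $(-\ell_l)^*(\bm\eta)=-t_l$ if $\bm\eta=(\mathbf 0_{\mathcal X},-\bs_l)$ and $+\infty$ otherwise. This forces $\bv=\sum_k\bz_k+(\mathbf 0,\bs_l)$.

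For $\mathcal N=\{\bx_0\}\times\{\by:C\by\le\bg\}$, LP duality over the bounded nonempty polyhedron gives $\sigma_{\mathcal N}(\bv)=\bv_{\mathcal X}^\top\bx_0+\min\{\bomega^\top\bg:\bomega\ge0,\ C^\top\bomega=\bv_{\mathcal Y}\}$, with the minimum attained.

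Substituting these expressions and collecting the $\bx_0$ terms into $\sum_k\bz_k^\top(\widehat\bxi_k-(\bx_0,0,\dots,0))$ produces the stated linear constraints, with $\bv_{\mathcal Y}=\sum_k\bz^{\mathcal Y}_k+\bs_l$ rearranged into $-\sum_k\bz^{\mathcal Y}_{l,k,\alpha_k}+\bomega^\top C=\bs_l$. In this case compactness of $\mathcal N$ makes all the strong duality steps clean.
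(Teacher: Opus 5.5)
Your proposal is essentially the paper's argument. The paper omits the proof and defers to the proof of Theorem~4.2 in \cite{EK18}, which follows exactly your sequence: split $\ell=\max_l\ell_l$, write each $\lambda_k\|\cdot\|$ as a maximum over the dual ball, swap by a minimax theorem, express $(-\ell_l+\delta_{\mathcal N})^*$ through the infimal convolution of $(-\ell_l)^*$ and $\sigma_{\mathcal N}$, and then specialize to max-affine losses and polyhedral $\mathcal N$ by LP duality.

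Two small repairs are needed. First, $(-\ell_l)^*$ equals $+t_l$, not $-t_l$, at $\bm\eta=(\mathbf 0,-\bs_l)$; this is what yields the $+t_l$ in the final constraint. Second, without a relative-interior qualification, the conjugate-of-sum formula only identifies $(-\ell_l+\delta_{\mathcal N})^*$ with the \emph{closure} of that infimal convolution. The two can genuinely differ at relative-boundary points of its domain, which is more than non-attainment in $\bv$. Your limiting argument should therefore perturb $\sum_k\bz_k$ toward a relative-interior point inside the dual balls, slightly enlarging $(\lambda_1,\lambda_2)$ if those balls only touch the domain. This preserves the optimal value, though not the feasible set.
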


\begin{proposition}\label{prop:refinedB2}
Suppose that Assumption \ref{ass:lossfun1} holds.
%and $c(\bxi,\bxi')=\|\bxi-\bxi'\|$. 
Then, the problem \eqref{prob-sum} in Theorem \ref{th:sumB2Duality} is equivalent to
\begin{align*}%\tag{${\rm P}^{\rm II}$}\label{prob-sum}
\begin{array}{lll}\inf & 
\sum_{k=1}^K\varrho_k-\frac{1}{b}\tau_1+\frac{1}{a}\tau_2\\
\text{\rm s.t.} & \lambda, \tau_1, \tau_2\in\R_+\\
&\varrho_k\in\R,~\bm\varphi_{k}\in\R^{N_k},~\bm\psi_k\in\R_+^{N_k},~\bm\gamma_k\in\R^{N_k} &\forall k\in[K]\\
&\eta_{\balpha}\in\R_+ &\forall \balpha\in\mathcal A\\ 
&\bz_{l,k,\alpha_k}\in\R^d,~\bv_{l,\balpha}\in\R^d &\forall \balpha\in\mathcal A, k\in[K],l\in[L]  \\
& \lambda \epsilon-\sum_{k=1}^K\sum_{j=1}^{N_k}{\varphi_{k,j}}{w_{k,j}}+\tau_1-\tau_2=0\\
&\gamma_{k,j}-\varrho_k+\varphi_{k,j}+\psi_{k,j}=0 &\forall j\in[N_k],~k\in[K]\\
&-\lambda d_{\balpha}^{\btheta}+\sum_{k=1}^K\varphi_{k,\alpha_k}+\eta_{\balpha}=0 &\forall \balpha\in\mathcal A\\
&(-\ell_l)^*\left(\sum_{k=1}^K\bz_{l,k,\alpha_k}-\bv_{l,\balpha}\right)+\sigma_{\mathcal N}(\bv_{l,\balpha})-\sum_{k=1}^K \bz_{l,k,\alpha_k}^{\top}\widehat{\bxi}_{k, \alpha_k}\le \sum_{k=1}^K \gamma_{k, \alpha_k} & \forall \balpha \in \mathcal{A}, l\in[L]\\
& \|\bz_{l,k,\alpha_k}\|_*\le \lambda\theta_k &\forall \balpha\in\mathcal A, k\in[K], l\in[L].
% & \sup _{\bxi \in \mathcal N} \left\{\ell(\bxi)-\lambda\sum_{k=1}^K \theta_k c(\bxi,\widehat{\bxi}_{k, \alpha_k})\right\} \leq \sum_{k=1}^K \gamma_{k, \alpha_k} & \forall \balpha \in \mathcal{A}.
\end{array}
\end{align*}
Moreover, if Assumption \ref{ass:lossfun2} holds, then the above problem reduces to
\begin{align*}%\tag{${\rm P}^{\rm II}$}\label{prob-sum}
\begin{array}{lll}\inf & 
\sum_{k=1}^K\varrho_k-\frac{1}{b}\tau_1+\frac{1}{a}\tau_2\\
\text{\rm s.t.} & \lambda, \tau_1, \tau_2\in\R_+\\
&\varrho_k\in\R,~\bm\varphi_{k}\in\R^{N_k},~\bm\psi_k\in\R_+^{N_k},~\bm\gamma_k\in\R^{N_k} &\forall k\in[K]\\
&\eta_{\balpha}\in\R_+ &\forall \balpha\in\mathcal A\\ 
&\bz_{l,k,\alpha_k}\in\R^d,~\bomega_{l,\balpha}\in\R_+^m &\forall \balpha\in\mathcal A, k\in[K],l\in[L]  \\
& \lambda \epsilon-\sum_{k=1}^K\sum_{j=1}^{N_k}{\varphi_{k,j}}{w_{k,j}}+\tau_1-\tau_2=0\\
&\gamma_{k,j}-\varrho_k+\varphi_{k,j}+\psi_{k,j}=0 &\forall j\in[N_k],~k\in[K]\\
&-\lambda d_{\balpha}^{\btheta}+\sum_{k=1}^K\varphi_{k,\alpha_k}+\eta_{\balpha}=0 &\forall \balpha\in\mathcal A\\
&t_l+\bomega_{l,\balpha}^{\top}\bg-\sum_{k=1}^K\bz_{l,k,\alpha_k}^{\top}\left(\widehat{\bxi}_{k,\alpha_k}-(\bx_0,0,\dots,0)\right)\le \sum_{k=1}^K \gamma_{k,\alpha_k} &\forall \balpha\in\mathcal A, l\in[L]\\
&-\sum_{k=1}^K\bz^{\mathcal Y}_{l,k,\alpha_k}+\bomega_{l,\balpha}^{\top}C=\bs_l &\forall \balpha\in\mathcal A, l\in[L]\\
& \|\bz_{l,k,\alpha_k}\|_*\le \lambda \theta_k &\forall \balpha\in\mathcal A, k\in[K], l\in[L].
\end{array}
\end{align*}
\end{proposition}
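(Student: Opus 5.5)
The approach is to start from formulation \eqref{prob-sum} of Theorem~\ref{th:sumB2Duality}. There, the only non-finite constraints are the semi-infinite ones,
\[
\sup_{\bxi\in\mathcal N}\Big\{\ell(\bxi)-\lambda\sum_{k=1}^K\theta_k\|\bxi-\widehat\bxi_{k,\alpha_k}\|\Big\}\le \sum_{k=1}^K\gamma_{k,\alpha_k},\qquad \balpha\in\mathcal A .
\]
I will reformulate each of them exactly, following the argument of \citet[Theorem~4.2]{EK18}. All other constraints of \eqref{prob-sum} are carried over unchanged.

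\textbf{Step 1.} Under Assumption~\ref{ass:lossfun1}, since $\ell=\max_l\ell_l$, the constraint splits into $L$ constraints, one for each $l\in[L]$:
\[
\sup_{\bxi}\Big\{\ell_l(\bxi)-\sum_k\lambda\theta_k\|\bxi-\widehat\bxi_{k,\alpha_k}\|-\delta_{\mathcal N}(\bxi)\Big\}\le\sum_k\gamma_{k,\alpha_k}.
\]
The left-hand side equals $-\inf_{\bxi}\{-\ell_l+\sum_k f_k+\delta_{\mathcal N}\}$, where $f_k(\bxi)=\lambda\theta_k\|\bxi-\widehat\bxi_{k,\alpha_k}\|$.

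\textbf{Step 2.} I apply Fenchel--Rockafellar duality to this sum of $K+2$ proper, convex, lsc functions. The $f_k$ are finite and continuous everywhere, and $-\ell_l+\delta_{\mathcal N}$ is proper because $\ell_l\not\equiv-\infty$ on $\mathcal N$. Hence the standard qualification condition holds (continuity of all but one summand at a point of the domain of the remaining one), strong duality holds, and the dual infimum is attained:
\[
\inf_{\bxi}\Big\{-\ell_l+\sum_k f_k+\delta_{\mathcal N}\Big\}=\max_{\bu+\sum_k\bz_k+\bv=0}\Big\{-(-\ell_l)^*(\bu)-\sum_k f_k^*(\bz_k)-\sigma_{\mathcal N}(\bv)\Big\}.
\]
Flipping signs on $\bz_k,\bv$, I use $f_k^*(\bz)=\bz^\top\widehat\bxi_{k,\alpha_k}+\delta_{\{\|\bz\|_*\le\lambda\theta_k\}}$ and substitute $\bu=\sum_k\bz_{l,k,\alpha_k}-\bv_{l,\balpha}$. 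Because the maximum is attained, the constraint is equivalent to the existence of $\bz_{l,k,\alpha_k},\bv_{l,\balpha}$ satisfying the displayed conjugate inequality together with the dual-norm bounds $\|\bz_{l,k,\alpha_k}\|_*\le\lambda\theta_k$. This is the first formulation. The case $\lambda=0$ is covered, since then $\bz=0$.

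\textbf{Step 3.} Under Assumption~\ref{ass:lossfun2}, $-\ell_l$ is affine, so $(-\ell_l)^*(\bu)$ is finite only if $\bu$ equals $(\cdot,\bs_l)$ in the appropriate coordinates (the $\mathcal X$-part is free because $\ell_l$ does not depend on $\bx$), where it takes the value $t_l$. The polytope $\mathcal N=\{\bx_0\}\times\{C\by\le\bg\}$ is compact and nonempty, so LP duality gives
\[
\sigma_{\mathcal N}(\bv)=\bv^{\mathcal X\top}\bx_0+\min\{\bomega^\top\bg:\ \bomega\ge0,\ C^\top\bomega=\bv^{\mathcal Y}\}.
\]
Eliminating $\bv$ via the conjugate constraint, with the $\mathcal X$-block $\bv^{\mathcal X}=\sum_k\bz^{\mathcal X}_k$ and the $\mathcal Y$-block fixed by $\bs_l$, produces the term $\bz^\top(\bx_0,0,\dots,0)$ and the linear equation $-\sum_k\bz^{\mathcal Y}_{l,k,\alpha_k}+\bomega_{l,\balpha}^\top C=\bs_l$. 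This gives the second formulation.

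The main obstacle is justifying attainment and strong duality in Step 2 without boundedness of $\mathcal N$. I would verify the qualification condition via \citet{R70} (Theorem~31.1, relative interior intersection), using that each $f_k$ has full domain. The rest is bookkeeping of signs and coordinate blocks.
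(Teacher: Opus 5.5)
Your route is the one the paper intends: it defers to the proof of Theorem~4.2 in \cite{EK18}. There, each $\lambda\theta_k\|\cdot-\widehat\bxi_{k,\alpha_k}\|$ is written as a maximum over a dual-norm ball, $\sup$ and $\inf$ are swapped, and $-\ell_l+\delta_{\mathcal N}$ is conjugated. That is the same computation as your multi-term Fenchel duality.

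\textbf{The gap is in Step~2.} The qualification you invoke (all summands but one continuous at a point of the domain of the remaining one) applies only if $h_l:=-\ell_l+\delta_{\mathcal N}$ is kept as a single summand. It then yields
\[
\inf_{\bxi}\Big\{h_l+\sum_k f_k\Big\}=\max_{\bu+\sum_k\bz_k=0}\Big\{-h_l^*(\bu)-\sum_k f_k^*(\bz_k)\Big\}
\]
with attainment in $(\bu,\bz)$. It does not give the further splitting $h_l^*(\bu)=\min_{\bv}\{(-\ell_l)^*(\bu-\bv)+\sigma_{\mathcal N}(\bv)\}$ that creates the variables $\bv_{l,\balpha}$; the EK18-style argument faces exactly the same step.
\begin{itemize}
\item Rockafellar's Theorem~16.4 gives only $h_l^*=\operatorname{cl}\big((-\ell_l)^*\,\square\,\sigma_{\mathcal N}\big)$ in general.
\item The closure can be dropped, with attainment, when $\operatorname{ri}(\operatorname{dom}(-\ell_l))\cap\operatorname{ri}(\mathcal N)\neq\emptyset$.
\item Assumption~\ref{ass:lossfun1} only gives $\operatorname{dom}(-\ell_l)\cap\mathcal N\neq\emptyset$.
\item The full domain of the $f_k$ does not help here, so your planned check via Theorem~31.1 would not close this. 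Boundedness of $\mathcal N$ is not the real issue.
\end{itemize}

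For example, take $d=1$, $\mathcal N=(-\infty,0]$, and $\ell_1(\xi)=\sqrt{\xi}$ on $[0,\infty)$, $-\infty$ elsewhere (add $\ell_2\equiv-1$ if you want $\ell$ real-valued). Then $(-\ell_1)^*(y)=1/(4|y|)$ for $y<0$ and $+\infty$ otherwise, while $\sigma_{\mathcal N}(v)=0$ for $v\ge0$ and $+\infty$ otherwise. Hence $\inf_v\{(-\ell_1)^*(u-v)+\sigma_{\mathcal N}(v)\}=0=h_1^*(u)$, and the infimum is never attained. At $\sum_k\gamma_{k,\alpha_k}=-\lambda\sum_k\theta_k|\widehat\xi_{k,\alpha_k}|$ the semi-infinite constraint for $l=1$ holds, yet no $(\bz,\bv)$ satisfies the conjugate inequality. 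So the constraint-by-constraint equivalence you assert ``because the maximum is attained'' is false in general.

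You need either the relative-interior condition as an extra hypothesis, or a separate argument that closure and non-attainment leave the optimal value of the outer infimum unchanged. Under Assumption~\ref{ass:lossfun2} one has $\operatorname{dom}(-\ell_l)=\R^d$, so the condition holds and your second formulation is unaffected.

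\textbf{Two smaller corrections.}
\begin{itemize}
\item In Step~3, $(-\ell_l)^*(\bu)=\sup_{\bxi}\{\bu^\top\bxi+\bs_l^\top\by+t_l\}$ is finite only at $\bu=(\mathbf 0,-\bs_l)$. Since $\ell_l$ does not depend on $\bx$, the $\mathcal X$-block is forced to vanish rather than being free, and the $\mathcal Y$-block carries a minus sign. With $\bu=\sum_k\bz_k-\bv$ this gives $\bv^{\mathcal X}=\sum_k\bz_k^{\mathcal X}$ (which is what you actually use) and $\bv^{\mathcal Y}=\sum_k\bz_k^{\mathcal Y}+\bs_l$. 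Then $C^\top\bomega=\bv^{\mathcal Y}$ becomes $-\sum_k\bz^{\mathcal Y}_k+\bomega^\top C=\bs_l$; with your $+\bs_l$ the sign would come out reversed.
\item The multipliers produced in Steps~2--3 depend on the whole tuple $\balpha$ (and on $l$), so $\bz_{l,k,\alpha_k}$ must be read as $\bz_{l,k,\balpha}$. Forcing a common $\bz$ for all tuples sharing the same $\alpha_k$ would in general only yield an upper bound on the value of \eqref{prob-sum}.
\end{itemize}
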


\begin{proposition}\label{prop:refinedB3}
Suppose that Assumption \ref{ass:lossfun1} holds.
%and $c(\bxi,\bxi')=\|\bxi-\bxi'\|$. 
Then, the problem \eqref{prob-barycenterB3} in Corollary \ref{co:barycenterB3Duality} is equivalent to
\begin{align*}%\tag{${\rm P}^{\rm III}$}\label{prob-barycenterB3}
\begin{array}{lll}\inf & 
\varrho-\frac{1}{b}\tau_1+\frac{1}{a}\tau_2\\
\text{\rm s.t.} & \lambda,\tau_1,\tau_2\in\R_+,~\varrho\in\R\\
& \varphi_j,\psi_j\in\R_+,~\gamma_j\in\R &\forall j\in[N]\\
&\bz_{l,j}\in\R^d,~\bv_{l,j}\in\R^d &\forall j\in[N], l\in[L]\\
& \lambda \epsilon-\lambda\sum_{j=1}^Nw_jd_j+\tau_1-\tau_2+\sum_{j=1}^N \varphi_j w_j=0\\
&\gamma_{j}+\lambda d_j-\varrho-\varphi_{j}+\psi_{j}=0 &\forall j\in[N]\\
&(-\ell_l)^*(\bz_{l,j}-\bv_{l,j})+\sigma_{\mathcal N}(\bv_{l,j})-\bz_{l,j}^\top \widehat{\bxi}_j\le \gamma_j &\forall j\in[N],l\in[L]\\
&\|\bz_{l,j}\|_*\le \lambda &\forall j\in[N], l\in[L].
% & \sup _{\bxi \in \mathcal N} \left\{\ell(\bxi)-\lambda  c(\bxi,\widehat{\bxi}_{j})\right\} \leq \gamma_j & \forall j \in [N].
\end{array}
\end{align*}
Moreover, if Assumption \ref{ass:lossfun2} holds and $\operatorname{cl}(\mathcal N^c)=\Xi$ (which holds whenever $\mathcal N$ has empty interior in $\Xi$, as in singleton conditioning on a continuous support), then $d_j=0$ for all $j\in[N]$, and the above problem reduces to
\begin{align*}%\tag{${\rm P}^{\rm III}$}\label{prob-barycenterB3}
\begin{array}{lll}\inf & 
\varrho-\frac{1}{b}\tau_1+\frac{1}{a}\tau_2\\
\text{\rm s.t.} & \lambda,\tau_1,\tau_2\in\R_+,~\varrho\in\R\\
& \varphi_j,\psi_j\in\R_+,~\gamma_j\in\R &\forall j\in[N]\\
&\bz_{l,j}\in\R^d,~\bomega_{l,j}\in\R_+^m &\forall j\in[N], l\in[L]\\
& \lambda \epsilon+\tau_1-\tau_2+\sum_{j=1}^N \varphi_j w_j=0\\
&\gamma_{j}-\varrho-\varphi_{j}+\psi_{j}=0 &\forall j\in[N]\\
&t_l+\bomega_{l,j}^{\top}\bg-\bz_{l,j}^{\top}\left(\widehat{\bxi}_{j}-(\bx_0,0,\dots,0)\right)\le \gamma_j &\forall j\in[N],l\in[L]\\
&-\bz_{l,j}^{\mathcal Y}+\bomega_{l,j}^{\top}C=\bs_l  &\forall j\in[N],l\in[L]\\
&\|\bz_{l,j}\|_*\le \lambda &\forall j\in[N],l\in[L]. 
\end{array}
\end{align*}
\end{proposition}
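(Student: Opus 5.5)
We derive Proposition~\ref{prop:refinedB3} from Corollary~\ref{co:barycenterB3Duality}. The only non-finite piece of \eqref{prob-barycenterB3} is the family of semi-infinite constraints
\[
\sup_{\bxi\in\mathcal N}\{\ell(\bxi)-\lambda\|\bxi-\widehat{\bxi}_j\|\}\le\gamma_j,\qquad j\in[N].
\]
All other constraints, namely the balance equation and $\gamma_j+\lambda d_j-\varrho-\varphi_j+\psi_j=0$, are carried over unchanged. So the plan is to reformulate this supremum in the manner of Theorem~4.2 of \cite{EK18}, one $j$ at a time.

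\textbf{Splitting over pieces.} Under Assumption~\ref{ass:lossfun1}, $\ell=\max_l\ell_l$, so the supremum splits over $l\in[L]$. The constraint holds if and only if, for every $l\in[L]$,
\[
\sup_{\bxi\in\mathcal N}\{\ell_l(\bxi)-\lambda\|\bxi-\widehat{\bxi}_j\|\}\le\gamma_j .
\]

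\textbf{Dualizing each piece.} Fix $l$ and $j$. Write $\lambda\|\bxi-\widehat{\bxi}_j\|=\sup_{\|\bz\|_*\le\lambda}\bz^\top(\bxi-\widehat{\bxi}_j)$. This turns the left-hand side into $\sup_{\bxi\in\mathcal N}\inf_{\|\bz\|_*\le\lambda}\{\ell_l(\bxi)-\bz^\top\bxi+\bz^\top\widehat{\bxi}_j\}$.

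The $\bz$-set is compact and convex, and the function is concave in $\bxi$ and linear in $\bz$. Sion's theorem \citep{S58} therefore allows the interchange, giving
\[
\inf_{\|\bz\|_*\le\lambda}\Big\{\bz^\top\widehat{\bxi}_j+\sup_{\bxi\in\mathcal N}\{\ell_l(\bxi)-\bz^\top\bxi\}\Big\}.
\]
The inner supremum is $(-\ell_l+\delta_{\mathcal N})^*(-\bz)$. Because $\ell_l$ is not identically $-\infty$ on $\mathcal N$, and $-\ell_l$ and $\delta_{\mathcal N}$ are proper, closed and convex, the inf-convolution formula for the conjugate of a sum applies. It yields $\inf_{\bv}\{(-\ell_l)^*(-\bz-\bv)+\sigma_{\mathcal N}(\bv)\}$.

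Substituting $\bz_{l,j}:=-\bz$, which preserves $\|\bz_{l,j}\|_*\le\lambda$, and renaming $\bv\to\bv_{l,j}$ puts this in the form $(-\ell_l)^*(\bz_{l,j}-\bv_{l,j})+\sigma_{\mathcal N}(\bv_{l,j})-\bz_{l,j}^\top\widehat{\bxi}_j$. The infima over $\bz_{l,j}$ and $\bv_{l,j}$ are then absorbed into the outer minimization as new decision variables, which gives the first displayed program.

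\textbf{Main obstacle.} The delicate point is exactness of the inf-convolution step. One needs either a constraint qualification such as $\mathrm{ri}(\mathrm{dom}(-\ell_l))\cap\mathrm{ri}(\mathcal N)\neq\emptyset$, or the closure/attainment argument used in \cite{EK18}. I would follow their argument, noting that because the infimum is absorbed into the outer minimization, attainment of the inner infimum is not needed; equality of values suffices.

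\textbf{Max-affine specialization.} Under Assumption~\ref{ass:lossfun2} we have $\ell_l(\bxi)=\bs_l^\top\by+t_l$. Its conjugate is $(-\ell_l)^*(\bw)=-t_l$ if $\bw=(\ast,-\bs_l)$ and $+\infty$ otherwise, so only the $\by$-block of $\bw$ is pinned to $-\bs_l$ and the $\bx$-block is absorbed by $\bv_{l,j}$. Rather than computing $\sigma_{\mathcal N}$ separately, I would dualize the inner linear program directly. It reads $\sup\{\bs_l^\top\by-\bz_{l,j}^{\mathcal Y\top}\by : C\by\le\bg\}-\bz_{l,j}^{\mathcal X\top}\bx_0$, which equals $\min\{\bomega^\top\bg : \bomega\ge0,\ C^\top\bomega=\bs_l-\bz_{l,j}^{\mathcal Y}\}-\bz_{l,j}^{\mathcal X\top}\bx_0$. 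Strong LP duality holds because the polyhedron is nonempty and bounded. With $\bomega_{l,j}:=\bomega$, this produces the stated constraints $t_l+\bomega_{l,j}^\top\bg-\bz_{l,j}^\top(\widehat{\bxi}_j-(\bx_0,0,\dots,0))\le\gamma_j$ and $-\bz_{l,j}^{\mathcal Y}+\bomega_{l,j}^\top C=\bs_l$.

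\textbf{The $d_j=0$ claim.} When $\operatorname{cl}(\mathcal N^c)=\Xi$, each $\widehat{\bxi}_j\in\Xi$ is a limit of points of $\mathcal N^c$. Continuity of the norm then gives $d_j=\inf_{\bxi\in\mathcal N^c}\|\bxi-\widehat{\bxi}_j\|=0$, and deleting the $d_j$ terms yields the final program.
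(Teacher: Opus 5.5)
Your route is the one the paper intends. The paper omits the proof and defers to Theorem~4.2 of \cite{EK18}, which follows your sequence: split over $l$, use the dual-norm representation of $\lambda\|\bxi-\widehat{\bxi}_j\|$, swap via minimax over the compact dual ball, write the conjugate of $-\ell_l+\delta_{\mathcal N}$ as the inf-convolution of $(-\ell_l)^*$ and $\sigma_{\mathcal N}$, then substitute $\bz\mapsto-\bz$.

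Several parts of your proposal are fine as they stand:
\begin{itemize}
\item Your $d_j=0$ argument is correct.
\item You are right that attainment in $\bv$ is not needed.
\item The possible non-closedness of the inf-convolution can be handled by a similar perturbation. Slightly increasing $\gamma_j$ (through $\varrho$) or $\lambda$ (through $\varphi_j$ and $\tau_2$) can be compensated in the linear constraints at arbitrarily small cost in the objective.
\item Dualizing the inner LP directly in the max-affine case is a legitimate shortcut. It avoids the inf-convolution step there altogether.
\end{itemize}

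The max-affine computation, however, contains sign errors, and as written it does not produce the stated constraints.

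First, the conjugate is misstated. We have $(-\ell_l)^*(\bw)=\sup_{\bx,\by}\{\bw^{\mathcal X\top}\bx+(\bw^{\mathcal Y}+\bs_l)^\top\by\}+t_l$. This equals $+t_l$, not $-t_l$, and it is finite only if $\bw^{\mathcal X}=\mathbf 0$ and $\bw^{\mathcal Y}=-\bs_l$. So the $\bx$-block is not free; it forces $\bv_{l,j}^{\mathcal X}=\bz_{l,j}^{\mathcal X}$.

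Second, after your substitution $\bz_{l,j}=-\bz$, the inner problem is
$\sup_{\bxi\in\mathcal N}\{\ell_l(\bxi)+\bz_{l,j}^\top\bxi\}=t_l+\bz_{l,j}^{\mathcal X\top}\bx_0+\sup\{(\bs_l+\bz_{l,j}^{\mathcal Y})^\top\by: C\by\le\bg\}$.
Its LP dual has the constraint $C^\top\bomega=\bs_l+\bz_{l,j}^{\mathcal Y}$. This is what yields $-\bz_{l,j}^{\mathcal Y}+\bomega_{l,j}^\top C=\bs_l$. Combined with $-\bz_{l,j}^\top\widehat{\bxi}_j$, it also yields the term $-\bz_{l,j}^\top(\widehat{\bxi}_j-(\bx_0,0,\dots,0))$.

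Your displayed LP instead uses the pre-substitution signs, namely $\bs_l-\bz_{l,j}^{\mathcal Y}$ and $-\bz_{l,j}^{\mathcal X\top}\bx_0$, while keeping $-\bz_{l,j}^\top\widehat{\bxi}_j$. That combination is internally inconsistent: minimizing over $\|\bz_{l,j}\|_*\le\lambda$ amounts to penalizing $\lambda\|\bxi+\widehat{\bxi}_j\|$ rather than $\lambda\|\bxi-\widehat{\bxi}_j\|$. With the signs corrected, the derivation goes through and matches the proposition.
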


% {\color{black}
% \section{DRO for unconditional problems}

% Formulation I (intersection-based):

% \begin{align*}
% \begin{array}{lll}\inf & \sum_{k=1}^K \varepsilon_k \lambda_k+\sum_{k=1}^K \sum_{j=1}^{N_k} w_{k, j} \gamma_{k, j} & \\ 
% \text { s.t. } & \lambda_k \in \mathbb{R}_{+}, \bgamma_k \in \mathbb{R}^{N_k} & \forall k \in[K] \\ 
% &t_l+\bomega_{l,\balpha}^{\top}\bg-\sum_{k=1}^K\bz_{l,k,\balpha_k}^{\top}\widehat{\bxi}_{k,\alpha_k}\le \sum_{k=1}^K \gamma_{k,\alpha_k} &\forall \balpha\in\mathcal A, l\in[L]\\
% &-\sum_{k=1}^K\bz_{l,k,\balpha_k}+\bomega_{l,\balpha}^{\top}C=\bs_l &\forall \balpha\in\mathcal A, l\in[L]\\
% & \|\bz_{l,k,\alpha_k}\|_*\le \lambda_k &\forall \balpha\in\mathcal A, k\in[K], l\in[L].
% \end{array}
% \end{align*}

% \noindent Formulation II (weighted-based):

% \begin{align*}
% \begin{array}{lll}\inf & \varepsilon \lambda+\sum_{k=1}^K \sum_{j=1}^{N_k} w_{k, j} \gamma_{k, j} & \\ 
% \text { s.t. } & \lambda \in \mathbb{R}_{+}, \bgamma_k \in \mathbb{R}^{N_k} & \forall k \in[K] \\ 
% &t_l+\bomega_{l,\balpha}^{\top}\bg-\sum_{k=1}^K\bz_{l,k,\balpha_k}^{\top}\widehat{\bxi}_{k,\alpha_k}\le \sum_{k=1}^K \gamma_{k,\alpha_k} &\forall \balpha\in\mathcal A, l\in[L]\\
% &-\sum_{k=1}^K\bz_{l,k,\balpha_k}+\bomega_{l,\balpha}^{\top}C=\bs_l &\forall \balpha\in\mathcal A, l\in[L]\\
% & \|\bz_{l,k,\alpha_k}\|_*\le \lambda \theta_k &\forall \balpha\in\mathcal A, k\in[K], l\in[L].
% \end{array}
% \end{align*}
% }

\end{document}